\renewenvironment{proof}{\proof}{\hfill $\Box$ \vskip 12pt}
\renewcommand{\proof}{\par\noindent{\it Proof.\ \ }}
\def\qed{\ifmmode\square\else\nolinebreak\hfill
$\Box$\fi\par\vskip12pt}
\def\ov{\overline} 
\def\l{\langle} \def\r{\rangle} 
\def\div{\,\big|\,} \def\notdiv{{\,\not\big|\,}}
\def\le{\leqslant}
\def\ge{\geqslant}
\def\leq{\leqslant}
\def\geq{\geqslant}
\def\mod{{\rm mod~}}
\def\FF{{\mathbb F}} \def\ZZ{{\mathbb Z}}
\def\SS{{\mathbb S}}
\def\calT{{\mathcal T}}
\def\calC{{\mathcal C}}
\def\calK{{\mathcal K}}
\def\cali{{\mathcal I}}
\def\calj{{\mathcal J}}
\def\calM{{\mathcal D}}
\def\calD{{\mathcal D}}
\def\calP{{\mathcal P}}
\def\SS{{\mathbb S}}
\def\ZZ{{\mathbb Z}}
\def\bfe{{\bf e}}
\def\bfi{{\bf i}}
\def\mod{{\rm mod~}}
\def\K{{\bf K}}
\def\Aut{{\rm Aut}} \def\Inn{{\rm Inn}}
\def\HA{{\rm HA}} \def\AS{{\rm AS}} \def\PA{{\rm PA}} \def\TW{{\rm TW}}
\def\Cos{{\sf Cos}}
\def\BiCos{{\sf Cos}}
\def\D{{\rm D}} \def\Q{{\rm Q}}
\def\S{{\rm S}} 
\def\soc{{\rm soc}} 
\def\C{{\bf C}}\def\N{{\bf N}}\def\Z{{\bf Z}}
\def\Ga{{\it \Gamma}}
\def\Ome{{\it \Omega}}
\def\Sig{{\it \Sigma}}
\def\Del{{\it \Delta}}
 \def\b{\beta} 
 \def\s{\sigma}
\def\o{\omega}
\def\AGammaL{{\rm A\Gamma   L}}
\def\PSL{{\rm PSL}}\def\PGL{{\rm PGL}}
\def\SigmaL{{\rm \Sigma L}}
\def\GL{{\rm GL}} \def\SL{{\rm SL}}
\def\AGL{{\rm AGL}}
 \def\F{{\rm F}} \def\D{{\rm D}}
\def\HS{{\rm HS}}\def\CD{{\rm CD}}\def\HC{{\rm HC}}\def\SD{{\rm SD}}
\def\calu{{\mathcal U}}
\def\calU{{\mathcal U}}
\def\ol#1{\overline{#1}}
\newtheorem{theorem}{Theorem}[section]
\newtheorem{lemma}[theorem]{Lemma}%
\newtheorem{corollary}[theorem]{Corollary}%
\newtheorem{proposition}[theorem]{Proposition}%
\theoremstyle{definition}
\newtheorem{problem}[theorem]{Problem}%
\newtheorem{remark}[theorem]{Remark}
\newtheorem{definition}[theorem]{Definition}%
\newtheorem{example}[theorem]{Example}%
\newtheorem{construction}[theorem]{Construction}%
\newcommand \be{\begin{equation}}
\newcommand \ee{\end{equation}}
\newcommand \bes{\begin{eqnarray}}
\newcommand \ees{\end{eqnarray}}
\newcommand \bess{\begin{eqnarray*}}
\newcommand \eess{\end{eqnarray*}}
\begin{document}

\title{Coverings of Groups, Regular Dessins, and Surfaces}
\thanks{{\bf Keywords}: regular dessins;  quotient; covering; quasiprimitive; unicellular}
\thanks{{\bf MSC(2010)} 20B15, 20B30, 05C25.}
\thanks{This work was partially supported  by NSFC: 11931005, 12061092, 12101518;
Yunnan Applied Basic Research Projects: 202101AT070137; Fundamental Research Funds for the Central
Universities: 20720210036.}

\author{Jiyong Chen}
\address{$^{1}$ School of Mathematical Sciences\\
Xiamen University\\
Xiamen, Fujian 361005\\
P. R. China}
\email{chenjy1988@xmu.edu.cn}

\author{Wenwen Fan}
\address{$^{2}$ School of Mathematics\\
Yunnan Normal University\\
Kunming, Yunnan 650500\\
P. R. China}
\email{fww0871@163.com}

\author{Cai Heng Li}
\address{$^{3}$ SUSTech International Center for mathematics\\
Department of Mathematics\\
Southern University of Science and Technology \\
Shenzhen 518055\\
P. R. China}
\email{lich@sustech.edu.cn}

\author{Yan Zhou Zhu}
\address{$^{4}$
Department of Mathematics\\
Southern University of Science and Technology \\
Shenzhen 518055\\
P. R. China}
\email{zhuyz@mail.sustech.edu.cn}

\date\today

\begin{abstract}
A coset geometry representation of regular dessins is established, and employed to describe quotients and coverings of regular dessins and surfaces.
A characterization is then given of face-quasiprimitive regular dessins as coverings of unicellular regular dessins.
It shows that there are exactly three O'Nan-Scott-Praeger types of face-quasiprimitive regular dessins which are smooth coverings of unicellular regular dessins, leading to new constructions of interesting families of regular dessins.
Finally, a problem of determining smooth Schur covering of simple groups is initiated by studying coverings between $\SL(2,p)$ and $\PSL(2,p)$, giving rise to interesting regular dessins like Fibonacci coverings.
\end{abstract}

\maketitle

\section{Introduction}\label{introduction}

A {\it dessin} $\calD$ is a 2-cell embedding of a bipartite graph $(V,E)$ on an orientable closed surface $\SS$
such that the vertex set $V$ is partitioned into two parts $B$ and $W$
with vertices colored {\it black} and {\it white}, respectively, and
the supporting surface $\SS$ is partitioned into topological discs. The discs are called {\it faces}, and the set of faces is denoted by $F$.
A dessin $\calD$ is usually viewed as an incidence triple of vertices, edges and faces, and written as
$\calD=( B \cup W,E,F)$. The bipartite graph $\Ga=(B \cup W, E)$ is called the {\it underlying graph}.
In a dessin $\calD=(B\cup W, E, F)$, the number of edges incident with a vertex is called the {\it valency} at the vertex, and the number of edges on the boundary cycle of a face is called the {\it face length} of the face.

An {\it automorphism} of a dessin $\calD$ is a permutation on $(B\cup W)\cup E\cup F$
which preserves the sets $ B,W,E$ and $F$, and their incidence relations,
also preserves the orientation of the supporting surface.
Let $\Aut\calD$ be the group of automorphisms of $\calD$.
Then $\Aut\calD$ acts semi-regularly on the edge set $E$ of $\calD$. So, if $\Aut\calD$ is transitive on  $E$, then it is regular on $E$, and $\calD$ is thus called a {\it regular dessin}.

The importance of dessins has been well recognised, see \cite{Grothendieck,Jones,Jones-Korea,Jones-2013};
special classes of regular dessins have been studied, see \cite{CJSW,unique-embedding,pe-pf,Jones-2010,Jones-2007}.
In this paper, we systematically study regular dessins, and establish a theory regarding quotients and coverings of groups, regular dessins and surfaces.

\subsection{Coset geometry and quotients of dessins}\

It is well-known that each regular dessin $\calD$ can be identified with a group $G$ and two generators $b,w$, which is denoted by $\calD(G,b,w)$, refer to \cite[Chapter\,2]{Jones2016Dessins} or Lemma~\ref{G2}.
It follows that a group $G$ is the automorphism group of a regular dessin if and only if $G$ is 2-generated.
Studying 2-generated groups is an important topic in group theory, and has a long and rich history, see \cite{Burness} for references.
A particularly impressive result is that each finite simple group is 2-generated.
Obviously, any quotient group of a 2-generated group is also 2-generated, and this induces quotients of dessins and then induces quotients of surfaces, which we explain below.

Let $\calD=(B\cup W,E,F)$ and $\calD'=(B'\cup W',E',F')$, and let $\varphi$ be a mapping from $B\cup W\cup E\cup F$ to $B'\cup W'\cup E'\cup F'$ such that $\varphi(X)\subseteq X'$ for $X\in \{B,W,E,F\}$.
Then $\varphi$ is a {\it homomorphism} if $\varphi$ preserves the incidence relations.
If further $\varphi$ is a bijection, then $\calD$ is {\it isomorphic} to $\calD'$ and $\varphi$ is an {\it isomorphism}.

\begin{definition}\label{def-quotient}
{\rm
Let $\calD=\calD(G,b,w)=(V,E,F)$ be a regular dessin.
For a normal subgroup $N\lhd G$, we define
\begin{itemize}
\item[(i)] `{\sf geometric quotient}': $\calD_N=(V_N,E_N,F_N)$, where $V_N,E_N,F_N$ are the sets of $N$-orbits on $V,E,F$, respectively;

\item[(ii)] `{\sf algebraic quotient}': $\calD/N=\calD(\ov G,\ov b,\ov w)$, where $\ov G=G/N$, and $(\ov b,\ov w)=(bN,wN)$.
\end{itemize}
}
\end{definition}

It will be shown that $\calD_N$ is indeed a regular dessin with $\Aut\calD_N=G/N$, and
\[\calD_N\cong\calD/N,\]
see Theorem~\ref{quotient}.
The geometric quotient focuses on the actions of the automorphism groups of dessins, and so permutation group theory plays an important role, leading to a description of face quasiprimitive regular dessins given in Theorem~\ref{qp-types}.
On the other hand, the algebraic quotient emphasizes on the 2-generated groups, which was the principle motivation for Theorems~\ref{Fibonacci}.

A quotient of a dessin naturally induces a quotient of the supporting surface.

\begin{definition}\label{surface-coverings}
{\rm
For two orientable (closed) surfaces $\SS$ and $\SS'$,
if there is a positive integer $n$, a finite subset $\Del\subset\SS'$, and a continuous function $\varphi:\ \SS\to \SS'$ such that each point of $\SS'\setminus\Del$ has precisely $n$ preimages in $\SS$,
then $\SS'$ is called a {\it $n$-sheeted quotient} of $\SS$.
In this case, points in $\Del$ are called {\it ramification points} or {\it branched points}, and $\SS$ is called a (ramification) {\it covering} of $\SS'$.
If a covering has no ramification point, namely $\Del=\emptyset$, then $\SS$ is said to be a {\it smooth covering} of $\SS'$.
}
\end{definition}

We shall see that quotients and coverings of surfaces can be realized by quotient and coverings of regular dessins, defined below.
Note that, a bipartite graph is called {\it bi-regular} if the vertices in the same part have the same valency.
Moreover, a bi-regular bipartite graph is of bi-valency $(k_1,k_2)$ if the valencies of the two parts are $k_1$ and $k_2$, respectively.

\begin{definition}\label{def-cover}
{\rm
Let $\calD$ be a regular dessin, and let $N\lhd G=\Aut\calD$.
Then $\calD$ is called a {\it ramification covering} of $\calD_N$, and furthermore,
\begin{itemize}
\item[(a)] $\calD$ is called a {\it smooth covering} of $\calD_N$ if $\calD$ and $\calD_N$ have the same bi-valency and face length;
\item[(b)] $\calD$ is called a {\it quasi-smooth covering} of $\calD_N$ if $\calD$ and $\calD_N$ have the same bi-valency;
\item[(c)] $\calD$ is called a {\it totally branched covering} of $\calD_N$ if $\calD$ and $\calD_N$ have the same number of vertices and the same number of faces, or equivalently, $N$ is contained in the intersection of all vertex stabilizers and all face stabilizers of $\calD$;
\item[(d)] if $N$ is a minimal normal subgroup of $G$, then $\calD$ is called a {\it minimal covering} of $\calD_N$.
\end{itemize}
Correspondingly, a quotient dessin $\calD_N$ is called a (\textit{quasi-})\textit{smooth quotient} of $\calD$ if $\calD$ is a (quasi-)smooth covering of $\calD_N$.
}
\end{definition}

{\noindent \bf Remarks on Definition~\ref{def-cover}:}

\begin{enumerate}[{\rm (i)}]
\item By definition, $\calD$ is a quasi-smooth covering of $\calD_N$ if and only if the underlying graph of $\calD$ is a covering of the underlying graph of $\calD_N$.


\item Smooth coverings and totally branched coverings represent two extreme cases.
If $\calD$ is a smooth covering of $\calD_N$, then the intersection of $N$ with any vertex stabilizer or face stabilizer is trivial.
In contrast, if $\calD$ is a totally branched covering of $\calD_N$, then $N$ is contained in every vertex stabilizer and face stabilizer.
Recall that the Euler characteristic of a dessin $\calD=(B\cup W,E,F)$ is
\[\chi(\calD)=|B\cup W|-|E|+|F|.\]
For regular dessins $\calD$ and $\calD_N$ with negative Euler characteristic,  Theorem~\ref{quotient-dessin} tells us that
\[|N|\leqslant \frac{\chi(\calD)}{\chi(\calD_N)}\leqslant 42|N|-41,\]
in particular, the first equality holds if and only if $\calD$ is a smooth covering of $\calD_N$, and
the second equality holds if and only if $\calD$ is a totally branched covering of $\calD_N$ and $\calD_N$ is a Hurwitz dessin, see Theorem~\ref{upbound}.

\item Coverings of regular dessins provide a simple way to understand surface coverings, for instance,
{\it each surface of positive genus has infinitely many smooth coverings which can be realized by smooth coverings of dessins.}
See Corollary~\ref{HA-inf} and Theorem~\ref{HA-classification}.

\item For the extremal case $N=G$, the quotient $\calD_N$ is $\K_2$ on a sphere, and
$\calM$ is a $|G|$-sheeted covering of $\calM_N$ with precisely 3 branched points,
which is so-called Bely\v{\i} covering \cite{Belyi}.
This is a `degenerate case' in the sense that the quotient map $\calM_N=\K_2$ does not have much structural information of the original map $\calM$.
This would be one of the reasons why it is very difficult to reconstruct Bely\v{\i} coverings.

\end{enumerate}

As noticed before, a group is the automorphism group of a regular dessin if and only if it is 2-generated,
the concept of coverings of regular dessins has a group version.

\begin{definition}\label{smooth-extension}
{\rm
Let $H$ be a 2-generated group, and let $G=N.H$ be an extension of $N$ by $H$.
For an element $g\in G$, let $\ov g$ be the image of $g$ in $H=\ov G=G/N$.
\begin{itemize}
\item[(a)] If $G=\l b,w\r$ such that $(|b|,|w|)=(|\ov b|,|\ov w|)$, then $G$ is called a {\it quasi-smooth covering} of $\ov G$, and $(G,b,w)$ is said to be a {\it quasi-smooth covering} of $(\ov G,\ov b,\ov w)$.

\item[(b)] If $G=\l b,w\r$ with $(|b|,|w|,|bw|)=(|\ov b|,|\ov w|,|\ov{bw}|)$, then $G$ is called a {\it smooth covering} of $\ov G$, and $(G,b,w)$ is said to be a {\it smooth covering} of $(\ov G,\ov b,\ov w)$.
\end{itemize}
Correspondingly, $H=\ov G$ is called a {\it (quasi-)smooth quotient} of $G$.
}
\end{definition}

We observe that there exists a regular dessin $\calD(G,b,w)$ which is a (quasi-)smooth covering of a regular dessin $\calD(\ov G,\ov b,\ov w)$ if and only if the group $G$ is a (quasi-)smooth covering of the factor group $\ov G$.
It is easily shown that a dihedral group does not have a smooth covering.
A natural problem then arises.

\begin{problem}\label{prob:smooth-quotient}
{\rm
Characterize regular dessins which have proper smooth quotients;
equivalently, characterize $2$-generated groups that have proper smooth quotient groups.
}
\end{problem}

Taking quotient of regular dessins suggests us to study regular dessins in two steps:
\begin{itemize}
\item[(a)] characterize certain `basic dessins' or `basic 2-generated groups', and
\item[(b)] determine (smooth) coverings of given regular dessins or 2-generated groups.
\end{itemize}

With respect to taking geometric quotient, an extremal type of basic dessin has a single face, called a {\it unicellular} dessin, which has a cyclic automorphism group.
So this piques our interest in investigating coverings of unicellular regular dessins and cyclic groups, which will be described in Section~\ref{sec:f-trans}.
On the other hand, with taking algebraic quotients, it is natural to study smooth coverings of simple groups, which led us to address smooth Schur coverings of simple groups described in Section~\ref{Schur covering}.

\subsection{Face-quasiprimitive regular dessins}\label{sec:f-trans}\

Let $\calD=(B\cup W,E,F)$ be a regular dessin, and let $G=\Aut\calD$.
Then $G$ is transitive on each of the four sets $B$, $W$, $E$ and $F$.
In this paper, we focus on the transitive action of $G$ on the face set $F$.

A permutation group $G$ on a set $\Omega$ is said to be {\it quasiprimitive} if each non-trivial normal subgroup of $G$ is transitive on $\Omega$.
A regular dessin $\calD$ is called {\it face-quasiprimitive} if $\Aut\calD$ acts quasiprimitively and faithfully on the face set.

A covering $\calD$ of $\calD_N$ is said to be {\em minimal} if $N$ is a minimal normal subgroup of $\Aut\calD$.
Then a face-quasiprimitive regular dessin is a minimal covering of a unicellular dessin.
Naturally, we have the following problem, which is a subproblem of Problem~\ref{prob:smooth-quotient}.

\begin{problem}\label{face-qp-coverings}
    {\rm
    Determine smooth coverings of unicellular regular dessins;
    equivalently, determine smooth coverings of cyclic groups.
    }
\end{problem}

The well-known O'Nan-Scott-Praeger theorem divides the quasiprimitive groups into eight types, see Section~\ref{qp-sec} for details.

Lemma~\ref{G-types} will show that only four of the eight types appear as automorphism groups of face-quasiprimitive regular dessins, and three of these types correspond to smooth coverings of unicellular regular dessins. The following theorem provides a characterization of face-quasiprimitive regular dessins through smooth coverings of unicellular regular dessins.

\begin{theorem}\label{qp-types}
Let $\calD$ be a regular dessin, and let $G=\Aut\calD$ be face-quasiprimitive and $N=\soc(G)$.
Assume that $\calD$ is a smooth covering of $\calD_N$.
Then $\calD_N$ is unicellular, $G/N$ is cyclic, and $G=N{:}\ZZ_\ell$ is of type $\HA$, $\TW$ or $\AS$.
Conversely, $G$ is a smooth covering of $\ZZ_\ell$ if one of the following holds:
\begin{enumerate}[{\rm(i)}]
\item $G=N{:}\ZZ_\ell$ is of type $\HA$, with $\ell\ge3$, or

\item $G=T\wr\ZZ_\ell=T^\ell{:}\ZZ_\ell$ is of type $\TW$, where $\ell\ge5$ and $T$ is nonabelian simple, or

\item $G=\SigmaL(2,2^\ell)=\SL(2,2^\ell){:}\ZZ_\ell$, where $\ell\ge5$ is a prime.
\end{enumerate}
\end{theorem}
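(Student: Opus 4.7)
The plan is to treat the two directions separately. For the forward direction, face-quasiprimitivity forces $N=\soc(G)$ to be transitive on $F$, so $\calD_N$ has a single face, i.e.\ is unicellular. The automorphism group of a unicellular regular dessin is cyclic (it acts regularly on the edges of the unique boundary cycle), hence $G/N\cong\ZZ_\ell$. Applying Lemma~\ref{G-types} to cut down to the four possible O'Nan-Scott-Praeger types, the smoothness condition $(|b|,|w|,|bw|)=(|\ov b|,|\ov w|,|\ov{bw}|)$ produces a complement $\l b,w\r$ to $N$ in $G$, so the extension splits as $G=N{:}\ZZ_\ell$; a type-by-type inspection of the smoothness constraint then eliminates the fourth type and leaves $\HA$, $\TW$, $\AS$.

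For the converse, I would construct explicit generators $b,w$ in each case. In case (i), let $G=V{:}\l z\r$ with $V\cong(\FF_p)^d$ an irreducible faithful $\ZZ_\ell$-module, and take $b=z$ and $w=vz$ for a suitably chosen $v\in V\setminus\{0\}$. Irreducibility gives $\l b,w\r=G$, and for $\ell\ge 3$ the orders compute to $|b|=|w|=\ell$ and $|bw|=|z^2|$, matching the target $(|\ov z|,|\ov z|,|\ov z^2|)$. In case (ii), let $G=T^\ell{:}\l\sigma\r$ with $\sigma$ the cyclic shift, and pick $b=(t_1,\ldots,t_\ell)\sigma$ and $w=(s_1,\ldots,s_\ell)\sigma^j$ so that the respective ``orbit products'' yield $|b|=|w|=\ell$ and $|bw|$ agrees with the order of $\ov\sigma^{j+1}$ in $\ZZ_\ell$; the generation $\l b,w\r=G$ then follows from a wreath-generation argument exploiting the simplicity of $T$ and $\ell\ge 5$. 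In case (iii), $G=\SL(2,2^\ell){:}\l\phi\r$ with $\phi$ the Frobenius of prime order $\ell$; choose $b=x\phi$ and $w=y\phi$ with $x,y\in\SL(2,2^\ell)$ so that $|b|=|w|=|bw|=\ell$, using the cyclic tori of orders $2^\ell\pm 1$ and their behavior under $\phi$.

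The main obstacle, I expect, lies in simultaneously satisfying the order constraints and the generation condition in cases (ii) and (iii). For (ii), ruling out that $\l b,w\r$ is contained in a proper subgroup of diagonal, subdirect or imprimitive type requires the hypothesis $\ell\ge 5$ together with a careful commutator analysis; the calibration of the orbit products $\prod t_i,\prod s_i$ to produce exactly the right orders is also delicate. For (iii), one must exhibit concrete elements of $\SL(2,2^\ell)$ whose Frobenius-twisted products have the prescribed orders, relying on the detailed subgroup structure of $\SL(2,q)$ and on the primality of $\ell$ (so that every nontrivial image in $\ZZ_\ell$ is a generator). The HA case is comparatively routine once the module action is understood.
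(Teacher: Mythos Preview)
Your forward direction is essentially the paper's argument, with one slip: the complement to $N$ is $\langle bw\rangle$, not $\langle b,w\rangle$ (which is all of $G$). Smoothness gives $|bw|=|\overline{bw}|$, hence $\langle bw\rangle\cap N=1$; combined with $G=N\langle bw\rangle$ (since $N$ is face-transitive and $\langle bw\rangle$ is a face stabilizer) this yields the splitting $G=N{:}\ZZ_\ell$. To rule out $\PA$ you should say explicitly: smoothness forces $N$ to be semiregular on $F$, hence regular on $F$; but in type $\PA$, by definition $N$ is not regular. This is exactly how Lemma~\ref{G-types} works; ``type-by-type inspection'' undersells that it is a one-line observation.

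Your $\HA$ construction is correct and is a special case of the paper's Construction~\ref{cons-HA}.

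For $\TW$ and $\AS$, however, you have identified the shape of the argument but not its content, and the content is where all the difficulty lies.

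In $\TW$, the problem is to produce $x\in N=T^\ell$ such that \emph{simultaneously} (a) the orbit product $xx^gx^{g^2}\cdots x^{g^{\ell-1}}=1$, (b) the analogous product with step $g^2$ also equals $1$, and (c) $\langle x,g\rangle=G$. The paper's Construction~\ref{cons-TW} achieves this with the explicit choice $x=(s,1,t,1,(ts)^{-1},1,\ldots,1)$ where $T=\langle s,t\rangle$: the products telescope to the identity by direct computation, and generation follows because $\langle x,x^{g^2}\rangle$ already contains a full simple factor. It then takes $b=g^2x^{-1}$, $w=xg^{-1}$ (so $bw=g$), not generic cosets $T^\ell\sigma$ and $T^\ell\sigma^j$. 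Your ``suitably chosen orbit products'' hides exactly this construction; without it there is no proof.

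For $\AS$, the paper's argument is \emph{not} via cyclic tori. It sets $D_i=\{[\phi^i,t]:t\in T\}$ for $T=\SL(2,2^\ell)$, shows $|D_i|=|T|/6$ (Lemma~\ref{lem:Di}), bounds the set of $t$ with $\langle t,\phi\rangle<G$ by $\tfrac{5}{2^\ell}|T|$ via the maximal-subgroup list for $\SL(2,2^\ell)$ (Lemma~\ref{lem:J<T}), and then uses pigeonhole to find $x\in D_1\cap D_j$ with $\langle x,\phi\rangle=G$. Writing $x=[\phi,s]=[\phi^j,t]$, one takes $b=\phi^{j-1}$ and $w=\phi x=\phi^s$, whence $bw=(\phi^j)^t$; all three are conjugates of powers of $\phi$ and automatically have order $\ell$. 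Your tori-based sketch gives no mechanism for controlling $|bw|$ and generation at once; the commutator device $x=[\phi,s]$ is precisely what forces $\phi x$ to be a conjugate of $\phi$, making the order condition automatic and reducing everything to the counting argument.
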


Theorem~\ref{qp-types} offers a rich resource for examples of smooth coverings of unicellular regular dessins.
However, for part~(iii), we have been unable to prove that $\SigmaL(2,2^\ell)$ is a smooth covering of $\ZZ_\ell$ for each integer $\ell\ge5$, leading to the following problem.

\begin{problem}\label{prob:simple-out}
{\rm
Determine almost simple groups $G=T.\ZZ_\ell$ which is a smooth covering of $\ZZ_\ell$, where $T=\soc(G)$ and $\ell\ge5$.
}
\end{problem}

Unicellular regular dessins have been well-characterised, see \cite{1-face,Singerman}.
A unicellular regular dessin of face length $2\ell$ has underlying graph being a complete bipartite graph $\K_{m,n}^{(\lambda)}$ with $\ell=mn\lambda$ such that the triple $(m,n,\lambda)\in T_\ell$, where
\[
T_\ell=\{(m,n,\lambda)\mid \ell=mn\lambda, \gcd(m,n)=1,\mbox{ and }\lambda_2<\max\{\ell_2,2\}\}, \eqno(1)
\]
where $\lambda_2$ is the 2-part of $\lambda$ and $\ell_2$ is the $2$-part of $\ell$.
For $(m,n,\lambda)\in T_{\ell}$, let
\[\begin{array}{rcl}
\calu_\ell&=&\{\mbox{unicellular regular dessins of face length $2\ell$}\};\\

\calu_\ell^{(\lambda)}&=&\{\mbox{unicellular regular dessins of face length $2\ell$, edge-mulitplicity $\lambda$}\};\\

\calK_{m,n}^{(\lambda)}&=&\{\mbox{unicellular regular dessins with underlying graph $\K_{m,n}^{(\lambda)}$}\}.
\end{array}\]
Then $\calu_\ell$ is a disjoint union of $\calu_\ell^{(\lambda)}$ with suitable $\lambda$, and $\calu_\ell^{(\lambda)}$ is the disjoint union of $\calK_{m,n}^{(\lambda)}$ with $(m,n,\lambda)\in T_\ell$.
Recalling that the vertices of a dessin are colored black and white, so each triple $(m,n,\lambda)$ uniquely determines a colored graph $\K_{m,n}^{(\lambda)}$,
in particular, if $m\not=n$, we have $\K_{m,n}^{(\lambda)}\not\cong\K_{n,m}^{(\lambda)}$, which implies that $|T_\ell|$ equals the cardinality of
\[\{\mbox{non-isomorphic colored graphs $\K_{m,n}^{(\lambda)}$ which underlies a dessin in $\calu_\ell$}\}.\]

As usual, for any positive integer $n$, the set of prime divisors of $n$ is denoted by $\pi(n)$, and the Euler totient function is denoted by $\phi$.
The following theorem counts regular unicellular dessins.
We remark that determining the cardinality $|\calu_\ell|$ was an unsettled  problem posed in \cite[Problem~B]{1-face}.

\begin{theorem}\label{Counting}
Let $\ell$ be a positive integer, and let $(m,n, \lambda)\in T_\ell$ as defined in $(1)$.
Then the following statements hold.
\begin{enumerate}[{\rm(1)}]
\item $|\calu_\ell|=\ell$, namely, there are exactly $\ell$ non-isomorphic unicellular regular dessins with face length $2\ell$;

\item $|\calK_{m,n}^{(\lambda)}|=\phi(\lambda) \prod\limits_{p\in \pi}\frac{p-2}{p-1}$, and
$|\calu_\ell^{(\lambda)}|=2^{|\sigma|}\phi(\lambda) \prod\limits_{p\in \pi}\frac{p-2}{p-1}$,
where $\sigma=\pi(\ell/\lambda)$ and $\pi=\pi(\lambda)\setminus\sigma$;

\item $|T_\ell|=(2e_1+\delta)(2e_2+1)\dots(2e_s+1)$, where $\ell=p_1^{e_1}\cdots p_s^{e_s}$ is the prime factorization with $p_1<p_2<\dots<p_s$, and $\delta=0$ or $1$ for $\ell$ even or odd, respectively.
\end{enumerate}
\end{theorem}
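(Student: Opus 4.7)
The plan is to reduce the enumeration to counting $\Aut(\ZZ_\ell)$-orbits on generator pairs in a cyclic group, and then to perform a prime-by-prime analysis via the Chinese Remainder Theorem. By the coset geometry representation, the face stabilizer of $\calD(G,b,w)$ is $\langle bw\rangle$, so being unicellular of face length $2\ell$ forces $G=\langle bw\rangle\cong\ZZ_\ell$. Hence unicellular regular dessins of face length $2\ell$ correspond bijectively to orbits of $\Aut(\ZZ_\ell)=\ZZ_\ell^*$ acting diagonally on pairs $(b,w)\in\ZZ_\ell\times\ZZ_\ell$ for which $b+w$ generates $\ZZ_\ell$ (the condition $\langle b,w\rangle=\ZZ_\ell$ is then automatic). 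This action is free: if $u\cdot(b,w)=(b,w)$ then $u-1$ annihilates $\langle b,w\rangle=\ZZ_\ell$, forcing $u=1$.

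For part~(1), the admissible pairs are enumerated as ``choose a generator $g=b+w$ ($\phi(\ell)$ ways) and $b\in\ZZ_\ell$ freely ($\ell$ ways)'', giving $\ell\phi(\ell)$ pairs and hence $\ell$ orbits. For part~(3), each triple $(m,n,\lambda)\in T_\ell$ amounts, at each prime $p$ with $v_p(\ell)=e$, to an independent choice of a distribution $e=\alpha+\beta+\gamma$ among $m,n,\lambda$ with $\min(\alpha,\beta)=0$; this gives $2e+1$ options for odd $p$ (the $e+1$ choices $\alpha\in\{0,\ldots,e\},\beta=0$ together with their mirror, overlapping only at $\alpha=\beta=0$), while for $p=2$ in the even case the restriction $\lambda_2<\ell_2$ excludes the single choice $\alpha=\beta=0,\gamma=e$, leaving $2e$ options. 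Multiplying across primes yields the stated formula with the $\delta$-correction at the $2$-part.

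For part~(2), I count componentwise via $\ZZ_\ell\cong\prod_{p\mid\ell}\ZZ_{p^{e_p}}$. The underlying graph of $\calD(\ZZ_\ell,b,w)$ has valencies $|b|,|w|$ and edge-multiplicity $|\langle b\rangle\cap\langle w\rangle|=\gcd(|b|,|w|)$, so it equals $\K_{m,n}^{(\lambda)}$ exactly when $v_p(b_p)=\alpha:=v_p(m)$ and $v_p(w_p)=\beta:=v_p(n)$ for every $p$; the unicellular condition $|b+w|=\ell$ is $v_p(b_p+w_p)=0$ for every $p$. Writing $\gamma=v_p(\lambda)$, the local counts are: for $p\in\pi$ (so $\alpha=\beta=0$, $\gamma\ge 1$), both $b_p,w_p$ are units in $\ZZ_{p^\gamma}$ and one excludes $w_p\equiv-b_p\pmod p$, yielding $\phi(p^\gamma)\cdot p^{\gamma-1}(p-2)$; for $p\mid m$ (so $\beta=0,\alpha\ge 1$), $b_p\equiv 0\pmod p$ makes $b_p+w_p$ automatically a unit, yielding $\phi(p^\gamma)\cdot\phi(p^{\alpha+\gamma})$; the case $p\mid n$ is symmetric. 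Taking the product over primes and dividing by $\phi(\ell)=\prod_p\phi(p^{e_p})$ telescopes: the $\pi$-primes leave $\phi(p^\gamma)(p-2)/(p-1)$ and the $m,n$-primes leave $\phi(p^\gamma)$, so the product equals $\phi(\lambda)\prod_{p\in\pi}(p-2)/(p-1)$. Summing over the $2^{|\sigma|}$ ordered coprime factorizations $\ell/\lambda=mn$ then gives $|\calu_\ell^{(\lambda)}|$.

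The main obstacle is the bookkeeping for part~(2): one must track carefully that when $p\mid m$ with $\gamma=0$ the local ratio is still $1=\phi(p^0)$ (so such primes disappear from the formula), and that only the primes in $\pi$ contribute the $(p-2)/(p-1)$ correction; once these boundary cases are pinned down, the remainder of the argument is mechanical.
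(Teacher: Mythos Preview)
Your proof is correct and follows essentially the same route as the paper: reduce to generating pairs of $\ZZ_\ell$, then decompose prime-by-prime and compute local counts. The only cosmetic differences are that the paper normalizes $bw=h$ to a fixed generator (giving the explicit list $\calD(\langle h\rangle,h^k,h^{1-k})$ for $0\le k\le\ell-1$) rather than counting $\Aut(\ZZ_\ell)$-orbits via a free action, and it packages the CRT step as a ``direct product of dessins'' rather than working directly with components of group elements; the local computations at a prime power and the final assembly are identical.
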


The equality $|\calu_\ell|=\sum_{(m,n,\lambda)\in T_{\ell}}|\calK_{m,n}^{(\lambda)}|$ implies an interesting decomposition for integers.

\begin{corollary}\label{cor:ell-docm}
Let $\ell$ be a positive integer.
For each divisor $\lambda\div\ell$, let $\pi_\lambda=\pi(\lambda)\setminus \pi(\ell/\lambda)$ and $\pi'_\lambda=\pi(\ell/\lambda)$.
Then $\ell$ has a decomposition
\begin{equation*}
\ell=\sum_{\substack{\lambda\mid \ell \mbox{\ \footnotesize and} \\ \lambda_2<\max\{\ell_2,2\}}}2^{|\pi'_\lambda|}\phi(\lambda)\prod_{p \in \pi_\lambda}\frac{p-2}{p-1}.
\end{equation*}
\end{corollary}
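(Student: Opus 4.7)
The plan is to read the corollary as a direct book-keeping consequence of Theorem~\ref{Counting}, using the double decomposition
\[
\calu_\ell \;=\; \bigsqcup_{\lambda} \calu_\ell^{(\lambda)} \;=\; \bigsqcup_{(m,n,\lambda)\in T_\ell}\calK_{m,n}^{(\lambda)},
\]
already recorded in the paragraph preceding Theorem~\ref{Counting}. Counting the left-hand side by part~(1) of that theorem gives $\ell$, while counting the right-hand side via the cardinalities in part~(2) must therefore agree. The work is just to translate the notation $\pi,\sigma$ of Theorem~\ref{Counting}(2) into the notation $\pi_\lambda,\pi'_\lambda$ of the corollary.

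First I would fix a divisor $\lambda\mid\ell$ satisfying $\lambda_2<\max\{\ell_2,2\}$ (precisely the condition appearing in the definition of $T_\ell$) and count the pairs $(m,n)$ with $(m,n,\lambda)\in T_\ell$. Since such pairs satisfy $mn=\ell/\lambda$ with $\gcd(m,n)=1$, each prime divisor of $\ell/\lambda$ must lie entirely in $m$ or entirely in $n$; hence the number of such ordered pairs is exactly $2^{|\pi(\ell/\lambda)|}=2^{|\pi'_\lambda|}$. Multiplying by $|\calK_{m,n}^{(\lambda)}|=\phi(\lambda)\prod_{p\in\pi_\lambda}\frac{p-2}{p-1}$ from Theorem~\ref{Counting}(2) then gives
\[
|\calu_\ell^{(\lambda)}|=2^{|\pi'_\lambda|}\phi(\lambda)\prod_{p\in\pi_\lambda}\frac{p-2}{p-1},
\]
which matches the formula for $|\calu_\ell^{(\lambda)}|$ in the theorem. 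Summing over all admissible $\lambda$ and invoking $|\calu_\ell|=\ell$ from part~(1) produces the claimed identity.

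There is essentially no obstacle beyond bookkeeping: the only point worth checking carefully is that the admissibility condition $\lambda_2<\max\{\ell_2,2\}$ on $\lambda$ is exactly the projection of the condition $(m,n,\lambda)\in T_\ell$ onto its third coordinate (i.e.\ that for every such $\lambda$ at least one coprime factorisation $\ell/\lambda=mn$ exists, which is immediate because $(1,\ell/\lambda)$ always works). Once that verification is made, the corollary follows by a single application of Theorem~\ref{Counting} and the observation $|T_\ell|_\lambda=2^{|\pi'_\lambda|}$ derived above.
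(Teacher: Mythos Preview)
Your proposal is correct and follows essentially the same approach as the paper: both argue that $\ell=|\calu_\ell|=\sum_\lambda|\calu_\ell^{(\lambda)}|$, identify the admissible $\lambda$ as exactly those divisors with $\lambda_2<\max\{\ell_2,2\}$, and plug in the formula for $|\calu_\ell^{(\lambda)}|$ from Theorem~\ref{Counting}(2). Your intermediate step re-deriving $|\calu_\ell^{(\lambda)}|$ from $|\calK_{m,n}^{(\lambda)}|$ via the count $2^{|\pi'_\lambda|}$ of coprime factorisations is already contained in the proof of Theorem~\ref{Counting}(2) (Lemma~\ref{lem:(m,n,lambda)} in the paper), so you could cite it directly, but spelling it out does no harm.
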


{\noindent \bf Remark:}\
\begin{itemize}
 \item[(a)] Let $\calP_{2\ell}$ be a $2\ell$-gon with edges labeled $e_1,\dots,e_{2\ell}$.
Then a unicellular dessin with face length $2\ell$ can be formed by partitioning the $2\ell$ edges of $\calP_{2\ell}$ into $\ell$ pairs and identifying the two edges in each pair such that the resulted surface is orientable.
A remarkable result \cite[Theorem\,2]{HZ} establishes a formula for the number of dessins formed from a $2\ell$-gon and indexed by genus.
By Theorem~\ref{Counting}, the number of non-isomorphic unicellular regular dessins of face length $2\ell$ is equal to $\ell$.
\item[(b)] By part~(2) of Theorem~\ref{Counting}, it is easy to count the number of  non-isomorphic uncolored graphs $\K_{m,n}^{(\lambda)}$ underlies a dessin in $\calu_\ell$.
As uncolored graphs, $\K_{m_1,n_1}^{(\lambda_1)}\cong\K_{m_2,n_2}^{(\lambda_2)}$ if and only if $(m_1,n_1,\lambda_1)$ equals $(m_2,n_2,\lambda_2)$ or $(n_2,m_2,\lambda_2)$.
Note that $(m,n,\lambda)\in T_\ell$ if and only if $(n,m, \lambda)\in T_\ell$,  and $(m,n, \lambda)=(n, m, \lambda)\in T_\ell$ if and only if $(m,n,\lambda)=(1,1,\ell)$ with $\ell$ odd.
Hence the number of  non-isomorphic uncolored graphs $\K_{m,n}^{(\lambda)}$ underlies a dessin in $\calu_\ell$ equals
${(|T_\ell|+\delta)}/{2}$.
\end{itemize}

\subsection{Schur coverings}\label{Schur covering}\

Recall that a group $G$ is called {\it quasi-simple} if $G$ is a perfect group (namely, $G=G'$) and $G/\Z(G)$ is simple.
For a nonabelian simple group $S$, a group $G$ is called a {\it covering group} of $S$ if $G$ is perfect and $G/\Z(G)\cong S$; in this case, $\Z(G)$ is a factor group of the {\it Schur multiplier} of $S$.
It is well-known that each finite quasi-simple group is $2$-generated, and so acts edge-regularly on a dessin.

Very recently, Chen, Lubotzky and Tiep \cite[Theorem B]{Tiep-2024+} proves that,
{\it any finite quais-simple group $S$ with $\Z(S)\not=1$ is a smooth covering of $S/\Z(S)$.}
This solves a conjecture posed in the previous version of this paper, and motivates us to study the following Problem~\ref{conj:Schur-covering}.

For a regular dessin $\calD=\calD(G,b,w)$, we say $\calD$ is of \textit{type} $(\ell,m,n)$ if $(|b|,|w|,|bw|)=(\ell,m,n)$.
Then $\calD$ is a smooth covering of $\calD_N$ for $N\lhd G$ if and only if $\calD_N$ is also of type $(\ell,m,n)$.
A group $G$ is said to be a \textit{$(\ell,m,n)$-group} if there exists a regular dessin $\calD$ of type $(\ell,m,n)$ with $\Aut\calD\cong G$.
It is easy to see that a $(\ell,m,n)$-group is also a $(u,v,w)$-group, where $(u,v,w)$ is any permutation of $(\ell,m,n)$.
For a quasi-simple group $G=\langle b,w\rangle$ with $N=\Z(G)$, it is natural to ask whether $\calD=\calD(G,b,w)$ is a smooth covering of $\calD_N$, leading to the following problem:

\begin{problem}\label{conj:Schur-covering}
    {\rm
    Determine finite simple groups $S$ and $(\ell,m,n)$ such that there exists a regular dessin $\calD$ of type $(\ell,m,n)$ with $\Aut\calD\cong S$ which has a non-trivial smooth Schur covering.
    }
\end{problem}

Denote by $\mathrm{Spec}(G)$ the \textit{spectrum} of $G$, that is, the set of orders of elements in $G$.
We address Problem~\ref{conj:Schur-covering} for $\PSL(2,q)$.
Remark that $\PSL(2,2^f)$ with $f\geqslant 3$ has a trivial Schur multiplier; the only non-trivial Schur covering of $\PSL(2,q)$ is $\SL(2,q)$ for odd $q\geqslant 5$ except for $\PSL(2,9)\cong A_6$ (whose Schur multiplier is $6$).
The result of smooth Schur coverings of regular dessins of $\PSL(2,9)$ will be given in Lemma~\ref{lem:schurpsl29}.

\begin{theorem}\label{thm:sl2q}
    Let $S=\PSL(2,q)$ with $q=p^f\geqslant 5$ and $q\neq 9$ for odd prime $p$, and let $\ell,m,n\in\mathrm{Spec}(S)$ such that $1<\ell\leqslant m\leqslant n$.
    Then there exists a regular dessin $\calD$ of type $(\ell,m,n)$ of $S$ such that $\calD$ has a non-trivial smooth Schur covering if and only if
    \begin{enumerate}[{\rm(1)}]
        \item $\ell mn$ is odd with $(\ell,m,n)\neq (3,3,3)$ and $(\ell,m,n,q)\notin\{(3,3,p,p),(p,p,p,p)\}$; and
        \item $\{\ell,m,n\}\not\subset\mathrm{Spec}(\PSL(2,p^e))$ for any proper divisor $e$ of $f$.
    \end{enumerate}
\end{theorem}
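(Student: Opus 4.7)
The plan is to translate the existence of a non-trivial smooth Schur covering into a lifting problem for generating triples from $\PSL(2,q)$ to $\SL(2,q)$, and then apply Dickson's classification of subgroups of $\PSL(2,q)$ together with the Fricke--Vogt trace parametrization and a Macbeath-type counting argument. Since $q=p^f$ is odd with $q\ge 5$ and $q\ne 9$, the Schur multiplier of $S=\PSL(2,q)$ is $\ZZ_2$ and the unique non-trivial Schur cover is $G=\SL(2,q)$. A non-trivial smooth Schur covering of $\calD=\calD(S,\overline{b},\overline{w})$ of type $(\ell,m,n)$ thus amounts to a pair $(b,w)\in G^2$ with $\langle b,w\rangle=G$ and $(|b|,|w|,|bw|)=(\ell,m,n)$ matching the orders of $\overline{b},\overline{w},\overline{bw}$ in $S$. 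Perfectness of $\SL(2,q)$ forbids index-two subgroups, so lifts projecting onto generators of $\PSL(2,q)$ already generate $\SL(2,q)$.

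The central order-matching observation is that $-I$ is the unique involution of $\SL(2,q)$ for odd $q$, whence for $g\in\SL(2,q)$ we have $|g|=|\overline{g}|$ if and only if $|g|$ is odd. Hence smoothness forces $\ell,m,n$ all odd, giving the parity part of (1). For the necessity of the other exclusions in (1): the triple $(3,3,3)$ would present $\SL(2,q)$ as a quotient of the Euclidean triangle group $\Delta(3,3,3)$, whose derived subgroup is free abelian of rank two, forcing metabelian quotients and contradicting the perfectness of $\SL(2,q)$ for $q\ge 5$. For $(p,p,p)$-triples in $\SL(2,p)$, all three of $b,w,bw$ are unipotent with trace $2$; writing $b=I+N$, $w=I+M$ with $N,M$ rank-one nilpotent, the condition $\mathrm{tr}(bw)=2$ reduces to $\mathrm{tr}(NM)=0$, which forces $\mathrm{image}(N)=\mathrm{image}(M)$, whence $b$ and $w$ share a common fixed line and $\langle b,w\rangle$ lies in a Borel. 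For $(3,3,p)$-triples in $\SL(2,p)$, the Fricke identity $\mathrm{tr}(bw^{-1})=\mathrm{tr}(b)\mathrm{tr}(w)-\mathrm{tr}(bw)=(-1)(-1)-2=-1$ yields $|bw^{-1}|=3$, so $(b,w^{-1},bw^{-1})$ is a $(3,3,3)$-triple and the previous argument forces $\langle b,w\rangle$ to be metabelian. For the necessity of (2), if $\{\ell,m,n\}\subset\mathrm{Spec}(\PSL(2,p^e))$ with $e\mid f$ proper, then every order-$r$ element of $\SL(2,q)$ for $r\in\{\ell,m,n\}$ has trace in $\FF_{p^e}$, and Fricke--Vogt conjugates $(b,w)$ over $\overline{\FF_p}$ into $\SL(2,p^e)$, blocking $\langle b,w\rangle=\SL(2,q)$.

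For sufficiency, I parametrize candidate pairs by their trace triples $(\alpha,\beta,\gamma)=(\mathrm{tr}(b),\mathrm{tr}(w),\mathrm{tr}(bw))\in\FF_q^3$: for each odd $r\in\{\ell,m,n\}$ the admissible trace values form a non-empty Galois orbit in $\FF_q$, and condition (2) guarantees $\FF_p[\alpha,\beta,\gamma]=\FF_q$; Fricke--Vogt then yields a pair $(b,w)\in\SL(2,q)^2$ with the prescribed orders and traces. By Dickson's classification, the image $\langle\overline{b},\overline{w}\rangle$ in $\PSL(2,q)$ is either all of $\PSL(2,q)$ or lies in a Borel, a dihedral, an exceptional $A_4$, $S_4$, $A_5$, or a subfield subgroup $\PSL(2,p^e)$. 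Field generation rules out subfield overgroups; the irreducibility criterion $\alpha^2+\beta^2+\gamma^2-\alpha\beta\gamma\ne 4$ (which fails exactly for the canonical trace triples of the three excluded cases $(3,3,3),(3,3,p),(p,p,p)$) rules out Borel overgroups; dihedral overgroups are excluded because all of $\overline{b},\overline{w},\overline{bw}$ have odd order $\ge 3$ and hence would lie in the cyclic normal subgroup, yielding an abelian subgroup; and the small exceptional possibilities correspond exactly to the remaining excluded triples in (1).

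The main obstacle is the last step of the exceptional-subgroup analysis: verifying that outside the excluded triples some trace triple $(\alpha,\beta,\gamma)\in\FF_q^3$ with $\FF_p[\alpha,\beta,\gamma]=\FF_q$ can actually be chosen so that the resulting pair avoids every conjugacy class of cover of $A_4$, $S_4$, $A_5$ inside $\SL(2,q)$. The heart of this step is a Frobenius character-sum computation bounding the number of pairs with given traces inside each exceptional subgroup against the total admissible count of order roughly $q^3$; the asymptotics handle large $q$, and the finitely many small-$q$ boundary cases must be checked by direct computation.
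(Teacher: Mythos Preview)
Your overall framework coincides with the paper's: both reduce the question to whether $\SL(2,q)$ is an $(\ell,m,n)$-group for odd $\ell,m,n$, and both attack this via Macbeath's trace-triple parametrization together with Dickson's subgroup list. Your necessity arguments are correct; the Fricke-identity reduction of $(3,3,p)$ to $(3,3,3)$ is in fact tidier than the paper's route (which observes that $(b,b^{-1})$ is a commutative pair in the relevant trace class and then invokes Macbeath's affine criterion).

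The genuine gap is in the sufficiency direction, and it is exactly the piece you flag as an ``obstacle''. Two points:

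\textbf{(a) Exceptional overgroups.} You propose a Frobenius character-sum estimate plus case-checking for small $q$, but this is both unexecuted and unnecessary. Macbeath already proves (and the paper records as part of its Lemma~6.3) that among preimages $H\leqslant\SL(2,q)$ of finite triangular subgroups $A_4,S_4,A_5,D_{(q\pm1)/k}$, the \emph{only} odd hyperbolic type realized is $(3,5,5)$ with $H\cong\SL(2,5)$. So there is a single exceptional triple to treat, not an asymptotic family. The paper then disposes of $(3,5,5)$ by counting: there are exactly four trace classes $E_q(\alpha,\beta,\gamma)$ of $(3,5,5)$-pairs, giving eight $\SL(2,q)$-conjugacy classes, while the (at most two) $\SL(2,5)$-subgroup classes account for only four of them; the remaining classes therefore generate $G$. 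This replaces your proposed analytic bound by a finite combinatorial check.

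\textbf{(b) Irreducibility.} Your parenthetical that $\alpha^2+\beta^2+\gamma^2-\alpha\beta\gamma=4$ ``fails exactly for the canonical trace triples of $(3,3,3),(3,3,p),(p,p,p)$'' is correct for those triples (each of $3$ and $p$ has a unique trace), but it does not establish what you actually need: that for every other odd $(\ell,m,n)$ some admissible trace triple avoids the singular locus. When one of $\ell,m,n$ equals $p$ and the other two are equal (say $\ell=m$ with $p\nmid\ell$), or when $p\nmid\ell mn$ but two of the orders coincide, there may be only one or two admissible $\gamma$-values and the singular quadric has two roots in $\gamma$, so a genuine argument is required. The paper handles this by an explicit case split (its Lemma~6.8): for $p\nmid\ell mn$ with $n>3$ it tries $\gamma_1=\lambda_n+\lambda_n^{-1}$ and $\gamma_2=\lambda_n^2+\lambda_n^{-2}$ and shows both cannot simultaneously lie on the singular locus; the remaining cases with a $p$-factor are treated directly. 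You should either reproduce that case analysis or give a clean uniform argument replacing it.
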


Obviously, $\PSL(2,p)$ is a quasi-smooth quotient of $\SL(2,p)$ when $p$ is odd. Next we will investigate the smooth covering of $\PSL(2,p)$ by considering the generating pairs of $\SL(2,p)$
Let $G=\SL(2,p)$ with $p$ an odd prime, and let
\[
b=
\begin{pmatrix}
1 & 0\\
1 & 1
\end{pmatrix},\
w=\begin{pmatrix}
1 & 1\\
0 & 1
\end{pmatrix}.
\]
Then $|b|=|w|=p$, and $\l b, w\r=G$.
Let $\overline{G}=G/\Z(G)$ and let $\overline{g}$ be images of $g\in G$ in $\overline{G}$.
The following theorem shows the existence of regular dessins $\calD(G,b,w^i)$ which are smooth coverings of $\calD(\ov G,\ov b,\ov w^i)$.

\begin{theorem}\label{Fibonacci}
Let $G=\SL(2,p)$ and $\ov G=\PSL(2,p)$ with $p\ge5$ prime, and let $b,w$ be as above.
Then there are exactly $\frac{(p+1)_{2'}+(p-1)_{2'}}{2}-1$ different values of $i$ such that $\calD(G,b,w^i)$ is a smooth covering of $\calD(\ov G,\ov b,\ov w^i)$.
\end{theorem}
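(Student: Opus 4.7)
The plan is to translate the smooth covering condition into an arithmetic statement about the order of $c_i := bw^i$ in $\SL(2,p)$, and then count the valid $i$. Since $p$ is odd, $\Z(G) = \{\pm I\}$ has order $2$ and $-I$ is the unique involution of $G$, so for any $g \in G$ one has $|\ov g| = |g|$ when $|g|$ is odd and $|\ov g| = |g|/2$ otherwise (as $g^{|g|/2}$ is then forced to equal $-I$). Applied to $b$ and $w^i$, both of $p$-power, hence odd, order, the orders automatically match. Thus, by the definition of smooth covering, $\calD(G, b, w^i)$ is a smooth covering of $\calD(\ov G, \ov b, \ov{w^i})$ if and only if $|c_i|$ is odd.

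A direct computation gives $c_i = \bigl(\begin{smallmatrix} 1 & i \\ 1 & i+1 \end{smallmatrix}\bigr)$, with $\tr(c_i) = i + 2 =: t_i$, so $i \mapsto t_i$ is a bijection from $\{0, 1, \ldots, p-1\}$ onto $\FF_p$. The $\GL(2,p)$-conjugacy class of $c_i$, and hence $|c_i|$, is determined by $t_i$ via the discriminant $t_i^2 - 4$. For $t_i = 2$ (i.e.\ $i = 0$) the element is unipotent of order $p$, but $w^i = I$ and so the dessin is not regular; for $t_i = -2$ (i.e.\ $i = p - 4$), $c_i$ is $-$unipotent of order $2p$, which is even; when $t_i^2 - 4$ is a nonzero square in $\FF_p$, $c_i$ is split semisimple with eigenvalue $\lambda \in \FF_p^\times$ satisfying $\lambda + \lambda^{-1} = t_i$, and $|c_i|$ equals the order of $\lambda$ in the cyclic group $C_{p-1} := \FF_p^\times$ of order $p-1$; when $t_i^2 - 4$ is a non-square, $\lambda$ instead lies in the norm-one cyclic subgroup $C_{p+1} \leq \FF_{p^2}^\times$ of order $p+1$, and again $|c_i|$ equals the order of $\lambda$. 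The task becomes counting the images, under $\lambda \mapsto \lambda + \lambda^{-1}$, of the odd-order subgroups of $C_{p-1}$ and $C_{p+1}$, of sizes $(p-1)_{2'}$ and $(p+1)_{2'}$.

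The map $\lambda \mapsto \lambda + \lambda^{-1}$ is $2$-to-$1$ away from its fixed points $\lambda = \pm 1$, and within the odd-order subgroups only $\lambda = 1$ is fixed, producing the trace $t = 2$. Hence odd-order $\lambda \in C_{p-1}$ yield $\tfrac{(p-1)_{2'} - 1}{2} + 1 = \tfrac{(p-1)_{2'} + 1}{2}$ distinct traces, and $C_{p+1}$ similarly yields $\tfrac{(p+1)_{2'} + 1}{2}$ traces. For $t \neq 2$ the two trace-sets are disjoint, since the quadratic character of $t^2 - 4$ already distinguishes whether $\lambda$ lies in $C_{p-1}$ or in $C_{p+1}$. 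Hence their union, after subtracting the shared trace $t = 2$, has size $\tfrac{(p-1)_{2'} + (p+1)_{2'}}{2}$. Removing the excluded case $t = 2$ (which corresponds to $i = 0$ and gives $w^i = I$) leaves exactly $\tfrac{(p-1)_{2'} + (p+1)_{2'}}{2} - 1$ admissible values of $i$.

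Finally, I would verify that for each such $i \in \{1, \ldots, p-1\}$ the pair $(b, w^i)$ generates $G$, so that $\calD(G, b, w^i)$ is indeed a regular dessin: both generators are non-trivial transvections, fixing the lines $\langle (0,1)^{\top} \rangle$ and $\langle (1,0)^{\top} \rangle$ respectively, which are distinct, so $\langle b, w^i \rangle$ is not contained in any Borel subgroup, and being of order divisible by $p$ it must, by Dickson's classification, equal $\SL(2,p)$. The main technical points are the case analysis on the discriminant $t_i^2 - 4$, in particular the degenerate case $t_i = -2$ which contributes the even order $2p$, and the disjointness of the trace-sets from $C_{p-1}$ and $C_{p+1}$ outside of $t = 2$; both are settled cleanly once the bijection $i \leftrightarrow t_i$ is in place.
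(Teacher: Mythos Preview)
Your proof is correct and follows essentially the same route as the paper: reduce the smooth-covering condition to $|bw^i|$ being odd (the paper's Lemma~\ref{Gamma-embeddings}(iii)), analyze the eigenvalues of $bw^i$ via its trace (the paper's Lemma~\ref{face-length}), and count odd-order eigenvalues in the cyclic subgroups of order $p\pm 1$ of $\FF_{p^2}^\times$ using the $2$-to-$1$ map $\lambda\mapsto\lambda+\lambda^{-1}$ (the paper's Lemma~\ref{1-1}(iii)). Your organization by the quadratic character of the discriminant $t_i^2-4$ is a clean way to separate the split and non-split tori, and your explicit verification that $\langle b,w^i\rangle=\SL(2,p)$ via Dickson's theorem replaces the paper's appeal to the connectedness of the coset graph, but these are presentational differences rather than a genuinely different argument.
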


However, it is unknown which values of $i$ are such that $\calD(G,b,w^i)$ is a smooth covering of $\calD(\ov G,\ov b,\ov w^i)$.
We next consider the case that $i=1$ as an example.

\begin{example}\label{Fibonacci-1}
Let $G=\SL(2,p)$ with $p$ prime, and $b,w\in G$ be defined above.
\begin{itemize}
\item[(1)] If $p\equiv 11$ or $19$ $(\mod 20)$, then $\calD(G,b,w)$ is a smooth covering of $\calD(\ov G,\ov b,\ov w)$.
\item[(2)] If $p\equiv2$ or $3$ $(\mod 5)$, then $\calD(G,b,w)$ is not a smooth covering of $\calD(\ov G,\ov b,\ov w)$.
\item[(3)] For the case $p\equiv 1$ $(\mod 20)$, we only have the following conclusion obtained by computation in Magma~\cite{magma}:
\begin{itemize}
    \item[(i)] if $p\in\{101,181,461,521,541,941\}$, then $\calD(G,b,w)$ is a smooth covering of $\calD(\ov G,\ov b,\ov w)$;
    \item[(ii)] if $p\in\{41,61,241,281,401,421,601,641,661,701,761,821,881\}$, then $\calD(G,b,w)$ is not a smooth covering of $\calD(\ov G,\ov b,\ov w)$.
\end{itemize}
\end{itemize}
\end{example}

This paper is organized as follows.
An explicit construction of regular dessins $\calD(G,b,w)$ is given in Section~\ref{coset regular dessins-sec}, and then the quotient and ramification phenomenon under this construction are studied in Section~\ref{quotient-sec}.
In Section~\ref{qp-sec}, a classification is obtained for the face quasiprimitive case. From Section~\ref{abel-sec} to Section~\ref{Schur-sec}, some special cases of regular dessins and their coverings are considered.

\section{Coset regular dessins}\label{coset regular dessins-sec}

It is well-known that a regular dessin is uniquely determined by its automorphism group and a pair of generators.
This is easily proved in the theory of monodromy groups.
In this section, we explain this important fact in a combinatorial way, see Theorem~\ref{G2}.

Let $G$ be a group such that  $G=\l b,w\r$.
Let $\Ga$ be the graph with vertex set $V$ and edge set $E$, where
\[\left\{
\begin{array}{l}
V=[G:\l b\r]\cup[G:\l w\r],\\
E=G,
\end{array}\right.
\]
such that the end vertices of an `edge' $g\in G$ are $\l b\r g$ and $\l w\r g$.
This graph is denoted by $\BiCos(G,\l b\r,\l w\r)$,  and called a {\it bi-coset graph}.
We remark that, in Dessin Theory, vertices in the set $[G:\l b\r]$ are colored {\bf b}lack, and vertices in the set $[G:\l w\r]$ are colored by {\bf w}hite.
We make the following observations about $\Ga=\Cos(G,\l b\r,\l w\r)$:
\begin{itemize}
\item[(i)] Two vertices $\l b\r x$ and $\l w\r y$ are adjacent if and only if $xy^{-1} \in\l b\r\l w\r$.

\item[(ii)] $\Ga$ may be not a simple graph, and the edge multiplicity of $\Ga$ equals $|\l b\r\cap\l w\r|$.

\item[(iii)] If $b=1$ or $w=1$, then $\Ga$ is a star $\K_{1,|G|}$.

\item[(iv)] $\Ga$ is a complete bipartite multigraph if and only if $G=\l b\r\l w\r$.

\item[(v)] For any element $g\in G$, the right multiplication of $g$ on set $V\cup E$ is an automorphism of $\Ga$. Moreover, the group $G$ can be viewed naturally as an automorphism subgroup of $\Ga$ in this way.
\end{itemize}

A directed walk of length $\ell$ in a graph $\Ga$ is an alternating sequence of vertices and edges, say $(v_0, e_1,v_1,e_2, v_2,\dots, e_\ell, v_\ell)$, such that  $v_{i-1}, v_i$ are the two endvertices of $e_i$ for $1\le i\le \ell$.  The vertices $v_0, v_\ell$ are called the endvertices of this directed walk. A directed walk of length $1$ is also called an arc of $\Ga$.
A directed cycle in a graph $\Ga$ is a directed walk $(v_0, e_1,v_1,e_2, v_2,\dots, e_\ell, v_\ell)$ with $v_0=v_\ell$.
More precisely, a directed cycle is a closed directed walk without distinguished endvertices,
that is, the sequences $(v_0, e_1,v_1,e_2, v_2,\dots, e_\ell, v_\ell)$ and $(v_1,e_2, v_2,\dots, e_\ell, v_\ell, e_1,v_1)$ represent the same directed cycle.
For a directed cycle, we may omit the vertices in these sequences if there is no ambiguity.

For a given $2$-generated group $G$ and one of its ordered generating pairs $(b,w)$, define {\it the boundary cycle $C(b,w)$ generated by $(b,w)$} in the bi-coset graph $\BiCos(G, \l b\r, \l w\r)$ to be the directed cycle.
\begin{eqnarray}\label{cycle-1}
C(b,w)=( 1,  b^{-1}, \dots,  (bw)^{-i},  b^{-1}(bw)^{-i}, \dots, (bw)^{-(\ell-1)},  b^{-1}(bw)^{-(\ell-1)}=w ),
\end{eqnarray}
or more precisely, with the incident vertices included,
\begin{eqnarray}\label{cycle-1-precise}
C(b,w)=(\l w\r,  1, \l b\r,  b^{-1},\l w\r b^{-1}, \dots,\l w\r(bw)^{-i},(bw)^{-i},\l b\r(bw)^{-i}  ,\dots, \l b\r w  , w , \l w\r),
\end{eqnarray}
 where $\ell=|bw|$.
 Notice that the cycle $C(b,w)$ is obtained by spinning the $2$-arc $(1,b^{-1})$ by the cyclic group $\l bw\r$.
Since the right multiplication of the group $G$ induced an automorphism subgroups of $\Ga$,
for any element $g\in G$, the image $C(b,w)g$  of $C(b,w)$ under $g$ is also a directed cycle of $\Ga$, which is
\[( g,  b^{-1}g, \dots,  (bw)^{-i}g,  b^{-1}(bw)^{-i}g, \dots, bwg, w g).\]

\begin{lemma}\label{face-stabilizer}
    For any two elements $g_1, g_2\in G$,  two cycles $C(b,w)g_1$ and $C(b,w)g_2$ are identical if and only if $g_1g_2^{-1}\in \l bw\r$.
\end{lemma}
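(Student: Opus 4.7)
The plan is to exploit the observation recorded just before the lemma: the cycle $C(b,w)$ is obtained by spinning the 2-arc $(1,b^{-1})$ under the cyclic action of $\l bw\r$ by right multiplication. Concretely, right multiplication by $(bw)^{-1}$ shifts each of the $2\ell$ edges of $C(b,w)$ by two positions (one 2-arc), so as a directed cycle without distinguished endvertices one has $C(b,w)\cdot (bw)^{-i}=C(b,w)$ for every integer $i$.

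For the sufficiency direction, I will assume $g_1g_2^{-1}\in \l bw\r$, say $g_1=(bw)^{-i}g_2$. Then $C(b,w)g_1=(C(b,w)(bw)^{-i})g_2=C(b,w)g_2$, by the above observation and the coordinate-wise behaviour of right multiplication on sequences of vertices and edges.

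For the necessity direction, set $h=g_1g_2^{-1}$ and suppose $C(b,w)h$ and $C(b,w)$ represent the same directed cycle. Since both are sequences of $2\ell$ consecutive edges, $C(b,w)h$ must be a cyclic rotation of $C(b,w)$ by some integer $k$ with $0\leq k<2\ell$. Reading off the edge at position $0$ then gives $h=e_k$, where by (\ref{cycle-1}) the edge $e_k$ equals $(bw)^{-j}$ if $k=2j$ and $b^{-1}(bw)^{-j}$ if $k=2j+1$.

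The decisive step, and the one I expect to require the most care, is ruling out the odd case. I plan to argue it from the bipartite structure of the bi-coset graph on $[G:\l b\r]\cup [G:\l w\r]$: right multiplication by any element of $G$ preserves this bipartition, while the vertices of $C(b,w)$ strictly alternate between the white and black sides by (\ref{cycle-1-precise}). Hence the cyclic rotation that identifies $C(b,w)h$ with $C(b,w)$ must shift by an even number of positions, forcing $k=2j$ and therefore $h=(bw)^{-j}\in\l bw\r$. The main subtlety is formalising the parity argument rigorously, ensuring that the notion of two directed cycles being ``identical'' (i.e. the equivalence class under cyclic rotation of the full alternating vertex-edge sequence) really does force bipartition-preserving rotations.
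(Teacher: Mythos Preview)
Your proposal is correct and takes essentially the same approach as the paper: both arguments reduce to showing that right multiplication by $h$ fixing $C(b,w)$ forces $h\in\l bw\r$, and both exploit the fact that right multiplication preserves the black/white bipartition. The paper's version is slightly more direct---it tracks the single white-to-black arc $(\l w\r,1,\l b\r)$ and observes that its image $(\l w\r h,h,\l b\r h)$ must be one of the $\ell$ white-to-black arcs of $C(b,w)$, whose edge labels are exactly $(bw)^{-i}$---whereas you phrase the same parity constraint as ``the cyclic rotation must be even''; but the content is identical.
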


\begin{proof}
    Obviously, $C(b,w)g_1=C(b,w)g_2$ is equivalent to $C(b,w)g_1g_2^{-1}=C(b,w)$.
    We only need to show that $C(b,w)g=C(b,w)$ if and only if $g\in \l bw\r$.

    Note that $(\l w\r,  1, \l b\r)$ is an arc started from a white vertex in $C(b,w)$.
    Hence, the arc $(\l w\r,  1, \l b\r)^g=(\l w\r g,g,\l b\r g)$ is started from a white vertex and lies on $C(b,w)^g=C(b,w)$.
    By the definition of $C(b,w)$, the arcs started from white vertices are
    \[(\l w\r (bw)^{-i},(bw)^{-i},\l b\r (bw)^{-i})\mbox{ for }i=0,1,...,\ell-1.\]
    This implies $g\in \l bw\r$.
    Therefore, $C(b,w)g_1=C(b,w)g_2$ if and only if $g_1g_2^{-1}\in \l bw\r$.
 \end{proof}

\vskip0.1in

Let $e$ be an edge of a regular dessin $\calD$ with two ends: the black vertex $\b$ and the white vertex  $\o$.
Then the automorphism group $G=\Aut\calD$ is generated by two elements $b,w$ such that
 $G_\b=\l b\r$ and $G_\o=\l w\r$.
In the following, we give an incidence configuration $(V,E,F)$ to identify this regular dessin.



\begin{definition}\label{def-maps}
{\rm
Given an abstract group $G=\l b,w\r$, define a configuration $(V,E,F)$:
\[\left\{
\begin{array}{l}
V=[G:\l b\r]\cup[G:\l w\r],\\
E=G,\\
F=\{C(b,w)g \mid g\in G\},
\end{array}\right.\]
such that an `edge' $g\in E$ is incident with two `vertices' $\l b\r g$ and $\l w\r g$, and with two `faces' $C(b,w)g$ and $C(b,w)w^{-1}g$.
%
This incidence configuration is denoted by $\calD(G,b,w)$.}
\end{definition}

Since $G$ is a group of automorphisms of $\Cos(G,\l b\r,\l w\r)$,
an element $g\in G$ maps the cycle $C(b,w)$ to another cycle in $F$ by right multiplication. By the setting of the set $F$, we have the following lemma immediately.

\begin{lemma}\label{F-trans}
The group $G$ acts on the set $F$ transitively by right multiplication.
\end{lemma}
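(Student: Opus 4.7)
The plan is to verify two things: that right multiplication really does give a well-defined action of $G$ on the set $F$ (this is the only subtle point, since the same cycle in $F$ can be written as $C(b,w)g$ for several different choices of $g$), and that this action is transitive (which, once well-definedness is established, is essentially immediate from the definition of $F$).

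First I would set up the action. For $h\in G$, define $h$ to act on $F$ by sending $C(b,w)g$ to $C(b,w)(gh)$. Since $G$ already acts as a group of automorphisms of the bi-coset graph $\Cos(G,\l b\r,\l w\r)$ by right multiplication on $V\cup E$, this image is genuinely the directed cycle obtained by right-multiplying every vertex and every edge in $C(b,w)g$ by $h$, so it is a directed cycle of the graph and it lies in $F$ by the definition $F=\{C(b,w)g\mid g\in G\}$.

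Next I would check that the rule is well-defined on $F$. Suppose $C(b,w)g_1=C(b,w)g_2$ as elements of $F$. By Lemma~\ref{face-stabilizer}, this means $g_1g_2^{-1}\in\l bw\r$. But then $(g_1h)(g_2h)^{-1}=g_1g_2^{-1}\in\l bw\r$, and applying Lemma~\ref{face-stabilizer} in the other direction gives $C(b,w)(g_1h)=C(b,w)(g_2h)$. So the image is independent of the representative chosen, and one immediately checks that this defines a right action: the identity acts trivially, and $C(b,w)g\cdot(h_1h_2)=C(b,w)(gh_1h_2)=(C(b,w)(gh_1))\cdot h_2$.

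Finally, transitivity is immediate: given any two faces $C(b,w)g_1$ and $C(b,w)g_2$ in $F$, the element $h=g_1^{-1}g_2\in G$ sends the first to $C(b,w)(g_1\cdot g_1^{-1}g_2)=C(b,w)g_2$. I do not anticipate any real obstacle here; the only place one has to be slightly careful is the well-definedness step, and Lemma~\ref{face-stabilizer} has been arranged precisely so that this step reduces to the observation that $\l bw\r$ is closed under conjugation by the trivial conjugation $h\mapsto h$ after right multiplication, i.e.\ to the trivial identity $(g_1h)(g_2h)^{-1}=g_1g_2^{-1}$.
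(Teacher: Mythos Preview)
Your proof is correct and follows the same approach as the paper. The paper in fact treats this lemma as immediate from the definition of $F$ and the observation (made just before the lemma) that right multiplication by $G$ consists of graph automorphisms; your argument spells out the well-definedness step via Lemma~\ref{face-stabilizer} more carefully than the paper bothers to, but the underlying idea is identical.
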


As $G$ also acting transitively on the edge set $E$, each edge must lie on some directed cycles in $F$.
The following lemma give a more clear relation between arcs with cycles in $F$.

\begin{lemma}\label{arc-unique}
Each arc of $\calD=\calD(G,b,w)$ lies on a unique cycle in $F$, and each cycle contains no repeated arcs.
Moreover, for $C=C(b,w)$ and $g\in G$, the arc $(\l w\r g,g,\l b\r g)$ lies on $Cg$, and the paired arc $(\l b\r g,g,\l w\r g)$ lies on $Cbg$.
\end{lemma}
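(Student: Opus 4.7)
The plan is to make the arcs of $C=C(b,w)$ explicit, and then propagate by the right-multiplication action of $G$ on $\BiCos(G,\l b\r,\l w\r)$, using Lemma~\ref{face-stabilizer} to read off the unique face through a given arc. From the precise description in~(\ref{cycle-1-precise}), setting $\ell=|bw|$, the white-to-black arcs of $C$ are
\[(\l w\r(bw)^{-i},\, (bw)^{-i},\, \l b\r(bw)^{-i}),\qquad i=0,1,\dots,\ell-1,\]
while the black-to-white arcs of $C$ are
\[(\l b\r(bw)^{-i},\, b^{-1}(bw)^{-i},\, \l w\r(bw)^{-(i+1)}),\qquad i=0,1,\dots,\ell-1.\]
Since the $\ell$ edge labels $(bw)^{-i}$ are distinct and the $\ell$ edge labels $b^{-1}(bw)^{-i}$ are distinct, and since the two families consist of arcs of opposite orientations, the $2\ell$ arcs in $C$ are pairwise distinct. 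The same conclusion for an arbitrary face $Cg\in F$ follows at once, because right multiplication by $g$ is a graph automorphism of $\BiCos(G,\l b\r,\l w\r)$ that bijectively maps the arc list of $C$ to the arc list of $Cg$.

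Next I identify the cycles through a given arc. Fix $g\in G$. The arc $(\l w\r g,g,\l b\r g)$ appears in the face $Cg'$ precisely when $g=(bw)^{-i}g'$ for some $i$ (so that its edge label matches one of the white-to-black edge labels of $Cg'$), and a direct check of cosets confirms that the endpoints then match automatically. This edge condition is equivalent to $g'g^{-1}\in\l bw\r$, and by Lemma~\ref{face-stabilizer} this in turn is equivalent to $Cg'=Cg$; hence the arc lies on the unique face $Cg$. For the paired arc $(\l b\r g,g,\l w\r g)$ the corresponding condition is $g=b^{-1}(bw)^{-i}g'$, i.e.\ $bg(g')^{-1}\in\l bw\r$, which Lemma~\ref{face-stabilizer} turns into $Cg'=Cbg$; thus the paired arc lies on the unique face $Cbg$.

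The main challenge here is only bookkeeping: keeping the two orientations of each edge distinct and matching the sequence in~(\ref{cycle-1-precise}) with the appropriate cosets $\l w\r(bw)^{-i}$ and $\l b\r(bw)^{-i}$. Once the two explicit arc-lists above are written down and Lemma~\ref{face-stabilizer} is invoked, all three assertions of the lemma (no repeated arcs in any cycle, uniqueness of the face through an arc, and the precise identification $Cg$ respectively $Cbg$) follow immediately.
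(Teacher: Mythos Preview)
Your proof is correct and follows essentially the same approach as the paper: both identify the explicit white-to-black and black-to-white arcs of $C$ from~(\ref{cycle-1-precise}) and invoke Lemma~\ref{face-stabilizer} to pin down the unique face through a given arc. The only minor stylistic difference is that for ``no repeated arcs'' you argue directly from distinctness of the $2\ell$ edge labels, whereas the paper argues by contradiction via transitivity; your route is arguably cleaner here.
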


\begin{proof}
As $(\l w\r, 1, \l b\r)$ lies on the directed cycle $C$, the arc $(\l w\r g, g, \l b\r g)$ lies on the cycle $Cg$, and the paired arc $(\l b\r g,g,\l w\r g)$ lies on $Cbg$.

Suppose that the arc $(\l w\r g, g, \l b\r g)$ also lies on $Cg_1$.
Then the arc $(\l w\r gg_1^{-1}, gg_1^{-1}, \l b\r gg_1^{-1})$ lies on $C$.
By the definition of the boundary cycle $C$, the arcs in $C$ with orientation from white vertex to black vertex are precisely those arcs $(\l w\r h, h, \l b\r h)$, where $h\in \l bw\r$.
This gives $gg_1^{-1}\in \l bw\r$, and so $Cg=Cg_1$ by Lemma~\ref{face-stabilizer}. Thus, $Cg$ is the only cycle in $F$ which contains the arc $(\l w\r g, g, \l b\r g)$.
By the same argument, the only cycle in $F$ which contains the arc $(\l b\r g, g, \l w\r g)$ is $Cbg$.
Thus, each arc lies on a unique cycle.

Suppose that a cycle $C'$ in $F$ contains a repeated arc $\alpha$.
Since $G$ is transitive on edges of $\calD$, there exists $g\in G$ such that $\alpha^g=(\l w\r, 1, \l b\r)$ or $(\l b\r, 1, \l w\r)$.
Without loss of generality, we assume that $\alpha^g=(\l w\r, 1, \l b\r)$.
Then $C=(C')^g$ as both $C$ and $(C')^g$ contain the arc $\alpha^g=(\l w\r, 1, \l b\r)$,
which yields that the arc $(\l w\r, 1, \l b\r)$ repeats on the cycle $C$, which contradict with the definition of $C$.
Therefore, every cycle contains no repeated arcs.
\end{proof}

Note that each edge contains exactly two arcs, and the above lemma deduces that every edge repeated at most twice in a cycle.
The following lemma shows that if an edge repeats in a cycle, then the dessin is unicellular.

\begin{lemma}\label{1-cycle}
Let $C=C(b,w)$.
Then the following statements are equivalent:
\begin{itemize}
\item[(i)] The edge $1$ appears at least two times on the cycle $C$.

\item[(ii)] $G=\l bw\r\cong\ZZ_\ell$.

\item[(iii)] Every edge of $C$ appears exactly two times on the cycle $C$.

\item[(iv)] $Cb=C$.

\item[(v)] $|F|=1$.
\end{itemize}

\end{lemma}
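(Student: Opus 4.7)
I would organize the argument around condition (ii), proving the chain $(i)\Rightarrow(ii)\Rightarrow(iii)\Rightarrow(i)$ by direct inspection of the explicit form of $C=C(b,w)$, and then obtaining $(ii)\Leftrightarrow(iv)$ and $(ii)\Leftrightarrow(v)$ from Lemma~\ref{face-stabilizer}.

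First I would unwind the definition of $C$ in~(\ref{cycle-1}): the $2\ell$ edge-entries of $C$ (where $\ell=|bw|$) are exactly
\[
A:=\{(bw)^{-i} \mid 0\le i<\ell\}\quad\text{and}\quad B:=\{b^{-1}(bw)^{-i}\mid 0\le i<\ell\}.
\]
Within $A$ the elements are pairwise distinct because $|bw|=\ell$; likewise within $B$. Hence repetition on $C$ can only occur through an overlap $A\cap B\ne\emptyset$, and an equation $(bw)^{-i}=b^{-1}(bw)^{-j}$ is equivalent to $b\in\langle bw\rangle$. Since $w=b^{-1}(bw)$, this condition is the same as $G=\langle b,w\rangle=\langle bw\rangle$. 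This observation is the engine of the proof.

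With that in hand:
$(i)\Rightarrow(ii)$ is immediate, since repetition of the edge $1$ forces $1\in A\cap B$, hence $b\in\langle bw\rangle$. For $(ii)\Rightarrow(iii)$, write $b=(bw)^k$; then $b^{-1}(bw)^{-i}=(bw)^{-k-i}$, so every element of $B$ already appears in $A$ and vice versa, meaning each of the $\ell$ edges on $C$ appears exactly twice among the $2\ell$ slots. The implication $(iii)\Rightarrow(i)$ is trivial. Finally, $(ii)\Leftrightarrow(iv)$ follows directly from Lemma~\ref{face-stabilizer}: $Cb=C\cdot 1$ iff $b=b\cdot 1^{-1}\in\langle bw\rangle$; and $(ii)\Leftrightarrow(v)$ follows because the same lemma identifies the stabilizer of $C$ in the right regular action of $G$ as $\langle bw\rangle$, so
\[
|F|=\frac{|G|}{|\langle bw\rangle|}=\frac{|G|}{\ell},
\]
which equals $1$ precisely when $G=\langle bw\rangle$.

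I do not anticipate any real obstacle here; the content is a bookkeeping argument once the edge multiset of $C$ is written down explicitly. The only point that needs mild care is making sure that $A$ and $B$ each really consist of $\ell$ distinct elements (so that all repetition comes from the overlap $A\cap B$), and that $b\in\langle bw\rangle$ is symmetric in forcing $w\in\langle bw\rangle$, so that it genuinely collapses the $2$-generation of $G$ to the cyclic group $\langle bw\rangle$ of order $\ell$.
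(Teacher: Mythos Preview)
Your proposal is correct and follows essentially the same approach as the paper: the paper also proves the chain $(i)\Rightarrow(ii)\Rightarrow(iii)\Rightarrow(i)$ by direct inspection of the explicit edge list of $C$, and links $(ii)$, $(iv)$, $(v)$ via Lemma~\ref{face-stabilizer}. Your use of the orbit--stabilizer formula $|F|=|G|/|\langle bw\rangle|$ for $(ii)\Leftrightarrow(v)$ is slightly more direct than the paper's version (which routes $(iv)\Rightarrow(v)$ through $(ii)$), but the content is the same.
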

\proof
Assume first that statement (i) holds. By Lemma~\ref{arc-unique}, both the arcs  $(\l w\r,1,\l b\r)$ and $(\l b\r, 1, \l w\r)$ appear on the cycle $C$.
Then  we have that $1=b^{-1}(bw)^{-i}$ for some $0\le i \le \ell -1$.
Thus $b=(bw)^{-i}$, and $w=b^{-1}(bw)=(bw)^{i+1}$.
Therefore, $b,w\in\l bw\r$, and $G=\l bw\r$ is cyclic, as in part (ii).
So (i) implies (ii).

Suppose that part~(ii) holds, namely, $G=\l bw\r$ is cyclic.
Then $b=(bw)^{-j}$ for some integer $j$ with $0\le j\le \ell-1$.
Thus $1=b^{-1}(bw)^{-j}$ and $b^{-1}=(bw)^{\ell-j}$.
It follows that both the edges $b^{-1}$ and 1 appear exactly twice on $C$.
By the definition of $C$, each edge on $C$ appears exactly twice.
So (ii) implies (iii).

It is obvious that part~(iii) implies part~(i).

Part~(ii) implies part~(iv) since $G=\l bw\r$ implies $b=(bw)^j$ for some integer $j$ and so $Cb=C(bw)^j=C$.

Assume that $Cb=C$, as in part~(iv).
By Lemma~\ref{face-stabilizer},  we have $b\in \l bw\r,$
and so $w=b^{-1}(bw)\in \l bw\r.$
Therefore, $G=\l b,w\r=\l bw\r,$ and part~(iv) implies part~(ii).

Part~(v) implies part~(iv) since there is only one cycle $C$ which is the boundary cycle of the unique face.

Finally, assume that part~(iv) holds. Then part (ii) hods, and so
$Cg=C$ for any $g\in G$ because $g=(bw)^k$ for some integer $k$.
Since $G$ is transitive on the set of cycles, it follows that $C$ is the unique cycle and so $|F|=1$, as in part~(v).
\qed

A collection $\calC$ of cycles of a graph $\Ga$ is called a {\it cycle-double-covering} if
each edge of the graph $\Ga$ appears on a cycle at most once and lies on exactly two cycles in $\calC$.
The case where $|F|=1$ gives rise to regular dessins with a single face, which is characterised in
\cite{1-face}.
Thus we next assume that $F$ contains at least two cycles, and so $|b|>1, |w|>1.$

\begin{lemma}\label{reg-dessin}
The incidence triple $\calM(G,b,w)$ defined in Definition~$\ref{def-maps}$ is a regular dessin.
\end{lemma}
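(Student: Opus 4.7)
My plan is to verify the three ingredients in the definition of a regular dessin in turn: a bipartite underlying graph, a $2$-cell embedding on a closed orientable surface, and an edge-regular automorphism group. The bipartite structure is built in: $\Ga=\BiCos(G,\l b\r,\l w\r)$ has vertex bipartition $V=[G:\l b\r]\cup[G:\l w\r]$ and edge set $E=G$, and right multiplication by $G$ preserves color classes, edges, and incidence.

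For the supporting surface, I would construct it by glueing a closed $2$-cell along each cycle in $F$, and then verify that the resulting $2$-complex is a surface near every vertex. This reduces to showing that the ``next edge on a face'' rule induces a single rotation (a cyclic permutation) on the edges incident to each vertex. The edges at the black vertex $\l b\r g$ are $\{b^ig:0\le i<|b|\}$, and by the explicit description of $C(b,w)$ in $(\ref{cycle-1-precise})$, whenever a face passes through $\l b\r g$ it does so via two consecutive edges of the form $(h,b^{-1}h)$ with $h\in\l b\r g$; so the induced local permutation at $\l b\r g$ is $h\mapsto b^{-1}h$, which is a single $|b|$-cycle because $\l b\r$ acts regularly on the incident edges. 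The analogous statement at the white vertex $\l w\r g$ gives the rotation $h\mapsto w^{-1}h$. Combined with Lemma~\ref{arc-unique}, which tells us that each arc lies on a unique cycle of $F$, this shows that $F$ is precisely the face set of a $2$-cell embedding of $\Ga$.

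Orientability then comes almost for free: every face $C(b,w)g$ carries an intrinsic cyclic direction inherited from the group, and the two arcs of each edge are traversed in opposite directions by the face(s) they bound, which is the compatibility condition for an orientable embedding; the unicellular case allowed by Lemma~\ref{1-cycle} is consistent because the unique face traverses the repeated edge once in each direction. For regularity, right multiplication by $G$ on $V\cup E\cup F$ preserves color classes, incidence, and orientation (from the rotation description above), and the action on $E=G$ is the right-regular representation, hence free and transitive; thus $G\le\Aut\calD(G,b,w)$ acts regularly on edges, making $\calD(G,b,w)$ a regular dessin.

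The main obstacle is checking the link condition at each vertex, i.e.\ that the face-induced local permutations are single cycles rather than breaking into shorter orbits. This is exactly where the cyclic structure of the vertex stabilizers $\l b\r$ and $\l w\r$ enters, and it follows cleanly from the alternating pattern $\dots,(bw)^{-i},b^{-1}(bw)^{-i},(bw)^{-(i+1)},\dots$ of $C(b,w)$, which encodes the rotations in terms of $b$ and $w$ themselves.
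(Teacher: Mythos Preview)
Your proof is correct and follows essentially the same line as the paper's: build the surface by gluing discs along the cycles in $F$, use Lemma~\ref{arc-unique} to see that edges lie on exactly two face-boundaries, check the link condition at a vertex, deduce orientability from the consistent cyclic directions on the faces, and conclude regularity from the right-regular action of $G$ on $E$. The only cosmetic difference is how the link condition is phrased: you argue that the ``next edge'' rule $h\mapsto b^{-1}h$ (resp.\ $h\mapsto w^{-1}h$) is a single cycle on the edges at a black (resp.\ white) vertex, whereas the paper writes out the sequence of face-corners $C_i=Cb^i$ around $\langle b\rangle$ and observes that $\widehat C_i$ and $\widehat C_{i+1}$ share the edge $b^i$, so they glue to a single disc. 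These are the same verification in two dialects (rotation systems versus explicit corner-gluing), and both handle the case where a single face meets a vertex in several corners.
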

\proof
By Definition~\ref{def-maps}, $\calD(G,b,w)$ gives an embedding of the graph $\Cos(G, \l b\r, \l w\r)$ on the topological space $\SS$.
We need to prove that $\SS$ is indeed an orientable closed surface, that is, a sufficiently small neighborhood of each point in $\SS$ is an open disc and $\SS$ is orientable.

First, for any $f\in F$, each interior point of $\widehat{f}$ clearly has open disc neighborhoods.
Then, by Lemma~\ref{arc-unique}, each edge lies on exactly two cycles, and
so each interior point of an edge in $\SS$ is contained in a larger disc as an interior point.

We finally look at the point corresponding to the vertex $\l b\r$.
Let $|b|=m$, $C=C(b,w)$, and let $C_i=Cb^i$, where $0\le i\le m-1$.
Then the $\widehat C_i$ are all the discs
which are incident with  $\l b\r$.
Further, $\widehat C_i$ and $\widehat C_{i+1}$ share a unique common edge $b^{i}$, and
as $F$ is a cycle-double-covering of $(V,E)$, the discs $\widehat C_0,\widehat C_1,\dots,\widehat C_i,\dots,\widehat C_{m-1}$  glued together gives rise to a larger disc $D=\widehat C_0\cup\widehat C_1\cup\dots\cup\widehat C_{m-1}$
which contains the vertex $\l b\r$ as an interior point.
Similarly, the vertex $\l w\r$ is an interior point of some disc in $\SS$.

Since $G$ is transitive on the sets $[G:\l b\r]$, $[G:\l w\r]$, $E$ and $F$,
it follows that each point of $\SS$ has a neighbourhood which is  an open disc in $\SS$.
Therefore, $\SS$ is a surface.
 Moreover, for each disc $\hat{f}$, define a local orientation by the orientation of the directed cycle $f$. As each arc appears exactly once in $F$, these local orientations are compatible with each other. Hence $\SS$ is orientable.

Now the surface $\SS$ minus the edge set $E$ becomes a collection of open discs:
$\widehat{Cg}\setminus Cg$ with $g\in G$, and so the graph $\Ga$ is (2-cell) embedded in $\SS$.
By definition, $\calM(G,b,w)$ is a dessin, and $G$ preserves the orientation of~$\SS$.
As $G$ is regular on the edge set, $\calM(G,b,w)$ is a regular dessin.
\qed

The next lemma shows that any regular dessin has the form $\calM(G,b,w)$.

\begin{lemma}\label{G2}
Let $\calM$ be a regular dessin, and let $G=\Aut\calM$.
Then $\calM=\calM(G,b,w)$ for some elements $b,w\in G$.
\end{lemma}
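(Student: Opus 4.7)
The plan is to pick a base edge of $\calM$, use the regularity of $G$ on edges to identify edges with group elements, and then read off the vertex and face structure of $\calM$ from the cyclic rotations in the supporting surface's orientation.

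First I fix an edge $e_0$ of $\calM$ with black endpoint $\b$ and white endpoint $\o$. Since $G$ is regular on $E$, the orbit map $g\mapsto e_0^g$ is a bijection $G\to E$. Because $G$ preserves the orientation of the supporting surface, the stabilizer of any vertex acts faithfully as a cyclic rotation on the edges incident to that vertex in the local rotation system; hence $G_\b=\l b\r$ and $G_\o=\l w\r$ are cyclic groups of orders equal to the respective vertex-valencies. A standard connectivity argument along the underlying graph (alternately applying a power of $b$ at the current black vertex and of $w$ at the current white vertex) shows $\l b,w\r$ is already edge-transitive on $\calM$, and edge-regularity of $G$ then forces $G=\l b,w\r$.

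With these choices in hand, I would define $\varphi\colon\calM(G,b,w)\to\calM$ by
\[
g\longmapsto e_0^g,\qquad \l b\r g\longmapsto \b^g,\qquad \l w\r g\longmapsto \o^g,
\]
together with a corresponding face map described below. The vertex maps are well-defined bijections precisely because $G_\b=\l b\r$ and $G_\o=\l w\r$; colors and vertex--edge incidences are preserved by construction, and $\varphi$ is $G$-equivariant since $G$ acts by right multiplication on both sides.

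The main work, and the main obstacle, is the face identification: one needs to check that the combinatorial boundary cycle $C(b,w)$ of Definition~\ref{def-maps} really traces the topological boundary of the face of $\calM$ containing the arc $(\o,e_0,\b)$ in the orientation prescribed by the local rotations $b$ and $w$. This is the standard identification in the monodromy picture: at the black end of an edge, turning by $b^{-1}$ moves along the face boundary to the next edge, and at the white end turning by $w^{-1}$ does likewise, so successive edges along the face-boundary are obtained by right-multiplying by $(bw)^{-1}$, matching the definition of $C(b,w)$ in \eqref{cycle-1}. Once this combinatorial identification is in place, Lemma~\ref{face-stabilizer} and Lemma~\ref{arc-unique} make $\varphi$ a $G$-equivariant bijection on faces; orientation is preserved because the local rotations at $\b$ and $\o$ agree with $b$ and $w$ by construction, giving the desired isomorphism $\calM=\calM(G,b,w)$.
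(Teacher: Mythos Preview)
Your proposal is correct and follows essentially the same route as the paper: identify edges with group elements via the regular action, take $b,w$ as generators of the vertex-stabilizers $G_\b,G_\o$, and verify that the face boundaries are the cycles $C(b,w)g$. The one place where the paper is more explicit than your sketch is the \emph{choice} of $b$ and $w$ inside the cyclic stabilizers: you write ``$G_\b=\l b\r$'' without specifying which generator, and only later assert that ``the local rotations at $\b$ and $\o$ agree with $b$ and $w$ by construction''---but you never actually made that construction. The paper pins them down by fixing a face $f$ through the base edge, reading off the directed $3$-walk $(e_1,\o,e,\b,e_2)$ on its boundary, and \emph{defining} $w\in G_\o$ and $b\in G_\b$ by $e^w=e_1$ and $e_2^b=e$; this immediately gives $bw\in G_f$, from which the boundary cycle $C(b,w)$ and the face description $F=\{C(b,w)g\mid g\in G\}$ fall out. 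Your monodromy argument in the final paragraph is exactly this computation, so once you tighten the definition of $b,w$ the two proofs coincide.
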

\proof
Since $G$ is regular on the edges, we identify the edges of $\calD$ with the elements of $G$
such that $G$ acts on the edge set by right multiplication.
Let $e$ be the edge represented by the identity  $1\in G$ with black end $\beta$ and white end $\omega$, and
let $f,f'$ be the two faces that are incident with the edge $e$.

Let $(e_1,\o,e,\b,e_2)$ be the directed $3$-walk on the boundary of $f$, and
$(e_1',\o,e,\b,e_2')$ be the directed $3$-walk on the boundary of $f'$.
Since $G_\o$ is regular on $E(\o)$ and preserves the supporting surface,
there exists an element $w\in G_\o$ which sends $e$ to $e_1$, and sends $e_1'$ to $e$.
Similarly, there exists an element $b\in G_\b$ which sends $e$ to $e_2'$, and sends $e_2$ to $e$.
Thus, $e_1=w$, $e_1'=w^{-1}$, $e_2=b^{-1}$ and $e_2'=b$.
Noticing that the rotation $w$ sends the face $f'$ to $f$ and the rotation $b$ sends the face $f$ to $f'$, we conclude that the element $bw$ fixes the face $f$ and so does the subgroup $\l bw\r$.
It follows that all the images of $(e_1,\o,e,\b,e_2)=(w, 1,b^{-1})$ lie on the boundary of $f$. This gives the boundary cycle of $f$:
\[C:=(1,b^{-1},\dots,  (bw)^{-i},b^{-1}(bw)^{-i},\dots,(bw)^{-(\ell-1)}), b^{-1}(bw)^{-(\ell-1)}),\]
where $\ell=|bw|$.
Since $G_\o=\l w\r$ is transitive on $E(\o)$, it is transitive on the faces of $\calM$ that are incident with $\o$.
Thus $G$ is transitive on the face set $F$, and so $F=\{Cg\mid g\in G\}$.
Therefore, $\calD=\calD(G, b,w)$.
\qed

Let $G_1=\l b_1,w_1\r$ and $G_2=\l b_2,w_2\r$.
If there exists a group isomorphism $\sigma:G_1\rightarrow G_2$ such that $(b_1,w_1)^\sigma=(b_2,w_2)$, then
$\sigma$ clearly induces an isomorphism between $\calD(G_1, b_1, w_1)$ and $\calD(G_2, b_2, w_2)$.
The converse part is a well-known result by considering the $2$-generated Free group (see \cite{Jones2014}).
We give a simple proof under our terminology of incidence configuration for this link between isomorphisms of groups and dessins.

\begin{lemma}\label{iso}
Two regular dessins $\calD_i=\calD(G_i, b_i, w_i)$ $(i=1,2)$ are isomorphic if and only if there is a group automorphism $\sigma$ from $G_1$ to $G_2$ such that $(b_1, w_1)^\sigma=(b_2,w_2)$.
\end{lemma}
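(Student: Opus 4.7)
The plan is to verify both directions; the forward (``if'') direction is a routine transport of structure, so all the content lies in the reverse direction. For the forward direction, given $\sigma\colon G_1\to G_2$ with $(b_1,w_1)^\sigma=(b_2,w_2)$, I define $\varphi$ on the three layers by $g\mapsto g^\sigma$ on edges, $\l b_1\r g\mapsto\l b_2\r g^\sigma$ and $\l w_1\r g\mapsto\l w_2\r g^\sigma$ on vertices, and $C(b_1,w_1)g\mapsto C(b_2,w_2)g^\sigma$ on faces. Well-definedness of the vertex maps is immediate from $\l b_1\r^\sigma=\l b_2\r$ and $\l w_1\r^\sigma=\l w_2\r$, while well-definedness of the face map follows from Lemma~\ref{face-stabilizer} together with $\l b_1w_1\r^\sigma=\l b_2w_2\r$; preservation of incidence and orientation then falls out term-by-term from the explicit form (\ref{cycle-1}).

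For the reverse direction, given a dessin isomorphism $\varphi\colon\calD_1\to\calD_2$, my first step is to \emph{normalise}. Since $G_2$ acts as dessin automorphisms of $\calD_2$ regularly on edges (observation (v)), I replace $\varphi$ with $R_{\varphi(1)^{-1}}\circ\varphi$, where $R_h$ denotes right multiplication by $h$; the composite is still a dessin isomorphism, and now satisfies $\varphi(1)=1$ on edges. Colour and incidence preservation then pin down $\varphi(\l b_1\r)=\l b_2\r$ and $\varphi(\l w_1\r)=\l w_2\r$. The two faces of $\calD_i$ incident with edge $1$ are $C(b_i,w_i)$ and $C(b_i,w_i)b_i$ (the latter equal to $C(b_i,w_i)w_i^{-1}$ by Lemma~\ref{face-stabilizer} since $b_iw_i\in\l b_iw_i\r$), distinguished by the orientation of the arc through edge $1$ appearing on the boundary. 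Combining orientation-preservation of $\varphi$ with the arc-to-face uniqueness of Lemma~\ref{arc-unique} forces the correspondences $C(b_1,w_1)\mapsto C(b_2,w_2)$ and $C(b_1,w_1)b_1\mapsto C(b_2,w_2)b_2$.

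From the explicit boundary (\ref{cycle-1-precise}), the edge immediately preceding edge $1$ in the oriented boundary cycle of $C(b_i,w_i)$ is $w_i$, while the edge immediately preceding $1$ on $C(b_i,w_i)b_i$ is $b_i$. Preservation of the oriented boundary cycles of these two faces under $\varphi$ therefore yields $\varphi(w_1)=w_2$ and $\varphi(b_1)=b_2$ as edges. Finally, I define $\sigma\colon G_1\to G_2$ by $\sigma(g)=\varphi\circ g\circ\varphi^{-1}$, viewing each $g\in G_i=\Aut\calD_i$ via its right-multiplication action. Since $\varphi$ is a dessin isomorphism and each $G_i$ coincides with the full automorphism group, $\sigma$ is a group isomorphism. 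Evaluating $\sigma(g)$ on the edge $1$ gives $\sigma(g)(1)=\varphi(g\cdot\varphi^{-1}(1))=\varphi(g)$, so under the canonical bijection $G_j\leftrightarrow E_j$ the map $\sigma$ simply coincides with $\varphi|_E$; the previous step then delivers $(b_1,w_1)^\sigma=(b_2,w_2)$ as required.

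The main obstacle is the face-matching argument in the second paragraph: one must simultaneously track which of the two faces incident with edge $1$ corresponds to which across $\varphi$, and in which cyclic direction each boundary is traversed, in order to read off the correct edge labels for $\varphi(b_1)$ and $\varphi(w_1)$. Everything else reduces to bookkeeping, handled either directly or via Lemmas~\ref{face-stabilizer} and~\ref{arc-unique}; in particular, the group-isomorphism property of $\sigma$ is free once the conjugation description is in place, because $\varphi$ is a bijective intertwiner of the two edge-regular actions.
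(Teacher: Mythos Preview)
Your proof is correct and follows essentially the same route as the paper's: normalise the dessin isomorphism using edge-regularity so that edge $1$ goes to edge $1$, use Lemma~\ref{arc-unique} to pin down the face correspondence, and then read off the images of the generators from the boundary cycle(s). The only cosmetic difference is that you extract $b_i$ and $w_i$ as the predecessors of edge $1$ on the two incident faces $C(b_i,w_i)$ and $C(b_i,w_i)b_i$, whereas the paper reads both $w_i$ and $b_i^{-1}$ off the single face $C(b_i,w_i)$ as the edges flanking $1$; you are also more explicit than the paper in spelling out the conjugation definition of $\sigma$ and checking it agrees with $\varphi|_E$.
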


\begin{proof}
    The sufficiency is clear. Note that $G_i\cong\Aut\calD_i$ by Lemma~\ref{G2}. An isomorphism from $\calD_1$ to $\calD_2$ induces a group isomorphism $\sigma:G_1\rightarrow G_2$.
    As $G_i$ acts regularly on the edge set of $\calD_i$ and $\sigma$ is color-preserving, we may assume that $\sigma$ maps the arc $(\l b_1\r, 1_{G_1}, \l w_1\r)$ to the arc $(\l b_2\r, 1_{G_2}, \l w_2\r)$. By Lemma~\ref{arc-unique}, the  boundary cycles contain these two arcs are $C(b_1,w_1)$ and $C(b_2,w_2)$, respectively. Hence $C(b_1,w_1)^\sigma=C(b_2,w_2)$. This follows
    \[(\l b_1\r w_1, w_1, \l w_1\r, 1, \l b_1\r, b_1^{-1},\l w_1\r b_1^{-1})^\sigma=(\l b_2\r w_2, w_2, \l w_2\r, 1, \l b_2\r, b_2^{-1},\l w_2\r b_2^{-1}).\]
    That gives $(b_1,w_1)^\sigma=(b_2,w_2)$.
\end{proof}

\section{Quotients and coverings: smoothness and ramification}
\label{quotient-sec}

In this section, we will discuss the quotients and coverings of dessins from two different perspectives: geometric and algebraic. Naturally, there arises a phenomenon of ramification during the discussion. Through the exploration of this phenomenon, we got a connection between the Euler characteristics of dessins and their coverings.

Let $\calD=\calD(G,b,w)=(V,E,F)$ be a $G$-regular dessin, and
let $C=C(b,w)$, as defined in (\ref{cycle-1}).
Let $N\lhd G$, and $\ov G=G/N$. We discuss the quotients of the dessin $\calD(G,b,w)$ with respect to the normal subgroup $N$ in the following subsections.

\subsection{Geometric quotient}\label{subsec:geo-quo}\

The `geometric quotient' of $\mathcal{D}$ induced by $N$, denoted as $\mathcal{D}_N$, is obtained by contracting $N$-orbits.
As $N\lhd G$ preserves the sets $V,E,F$, we can consider the actions on these sets separately.
Let $V_N,E_N,F_N$ be the sets of $N$-orbits on $V,E,F$, respectively.
 Moreover, there is a natural projection
 \def\phigeo{\varphi_{\mbox{\scalebox{.4}{Geo}}}}
  \def\phialg{\varphi_{\mbox{\scalebox{.4}{Alg}}}}

 \[\phigeo:\ \ x\mapsto x^N,\ \mbox{where $x\in V\cup E\cup F$}.\]
 Then the quotient $\calD_N$ is just the incidence triple $(V_N,E_N,F_N)$ which gives rise to a quotient dessin. We give a specific description of this quotient dessin in the following.

 Let $\beta$ denote the black vertex corresponding to the coset $\langle b \rangle$ in $\mathcal{D}$. The orbit of $\beta$ under $N$ is $\beta^N = \{ \langle b \rangle x | x \in N \} = \{ \langle b \rangle x | x \in \langle b \rangle N \}$. Consequently, the number of preimages of $\varphi_{\text{geo}}(\beta)$ is $\frac{|\langle b \rangle N|}{|\langle b \rangle|} = \frac{|N|}{|\langle b \rangle \cap N|}$. This holds true for other black vertices in $\mathcal{D}$ as well.
Similarly, for the white vertex $\omega$ corresponding to the coset $\langle w \rangle$ in $\mathcal{D}$, the number of preimages of $\varphi_{\text{geo}}(\omega)$ is $\frac{|\langle w \rangle N|}{|\langle w \rangle|} = \frac{|N|}{|\langle w \rangle \cap N|}$, and the same applies to other white vertices.
For any edge $e$ of $\mathcal{D}$, which is also an element of the group $G$, the orbit $e^N$ has cardinality $|N{:}N_ e|=|N|$. Thus, the number of preimages of $\varphi_{\text{geo}}(e)$ is $|N|$.

The contraction of a face orbit is somewhat complex and requires a bit more caution.
Consider the face orbit $C^N=C(b,w)^N=\{C(b,w)x| x\in N\}$. Since the stabilizer of $C$ is $\l bw\r$, the number of preimages of $\varphi_{\text{geo}}(C)$ is $\frac{|N|}{|\langle bw \rangle \cap N|}$.
Note that the intersection $N\cap\l bw\r$ fixes the cycle $C$. Let $|bw|=\ell$ and $|N\cap\l bw\r|=m$.
Then $N\cap\l bw\r$ wraps the cycle $C$ by $m$ times, contracting $C$ into a cycle of length $2\ell/m$.
Similarly, each cycle in the orbit $C^N$ becomes a cycle of length $2\ell/m$.
Then contract the $\frac{|N|}{|N\cap\l bw\r|}$ cycles of length $2\ell/m$ into a cycle of the same length.
Refer to the demonstration shown in Figure~\ref{contracting-face}.
 By the same way, for any $g\in G$, the face orbit $C^{gN}=C^{Ng}=(C^N)^g$ i{}s also viewed as the contraction of $2\ell/m$ cycles in $F$.
This gives the directed cycles $C^{Ng}\in F_N$.
So we obtain an incidence triple $(V_N,E_N,F_N)$, and it will be shown to be a regular dessin by Theorem~\ref{quotient},
which could be regarded as a `geometric quotient' of $\calM$.

 \begin{figure}
     \centering
     \begin{tikzpicture}[scale=0.5]
      \coordinate (v1) at (3.4cm, 0);
      \coordinate (v2) at (0, 1cm);
      \coordinate (v4) at (0, -1cm);
      \coordinate (v3) at (-3.4cm, 0);
      \coordinate (v5) at (-2cm,-1.5cm);
      \coordinate (v0) at (0,-1.4cm);
      \coordinate (v6) at ($(v4)+(v0)$);
      \coordinate (v7) at ($(v1)+(v0)$);
      \coordinate (v8) at ($(v2)+(v0)$);
      \coordinate (v9) at ($(v3)+(v0)$);
      \coordinate (v10) at (-0.3cm,-0.4cm);
      \coordinate (v11) at (-10cm,-0.7cm);

      \foreach \j in {0cm, 7cm, 13cm} {
          \filldraw[fill =gray!20, line width=1pt] ($(v11)+(0, \j)$) circle (2.5cm);
          \foreach \i in {1,..., 4} {
          \filldraw[fill=white] ($(v11)+(\i*90: 2.5cm)+(0, \j)$) circle (4pt);
          \filldraw[fill=black] ($(v11)+(\i*90+45: 2.5cm)+(0, \j)$) circle (4pt);
          };

          \fill[fill= gray!20] ($(v5)+(0, \j)$) .. controls +(-55: 1cm) and +(180: 0.4cm) .. ($(v6)+(0, \j)$) .. controls +(0: 2cm) and +(-90: 0.6cm) .. ($(v7)+(0, \j)$) .. controls +(90: 0.6cm) and +(0: 2cm) .. ($(v8)+(0, \j)$) .. controls +(180: 2cm) and +(90: 0.6cm) ..  ($(v9)+(0, \j)$) .. controls +(-90: 0.4cm) and +(215: 1cm) ..($(v5)+(0, \j)$) ;
          \shade [bottom color=gray, top color=gray!0, opacity=0.6, shading angle=-170] ($(v5)+(0, \j)$) .. controls +(-55:1cm) and +(180: 0.4cm) .. ($(v6)+(0, \j)$) .. controls +(0: 2cm) and +(-70: 0.2cm) ..  ($(v4)+(0, \j)$) .. controls +(190: 0.2cm) and +(40:1cm) .. ($(v5)+(0, \j)$);
            \draw[line width=1pt] ($(v5)+(0, \j)$) .. controls +(-55: 1cm) and +(180: 0.4cm) .. ($(v6)+(0, \j)$) .. controls +(0: 2cm) and +(-90: 0.6cm) .. ($(v7)+(0, \j)$) .. controls +(90: 0.6cm) and +(0: 2cm) .. ($(v8)+(0, \j)$) .. controls +(180: 2cm) and +(90: 0.6cm) ..  ($(v9)+(0, \j)$) .. controls +(-90: 0.4cm) and +(215: 1cm) ..($(v5)+(0, \j)$) ;
          \fill[fill=gray!20] ($(v5)+(0, \j)$) .. controls +(40: 1cm) and +(190: 0.2cm) .. ($(v4)+(0, \j)$) .. controls +(10: 2cm) and +(-90: 0.6cm) .. ($(v1)+(0, \j)$) .. controls +(90: 0.6cm) and +(0: 2cm) .. ($(v2)+(0, \j)$) .. controls +(180: 2cm) and +(90: 0.6cm) ..  ($(v3)+(0, \j)$) .. controls +(-90: 0.4cm) and +(125: 1cm) ..($(v5)+(0, \j)$) ;
          \shade [bottom color=gray, top color=gray!0, opacity=0.6, shading angle=10] ($(v5)+(0, \j)$) .. controls +(40:1cm) and +(-150: 1cm) .. ($(v10)+(0, \j)$) .. controls +(125: 2cm) and +(90: 0.6cm) ..  ($(v3)+(0, \j)$) .. controls +(-90: 0.4cm) and +(125:1cm) .. ($(v5)+(0, \j)$);
          \draw[line width=1pt] ($(v5)+(0, \j)$) .. controls +(40: 1cm) and +(190: 0.2cm) .. ($(v4)+(0, \j)$) .. controls +(10: 2cm) and +(-90: 0.6cm) .. ($(v1)+(0, \j)$) .. controls +(90: 0.6cm) and +(0: 2cm) .. ($(v2)+(0, \j)$) .. controls +(180: 2cm) and +(90: 0.6cm) ..  ($(v3)+(0, \j)$) .. controls +(-90: 0.4cm) and +(125: 1cm) ..($(v5)+(0, \j)$) ;

          \foreach \i in {1,3,7,9} {
          \filldraw[fill=black] ($(v\i)+(0, \j)$) circle (4pt);
          };
           \foreach \i in {2,4,6} {
          \filldraw[fill=white] ($(v\i)+(0, \j)$) circle (4pt);
          };

           \draw [line width=0.5pt, double distance=3pt,  arrows = {-Latex[length=0pt 3 0]}] ($(-7cm,-0.7cm)+(0, \j)$) -- ($(-4cm,-0.7cm)+(0, \j)$) ;

             \draw [line width=0.5pt, double distance=3pt,  arrows = {-Latex[length=0pt 3 0]}] ($(0,-0.7cm +\j)! .35!(11cm, 6.3cm) $) -- ($(0,-0.7cm+\j)! .7!(11cm, 6.3cm) $) ;
      };

      \node[anchor= south ] at (-5.5cm, 6.3cm) {Wraping};
      \node[anchor= south ] at (5.8cm, 6.3cm) {Contraction};
      \node[anchor= south ] at (11cm, 9cm) {$C^N$};

      \node at ($(v11)+(0, 3.5cm)$) {$\vdots$};
      \node at ($(0,-0.7cm)+(0, 3.5cm)$)  {$\vdots$};

      \filldraw[fill =gray!20, line width=1pt] (11cm, 6.3cm) circle (2.5cm);
       \foreach \i in {1,2} {
          \filldraw[fill=white] ($(11cm, 6.3cm)+(\i*180+90: 2.5cm)$) circle (4pt);
          \filldraw[fill=black] ($(11cm, 6.3cm)+(\i*180: 2.5cm)$) circle (4pt);
    };

    \node at (-15cm, -0.7cm) {$Cn_{\frac{|N|}{m}}$};
    \node at (-14.5cm, 6.3cm) {$Cn_2$};
    \node at (-14.5cm, 2.8cm) {$\vdots$};
    \node at (-14.5cm, 12.3cm) {$C=Cn_1$};
     \end{tikzpicture}
          \caption{Contraction of the face orbit $C^N$}
     \label{contracting-face}
 \end{figure}
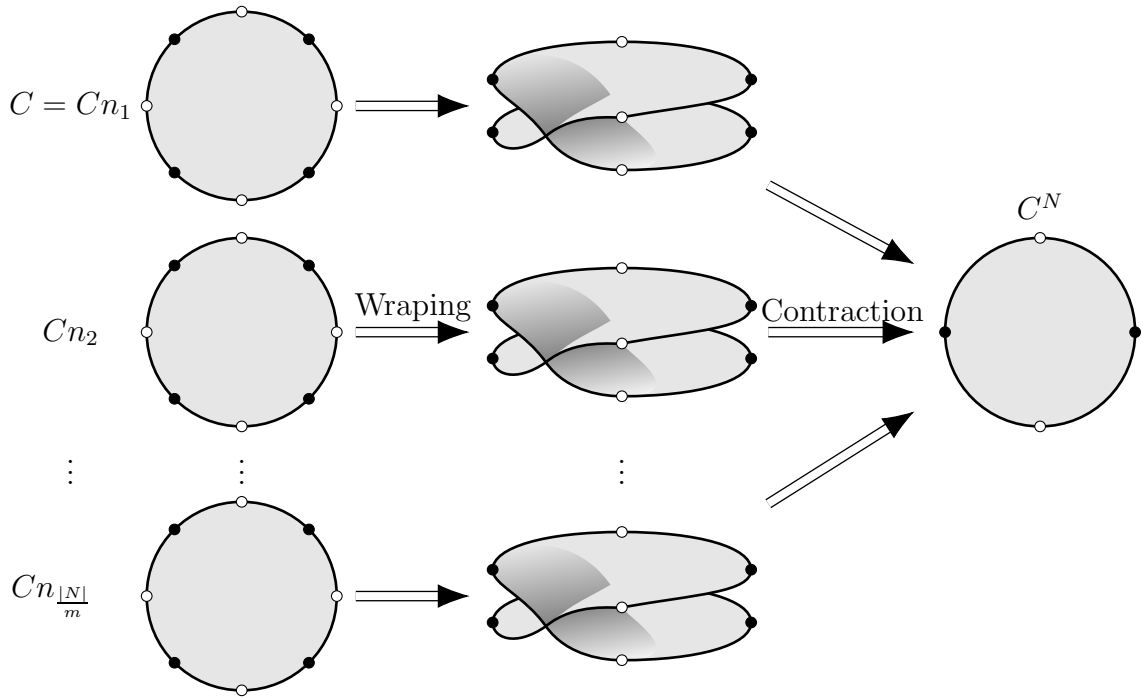


\subsection{Algebraic quotient}

For each element $g\in G$, let $\ov g$ be the image of $g$ under $G\to\ov G=G/N$.
Then the quotient group $\ov G$ is generated by $\ov b$ and $\ov w$.
Let $\l bw \r\cap N=\l (bw)^k \r\cong \ZZ_m$ such that $km=|bw|=\ell.$ Then $C(\overline{b},\overline{w})$ is a cycle with length $2k$ as follows
\begin{eqnarray}\label{quo-cycle}
C(\ov b, \ov w)=(\overline{1},\overline{b}^{-1},\dots,({\ov b\ov w})^{-i},\overline{b}^{-1}({\ov b\ov w})^{-i}, \dots,({\ov b\ov w})^{-(k-1)}, \overline{w}).
\end{eqnarray}
Let
\[\calM/N=\calM(\ov G,\ov b,\ov w),\]
called a {\it normal quotient dessin} of $\calM$ induced by $N$,
which we regard as the `algebraic quotient' of $\calD$ and satisfies
\[\begin{array}{l}
V/N=[\ov G:\l\ov b\r]\cup[\ov G:\l\ov w\r],\\
E/N=[\ov G:\l\ov 1\r]
,\\
F/N=\{C(\overline{b},\overline{w})\overline{g}\mid \ov g\in G/N\}.
\end{array}\]

The following theorem shows that `geometric quotient' and `algebraic quotient' are
the same.

\begin{theorem}\label{quotient}
The incidence triple $(V_N,E_N,F_N)$ is a regular dessin and
isomorphic to $\calD(\ov G,\ov b,\ov w)$.
\end{theorem}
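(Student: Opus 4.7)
The plan is to exhibit an explicit bijective correspondence that identifies the geometric quotient $\calD_N$ with the algebraically defined dessin $\calD(\ov G, \ov b, \ov w)$, which is already known (by Lemma~\ref{reg-dessin}, together with the single-face case characterized via Lemma~\ref{1-cycle}) to be a regular dessin. Once this identification is in place, regularity of $\calD_N$ is automatic, and so both conclusions of the theorem follow simultaneously.

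First, I would introduce the natural projection $\phi:\calD(G,b,w)\to\calD(\ov G,\ov b,\ov w)$ defined component-wise by
\[
\l b\r g\mapsto \l\ov b\r\ov g,\qquad \l w\r g\mapsto \l\ov w\r\ov g,\qquad g\mapsto \ov g,\qquad C(b,w)g\mapsto C(\ov b,\ov w)\ov g.
\]
Using $N\lhd G$, each of these is well-defined (independent of coset representative) and surjective. Next I would compute the fibers of $\phi$ and check they agree with the $N$-orbits. For vertices and edges this is immediate: $\l b\r g_1$ and $\l b\r g_2$ have the same image iff $g_1g_2^{-1}\in\l b\r N$, which is precisely the condition for them to share an $N$-orbit under right multiplication; the edge case reduces to $g_1 g_2^{-1}\in N$. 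For faces, Lemma~\ref{face-stabilizer} together with $\l bw\r N/N=\l\ov{bw}\r$ shows that $C(b,w)g_1$ and $C(b,w)g_2$ land in the same $\ov G$-face iff $g_1 g_2^{-1}\in\l bw\r N$, which is exactly the condition for them to share an $N$-orbit in $F$. Hence $\phi$ descends to a bijection $\bar\phi\colon\calD_N\to\calD(\ov G,\ov b,\ov w)$.

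Next I would verify that $\bar\phi$ preserves all incidences. The edge $g$ is incident with the vertices $\l b\r g,\l w\r g$ and with the faces $C(b,w)g,C(b,w)w^{-1}g$, and their images under $\phi$ are exactly the corresponding incident objects in $\calD(\ov G,\ov b,\ov w)$ as prescribed by Definition~\ref{def-maps}. Since these incidences are $N$-invariant, they descend unambiguously to $\calD_N$.

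The only genuinely delicate step is to check that the face side of the geometric construction — contracting the $N$-orbit of $C(b,w)g$ — really produces the directed cycle $C(\ov b,\ov w)\ov g$. Writing $N\cap\l bw\r=\l (bw)^k\r$ with $km=\ell=|bw|$, so that $|\ov{bw}|=k$, the element $(bw)^k\in N$ acts on $C(b,w)g$ as a cyclic shift of length $2k$, wrapping the original $2\ell$-cycle $m$ times onto a cycle of length $2k$; inserting the resulting vertex–edge sequence into $\phi$ reproduces the list (\ref{quo-cycle}) defining $C(\ov b,\ov w)\ov g$ exactly. The remaining cycles in the orbit $C(b,w)^{Ng}$ are obtained from this one by right $N$-translation, which is collapsed by contraction, so the $N$-orbit of faces becomes a single directed cycle matching $C(\ov b,\ov w)\ov g$. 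The main obstacle, and the only point requiring real bookkeeping, is this careful matching of vertex/edge sequences in the contracted cycle with those of $C(\ov b,\ov w)$; once it is verified, $\bar\phi$ is an isomorphism of incidence configurations, whence $\calD_N$ inherits the regular-dessin structure of $\calD(\ov G,\ov b,\ov w)$ and the theorem is proved.
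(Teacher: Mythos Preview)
Your proposal is correct and follows essentially the same route as the paper: both arguments identify the $N$-orbits on vertices, edges, and faces with the coset data of $\calD(\ov G,\ov b,\ov w)$, and the key bookkeeping step---writing $N\cap\l bw\r=\l(bw)^k\r$ and tracking how the $2\ell$-cycle $C(b,w)$ wraps $m$ times onto the $2k$-cycle $C(\ov b,\ov w)$---is handled identically. Your presentation is slightly more explicit in naming the projection $\phi$ and separately verifying incidence preservation, but the content matches the paper's proof.
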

\proof
We note that the action of $G$ on $\calD$ is by right multiplication.
For an element $g\in G$, the orbit of the vertex $\l w\r g$ under $N$ is equal to
\[(\l w\r g)^N=\{\l w\r gx\mid x\in N\}=(\l w\r N)(Ng)=\l \ov w\r\ov g;\]
similarly, the orbit of the vertex $\l b\r g$ under $N$ is $\l\ov b\r\ov g$.
Thus the vertex set $V_N$ equals $[\ov G:\l\ov w\r]\cup[\ov G:\l\ov b\r]$.

The orbit of the edge $g$ under $N$ is equal to $gN=\ov g\in \ov G$, and
the edge set $E_N=[\ov G:\ov 1]$.
Now we consider the orbit of the face $C(b,w)$ under $N$. Recall that
\[\begin{array}{l}
C(b,w)=( 1,  b^{-1}, \dots,  (bw)^{-i},  b^{-1}(bw)^{-i}, \dots, (bw)^{-(\ell-1)},  b^{-1}(bw)^{-(\ell-1)} ),
\end{array}\]
Let $\l bw \r\cap N=\l (bw)^k \r\cong \ZZ_m$ such that $km=|bw|=\ell.$
In the cycle $C(b,w),$ the edges $1,(bw)^k, (bw)^{2k},\cdots, (bw)^{(m-1)k}$ belong to an orbit of $1$ under $N$,
that is $1N,$ which is the first edge in $C(\ov b, \ov w)$. Similarly, $\{b^{-1}, b^{-1}(bw)^k, b^{-1}(bw)^{2k},\cdots, b^{-1}(bw)^{(m-1)k}\} \subset b^{-1}N$ is the second edge in $C(\ov b, \ov w)$.
And so on, $\{b^{-1}(bw)^{k-1}, b^{-1}(bw)^{2k-1},\cdots, b^{-1}(bw)^{mk-1}\}=b^{-1}(bw)^{k-1}N$ is the $2k$-th edge in $C(\ov b, \ov w)$.
Thus, the orbit of the face $C(b,w)$ under $N$ is $C(\ov b, \ov w)$. By the transitivity, the face set $F_N=\{C(\overline{b},\overline{w})\overline{g}\mid \ov g\in G/N\}.$
We therefore conclude that $(V_N,E_N,F_N)=\calD(\ov G,\ov b,\ov w)$, and
so $\calD_N=(V_N,E_N,F_N)$ is a regular dessin.
\qed

Notice that if $G=\l b,w\r$ then $\l b\r\cap\l w\r\lhd G$ and the edge-multiplicity of $\Cos(G,\l b\r,\l w\r)$ equals $|\l b\r\cap\l w\r|$, led to the following corollary.

\begin{corollary}\label{q-multi-edges}
Each regular dessin has a normal quotient which is a simple dessin.
%
\end{corollary}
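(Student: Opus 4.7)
The plan is to take $N=\langle b\rangle\cap\langle w\rangle$ and show (i) that $N$ is normal in $G$, so Theorem~\ref{quotient} applies and $\calD_N=\calD(\overline G,\overline b,\overline w)$ is a regular dessin, and (ii) that the edge-multiplicity of $\calD_N$ is $1$, i.e.\ $\langle\overline b\rangle\cap\langle\overline w\rangle=\overline 1$ in $\overline G=G/N$. Combined with the remark quoted just before the corollary (edge-multiplicity of $\Cos(H,\langle x\rangle,\langle y\rangle)$ equals $|\langle x\rangle\cap\langle y\rangle|$), this yields that $\calD_N$ has a simple underlying graph, hence is a simple dessin.

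For step (i), I would observe that any element of $\langle b\rangle$ commutes with $b$ and any element of $\langle w\rangle$ commutes with $w$; an element of $N=\langle b\rangle\cap\langle w\rangle$ therefore commutes with both $b$ and $w$, hence with every element of $G=\langle b,w\rangle$. Thus $N\leqslant \Z(G)$, and in particular $N\lhd G$.

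For step (ii), suppose $\overline x\in\langle\overline b\rangle\cap\langle\overline w\rangle$, so $\overline x=\overline b^{\,i}=\overline w^{\,j}$ for some integers $i,j$. Then $b^iw^{-j}\in N\subseteq\langle b\rangle$, which forces $w^{-j}\in\langle b\rangle$; but $w^{-j}\in\langle w\rangle$ as well, so $w^{-j}\in\langle b\rangle\cap\langle w\rangle=N$. Therefore $\overline x=\overline w^{\,j}=\overline 1$, proving $\langle\overline b\rangle\cap\langle\overline w\rangle=\overline 1$.

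The argument is short and essentially bookkeeping, so I don't anticipate a genuine obstacle; the only mildly delicate point is being clear that the remark preceding the corollary is exactly what identifies ``simple dessin'' with the condition $\langle b\rangle\cap\langle w\rangle=1$, and that this is the correct normalization when one allows the trivial case $N=1$ (in which $\calD$ is already simple and the ``quotient'' is $\calD$ itself).
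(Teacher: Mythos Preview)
Your proposal is correct and is exactly the approach the paper has in mind: the sentence immediately preceding the corollary records precisely the two facts you establish, namely that $\langle b\rangle\cap\langle w\rangle\lhd G$ (indeed central, as you note) and that the edge-multiplicity of $\Cos(G,\langle b\rangle,\langle w\rangle)$ equals $|\langle b\rangle\cap\langle w\rangle|$, after which the corollary is stated without further proof. Your write-up simply supplies the details (centrality of $N$ and the verification that $\langle\overline b\rangle\cap\langle\overline w\rangle=\overline 1$ in $G/N$) that the paper leaves implicit.
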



\begin{example}\label{quaternion}
{\rm
Let $G=\Q_{4m}=\l x,y\r$, where $|x|=2m\geq 4$, $|y|=4$, $x^y=x^{-1}$ and $y^2=x^m$, a {\it generalized quaternion} group.
Let $b=xy$ and $w=y^{-1}$.
Then $|b|=|w|=4$, $G=\l b,w\r$, $\l b\r\cap\l w\r=\ZZ_2$ and $\l bw\r=\l x\r=\ZZ_{2m}$.
Let $\calD=\calD(G,b,w)$ and let
\[\begin{aligned}
    C&=(1,b^{-1},\dots, (bw)^{-i},b^{-1}(bw)^{-i},\cdots,(bw)^{-(2m-1)},b^{-1}(bw)^{-(2m-1)})\\
    &=(1,x^{m+1}y,\dots,x^{-i}, x^{m+1+i}y,\dots,x^{-(2m-1)}, x^{m}y ).
\end{aligned} \]
Then $\calD=(V,E,F)$, where $V=[G:\l b\r]\cup[G:\l w\r]$, $E=G$ and $F=\{C,C y\}$;
further,
\begin{itemize}
\item[(i)] $\calD$ has valency 4, and face length $2|bw|=4m$;
\item[(ii)] the underlying graph of $\calD$ is $\C_{2m}^{(2)}$, a cycle with edge multiplicity 2;
\item[(iii)] $\chi(\calD)=|V|-|E|+|F|=4m(\frac{2}{4}-1+\frac{1}{ 2m})=-2(m-1)$.
\end{itemize}

We consider the quotient of $\calD$ induced by the center $N=\Z(G)=\l y^2\r\cong\ZZ_2$.
First, notice that $G/N=\D_{2m}=\l\ov x\r{:}\l\ov y\r=\l\ov b,\ov w\r$ is a dihedral group where $|\ov x|=m$ and $|\ov y|=|\ov b|=|\ov w|=2$, and
the quotient graph of $\C_{2m}^{(2)}$ is a simple cycle $\C_{2m}$.
\begin{itemize}
\item[(a)] The geometric quotient $\calD_N=(V/N,E/N,F/N)$, with $V/N=[\ov G:\l\ov b\r]\cup[\ov G:\l\ov w\r]$, $E/N=\ov G\cong \D_{2m}$, and $F/N=\{\overline{C},\overline{C} \overline{y}\}$, where
\[ \overline{C}= (\overline{1},\overline{x}\overline{y},\dots,\overline{x}^{-i}, \overline{x}^{1+i}\overline{y},\dots,\overline{x}^{-(m-1)}, \overline{y} ).\]
\item[(b)] The algebraic quotient $\calD/N=\calD(\ov G,\ov b,\ov w)$, where $\ov G\cong \D_{2m}$ is generated by the involutions $\ov b$ and $\ov w$.
    Clearly, $\calD/N$ is an embedding of a simple cycle $\C_{2m}$ in a sphere.
\end{itemize}

Thus the double covering between $\calD$ and $\calD/N$ induces a covering between a surface with Euler characteristic $\chi(\calD)=-2(m-1)$ and a sphere.
\qed
}
\end{example}

\subsection{Ramification points}\label{rami-sec} \

Let $\varphi$ be the natural homomorphism from $G$ to $\ov G=G/N$:
\[\varphi:\ \ g\mapsto \ov g=gN,\ \ \mbox{where $g\in G$.}\]
Recall that the underlying graphs of $\calD(G,b,w)$ and $\calD(\ov G,\ov b,\ov w)$ are
$\Cos(G,\l b\r,\l w\r)$ and $\Cos(\ov G,\l\ov b\r,\l\ov w\r)$, respectively.
Then $\varphi$ induces a mapping from vertices and edges  of $\calM$ to the vertices and edges of $\calM_N$,
respectively, as below
\[\varphi:\ \ \ \begin{array}{rcl}
\l b\r g &\mapsto& \l\ov b\r\ov g,\\
\l w\r g &\mapsto& \l\ov w\r\ov g,\\
g &\mapsto& \overline{g}.
\end{array}\]

\begin{lemma}\label{graph-homo}
The mapping $\varphi$ is a graph homomorphism from $\Cos(G,b,w)$ to $\Cos(\ov G,\ov b,\ov w)$, and further, the following hold.
\begin{itemize}
\item[(1)] Each  vertex in $[\ov G:\l\ov b\r]$ has exactly $\frac{|N|}{|N\cap\l b\r|}$ preimages in $[G:\l b\r]$,
and each vertex in $[\ov G:\l\ov w\r]$ has exactly $\frac{|N|}{|N\cap\l w\r|}$ preimages in $[G:\l w\r]$.

\item[(2)] Each edge of $\Cos(\ov G,\l\ov b\r,\l\ov w\r)$ has exactly $|N|$ preimages in $\Cos(G,\l b\r,\l w\r)$.
\end{itemize}
\end{lemma}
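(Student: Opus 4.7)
The plan is to verify that $\varphi$ is well-defined on cosets, then check incidence preservation, and finally count fibers via the second isomorphism theorem. First I would verify well-definedness: if $\l b\r g_1=\l b\r g_2$, then $g_1g_2^{-1}\in\l b\r$, so $\ov{g_1}\ov{g_2}^{-1}\in\l\ov b\r$, giving $\l\ov b\r\ov{g_1}=\l\ov b\r\ov{g_2}$; the same argument applies to white cosets, and well-definedness on edges is trivial since $\varphi$ sends an element of $G$ to its $N$-coset. For incidence, recall that in $\Cos(G,\l b\r,\l w\r)$ the edge $g$ joins $\l b\r g$ and $\l w\r g$, and its image $\ov g$ in $\Cos(\ov G,\l\ov b\r,\l\ov w\r)$ joins $\l\ov b\r\ov g$ and $\l\ov w\r\ov g$, which are exactly the images under $\varphi$ of the endvertices. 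Thus $\varphi$ is a graph homomorphism.

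For part~(1), I would compute the fiber of a black vertex $\l\ov b\r\ov g\in[\ov G:\l\ov b\r]$. A coset $\l b\r h$ lies in this fiber precisely when $\ov h\in\l\ov b\r\ov g$, i.e., $h\in\l b\r Ng$. The number of distinct $\l b\r$-cosets contained in $\l b\r Ng$ equals $|\l b\r N|/|\l b\r|$, and by the second isomorphism theorem this equals $|N|/|N\cap\l b\r|$. The computation for white vertices is identical with $w$ in place of $b$. Part~(2) is immediate: the fiber of $\ov g$ is the $N$-coset $gN$, which has cardinality $|N|$.

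The only small subtlety is the verification that $\varphi$ really is well-defined as a map between the two bi-coset graphs (in particular that the fibers of the two vertex parts land in the correct parts, which follows because $\varphi$ is color-preserving by construction); everything else is routine application of coset arithmetic. Since no nontrivial structural fact beyond the definitions and the second isomorphism theorem is needed, I do not expect any real obstacle.
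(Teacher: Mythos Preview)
Your proposal is correct and essentially matches the paper's approach: both arguments reduce part~(1) to counting the $\l b\r$-cosets contained in $\l b\r Ng$, with you invoking the second isomorphism theorem and the paper carrying out the equivalent orbit-stabilizer computation for $N$ acting by right multiplication; part~(2) and the homomorphism verification are handled identically.
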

\proof
{\rm (1).}
A vertex in $[\ov G:\l\ov b\r]$ has the form $\l\ov b\r\ov g=\l bN\r gN$, where $g\in G$.
Since $b^iNgN=b^igN$, we have  $\l bN\r gN=\{gN, bgN,\dots,b^igN,\dots\}=\l b\r gN$,
which is an orbit of the vertex $\l b\r g\in[G:\l b\r]$ under $N$ by right multiplication.
For $x_1,x_2\in N$, the vertices $\l b\r gx_1$ and $\l b\r gx_2$ are equal if and only if
$b^igx_1=b^jgx_2$ for some integers $i,j$, that is, $b^{j-i}=(x_1x_2^{-1})^{g^{-1}}\in\l b\r\cap N$.
Thus the cardinality $|\{\l b\r gx\mid x\in N\}|$ equals $\frac{|N|}{|N\cap \l b\r|}$.

Similarly, the vertex $\l \overline{w}\r \overline{g}=\l wN\r gN=\l w\r gN$ is an orbit of $\l w\r g$ under $N$, and
it follows that $|\{\l w\r gx\mid x\in N\}|=\frac{|N|}{|N\cap \l w\r|}$.

{\rm (2).} Let $\ov e = \overline{1_G}=1_{G/N}=N$ be the  edge between $\l\ov b\r$ and $\l\ov w\r$.
Then for any $g\in G$, the edge $g$ is a preimage of $\overline{e}$ if and only if
 $\varphi(g)=N=e$, which is equivalent to $g\in N$. Hence, the set of all preimages of $\overline{e}$  is $N$, which is also an orbit of $N$ on the edges in $E$.
Finally, since $G$ is regular on the edge set, $N$ is semiregular, and so $\ov e$ has exactly
$|N|$ preimages.
So is each edge in $E_N$ because $G/N$ is transitive on $E_N$.
\qed

Let $C=C(b,w)$ and $\ov C=C(\ov b,\ov w)$, as defined in (\ref{cycle-1}) and (\ref{quo-cycle}).
Let $\calD=\calM(G,b,w)$ and $\calD_N=\calM(\ov G,\ov b,\ov w)$.

\begin{lemma}\label{phi}
The graph homomorphism $\varphi$ induces a homomorphism from the dessin $\calD$ to the quotient dessin $\calD_N$, and further,
each face of $\calD_N$ has exactly $\frac{|N|}{|N\cap\l bw\r|}$ preimages, which form an orbit of $N$ acting on $F$.
\end{lemma}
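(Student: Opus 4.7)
The plan is to build on Lemma~\ref{graph-homo} (which already extends $\varphi$ to vertices and edges) by extending $\varphi$ to faces in the natural $G$-equivariant way, and then count preimages via orbit--stabilizer. First I would define $\varphi$ on faces by $\varphi(C(b,w)g) = C(\ov b,\ov w)\ov g$ for $g\in G$. One has to check this is well-defined: if $C(b,w)g_1 = C(b,w)g_2$, then by Lemma~\ref{face-stabilizer} we have $g_1 g_2^{-1}\in\l bw\r$, so $\ov{g_1}\,\ov{g_2}^{-1}\in\l\ov{bw}\r$, which (again by Lemma~\ref{face-stabilizer} applied in $\ov G$) gives $C(\ov b,\ov w)\ov{g_1}=C(\ov b,\ov w)\ov{g_2}$. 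Compatibility of $\varphi$ with the incidence of edges and faces is then immediate from the explicit description of $C(b,w)$ in (\ref{cycle-1}) together with Theorem~\ref{quotient}, which already identified $C(b,w)^N$ with the cycle $C(\ov b,\ov w)$.

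Next I would count the preimages of a given face, and it suffices by transitivity of $G$ on $F$ to handle $\ov C=C(\ov b,\ov w)$. By definition, $\varphi^{-1}(\ov C) = \{C(b,w)g : \ov g\in\l\ov{bw}\r\}$. The set of such $g\in G$ is the preimage of $\l\ov{bw}\r$ in $G$, namely $\l bw\r N$, and since $\l bw\r$ stabilises $C(b,w)$, the family $\{C(b,w)g : g\in \l bw\r N\}$ equals $\{C(b,w)n : n\in N\}$, i.e.\ the $N$-orbit of the face $C(b,w)$. This already gives the claim that preimages of a face form a single $N$-orbit on $F$, and transferring by the $G$-action on $F$ shows the same for every face.

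Finally, to compute the size of this orbit I would apply orbit--stabiliser to the action of $N$ on $F$: the stabiliser of $C(b,w)$ in $G$ is $\l bw\r$ by Lemma~\ref{face-stabilizer}, so the stabiliser of $C(b,w)$ in $N$ is $N\cap\l bw\r$. Therefore $|C(b,w)^N| = |N|/|N\cap\l bw\r|$, which is exactly the stated number of preimages. The same count holds for every face of $\calD_N$ since $G$ permutes the faces of $\calD$ transitively and $N\lhd G$, so the sizes of $N$-orbits on $F$ are all equal.

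I do not expect a real obstacle here: the only point requiring a little care is the well-definedness of $\varphi$ on faces, which in turn reduces to the characterisation of the face-stabiliser established in Lemma~\ref{face-stabilizer}; everything else is formal equivariance and orbit--stabiliser.
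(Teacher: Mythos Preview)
Your proposal is correct and follows essentially the same route as the paper: both identify the preimages of $\ov C$ as $\{C(b,w)g : g\in\l bw\r N\}=\{C(b,w)x : x\in N\}$, recognise this as the $N$-orbit of $C(b,w)$, and then compute its size as $|N|/|N\cap\l bw\r|$ via the face-stabiliser $\l bw\r$. The only cosmetic difference is that the paper spells out explicitly how $C$ wraps $m=|N\cap\l bw\r|$ times onto $\ov C$ (showing each arc of $\ov C$ has exactly $m$ preimages in $C$), whereas you delegate the incidence compatibility to Theorem~\ref{quotient}; either way the argument is the same.
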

\proof
Consider the two cycles
\[\begin{array}{l}
C=C(b,w)=( 1,  b^{-1}, \dots,  (bw)^{-i},  b^{-1}(bw)^{-i}, \dots, (bw)^{-(\ell-1)},  b^{-1}(bw)^{-(\ell-1)} ),\\

\ov C=C(\ov b, \ov w)=(\overline{1},\overline{b}^{-1},\dots,({\ov b\ov w})^{-i},\overline{b}^{-1}({\ov b\ov w})^{-i}, \dots,({\ov b\ov w})^{-(\overline{\ell}-1)}, \overline{w}),
\end{array}\]

 where $\ell=|bw|$, and $\overline{\ell}=|\overline{b}\overline{w}|$.
 Obviously, the image $\varphi(C)$ is $\ov C$.
 Let $m=|\l bw\r\cap N|$.
 Then $m$ is a divisor of $\ell$, and $(bw)^{\ell/m}\in N$.
Thus $(\ov b\ov w)^{\ell/m}=\ov 1$, and for  integers $j$ and $i$ with $0\leq j\leq m-1$,
\[\varphi:\
\begin{array}{rll}
b^{-1}(bw)^{j\frac{\ell}{m}+i} &\mapsto& \ov b^{-1}(\ov b\ov w)^i,\\
(bw)^{j\frac{\ell}{m}+i} &\mapsto& (\ov b\ov w)^i,\\
w(bw)^{j\frac{\ell}{m}+i} &\mapsto& \ov w(\ov b\ov w)^i.
\end{array}
\]

It follows that each arc on $\ov C$ has precisely $m$ preimages in $C$, namely,
the cycle $C$ is mapped to $\ov C$ by wrapping $m$ times.

Let $g\in G$ be such that $C^g$ is a preimage of $\overline{C}$ under the map $\varphi$.
Then $\ov C^{\ov g}=\ov C$, and so $\ov g\in \ov G_{\ov C}=\l\ov b\ov w\r=\l bwN\r$.
Thus $g\in\l bw\r N$, namely, $g=(bw)^ix$ for some element $x\in N$, and so  $C^g=C^x$.
The cardinality $|\{C^x\mid x\in N\}|=\frac{|N|}{|N\cap \l bw\r|}$, that is,
the cycle $\ov C$ has $\frac{|N|}{|N\cap\l bw\r|}$ preimages.
\qed

Next, we consider the relation between $\calM$ and $\calD_N$.
Let $\widehat C$ be a unit Euclidean disc
\[\widehat C=\{\rho\bfe^{\theta\bfi}\mid 0\le\rho\le 1,\ 0\le\theta<2\pi\}\]
with boundary cycle $C$, namely,
\[C=\{\bfe^{\theta\bfi}\mid 0\le\theta<2\pi\}.\]
Let $r=\ell/\ov\ell$, where $\ell=|bw|$ and $\overline{\ell}=|\overline{b}\overline{w}|$.
Define the following function
\[\Psi:\ \rho\bfe^{j\theta\bfi}\mapsto (\rho\bfe^{j\theta\bfi})^r,\ \ \mbox{where $0\le\rho\le 1$ and $0\le\theta<2\pi$.}\]

\begin{lemma}\label{C-cover-C}
The mapping $\Psi$ is a $|N\cap\l bw\r|$-sheeted covering of $\widehat{\ov C}$ by $\widehat C$,
with the origin being a ramification point if $|bw|\neq |\ov b\ov w|$.
Moreover, $\Psi$ and $\varphi$ coincide at vertices and edges on the cycle $C=C(b,w)$.
\end{lemma}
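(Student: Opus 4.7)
My plan is to first identify $r=\ell/\ov\ell$ in group-theoretic terms. Since $\ov\ell=|\ov b\ov w|$ is the smallest positive integer $k$ with $(bw)^k\in N$, the intersection $\l bw\r\cap N$ is the cyclic subgroup generated by $(bw)^{\ov\ell}$ and has order $\ell/\ov\ell=r$; thus $r=|N\cap\l bw\r|=m$, and $r>1$ precisely when $|bw|\neq|\ov b\ov w|$. With this identification, $\Psi(z)=z^r$ is the classical $r$-th power map on the closed unit disk, which is an unramified $r$-to-$1$ covering off the origin and sends the origin to itself with multiplicity $r$. This immediately gives the first assertion of the lemma.

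For the second assertion, I will parameterize the cycles $C$ and $\ov C$ concretely on their respective disks. Placing the $2\ell$ consecutive vertices of $C$ at the angles $\theta_k=\pi k/\ell$ for $k=0,1,\dots,2\ell-1$, the edge in position $k$ of $C$ occupies the boundary arc between $\theta_k$ and $\theta_{k+1}$. In the order listed in~(\ref{cycle-1}), this labels the edges $1,b^{-1},(bw)^{-1},b^{-1}(bw)^{-1},\dots$ at positions $0,1,2,3,\dots$, with vertices $\l w\r,\l b\r,\l w\r(bw)^{-1},\dots$ at angles $\theta_0,\theta_1,\theta_2,\dots$. The analogous parameterization on $\widehat{\ov C}$ uses $\ov\theta_k=\pi k/\ov\ell$ for the $2\ov\ell$ vertices and edges of $\ov C$ listed in~(\ref{quo-cycle}).

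Under $\Psi$, the boundary point $\bfe^{\theta_k\bfi}$ is sent to $\bfe^{r\theta_k\bfi}=\bfe^{\ov\theta_k\bfi}$, with indices interpreted modulo $2\ov\ell$, so the vertex (resp.\ edge) in position $k$ of $C$ lands on the vertex (resp.\ edge) in position $k\bmod 2\ov\ell$ of $\ov C$. Matching this with the formulas already derived in the proof of Lemma~\ref{phi} --- namely $(bw)^{-(j\ov\ell+i)}\mapsto (\ov b\ov w)^{-i}$ and $b^{-1}(bw)^{-(j\ov\ell+i)}\mapsto \ov b^{-1}(\ov b\ov w)^{-i}$ for $0\le i<\ov\ell$ and $0\le j<r$ --- shows that $\Psi$ and $\varphi$ make the same identifications on the vertices and edges of $C$. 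The only step requiring genuine care is verifying the identity $r=|N\cap\l bw\r|$, which is what aligns the analytic $r$-th power map with the combinatorial $N$-action of $\varphi$; once that is in place, the rest of the verification reduces to comparing two indexings modulo $2\ov\ell$.
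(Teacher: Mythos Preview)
Your proof is correct and follows the paper's approach for the first assertion: both identify $r=\ell/\ov\ell$ with $|N\cap\l bw\r|$ via the isomorphism $\l bw\r N/N\cong\l bw\r/(\l bw\r\cap N)$ and then invoke the standard fact that $z\mapsto z^r$ is an $r$-sheeted covering of the punctured disk ramified only at the origin. Your treatment of the ``Moreover'' clause is in fact more explicit than the paper's, which does not spell out this verification in its proof; your angular parameterization and position-matching modulo $2\ov\ell$ against the formulas of Lemma~\ref{phi} is the natural way to justify the agreement of $\Psi$ and $\varphi$ on the boundary cycle.
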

\proof
By definition, $\Psi$ is a $r$-to-1 mapping from $C\setminus\{0\}$ to $\ov C\setminus\{0\}$,
and fixes the origin $0$.
That is to say, $\Psi$ wraps the disc $C$ around the origin $r$ times to give rise to the disc $\ov C$.
Thus $C$ is an $r$-sheeted covering of $\ov C$,
where $r=\frac{|\l bw\r|}{|\l \overline{b}\ov w\r|}=\frac{|\l bw\r|}{|\l bw\r N/N|}=|\l bw\r\cap N|$,
with the origin being one ramification point if $|bw|\neq |\overline{b}\overline{w}|$.
\qed

\subsection{Euler characteristics}\

In this section, we investigate the relations of the Euler characteristics when taking normal quotient. Let $\calD=\calD(G, b,w)$ be a regular dessin, and let $N$ be a normal subgroup of $G$.

\begin{lemma}\label{cover-genus}
$\chi(\calD)\leq|N|\chi(\calD_N)$, and the equality holds if and only if $N$ is semiregular on
the vertex set and the face set of $\calD$.
\end{lemma}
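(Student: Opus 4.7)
The plan is to reduce the inequality to an elementary count by expressing $|V|$, $|E|$, $|F|$ in terms of $|V_N|$, $|E_N|$, $|F_N|$ using the fiber sizes already computed in Lemmas~\ref{graph-homo} and~\ref{phi}, and then comparing term by term with $|N|\chi(\calD_N)$.

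First I will partition $V=B\cup W$ and $V_N=B_N\cup W_N$ into black and white vertices and apply Lemma~\ref{graph-homo}(1) to get
\[
|B|=\frac{|N|}{|N\cap\l b\r|}\,|B_N|,\qquad |W|=\frac{|N|}{|N\cap\l w\r|}\,|W_N|.
\]
Lemma~\ref{graph-homo}(2) gives $|E|=|N|\,|E_N|$, and Lemma~\ref{phi} yields
\[
|F|=\frac{|N|}{|N\cap\l bw\r|}\,|F_N|.
\]
Substituting these into $\chi(\calD)=|B|+|W|-|E|+|F|$ and subtracting from $|N|\chi(\calD_N)=|N|(|B_N|+|W_N|-|E_N|+|F_N|)$ produces
\[
|N|\chi(\calD_N)-\chi(\calD)=\Bigl(|N|-\tfrac{|N|}{|N\cap\l b\r|}\Bigr)|B_N|+\Bigl(|N|-\tfrac{|N|}{|N\cap\l w\r|}\Bigr)|W_N|+\Bigl(|N|-\tfrac{|N|}{|N\cap\l bw\r|}\Bigr)|F_N|.
\]
Each of the three coefficients is nonnegative since $|N\cap H|\geqslant 1$ for any subgroup $H$, so the inequality $\chi(\calD)\leqslant|N|\chi(\calD_N)$ follows at once.

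The equality case is then transparent: the right-hand side vanishes exactly when $|N\cap\l b\r|=|N\cap\l w\r|=|N\cap\l bw\r|=1$. To translate this back into a semiregularity statement, I will use that the stabilizer in $N$ of the vertex $\l b\r g$ is $N\cap g^{-1}\l b\r g$, which by the normality of $N$ in $G$ is conjugate (hence equal in order) to $N\cap\l b\r$; similarly for white vertices and, using Lemma~\ref{face-stabilizer}, for the face stabilizers which are conjugates of $N\cap\l bw\r$. Therefore the three intersections are trivial if and only if $N$ acts semiregularly on $B$, on $W$, and on $F$, i.e.\ semiregularly on the vertex set and on the face set. (No edge condition appears, since $G$, and hence $N$, is already semiregular on $E$.) This step is essentially bookkeeping, so I anticipate no serious obstacle; the only subtle point is confirming that the conjugation argument handles \emph{all} vertex/face stabilizers uniformly, which is automatic because $N\lhd G$.
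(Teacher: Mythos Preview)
Your proof is correct and essentially the same as the paper's: the paper writes $\chi(\calD)=|G|\bigl(\tfrac{1}{|b|}+\tfrac{1}{|w|}-1+\tfrac{1}{|bw|}\bigr)$ and $\chi(\calD_N)=|\ov G|\bigl(\tfrac{1}{|\ov b|}+\tfrac{1}{|\ov w|}-1+\tfrac{1}{|\ov{bw}|}\bigr)$ and then compares $\tfrac{1}{|b|}\le\tfrac{1}{|\ov b|}$ etc., which is exactly your fiber-size inequality $\tfrac{|N|}{|N\cap\l b\r|}\le |N|$ rewritten via the identity $|b|/|\ov b|=|N\cap\l b\r|$. Your Riemann--Hurwitz style presentation (isolating the nonnegative defect on each of $B_N,W_N,F_N$) is a slightly more geometric packaging, but the argument and the equality analysis are identical in substance.
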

\proof
The characteristic $\chi(\calD)=|V|-|E|+|F|=|G|({\frac{1}{|b|}}+{\frac{1}{|w|}}-1+{\frac{1}{|bw|}})$, and
\[\begin{array}{lll}
\chi(\calD_N) &=&|V_N|-|E_N|+|F_N|=|\ov G|({\frac{1}{|\ov b|}}+{\frac{1}{|\ov w|}}-1+{\frac{1}{|\ov b\ov w|}})\\
&\geq&{\frac{|G| }{|N| }}({\frac{1 }{ |b|}}+{\frac{1}{|w|}}-1+{\frac{1}{|bw|}})\\
&=& {\frac{1}{|N|}}\chi(\calD).
\end{array}\]
The equality holds if  and only if ${\frac{1}{|b|}}+{\frac{1}{|w|}}-1+{\frac{1}{|bw|}}={\frac{1}{|\ov b|}}+{\frac{1}{|\ov w|}}-1+{\frac{1}{|\ov b\ov w|}}$, and if and only if $|b|=|\ov b|$, $|w|=|\ov w|$ and $|bw|=|\ov b\ov w|$.
It follows that $\chi(\calD)=|N|\chi(\calD_N)$ if and only if $N$ is semiregular on the vertex set and the face set of $\calD$.
\qed

We may identify a dessin with the supporting surface so that the normal quotient of a map corresponds to surface covering.
The following theorem characterises branched points of a covering of regular dessins.

\begin{theorem}\label{quotient-dessin}
Let $N\lhd G=\l b,w\r$, and let $\ov G=G/N$, $\ov b=bN, \ov w=wN$.
Then $\calD=\calD(G,b,w)$ is a $|N|$-sheeted covering of $\calD_N=\calD(\ov G,\ov b,\ov w)$ with
$|\ov G|({\frac{i}{|\ov b|} }+{\frac{j}{|\ov w|} }+{\frac{k}{|\ov b\ov w|} })$
ramification points with $i,j,k\in\{0,1\}$ such that
\[\mbox{$i=0\Longleftrightarrow\l w\r\cap N=1$, and $j=0\Longleftrightarrow\l b\r\cap N=1$, and
$k=0\Longleftrightarrow\l bw\r\cap N=1$.}\]
Furthermore, $\chi(\calM)\leq|N|\chi(\calM_N)$, and the equality $\chi(\calM)=|N|\chi(\calM_N)$ holds if and only if
$\calM$ is a smooth covering of $\calM_N$, and if and only if $N$ is semiregular on $V\cup F$.
\end{theorem}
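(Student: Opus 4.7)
The plan is to combine Lemmas~\ref{graph-homo}, \ref{phi} and \ref{C-cover-C} to realise the natural map $\varphi\colon G\to\ov G$ as a branched covering of the supporting surfaces of $\calD$ and $\calD_N$, then enumerate ramification points by tallying the contributions of black vertices, white vertices and face centres, and finally deduce the Euler characteristic inequality from Lemma~\ref{cover-genus}.

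First I would extend $\varphi$ to a continuous surjection of supporting surfaces. On interiors of edges, Lemma~\ref{graph-homo}(2) already says that each edge of $\calD_N$ has exactly $|N|$ disjoint preimages, so the map is locally $|N|$-to-$1$ there. On the interior of a face $\ov C\ov g$ of $\calD_N$, Lemma~\ref{phi} gives $|N|/|\l bw\r\cap N|$ preimage discs in $F$, and Lemma~\ref{C-cover-C} exhibits each as an $|\l bw\r\cap N|$-sheeted covering of the target disc, with potential ramification only at the face centre; multiplying yields $|N|$ preimages for every non-central face point. The same analysis applies at vertices: a disc neighbourhood of $\l \ov b\r \ov g$ in $\calD_N$ is assembled from $|\ov b|$ consecutive face discs, and its preimage is a union of $|N|/|\l b\r\cap N|$ such vertex discs of $\calD$, each wrapping $|\l b\r\cap N|$ times onto the target, with ramification only at the vertex itself; similarly for white vertices. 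Hence $\calD$ is a $|N|$-sheeted covering of $\calD_N$.

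Next I would identify the ramification set. By the local analysis above, a point of $\calD_N$ has strictly fewer than $|N|$ preimages exactly when it is a black vertex with $\l b\r\cap N\ne 1$, a white vertex with $\l w\r\cap N\ne 1$, or a face centre with $\l bw\r\cap N\ne 1$; in each of these three cases, either all such vertices (respectively all face centres) ramify or none do. Since $\calD_N$ has $|\ov G|/|\ov b|$ black vertices, $|\ov G|/|\ov w|$ white vertices and $|\ov G|/|\ov b\ov w|$ faces, introducing $\{0,1\}$-valued indicators for the three non-triviality conditions immediately produces the stated formula $|\ov G|\bigl(i/|\ov b|+j/|\ov w|+k/|\ov b\ov w|\bigr)$.

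For the final assertion, Lemma~\ref{cover-genus} already gives $\chi(\calM)\le |N|\chi(\calM_N)$ and tells us that equality holds iff $N$ acts semiregularly on $V\cup F$. Semiregularity on $V\cup F$ is equivalent to $\l b\r\cap N=\l w\r\cap N=\l bw\r\cap N=1$, which in turn is equivalent to $(|b|,|w|,|bw|)=(|\ov b|,|\ov w|,|\ov{bw}|)$, i.e.\ to $\calM$ being a smooth covering of $\calM_N$ in the sense of Definition~\ref{smooth-extension}(b); and by the enumeration above this is also equivalent to the ramification set being empty. The most delicate step is the local branched-covering picture at a vertex, where a disc neighbourhood of $\calD$ is glued from $|b|$ (resp.\ $|w|$) face discs and must be matched with a disc neighbourhood of $\calD_N$ glued from $|\ov b|$ (resp.\ $|\ov w|$) face discs in the quotient; once this verification is in place, the remainder is bookkeeping.
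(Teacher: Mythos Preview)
Your proposal is correct and follows essentially the same route as the paper: combine Lemmas~\ref{graph-homo}, \ref{phi} and \ref{C-cover-C} to see the quotient map is $|N|$-to-$1$ away from vertices and face centres, then count the three possible classes of ramification points and invoke Lemma~\ref{cover-genus} for the Euler characteristic clause. Your treatment of the local picture at a vertex is in fact slightly more explicit than the paper's, which simply appeals to the preimage count in Lemma~\ref{graph-homo}(1); note also that the labeling you obtain ($i=0\Leftrightarrow\l b\r\cap N=1$, $j=0\Leftrightarrow\l w\r\cap N=1$) agrees with the paper's \emph{proof} rather than the theorem \emph{statement}, which has the roles of $b$ and $w$ swapped.
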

\proof
We use the notation defined above.
Let $\varphi$ be the homomorphism from $\calM$ to $\calM_N$.
By Lemmas~\ref{phi}, each interior point on an edge of $\calM_N$ has exactly $|N|$ preimages.
By Lemmas~\ref{phi} and \ref{C-cover-C}, an interior point of a face $\ov C$ has exactly
$${\frac{|N|}{|N\cap\l bw\r|}}\cdot|N\cap\l bw\r|=|N|$$
preimages with one possible exception of a distinguished interior point.
Therefore, $\calD$ is a $|N|$-sheeted covering of $\calD_N$, and
the only possible ramification points are some vertices and some distinguished interior points of faces.
We next compute the ramification points.

The black vertex $\l \ov b\r$ of $\calM_N$ has precisely ${\frac{|N|}{|N\cap\l b\r|}}$ preimages $\{\l b\r x\mid x\in N\}$.
Thus $\l \ov b\r$ is not a ramification point if and only if ${\frac{|N|}{|N\cap\l b\r|}}=|N|$,
which is equivalent to $|N\cap\l b\r|=1$, namely, $N$ is semiregular on $[G:\l b\r]$.
By the transitivity, either no vertex in $[\ov G:\l\ov b\r]$ is ramification point, or
all of the vertices in $[\ov G:\l\ov b\r]$ are ramification points.
Similarly, the white vertex $\l\ov w\r$ is not a ramification point if and only if $N$ is semiregular on $[G:\l w\r]$, and either no white vertex is ramification point, or each white vertex is a ramification point.
Arguing similarly shows that the face $\overline{C}=\l \ov b\ov w\r$ contains no ramification point if and only if
$|\ov b\ov w|=|bw|$, and since $G$ is transitive on the face set, each face in $F_N=[\ov G:\l\ov b\ov w\r]$
contains $k$ ramification point, where $k=0$ or 1.
We thus conclude that  there are
\[i|\ov G:\l\ov b\r|+j|\ov G:\l\ov w\r|+k|\ov G:\l\ov b\ov w\r|=|\ov G|({\frac{i}{|\ov b|} }+{\frac{j}{|\ov w|} }+{\frac{k}{|\ov b\ov w|} })\]
ramification points, where $i,j,k=0$ or $1$, such that
$i=0$ if and only if $\l b\r\cap N=1$, $j=0$ if and only if $\l w\r\cap N=1$, and $k=0$ if and only if
$\l bw\r\cap N=1$.

The conclusion for the Euler characteristics of $\calD$ and $\calD_N$ is justified by Lemma~\ref{cover-genus}.
This completes the proof of Theorem~\ref{quotient-dessin}.
\qed

Suppose that the Euler characteristic $\chi(\calD_N)$ of the quotient dessin is negative.
By Lemma~\ref{cover-genus}, the ratio $\frac{\chi(\calD)}{\chi(\calD_N)}$ has a lower bound $|N|$, and the lower bound is reached if and only if $\calD$ is a smooth covering of $\calD_N$. To analyze the upper bound of $\frac{\chi(\calD)}{\chi(\calD_N)}$, we give a Hurwitz-like bound for regular dessin first in the following lemma.

\begin{lemma}\label{h-bound-dessin}
 Let $\calD=\calD(G, b,w)$ be a regular dessin with negative Euler characteristic $\chi(\calD)$. Then
 \[|G|\le 42|\chi(\calD)|,\]
 and the equality holds if and only if
 \[\{|b|, |w|, |bw|\}=\{2,3,7\}. \]
\end{lemma}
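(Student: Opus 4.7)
The plan is to convert the bound into a purely arithmetic statement about the three orders $|b|$, $|w|$, $|bw|$, and then establish a Hurwitz-type inequality by a short case analysis. The starting point, already recorded in the proof of Lemma~\ref{cover-genus}, is the identity
\[
\chi(\calD) = |V| - |E| + |F| = |G|\left(\frac{1}{|b|} + \frac{1}{|w|} + \frac{1}{|bw|} - 1\right).
\]
Since $\chi(\calD) < 0$, this rearranges to $|\chi(\calD)| = |G|\bigl(1 - \tfrac{1}{|b|} - \tfrac{1}{|w|} - \tfrac{1}{|bw|}\bigr)$, and hence
\[
|G| \leq 42\,|\chi(\calD)| \iff 1 - \frac{1}{|b|} - \frac{1}{|w|} - \frac{1}{|bw|} \geq \frac{1}{42}.
\]
So everything reduces to the following purely numerical claim: for positive integers $\ell \leq m \leq n$ with $\tfrac{1}{\ell} + \tfrac{1}{m} + \tfrac{1}{n} < 1$, one has $1 - \tfrac{1}{\ell} - \tfrac{1}{m} - \tfrac{1}{n} \geq \tfrac{1}{42}$, with equality exactly at $(\ell,m,n)=(2,3,7)$.

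I would first observe that if one of $|b|,|w|,|bw|$ equals $1$, then $G$ is cyclic and $\chi(\calD)\geq 0$, contrary to hypothesis; so $\ell,m,n\geq 2$. Then I would run the standard case split on $(\ell,m,n)$ with $\ell\leq m\leq n$. If $\ell\geq 3$, then $\tfrac{1}{\ell}+\tfrac{1}{m}+\tfrac{1}{n}<1$ forces the extremal sub-case to be $(3,3,k)$ with $k\geq 4$, giving $1 - \text{sum}\geq \tfrac{1}{12}$. If $\ell=2$ and $m\geq 4$, then $\tfrac{1}{m}+\tfrac{1}{n}<\tfrac{1}{2}$ forces $(m,n)$ lexicographically at least $(4,5)$, yielding $1-\text{sum}\geq \tfrac{1}{20}$. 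In the remaining case $\ell=2,\, m=3$, the constraint $\tfrac{1}{n}<\tfrac{1}{6}$ gives $n\geq 7$, and
\[
1 - \frac{1}{2} - \frac{1}{3} - \frac{1}{n} = \frac{1}{6} - \frac{1}{n}
\]
is minimized at $n=7$, with value precisely $\tfrac{1}{42}$, and is strictly larger for every $n\geq 8$. This isolates $\{|b|,|w|,|bw|\}=\{2,3,7\}$ as the unique equality case.

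I do not anticipate a genuine obstacle: the argument is the well-known arithmetic heart of the classical Hurwitz bound $|G|\leq 84(\mathsf{g}-1)$, transported to the dessin setting via the Euler-characteristic formula supplied by Lemma~\ref{cover-genus}. The only care required is to keep the signs straight when $\chi(\calD)<0$, and to verify the finitely many "small" triples $(\ell,m,n)$ that could conceivably violate the $\tfrac{1}{42}$ bound.
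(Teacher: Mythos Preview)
Your proposal is correct and follows essentially the same approach as the paper: both start from the Euler-characteristic identity $\chi(\calD)=|G|\bigl(\tfrac{1}{|b|}+\tfrac{1}{|w|}+\tfrac{1}{|bw|}-1\bigr)$, reduce the bound to the arithmetic inequality $1-\tfrac{1}{\ell}-\tfrac{1}{m}-\tfrac{1}{n}\geq\tfrac{1}{42}$, and verify it by the same short case split on the smallest of the three orders. Your extra observation ruling out an order equal to $1$ is a harmless addition, and the case $(\ell,m)=(2,2)$ that the paper excludes explicitly is handled automatically in your framing by the strict inequality $\tfrac{1}{\ell}+\tfrac{1}{m}+\tfrac{1}{n}<1$.
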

\proof
 Since the characteristic
 \[\chi(D)=|V|-|E|+|F|=|G|(\frac{1}{|b|}+\frac{1}{|w|}+\frac{1}{|bw|}-1)<0,\]
 we have $\frac{1}{|b|}+\frac{1}{|w|}+\frac{1}{|bw|}-1<0$ and
 \[\frac{|G|}{|\chi(D)|}=\frac{1}{1-(\frac{1}{|b|}+\frac{1}{|w|}+\frac{1}{|bw|})}.\]
Without loss of generality, assume that $|b|\le |w|\le |bw|$.
If $|b|\ge 3$, then $\frac{1}{|b|}+\frac{1}{|w|}+\frac{1}{|bw|}<1$ implies that $|bw|\ge 4$, and hence $\frac{|G|}{|\chi(\calD)|}\le \frac{1}{1-(\frac{1}{3}+\frac{1}{3}+\frac{1}{4})}=12$.
Note that $(|b|,|w|)\neq (2,2)$.
If $|b|=2$ and $|w|=3$, then $|bw|\ge 7$, and hence
\begin{equation*}
    \frac{|G|}{|\chi(D)|}=\frac{1}{1-(\frac{1}{|b|}+\frac{1}{|w|}+\frac{1}{|bw|})}\le \frac{1}{1-(\frac{1}{2}+\frac{1}{3}+\frac{1}{7})}=42.
\end{equation*}
If $|b|=2$ and $|w|>3$, then $|bw|\ge 5$, and hence $\frac{|G|}{|\chi(D)|}\le \frac{1}{1-(\frac{1}{2}+\frac{1}{4}+\frac{1}{5})}=20$.
We conclude that \[|G|\le 42|\chi(\calD)|.\]
  Moreover, the equality holds if and only if $(|b|, |w|, |bw|)=(2,3,7)$.
\qed

A \emph{Hurwitz group} is a finite group which can be generated by two elements $x,y$ such that $(|x|, |y|, |xy|)=(2,3,7)$.
It was proved by Hurwitz that a compact Riemann surface with Euler characteristic $\chi$ admits at most $84|\chi|$ conformal automorphisms. The upper bound is reached only when the full automorphism group is a Hurwitz group. For more about the Hurwitz group, see \cite{Conder1990}. By Lemma~\ref{h-bound-dessin}, a similar phenomenon happens for regular dessins.
A regular dessin $\calD(G,b,w)$ is called a Hurwitz dessin if $\{|b|, |w|, |bw|\}=\{2,3,7\}$.

\begin{theorem}\label{upbound}
 Let $N\lhd G=\l b,w\r$, and let $\overline{G}=G/N$, $\ov b=bN, \ov w=wN$.
Set $\calD=\calD(G,b,w)$ and $\calD_N=\calD(\ov G,\ov b,\ov w)$. Suppose that $\chi(\calD_N)<0$. Then
\[\frac{\chi(\calD)}{\chi(\calD_N)}\le 42|N|-41.\]
Moreover, the equality holds if and only if $\calD_N$ is a Hurwitz dessin and $\calD$ is a totally branched covering of $\calD_N$.
\end{theorem}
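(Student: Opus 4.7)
The plan is to express both Euler characteristics via the orders $|b|,|w|,|bw|$ and their images mod $N$, and then bound the ratio by a two-step estimate that becomes an equality precisely at the ``totally branched and Hurwitz'' configuration.

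Write $s=\tfrac{1}{|b|}+\tfrac{1}{|w|}+\tfrac{1}{|bw|}$ and $\bar s=\tfrac{1}{|\ov b|}+\tfrac{1}{|\ov w|}+\tfrac{1}{|\ov b\ov w|}$. The coset-geometry description gives $|V|=|G|/|b|+|G|/|w|$, $|E|=|G|$ and $|F|=|G|/|bw|$, so that
\[\chi(\calD)=|G|(s-1)<0 \quad\text{and}\quad \chi(\calD_N)=|\ov G|(\bar s-1)<0,\]
and consequently
\[\frac{\chi(\calD)}{\chi(\calD_N)}=\frac{|N|(1-s)}{1-\bar s}.\]

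The bound is now obtained by two separate estimates. First, using $|\ov b|=|b|/|\l b\r\cap N|$ together with $|\l b\r\cap N|\le|N|$ (and the analogues for $w$ and $bw$), I would deduce $|N|/|b|\ge 1/|\ov b|$ etc., and summing gives $|N|s\ge\bar s$, whence
\[|N|(1-s)=|N|-|N|s\;\le\; |N|-\bar s.\]
Second, Lemma~\ref{h-bound-dessin} applied to $\calD_N$ yields $\bar s\le 41/42$, and then the elementary identity
\[(|N|-\bar s)-(42|N|-41)(1-\bar s)=-(|N|-1)(41-42\bar s)\]
is non-positive, so $(|N|-\bar s)/(1-\bar s)\le 42|N|-41$. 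Chaining the two estimates yields the asserted upper bound.

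For the equality characterization, assuming $N\ne 1$: the first estimate is tight if and only if $|\l b\r\cap N|=|\l w\r\cap N|=|\l bw\r\cap N|=|N|$, i.e.\ $N\le\l b\r\cap\l w\r\cap\l bw\r$. Because $N$ is normal, this means $N$ lies in every conjugate of $\l b\r,\l w\r,\l bw\r$, hence in every vertex- and face-stabilizer of $\calD$; by Definition~\ref{def-cover}(c) this is exactly the condition that $\calD$ is a totally branched covering of $\calD_N$. The second estimate is tight if and only if $\bar s=41/42$, and the case analysis in the proof of Lemma~\ref{h-bound-dessin} shows that this forces $\{|\ov b|,|\ov w|,|\ov b\ov w|\}=\{2,3,7\}$, i.e.\ $\calD_N$ is a Hurwitz dessin. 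The converse is immediate: both conditions together saturate each step. I do not foresee a genuine obstacle; the only subtle ingredient is the uniqueness of $\{2,3,7\}$ as the admissible triple with $\bar s=41/42$, which is already packaged in the equality case of Lemma~\ref{h-bound-dessin}.
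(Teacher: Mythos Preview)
Your argument is correct and follows essentially the same route as the paper: both proofs use the trivial bound $|x|\le |N|\,|\ov x|$ for $x\in\{b,w,bw\}$ to get the first inequality, and then invoke the Hurwitz-type bound of Lemma~\ref{h-bound-dessin} for the second. The only cosmetic difference is that the paper rewrites the intermediate quantity as $1+(|N|-1)\dfrac{|\ov G|}{-\chi(\calD_N)}$ before applying $\dfrac{|\ov G|}{-\chi(\calD_N)}\le 42$, whereas you verify the equivalent algebraic identity $(|N|-\bar s)-(42|N|-41)(1-\bar s)=-(|N|-1)(41-42\bar s)$ directly; the equality analyses coincide.
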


\proof
  Set $n=|N|$. Noting that $\ov b$ is the image of $b$ in $\ov G$, we have $|b|\le n |\ov b|$. Hence, $ -\frac{1}{|b|} \le -\frac{1}{n|\ov b|}$. Similarly, $-\frac{1}{|w|} \le -\frac{1}{n|\ov w|}$ and $-\frac{1}{|bw|} \le -\frac{1}{n|\ov{bw}|}$. Thus
  \begin{align}
   \frac{\chi(\calD)}{\chi(\calD_N)}&=\frac{-\chi(\calD)}{-\chi(\calD_N)}=\frac{|G|(1-\frac{1}{|b|}-\frac{1}{|w|}-\frac{1}{|bw|})}{-\chi(\calD_N)}
   \le \frac{|G|(1-\frac{1}{n|\ov b|}-\frac{1}{n|\ov w|}-\frac{1}{n|\ov{bw}|})}{-\chi(\calD_N)}\label{up-ineqn-1}\\
   &= \frac{|G/N|\big((1-\frac{1}{|\ov b|}-\frac{1}{|\ov w|}-\frac{1}{|\ov{bw}|}) +(n-1)\big)}{-\chi(\calD_N)}=1+(n-1)\frac{|G/N|}{-\chi(\calD_N)}\notag \\
   &\le 1+42(n-1) =42n-41=42|N|-41. \label{up-ineqn-2}
  \end{align}
  	The equality in (\ref{up-ineqn-1}) holds if and only if $\frac{|b|}{|\ov b|}=\frac{|w|}{|\ov w|}=\frac{|bw|}{|\ov{bw}|}=n$. That is the covering $\calD$ of $\calD_N$ is totally branched.
  	The equality in (\ref{up-ineqn-2}) holds if and only if $\calD_N$ is a Hurwitz dessin. This completes our proof.
\qed

\section{Unicellular regular dessins}\label{abel-sec}

In this section, we will numerate the unicellular regular dessins formed from a polygon of length $2\ell$,
and we will give a number theoretic decomposition of an integer $\ell$ by counting the number of unicellular regular dessins.

Unicellular regular dessins are characterized by the following proposition, which was obtained in \cite{1-face}.

\begin{proposition}\label{uni-face}
\begin{itemize}
\item[(1)]A bipartite graph $\Gamma$ underlies a unicellular regular dessin if and only if $\Gamma=\K_{m,n}^{(\lambda)}$ such that $\gcd(m,n)=1$ and $mn$ is even whenever $\lambda$ is even.
\item[(2)]A regular dessin $\calD(G,b,w)$ is unicellular if and only if $G=\l bw\r$ is cyclic.
\end{itemize}
\end{proposition}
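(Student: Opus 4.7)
My plan is to dispose of part~(2) by direct appeal to Lemma~\ref{1-cycle} and then to handle part~(1) by converting ``$\Gamma$ underlies a unicellular regular dessin'' into a question about generating pairs of a cyclic group.

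Part~(2) should fall out immediately: Lemma~\ref{G2} says every regular dessin is a coset regular dessin $\calD(G,b,w)$, and Lemma~\ref{1-cycle} already gives $|F|=1$ if and only if $G=\l bw\r$ is cyclic. I will simply invoke these two results and move on.

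For the necessity direction of part~(1), I will assume $\Gamma$ underlies a unicellular $\calD=\calD(G,b,w)$; by part~(2), $G=\l g\r$ is cyclic of some order $\ell$, so I can write $b=g^i$, $w=g^j$ and put $m=\gcd(i,\ell)=[G:\l b\r]$, $n=\gcd(j,\ell)=[G:\l w\r]$. Since $G$ is abelian, $\l b\r\l w\r=\l b,w\r=G$, so by observation~(iv) in Section~\ref{coset regular dessins-sec} the underlying graph is complete bipartite, $\Gamma=\K_{m,n}^{(\lambda)}$, with $\lambda=|\l b\r\cap\l w\r|=\ell/\lcm(m,n)$ by observation~(ii). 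Counting edges then gives $\ell=mn\lambda$, which forces $\lcm(m,n)=mn$ and hence $\gcd(m,n)=1$. For the parity clause, I will write $b=g^{am}$, $w=g^{cn}$ with $\gcd(a,n\lambda)=\gcd(c,m\lambda)=1$ and observe that if $mn$ is odd while $\lambda$ is even, then $2\mid n\lambda$ and $2\mid m\lambda$ force both $a$ and $c$ to be odd, so $am+cn$ is even; hence $bw=g^{am+cn}\in\l g^2\r\neq G$, contradicting $G=\l bw\r$.

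For sufficiency, given $(m,n,\lambda)$ with $\gcd(m,n)=1$ and ($\lambda$ odd or $mn$ even), I will take $G=\l g\r\cong\ZZ_{mn\lambda}$ and construct $a,c$ with $\gcd(a,n\lambda)=\gcd(c,m\lambda)=1$ and $\gcd(am+cn,\ell)=1$, so that $b=g^{am}$, $w=g^{cn}$ yield the desired $\calD(G,b,w)$. The existence of $a,c$ I will verify prime by prime using CRT: for $p\mid m$ the assumption $\gcd(c,m\lambda)=1$ together with $p\nmid n$ automatically yields $p\nmid am+cn$, and symmetrically for $p\mid n$; for an odd prime $p\mid\lambda$ with $p\nmid mn$ the forbidden congruence $am+cn\equiv 0\pmod p$ excludes one residue class of $a$ modulo $p$, leaving at least $p-2\geq 1$ admissible choices. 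The main obstacle, and the only place where the hypothesis bites, is $p=2$ with $2\mid\lambda$ and $2\nmid mn$: here the coprimality conditions force both $a$ and $c$ to be odd, making $am+cn$ inevitably even. This case is precisely the one excluded by ``$\lambda$ even $\Rightarrow mn$ even,'' so the CRT construction produces the required $a,c$ and completes the proof.
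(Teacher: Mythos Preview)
Your proof is correct. Part~(2) is indeed immediate from Lemma~\ref{1-cycle}, and your treatment of part~(1) via generating pairs of a cyclic group, checking the parity obstruction prime by prime, is sound. One small point worth tightening in the sufficiency direction: your CRT argument is cleanest if you first fix $c=1$ (which already satisfies $\gcd(c,m\lambda)=1$) and then solve for $a$ one prime at a time; as written, it is slightly ambiguous whether $a$ and $c$ are being chosen simultaneously or sequentially, though the count ``at least $p-2\ge1$ admissible choices'' goes through either way.

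As for comparison with the paper: the paper does not prove this proposition at all. It is quoted as a known result from \cite{1-face} and stated without argument. So your proof is not an alternative to the paper's proof but a self-contained derivation using the machinery of Section~\ref{coset regular dessins-sec} (Lemma~\ref{G2}, Lemma~\ref{1-cycle}, and the observations on bi-coset graphs). This is valuable in its own right, since it shows the proposition follows directly from the coset-geometry framework set up in the paper, without needing to appeal to the external reference.
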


Let $\calD$ be a unicellular regular dessin with face length $2\ell$, and let $H=\Aut\calD$.
Then each edge appears exactly two times on the boundary cycle of the unique face of $\calD$.
It follows from Lemma~\ref{1-cycle} that $H=\l h\r$ is a cyclic group of order $\ell$.
By Lemma~\ref{G2}, we have $\calD=\calD(\l h\r,b,w)$ with $\l b,w\r=\l h\r$,
and so $b=h^k$ and $w=h^{k'}$ for some integers $k,k'$. As $\calD(\l h\r,b,w)$ is unicellular, by Proposition~\ref{uni-face}(2),  $H=\l h\r=\l bw\r$.
Without loss of generality, we may assume that $h=bw=h^kh^{k'}=h^{k+k'}$, and hence $k'\equiv1-k$ $(\mod \ell)$.
Therefore,
\[\mbox{$\calD=\calD(\l h\r,h^k,h^{1-k})$, where $0\leq k\leq \ell-1$}.\]
\vskip0.1in

The following lemma gives a numeration of unicellular regular dessins which are formed by a polygon of length $2\ell$, and proves the statement in part~(1) of Theorem~\ref{Counting}.
\vskip0.1in

\begin{lemma}\label{number of uniface dessins}
There are exactly $\ell$ non-isomorphic unicellular regular dessins with face length $2\ell$, and the underlying graphs of these dessins are complete bipartite multigraphs.
\end{lemma}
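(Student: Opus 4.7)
The plan is to combine the parametrization of unicellular regular dessins just established with the isomorphism criterion from Lemma~\ref{iso}. The discussion preceding the lemma already shows that every unicellular regular dessin of face length $2\ell$ is of the form
\[\calD_k := \calD(\l h\r, h^k, h^{1-k}), \qquad k\in\{0,1,\dots,\ell-1\},\]
where $\l h\r\cong\ZZ_\ell$. So the numerical upper bound ``at most $\ell$'' is free; the work is to show (a) the $\ell$ candidates are pairwise non-isomorphic, and (b) each underlying graph is a complete bipartite multigraph.

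For (a), I would invoke Lemma~\ref{iso}: $\calD_{k_1}\cong\calD_{k_2}$ iff there is a group automorphism $\sigma$ of $\l h\r$ sending $(h^{k_1},h^{1-k_1})$ to $(h^{k_2},h^{1-k_2})$. Every such $\sigma$ has the form $h\mapsto h^t$ with $\gcd(t,\ell)=1$, so the isomorphism condition becomes the pair of congruences $k_1 t\equiv k_2$ and $(1-k_1)t\equiv 1-k_2\pmod\ell$. Adding them forces $t\equiv 1\pmod\ell$, hence $\sigma=\mathrm{id}$ and $k_1=k_2$. This gives pairwise non-isomorphism at once.

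For (b), I would appeal to Proposition~\ref{uni-face}(1), which states exactly that the underlying graph of any unicellular regular dessin is a complete bipartite multigraph $\K_{m,n}^{(\lambda)}$; alternatively one can argue directly from observation~(iv) on bi-coset graphs: the underlying graph $\Cos(\l h\r,\l h^k\r,\l h^{1-k}\r)$ is complete bipartite iff $\l h\r=\l h^k\r\l h^{1-k}\r$, and this product (a subgroup, since $\l h\r$ is abelian) contains $h^k\cdot h^{1-k}=h$ and so equals $\l h\r$.

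I do not expect any real obstacle here: the parametrization has done the heavy lifting, and the crux is just the one-line observation that $k_1 t\equiv k_2$ and $(1-k_1)t\equiv 1-k_2$ add to $t\equiv 1$. The only thing to watch is to make sure the parametrization genuinely exhausts all unicellular regular dessins up to isomorphism (which follows from Lemma~\ref{G2} combined with Proposition~\ref{uni-face}(2)), so that the count of $\ell$ classes is sharp rather than merely an upper bound.
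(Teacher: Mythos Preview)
Your proposal is correct and follows essentially the same approach as the paper: the paper also parametrizes by $\calD(\l h\r,h^k,h^{1-k})$, applies Lemma~\ref{iso}, and observes that any $\sigma\in\Aut(\l h\r)$ sending $(h^{k_1},h^{1-k_1})$ to $(h^{k_2},h^{1-k_2})$ must fix $h=h^{k_1}h^{1-k_1}$, hence $\sigma=1$ and $k_1=k_2$; your adding of the two congruences is exactly this computation written additively. The paper likewise verifies the complete bipartite claim via $\l h\r=\l h^k\r\l h^{1-k}\r$.
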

\proof Let $\calD$ be a unicellular regular dessin with face length $2\ell$. According to the above analysis we have \[\mbox{$\calD=\calD(\l h\r,h^k,h^{1-k})$, where $0\leq k\leq \ell-1$}.\]
By Definition~\ref{def-maps}, the underlying graphs of these dessins are $\Ga=\Cos(\l h\r,\l h^k\r, \l h^{1-k}\r)$, which are complete bipartite graphs because $\l h\r=\l h^k, h^{1-k}\r=\l h^k\r\l h^{1-k}\r$,
and the edge multiplicity equals $\l h^k\r\cap\l h^{1-k}\r$.

Suppose that $\calD(\l h\r,h^{k_1},h^{1-k_1})\cong\calD(\l h\r,h^{k_2},h^{1-k_2})$ and $k_1\neq k_2$.
By Lemma~\ref{iso}, there exists $1\neq\sigma\in \Aut(\l h\r)$ such that $(h^{k_1},h^{1-k_1})^\sigma=(h^{k_2},h^{1-k_2})$,
and so $$h^\sigma=(h^{k_1}h^{1-k_1})^\sigma=h^{k_2}h^{1-k_2}=h,$$
that is, $\sigma=1$.
This contradiction implies that $\calD(\l h\r,h^{k_1},h^{1-k_1})\cong\calD(\l h\r,h^{k_2},h^{1-k_2})$ if and only if $k_1=k_2$.
Hence, there are exactly $\ell$ non-isomorphic unicellular regular dessins with face length $2\ell$.
\qed

\vskip0.1in
 Next, we count the unicellular regular dessins of face length $2\ell$ from another perspective.
 Recall the definitions of the sets $T_\ell, \calU_\ell, \calU_\ell^{(\lambda)}$ and $\calK_{m,n}^{(\lambda)}$ in Subsection~\ref{sec:f-trans}.
To compute $|\calK_{m,n}^{(\lambda)}|$ (see Lemma~\ref{lem:(m,n,lambda)}), we introduce `direct product' of regular dessins.

\begin{definition}\label{x-def}
{\rm Given $s$ regular dessins $\calD_i=\calD(G_i,b_i,w_i), i=1,2,\cdots,s$ such that $|b_1|, \dots, |b_s|$ are pairwise coprime, and so are $|w_1|, \dots, |w_s|$.
 The map $$\calD(G_1\times \dots \times G_s,b_1\dots b_s,w_1\dots w_s)$$ is called the  {\it direct product} of $\calD_1, \dots, \calD_s$,
denoted by $\calD_1\times \dots\times \calD_s$ or simply $\prod \limits_{i=1}^s \calD_i$. }
\end{definition}

The following lemma gives the factorization of $|\calK_{m,n}^{(\lambda)}|$.
\begin{lemma}\label{lem:decompose}
Let $(m,n,\lambda)\in T_{\ell}$ and let $\pi(\ell)=\{p_1,...,p_s\}$.
Then we have \[|\calK_{m,n}^{(\lambda)}|=\prod_{i=1}^s|\calK_{m_{p_i},n_{p_i}}^{(\lambda_{p_i})}|.\]
\end{lemma}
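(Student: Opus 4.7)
The plan is to realize each dessin in $\calK_{m,n}^{(\lambda)}$ as a direct product of smaller unicellular dessins, one for each prime divisor of $\ell$. First I would use the description obtained in the paragraph just before Lemma~\ref{number of uniface dessins}: every unicellular regular dessin of face length $2\ell$ has the form $\calD(\l h\r, h^k, h^{1-k})$ for a unique $k$ with $0\le k\le\ell-1$, and two such are isomorphic only when their $k$'s coincide. A quick computation of vertex valencies (the black valency equals $|h^k|=\ell/\gcd(k,\ell)$ and the white valency equals $\ell/\gcd(1-k,\ell)$) shows that this dessin lies in $\calK_{m,n}^{(\lambda)}$ if and only if $\gcd(k,\ell)=m$ and $\gcd(1-k,\ell)=n$. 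So $|\calK_{m,n}^{(\lambda)}|$ equals the number of such $k$ modulo $\ell$.

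Next, by the Chinese Remainder Theorem, $\l h\r \cong \prod_{i=1}^s\l h_i\r$, where $\l h_i\r$ is the Sylow $p_i$-subgroup, of order $\ell_{p_i}$. Under this identification, the generators decompose as $h^k=(h_1^k,\dots,h_s^k)$ and $h^{1-k}=(h_1^{1-k},\dots,h_s^{1-k})$. Since the orders $|h_i^k|$ are pairwise coprime (each being a $p_i$-power), and likewise for $|h_i^{1-k}|$, the direct product of the $s$ dessins $\calD(\l h_i\r, h_i^k, h_i^{1-k})$ is defined in the sense of Definition~\ref{x-def}, and a direct comparison of their coset-geometry descriptions (Definition~\ref{def-maps}) should yield
\[
\calD(\l h\r, h^k, h^{1-k})\ \cong\ \prod_{i=1}^s\calD(\l h_i\r, h_i^k, h_i^{1-k}).
\]

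Then the arithmetic conditions factor primewise: $\gcd(k,\ell)=m$ and $\gcd(1-k,\ell)=n$ hold if and only if $\gcd(k,\ell_{p_i})=m_{p_i}$ and $\gcd(1-k,\ell_{p_i})=n_{p_i}$ for every $i$, and these are exactly the conditions placing each factor $\calD(\l h_i\r, h_i^k, h_i^{1-k})$ in $\calK_{m_{p_i},n_{p_i}}^{(\lambda_{p_i})}$. Before invoking this I would verify briefly that each triple $(m_{p_i},n_{p_i},\lambda_{p_i})$ genuinely belongs to $T_{\ell_{p_i}}$: multiplicativity $m_{p_i}n_{p_i}\lambda_{p_i}=\ell_{p_i}$ is automatic, coprimality of $m_{p_i}$ and $n_{p_i}$ follows from $\gcd(m,n)=1$, and the $2$-part condition is trivial for odd $p_i$ and reduces to the hypothesis for $p_i=2$. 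Combining the product decomposition with the CRT bijection on residues then gives a bijection $\calK_{m,n}^{(\lambda)}\longleftrightarrow\prod_{i=1}^s\calK_{m_{p_i},n_{p_i}}^{(\lambda_{p_i})}$, and taking cardinalities yields the lemma.

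The main (minor) obstacle is to spell out carefully that the isomorphism of dessins displayed above is indeed an isomorphism of incidence configurations, i.e.\ that the vertex-, edge-, and face-sets of Definition~\ref{def-maps} assemble correctly under the CRT isomorphism of groups. This is routine once the direct product of Definition~\ref{x-def} is unpacked, but it is the only step where the cyclicity of $\l h\r$ (equivalently, the unicellularity of the dessin) plays an essential role.
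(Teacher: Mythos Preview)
Your proposal is correct and follows essentially the same approach as the paper: parametrize unicellular dessins by $k\pmod\ell$, decompose $\l h\r$ as $\prod_i\l h_i\r$ via the Chinese Remainder Theorem, and factor the conditions $\gcd(k,\ell)=m$, $\gcd(1-k,\ell)=n$ primewise. One simplification: your ``main obstacle'' is not an obstacle at all, since by Definition~\ref{x-def} the direct product $\prod_i\calD(\l h_i\r,h_i^k,h_i^{1-k})$ is \emph{defined} to be $\calD(\l h_1\r\times\cdots\times\l h_s\r,\,h_1^k\cdots h_s^k,\,h_1^{1-k}\cdots h_s^{1-k})=\calD(\l h\r,h^k,h^{1-k})$, so the displayed isomorphism is a literal equality and no unpacking of incidence configurations is needed.
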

\proof
   For any $\calD_i\in \calK_{m_{p_i},n_{p_i}}^{(\lambda_{p_i})}$ with $1\leq i\leq s$, it is a routine to verify that the direct product $\calD=\prod \limits_{i}^s \calD_i \in \calK_{m,n}^{(\lambda)}$.
   To establish the lemma's equality, it suffices to demonstrate that for each $\calD\in \calK_{m,n}^{(\lambda)}$, there exists a unique decomposition $\calD=\prod\limits_{i=1}^s \calD_i$ with $\calD_i\in \calK_{m_{p_i},n_{p_i}}^{(\lambda_{p_i})}$.

   Consider $\calD\in \calK_{m,n}^{(\lambda)}$, an arbitrary unicellular regular dessin with the underlying graph $\K_{m,n}^{(\lambda)}$.
Then $(m,n,\lambda)\in T_{\ell}$ and according to the previous analysis we have $$\calD=\calD(\l h\r, h^k, h^{1-k}),$$
 where $\l h\r\cong \ZZ_{mn\lambda}$, $|\l h\r{:}\l h^k\r|=\gcd (k, \ell)=m,$ $|\l h\r{:}\l h^{1-k}\r|=\gcd(1-k, \ell)=n$ and $|\l h^k\r\cap \l h^{1-k}\r|=\lambda$.
As $\pi(\ell)=\{p_1,...,p_s\},$ the cyclic group has a natural decomposition
 $$\l h\r=\l h_1\r\times\l h_2\r\times\cdots\times\l h_s\r\cong\ZZ_{\ell_{p_1}}\times\ZZ_{\ell_{p_2}}\times\cdots\times\ZZ_{\ell_{p_s}},$$ where $h=h_1h_2\dots h_s$.
Set $\calD_i=\calD(\l h_i\r , h_i^k, h_i^{1-k})$ for $1\leq i\leq s$. Then $\calD=\prod\limits_{i}^s \calD_i$. Furthermore,
\begin{align*}
|\l h_i\r {:}\l h_i^k\r|&= \gcd (k, |h_i|) =\gcd(k, \ell_{p_i})=\gcd(k, \ell)_{p_i}=m_{p_i},\\
|\l h_i\r {:}\l h_i^{1-k}\r|&= \gcd (1-k, |h_i|) =\gcd(1-k, \ell_{p_i})=\gcd(1-k, \ell)_{p_i}=n_{p_i},\\
|\l h_i^k\r \cap \l h_i^{1-k}\r|&= \frac{| \l h_i^k\r| |\l h_i^{1-k}\r|}{|\l h_i^k\r \l h_i^{1-k}\r|}= \frac{| \l h_i^k\r| |\l h_i^{1-k}\r|}{|\l h_i\r|}=|\l h_i\r| \frac{| \l h_i^k\r| }{|\l h_i\r|}\frac{|\l h_i^{1-k}\r|}{|\l h_i\r|}=\frac{\ell_{p_i}}{m_{p_i}n_{p_i}}=\lambda_{p_i}.
\end{align*}
This gives $\calD_i\in \calK_{m_{p_i},n_{p_i}}^{(\lambda_{p_i})}$ and $\calD=\prod\limits_{i=1}^s \calD_i$ is indeed a decomposition we need.

To verify the uniqueness, let us assume $\calD_i'=\calD(\l h_i\r, h_i^{k_i}, h_i^{1-k_i})\in \calK_{m_{p_i},n_{p_i}}^{(\lambda_{p_i})}$ for $1\leq i\leq s$, such that $\calD=\prod\limits_{i}^s \calD_i' $. By Lemma~\ref{iso}, there exists a group automorphism $\sigma$ that maps $h^k=h_1^k\dots h_s^k$ to $h_1^{k_1}\dots h_s^{k_s}$ and  $h^{1-k}=h_1^{1-k}\dots h_s^{1-k}$ to $h_1^{1-k_1}\dots h_s^{1-k_s}$. Consequently,
\[h^\sigma=(h^k h^{1-k})^\sigma=h_1^{k_1}\dots h_s^{k_s}h_1^{1-k_1}\dots h_s^{1-k_s}=h_1\dots h_s=h.\]
This compels $\sigma$ to be the identity, yielding  $h_1^k\dots h_s^k=h_1^{k_1}\dots h_s^{k_s}$. Thus $h_i^k=h_i^{k_i}$ and $\calD_i=\calD_i'$. This concludes the proof.
\qed

This result tells us that, to calculate $|\calK_{m,n}^{(\lambda)}|$, we need only to calculate $|\calK_{m_p,n_p}^{(\lambda_p)}|$ for each prime divisor $p$ of $\ell=mn\lambda$.

\begin{lemma}\label{l is power}
  Let $\ell=p^e$ and $(m,n,\lambda)\in T_{\ell}$. Then
 \[
|\calK_{m,n}^{(\lambda)}|=\left\{\begin{array}{ll}
\phi(\lambda)\frac{p-2}{p-1} & \mbox{if $\lambda=\ell$},\\
\phi(\lambda) & \mbox{if $\lambda<\ell$. }
\end{array}\right.\]
\end{lemma}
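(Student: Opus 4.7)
My plan is to parametrize the dessins in $\calK_{m,n}^{(\lambda)}$ explicitly using the description established just before the lemma, and then count solutions to congruence conditions modulo $p$.

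By Lemma~\ref{number of uniface dessins} and the discussion preceding it, every unicellular regular dessin of face length $2\ell$ can be written uniquely as $\calD(\l h\r,h^k,h^{1-k})$ for some $k$ with $0\le k\le\ell-1$, and its underlying graph $\K_{m,n}^{(\lambda)}$ is determined by
\[
m=\gcd(k,\ell),\quad n=\gcd(1-k,\ell),\quad \lambda=\frac{\ell}{mn}.
\]
Thus $|\calK_{m,n}^{(\lambda)}|$ equals the number of $k\in\{0,1,\dots,\ell-1\}$ satisfying the two gcd conditions above. When $\ell=p^e$, the condition $\gcd(m,n)=1$ in the definition of $T_\ell$ forces at least one of $m,n$ to equal $1$, which is the key structural constraint driving both cases.

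For the case $\lambda=\ell$, we have $m=n=1$, so the required conditions become $\gcd(k,p^e)=1$ and $\gcd(1-k,p^e)=1$, i.e.\ $k\not\equiv 0\pmod p$ and $k\not\equiv 1\pmod p$. Among the $p^e$ residues, exactly $p-2$ residue classes mod $p$ qualify, giving $p^{e-1}(p-2)$ admissible values of $k$. Since $\phi(p^e)=p^{e-1}(p-1)$, this equals $\phi(\lambda)\frac{p-2}{p-1}$, as claimed.

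For the case $\lambda<\ell$, write $\lambda=p^a$ with $a<e$, so $mn=p^{e-a}\ge p$; coprimality forces $\{m,n\}=\{1,p^{e-a}\}$. By symmetry, treat the subcase $(m,n)=(p^{e-a},1)$. The condition $\gcd(k,p^e)=p^{e-a}$ means $k=p^{e-a}k'$ with $0\le k'\le p^a-1$ and $\gcd(k',p)=1$; automatically $p\mid k$ forces $1-k\equiv 1\pmod p$, so $\gcd(1-k,p^e)=1=n$ holds for free. The number of valid $k'$ is $\phi(p^a)=\phi(\lambda)$, and the analogous count holds for $(m,n)=(1,p^{e-a})$; note that the ordered pair is fixed in $\calK_{m,n}^{(\lambda)}$ because black and white vertices are distinguished, so the two subcases do not overlap and each is counted in its own set.

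The entire argument is essentially bookkeeping once the parametrization is invoked; the only subtle point, and the step I would be most careful about, is tracking the role of the bipartition coloring so that the ordered triple $(m,n,\lambda)$ is never accidentally symmetrized, which would double the count in the second case. No deep obstacle is anticipated beyond this.
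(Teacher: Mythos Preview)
Your proposal is correct and follows essentially the same approach as the paper's proof: parametrize by $k\in\{0,\dots,\ell-1\}$, translate the conditions on $(m,n,\lambda)$ into gcd constraints on $k$ and $1-k$, split into the cases $m=n=1$ versus $\{m,n\}=\{1,p^{e-a}\}$, and count directly. One small imprecision: in the second case your condition ``$\gcd(k',p)=1$'' should strictly be ``$\gcd(k',p^a)=1$'' to cover the boundary $a=0$ (where $\lambda=1$ and the unique solution is $k'=0$), though your stated count $\phi(p^a)$ is correct in all cases.
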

\proof Let $\calD\in \calK_{m,n}^{(\lambda)}$, a unicellular regular dessin with underlying graph $\K_{m,n}^{(\lambda)}$.
Then $$\calD=\calD(\l h\r, h^k, h^{1-k})$$ for some integer $0\leq k\leq \ell-1$,
where $\l h\r\cong \ZZ_{\ell}$, $|\l h\r{:}\l h^k\r|=m, |\l h\r{:}\l h^{1-k}\r|=n$ and $|\l h^k\r\cap \l h^{1-k}\r|=\lambda$.
Since $(m,n,\lambda)\in T_{\ell}$, it follows that $\gcd(m,n)=1$ and $mn$ is even whenever $\lambda$ is even, and so
$m=1$ or $n=1$.

If $m=n=1$, then $\lambda=\ell=p^e$ and $p$ is odd. Thus, $h^k$ and $h^{1-k}$ are both generating elements of $\l h\r$,
and so $h^k\notin\langle h^p\rangle$ and $h^{1-k}\notin\langle h^p\rangle$.
Hence there are exactly $p^e-2p^{e-1}$ choices for $k$, and so
$|\calK_{1,1}^{(p^e)}|=p^e-2p^{e-1}=\phi(\lambda)\frac{p-2}{p-1}.$

If $n=1$ and $m=p^d>1$, then $\lambda=p^{e-d}$ and $|h^k|=p^{e-d}$, $|h^{1-k}|=p^e$. Which implies that $k=p^dk'$ with $1\leq k'\leq p^{e-d}$ and $\gcd(p,k')=1$.
Thus there are exactly $\phi(p^{e-d})=\phi(\lambda)$ possibilities for $k$, and so $|\calK_{m,1}^{(\lambda)}|=\phi(\lambda).$
Similarly, if $m=1$ and $n=p^d>1$, we also have $|\calK_{1,n}^{(\lambda)}|=\phi(\lambda).$ \qed

Combining Lemma~\ref{lem:decompose} and Lemma~\ref{l is power}, we have the following result which is the proof of part~(2) of Theorem~\ref{Counting}.

\begin{lemma}\label{lem:(m,n,lambda)}
    For each $(m,n,\lambda)\in T_{\ell}$, we have
    \[|\calK_{m,n}^{(\lambda)}|=\phi(\lambda)\prod_{p\in \pi}\frac{p-2}{p-1}\hspace*{5mm} \mbox{and}\hspace*{5mm}  |\calu_\ell^{(\lambda)}|=2^{|\sigma|}\phi(\lambda)\prod_{p\in \pi}\frac{p-2}{p-1},\]
    where $\sigma=\pi(\ell/\lambda), $ and $\pi=\pi(\lambda)\setminus\sigma$.
\end{lemma}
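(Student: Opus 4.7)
The plan is to derive both formulas from Lemma~\ref{lem:decompose} together with Lemma~\ref{l is power}. The first formula for $|\calK_{m,n}^{(\lambda)}|$ comes from multiplying up the prime-power contributions furnished by those two lemmas; the second formula for $|\calu_\ell^{(\lambda)}|$ then follows by observing that $|\calK_{m,n}^{(\lambda)}|$ depends only on $\lambda$ (not on the particular admissible pair $(m,n)$), and by counting the admissible pairs $(m,n)$ with $\gcd(m,n)=1$ and $mn=\ell/\lambda$.

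For the first formula, I would partition $\pi(\ell)$ into $\pi$ and $\sigma$ according to whether $v_p(\lambda)$ equals $v_p(\ell)$ or is strictly smaller; this gives a disjoint decomposition $\pi(\ell)=\pi\sqcup\sigma$. For $p\in\pi$ we have $\lambda_p=\ell_p$, forcing $(m_p,n_p,\lambda_p)=(1,1,\ell_p)$, and Lemma~\ref{l is power} (the $\lambda=\ell$ case) contributes the factor $\phi(\lambda_p)\frac{p-2}{p-1}$. For $p\in\sigma$ we have $\lambda_p<\ell_p$, and Lemma~\ref{l is power} (the $\lambda<\ell$ case) contributes $\phi(\lambda_p)$. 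Taking the product over $\pi(\ell)$ via Lemma~\ref{lem:decompose} and invoking multiplicativity of $\phi$, together with $\phi(\lambda_p)=1$ for $p\notin\pi(\lambda)$, collapses the product to $\phi(\lambda)\prod_{p\in\pi}\frac{p-2}{p-1}$.

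For the second formula, I would note that $\calu_\ell^{(\lambda)}$ is the disjoint union of all $\calK_{m,n}^{(\lambda)}$ as $(m,n,\lambda)$ ranges over triples in $T_\ell$ with the third coordinate fixed. By the formula just established, $|\calK_{m,n}^{(\lambda)}|$ does not depend on $(m,n)$. Each admissible pair $(m,n)$ corresponds to a way of distributing the prime-power factors of $\ell/\lambda$ between $m$ and $n$; since $\gcd(m,n)=1$, the whole $p$-part of $\ell/\lambda$ must go to exactly one side, giving $2^{|\sigma|}$ such pairs. Multiplying by the common value of $|\calK_{m,n}^{(\lambda)}|$ yields $|\calu_\ell^{(\lambda)}|=2^{|\sigma|}\phi(\lambda)\prod_{p\in\pi}\frac{p-2}{p-1}$.

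The main bookkeeping obstacle will be to use the partition $\pi(\ell)=\pi\sqcup\sigma$ correctly in conjunction with the multiplicativity of $\phi$. Specifically, primes $p\in\sigma\cap\pi(\lambda)$ (those dividing both $\lambda$ and $\ell/\lambda$) contribute a nontrivial $\phi(\lambda_p)$ that must be absorbed into the $\phi(\lambda)$ prefactor rather than into the $\prod_{p\in\pi}$ term, while primes $p\in\sigma\setminus\pi(\lambda)$ contribute $\phi(\lambda_p)=1$ and so drop out. Once this accounting is set up, both identities follow directly from the two preceding lemmas without further calculation.
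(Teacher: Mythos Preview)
Your proposal is correct and follows essentially the same route as the paper: both arguments combine Lemma~\ref{lem:decompose} with the prime-power case of Lemma~\ref{l is power}, split $\pi(\ell)$ according to whether $\lambda_p=\ell_p$ or $\lambda_p<\ell_p$, and then count the $2^{|\sigma|}$ coprime factorizations of $\ell/\lambda$ to pass from $|\calK_{m,n}^{(\lambda)}|$ to $|\calu_\ell^{(\lambda)}|$. Your extra bookkeeping remark about primes in $\sigma\cap\pi(\lambda)$ versus $\sigma\setminus\pi(\lambda)$ is handled implicitly in the paper but is a helpful clarification.
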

\begin{proof}
Let $\pi(\ell)=\{p_1,...,p_s\}$, a set of prime divisors of $\ell$.
By Lemmas~\ref{lem:decompose}-\ref{l is power}, we have
\begin{align*}
|\calK_{m,n}^{(\lambda)}|
&=\prod_{i=1}^s|\calK_{m_{p_i},n_{p_i}}^{(\lambda_{p_i})}|=\left(\prod_{\lambda_{p_i}=\ell_{p_i}}\phi(\lambda_{p_i})\frac{p_i-2}{p_i-1}\right)\cdot\left(\prod_{\lambda_{p_j}<\ell_{p_j}}\phi(\lambda_{p_j})\right) \\
&=\prod_{i=1}^s\phi(\lambda_{p_i})\cdot \left(\prod_{\lambda_{p_i}=\ell_{p_i}}\frac{p_i-2}{p_i-1}\right)=\phi(\lambda)\prod_{p\in\pi}\frac{p-2}{p-1}.
\end{align*}
Note that when the numbers $\ell$ and $\lambda$ are fixed, the integer pair $(m,n)$ satisfies $(m,n,\lambda)\in T_\ell$ if and only if $mn=\ell/\lambda$ and $\gcd(m,n)=1$.
Then for each prime divisor $p$ of $\ell/\lambda$, we have either $(m_p,n_p)=(\ell_p/\lambda_p,1)$~or $(1,\ell_p/\lambda_p),$
which implies that the pair $(m_p,n_p)$ has exactly $2$ possibilities for each prime divisor $p$ of $\ell/\lambda$.
For each prime divisor $p\in \sigma$, we have $ \ell_p/\lambda_p=m_p$ or $n_p$.
Thus, there are exactly $2^{|\sigma|}$ pairs $(m,n)$ satisfying $mn=\ell/\lambda$ and $\gcd(m,n)=1$.
Hence
\[|\calu_\ell^{(\lambda)}|=\sum_{\substack{mn=\ell/\lambda \\\gcd(m,n)=1 } } \phi(\lambda)\prod_{p\in \pi}\frac{p-2}{p-1}=2^{|\sigma|}\phi(\lambda)\prod_{p\in \pi}\frac{p-2}{p-1}. \]
This completes the proof.
\end{proof}

The combination of Lemma~\ref{number of uniface dessins} and Lemma~\ref{lem:(m,n,lambda)} yields an alternative counting expression for unicellular regular dessins with face length $2\ell$.

\begin{corollary}
Let $\calu_\ell$
be the sets defined in Subsection~\ref{F-trans}.  Then
$$|\calu_\ell|=\sum_{(m,n,\lambda)\in T_{\ell}}\phi(\lambda)\prod_{p\in \pi(\lambda)\setminus\pi(\ell/\lambda)}\frac{p-2}{p-1}.$$
\end{corollary}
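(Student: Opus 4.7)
The plan is to derive this formula as a direct consequence of the disjoint union decomposition of $\calu_\ell$ combined with the count of each piece $\calK_{m,n}^{(\lambda)}$ established in Lemma~\ref{lem:(m,n,lambda)}. Since the corollary is essentially a bookkeeping statement, no new ideas are needed beyond careful assembly of earlier results.

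First I would recall from the setup in Subsection~\ref{sec:f-trans} that every unicellular regular dessin of face length $2\ell$ has underlying graph $\K_{m,n}^{(\lambda)}$ for a uniquely determined triple $(m,n,\lambda)\in T_\ell$, where $m,n$ are the valencies of the two color classes and $\lambda$ is the edge-multiplicity. This shows that
\[
\calu_\ell = \bigsqcup_{(m,n,\lambda)\in T_\ell}\calK_{m,n}^{(\lambda)},
\]
so $|\calu_\ell|=\sum_{(m,n,\lambda)\in T_\ell}|\calK_{m,n}^{(\lambda)}|$. Next I would plug in the formula from Lemma~\ref{lem:(m,n,lambda)}, namely $|\calK_{m,n}^{(\lambda)}|=\phi(\lambda)\prod_{p\in\pi}\tfrac{p-2}{p-1}$ with $\pi=\pi(\lambda)\setminus\pi(\ell/\lambda)$, to obtain exactly the claimed identity.

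There is essentially no obstacle here: the decomposition into the pieces $\calK_{m,n}^{(\lambda)}$ is built into the definitions, and the cardinality of each piece is precisely Lemma~\ref{lem:(m,n,lambda)}. If anything requires a remark, it is only the observation that the triple $(m,n,\lambda)$ attached to a unicellular dessin is well-defined (i.e.\ the underlying colored graph recovers $(m,n,\lambda)$ up to the conventions of $T_\ell$), which was already used implicitly in defining $\calu_\ell^{(\lambda)}$ as a disjoint union of the $\calK_{m,n}^{(\lambda)}$. Thus the proof reduces to one line of substitution and can be written in a couple of sentences.
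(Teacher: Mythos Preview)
Your proposal is correct and matches the paper's approach: the corollary is stated there as an immediate consequence of Lemma~\ref{lem:(m,n,lambda)} (together with the disjoint-union decomposition of $\calu_\ell$ into the $\calK_{m,n}^{(\lambda)}$ set up in Subsection~\ref{sec:f-trans}), with no further argument given. Your write-up is exactly this substitution.
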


 We observe that the cardinality of $\calu_\ell$ is equal to the sum of $|T_{\ell}|$ terms, however, each term is solely dependent on the factor $\lambda$ of $\ell$.
Based on this fact, a decomposition of the number theoretic form of $\ell$ will be given as follows.
\begin{lemma}
For any positive integer $\ell$, we have
\begin{equation*}
\ell=\sum_{\substack{\lambda\mid \ell \\ \lambda_2<\max\{\ell_2,2\}}}2^{|\pi'_\lambda|}\phi(\lambda)\prod_{p \in \pi_\lambda}\frac{p-2}{p-1},
\end{equation*}
    where $\pi_\lambda=\pi(\lambda)\setminus \pi(\ell/\lambda)$, and $\pi_\lambda'=\pi(\ell/\lambda)$.
\end{lemma}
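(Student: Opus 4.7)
The plan is to deduce this identity as an immediate consequence of the two independent enumerations of $\calU_\ell$ already established in Theorem~\ref{Counting}. Concretely, I would compute $|\calU_\ell|$ in two different ways and equate the results.

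First, part~(1) of Theorem~\ref{Counting} gives directly $|\calU_\ell|=\ell$, which will be the left-hand side of the claimed decomposition. Second, I would use the partition described just after the definition of $T_\ell$ in~(1): every unicellular regular dessin of face length $2\ell$ has a well-defined edge-multiplicity $\lambda$ coming from a triple $(m,n,\lambda)\in T_\ell$, so
\[
\calU_\ell=\bigsqcup_{\lambda}\calU_\ell^{(\lambda)},
\]
where $\lambda$ ranges over the divisors of $\ell$ satisfying the constraint $\lambda_2<\max\{\ell_2,2\}$ (divisors failing this constraint contribute an empty set by definition of $T_\ell$). This gives $\ell=\sum_{\lambda}|\calU_\ell^{(\lambda)}|$ with the index $\lambda$ ranging exactly over the set indicated in the statement.

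Third, I would substitute into each summand the formula from part~(2) of Theorem~\ref{Counting}, namely $|\calU_\ell^{(\lambda)}|=2^{|\sigma|}\phi(\lambda)\prod_{p\in\pi}\frac{p-2}{p-1}$, where $\sigma=\pi(\ell/\lambda)$ and $\pi=\pi(\lambda)\setminus\sigma$. Matching the notation of the lemma, $\sigma=\pi'_\lambda$ and $\pi=\pi_\lambda$, so the substitution yields exactly the claimed sum.

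There is no real obstacle: all combinatorial content has already been packaged into Theorem~\ref{Counting}, and the argument is purely bookkeeping. The only point meriting a brief explicit check is that the index set $\{\lambda\mid\ell : \lambda_2<\max\{\ell_2,2\}\}$ used in the lemma coincides with the set of edge-multiplicities that actually occur in $\calU_\ell$, so that summands corresponding to excluded $\lambda$ genuinely vanish; this is exactly the condition built into the definition of $T_\ell$ in~(1), and hence requires no further work.
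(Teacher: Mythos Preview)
Your proposal is correct and follows essentially the same approach as the paper: the paper also computes $|\calU_\ell|$ in two ways, using $|\calU_\ell|=\ell$ (Lemma~\ref{number of uniface dessins}) and then partitioning $\calU_\ell$ by edge-multiplicity $\lambda$ and invoking the formula for $|\calU_\ell^{(\lambda)}|$ from Lemma~\ref{lem:(m,n,lambda)}. The only cosmetic difference is that you cite the packaged Theorem~\ref{Counting} rather than the underlying lemmas.
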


\proof
Let $\lambda$ be a divisor of $\ell$. Then, by the definition of the set $T_\ell$,  there exist two integers $m,n$ such that $(m,n,\lambda)\in T_\ell$ if and only if $\lambda \mid\ell$ and $\lambda_2 <\max\{\ell_2, 2\}$. Therefore, by Lemma~\ref{lem:(m,n,lambda)},
$$\ell=|\calu_\ell|=\sum_{\substack{\lambda\mid \ell \\ \lambda_2<\max\{\ell_2,2\}}}|\calu_{\ell}^{(\lambda)}|=\sum_{\substack{\lambda\mid \ell \\ \lambda_2<\max\{\ell_2,2\}}}2^{|\pi'_\lambda|}\phi(\lambda)\prod_{p \in \pi_\lambda}\frac{p-2}{p-1}.$$
The proof is completed.\qed

At the end of this section, we consider the quantity of non-isomorphic complete bipartite graphs that underlie a unicellular regular dessin with face length $2\ell$,
which is a proof of part~(3) of Theorem~\ref{Counting}, and further, it offers a resolution to Problem B introduced in the initial section of \cite{1-face}.

\begin{lemma}\label{lem:|(m,n,lambda)|}
Let $\ell=p_1^{e_1}p_2^{e_2}\dots p_s^{e_s}$, where $p_1<p_2<\dots<p_s$ represent the prime divisors of $\ell$.
Then the number of non-isomorphic graphs that underlie a unicellular regular dessin with face length $2\ell$ is
\[(2e_1+\delta)(2e_2+1)\dots(2e_s+1),\]
where $\delta=0$ if $\ell$ is even, and $\delta=1$ if $\ell$ is odd.
\end{lemma}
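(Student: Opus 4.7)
The plan is to establish the formula by counting the set $T_\ell$ directly via its prime-by-prime decomposition, since by part~(2) of Theorem~\ref{Counting} and its corollary the set $T_\ell$ is in bijection with the colored complete bipartite multigraphs that underlie a dessin in $\calu_\ell$. Concretely, write $\ell=p_1^{e_1}\cdots p_s^{e_s}$ and, for a triple $(m,n,\lambda)\in T_\ell$, decompose $m=\prod p_i^{a_i}$, $n=\prod p_i^{b_i}$ and $\lambda=\prod p_i^{c_i}$. The defining conditions then translate into purely local conditions on each exponent triple $(a_i,b_i,c_i)$.

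First I would observe that $\ell=mn\lambda$ becomes $a_i+b_i+c_i=e_i$ for every $i$, and $\gcd(m,n)=1$ becomes $\min(a_i,b_i)=0$ for every $i$; both are independent across primes. This already shows that the count factors:
\[
|T_\ell|=\prod_{i=1}^s N_i,
\]
where $N_i$ is the number of nonnegative integer triples $(a_i,b_i,c_i)$ with $a_i+b_i+c_i=e_i$ and $\min(a_i,b_i)=0$, subject to the additional condition coming from $\lambda_2<\max\{\ell_2,2\}$, which only constrains the exponent $c_1$ of $p_1=2$ (and only when $\ell$ is even).

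For any odd prime $p_i$ the extra condition is vacuous (since $p_i\neq 2$), and inclusion-exclusion gives
\[
N_i=|\{a_i=0\}|+|\{b_i=0\}|-|\{a_i=b_i=0\}|=(e_i+1)+(e_i+1)-1=2e_i+1.
\]
For the $\ell$-odd case, $p_1\geq 3$ as well and the product already equals $(2e_1+1)(2e_2+1)\cdots(2e_s+1)$, matching the formula with $\delta=1$. For the $\ell$-even case, $p_1=2$ and the extra condition $\lambda_2<2^{e_1}$ is exactly $c_1<e_1$, equivalently $a_1+b_1\geq 1$; this kills the single overlap $a_1=b_1=0$ and forbids the top endpoint in each of the two chains. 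A short count gives $N_1=2e_1$: there are $e_1$ triples with $a_1=0,b_1\geq 1$ and $e_1$ triples with $b_1=0,a_1\geq 1$, with no overlap. Multiplying $2e_1$ by $\prod_{i\geq 2}(2e_i+1)$ yields the claimed formula with $\delta=0$.

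No step is a real obstacle here, since the argument is entirely elementary once one notices the multiplicativity in the prime decomposition; the only thing to handle with a bit of care is the prime $p_1=2$ in the even case, where the condition $\lambda_2<\max\{\ell_2,2\}=2^{e_1}$ must be correctly translated to $c_1\leq e_1-1$ and one must verify that the subtracted triple is exactly the single overlap $(0,0,e_1)$. The rest is bookkeeping.
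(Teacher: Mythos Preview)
Your argument is correct and is essentially the same as the paper's proof: both reduce the count to $|T_\ell|$, factor over the primes via $(m,n,\lambda)\in T_\ell \Leftrightarrow (m_{p_i},n_{p_i},\lambda_{p_i})\in T_{p_i^{e_i}}$ for every $i$, and then compute the local factor as $2e_i+1$ for odd $p_i$ and $2e_1$ for $p_1=2$ by the same inclusion--exclusion on the two cases $m_{p_i}=1$ or $n_{p_i}=1$. The only quibble is your citation: the bijection between $T_\ell$ and colored graphs $\K_{m,n}^{(\lambda)}$ is not a consequence of part~(2) of Theorem~\ref{Counting} or its corollary, but rather of Proposition~\ref{uni-face} (as the paper's proof notes directly).
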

\begin{proof}
Note that a graph $\Gamma$ underlies a unicellular regular dessin with face length $2\ell$ if and only if $\Gamma\cong \K_{m,n}^{(\lambda)}$ as colored graphs for some $(m,n,\lambda)\in T_\ell$, where $T_\ell$ is defined as previously mentioned.
It follows that we only need to determine the cardinality of $T_\ell$.


It is apparent that $(m,n,\lambda)\in T_{\ell}$ if and only if $(m_p,n_p,\lambda_p)\in T_{\ell_p}$ for each prime divisor $p$ of $\ell$, leading to
$$|T_{\ell}|=\prod_{i=1}^s|T_{\ell_{p}}|.$$
Consequently, we need only to determine the size of $|T_{\ell}|$ when $\ell=p^e$ is a prime power.
For $(m,n,\lambda)\in T_{p^e}$, as $\gcd(m,n)=1$ and $mn$ is even whenever $\lambda$ is even,
\[(m,n,\lambda)=(1,p^{d_1},p^{e-{d_1}})~\mbox{or}~(p^{d_2},1,p^{e-{d_2}})\] where $d_i~(i=1,2)$ is an integer between $0$ and $e$.
Let $A=\{(m,n,\lambda)\in T_{p^e} | m=1\}$ and $B=\{(m,n,\lambda)\in T_{p^e} | n=1\}$.
Then $T_{p^e}=A\cup B,$ and so $$|T_{p^e}|=|A|+|B|-|A\cap B|.$$
If $p$ is odd, then $|A|=|B|=e+1$ and $A\cap B=\{(1,1,p^e)\}.$
So $|T_{p^e}|=2(e+1)-1=2e+1$.
If $p=2$, then $|A|=|B|=e$ and $A\cap B=\emptyset,$
and so $|T_{p^e}|=2e$.

In conclusion, if $\ell=p_1^{e_1}p_2^{e_2}\dots p_s^{e_s}$ with $p_1<p_2<\dots<p_s$, then \[|T_{\ell}|=\prod_{i=1}^s|T_{p_i^{e_i}}|=(2e_1+\delta)(2e_2+1)\dots(2e_s+1),\]
where $\delta=0$ if $\ell$ is even, and $\delta=1$ if $\ell$ is odd.
This completes the proof.
\end{proof}

\section{Minimal coverings of unicellular regular dessins}\label{qp-sec}

We in this section prove Theorem~\ref{qp-types}.
We first introduce O'Nan-Scott types for quasiprimitive groups.
Let $G$ be a quasiprimitive permutation group on $\Ome$, and
let $N=\soc(G)$, the socle of $G$ generated by all minimal normal subgroups.
Then $N=T^n$, where $T$ is simple and $n$ is a positive integer.
Praeger \cite{Praeger-qp} shows that there are the following eight types:
\begin{itemize}
\item[($\HS$)] {\it Holomorph Simple}: $N$ is a product of two minimal subgroups which are nonabelian simple;

\item[($\HC$)] {\it Holomorph Compound}: $N$ is a product of two minimal normal subgroups which are not simple;

\item[($\HA$)] {\it Holomorph Affine}: $N$ is abelian;

\item[($\AS$)] {\it Almost Simple}: $N$ is simple, and $\C_G(N)=1$;

\item[($\TW$)] {\it Twisted Wreath product}: $N$ is nonabelian, non-simple, and regular;

\item[($\SD$)] {\it Simple Diagonal}: the point stabilizer $N_\o$ is simple and isomorphic to $T$;

\item[($\CD$)] {\it Compound Diagonal}: the point stabilizer $N_\o\cong T^k$ with $k\ge2$;

\item[($\PA$)] {\it Product Action}: $N$ has no normal subgroup which is regular on $\Ome$.
\end{itemize}

\vskip0.08in
Let $\calM=(V,E,F)$ be a regular dessin with face length $\ell$ and let $G=\Aut\calM$, the automorphism group of $\calM$. In this section, we always suppose that the action of $G$ on the face set $F$ is faithful and quasiprimitive. Then we have the following interesting lemmas.

\begin{lemma}\label{Frattini-argument-quasiprimitive}
Let $N$ be a minimal normal subgroup of $G$.
Then there exists an element $g\in G$ such that $G=N\l g\r$, and there exists an element $x\in N$ such that
$G=\l g^{1-i}x^{-1},xg^i\r$, where $0\leq i\leq \ell-1$.
\end{lemma}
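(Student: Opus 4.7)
The plan is to exploit two ingredients in succession: the description of regular dessins via generating pairs (so that $G=\langle b,w\rangle$ with face stabilizer $\langle bw\rangle$), and the Frattini argument applied to a transitive normal subgroup.

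First I would use Lemma~\ref{G2} to write $\calM=\calM(G,b,w)$ for suitable generators $b,w\in G$, and recall from Lemma~\ref{face-stabilizer} that the stabilizer in $G$ of the distinguished face $C=C(b,w)$ is exactly $\langle bw\rangle$, a cyclic group of order $\ell=|bw|$. Set $g:=bw$, so $|g|=\ell$ and $G_C=\langle g\rangle$.

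Next, since $G$ acts quasiprimitively on $F$ and $N$ is a (necessarily nontrivial) minimal normal subgroup, $N$ is transitive on $F$. The Frattini argument then gives
\[
G \;=\; N\,G_C \;=\; N\langle g\rangle,
\]
which establishes the first assertion with the chosen $g$. In particular every element of $G$ has the form $xg^j$ with $x\in N$ and $0\leqslant j\leqslant\ell-1$.

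For the second assertion, I would simply read off the factorization of one of the generators. Writing $w=xg^i$ with $x\in N$ and $0\leqslant i\leqslant\ell-1$, one computes
\[
b \;=\; (bw)w^{-1} \;=\; g\cdot(xg^i)^{-1} \;=\; g^{1-i}x^{-1},
\]
so that
\[
G \;=\; \langle b,w\rangle \;=\; \bigl\langle g^{1-i}x^{-1},\; xg^i\bigr\rangle,
\]
which is exactly the desired presentation. The argument is essentially formal once quasiprimitivity is invoked; the only conceptual point worth highlighting is that quasiprimitivity is precisely what is needed to force $N$ to be transitive on $F$, enabling the Frattini factorization $G=N\langle g\rangle$. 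There is no serious obstacle beyond making sure the roles of $b$, $w$, and $bw$ are kept straight so that the exponents on $g$ match up correctly.
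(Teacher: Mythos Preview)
Your proof is correct and follows essentially the same approach as the paper's own proof: set $g=bw$, use quasiprimitivity to make $N$ transitive on $F$ so that $G=N\langle g\rangle$, then factor $w=xg^i$ and compute $b=gw^{-1}=g^{1-i}x^{-1}$. The only difference is that you cite Lemma~\ref{G2} and Lemma~\ref{face-stabilizer} explicitly and name the Frattini argument, whereas the paper leaves these implicit.
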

\proof
Suppose that $\calD=\calD(G,b,w)$ and set $g=bw$. Then $\l g\r$ is a face stabilizer. Since $G$ is quasiprimitive, $N$ is transitive on the face set $F$. Hence $G=N\l g\r$ and there exists an element $x\in N$ and an integer $0\leq i\leq \ell-1$ such that $w=xg^i$. The element $b=bww^{-1}=gw^{-1}=g^{1-i}x^{-1}$. Moreover $G=\l b,w\r= \l g^{1-i}x^{-1},xg^i\r$.
\qed

The types of the quasiprimitive group $G^F$ are determined in the following lemma.

\begin{lemma}\label{G-types}
The quasiprimitive group $G^F$ is of type $\HA$, $\AS$, $\TW$ or $\PA$,
and is of type $\HA$, $\AS$, $\TW$ when the dessin $\calD$ is a smooth covering of $\calD_N$.
Moreover, the quotient map $\calD_N$ is a unicellular dessin of face length equal to $2|G/N|$.
\end{lemma}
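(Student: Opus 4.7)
The plan is to combine the cyclicity of $G/N$ supplied by Lemma~\ref{Frattini-argument-quasiprimitive} with the structural constraints of each O'Nan--Scott--Praeger type. Quasiprimitivity guarantees at least one minimal normal subgroup $N$ of $G$, and applying Lemma~\ref{Frattini-argument-quasiprimitive} with $g=bw$ yields $G=N\langle g\rangle$, hence $G/N=\langle\overline g\rangle$ is cyclic. Moreover, the face stabilizer of $C(b,w)$ is $\langle g\rangle$ by Lemma~\ref{face-stabilizer}, so the point stabilizer $N_f=N\cap\langle g\rangle$ is cyclic as well, being a subgroup of a cyclic group. These two observations together will eliminate four of the eight types.

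First I would exclude the simple diagonal and compound diagonal types using cyclicity of $N_f$: in $\SD$ one has $N_f\cong T$ with $T$ nonabelian simple, and in $\CD$ one has $N_f\cong T^k$ with $k\ge 2$ and $T$ nonabelian simple, either of which contradicts $N_f$ being cyclic. Next I would exclude the holomorph types $\HS$ and $\HC$ using cyclicity of $G/N$: in both cases the socle decomposes as a direct product $N_1\times N_2$ of two minimal normal subgroups, each of which is nonabelian (nonabelian simple in $\HS$, of the form $T^k$ with $k\ge 2$ and $T$ nonabelian simple in $\HC$). Taking the minimal normal subgroup $N=N_1$, the subgroup $N_2N/N\cong N_2$ of $G/N$ is nonabelian, contradicting cyclicity of $G/N$. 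Hence only $\HA$, $\AS$, $\TW$, $\PA$ can arise, proving the first assertion.

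Under the additional hypothesis that $\calD$ is a smooth covering of $\calD_N$, Definition~\ref{def-cover} and Theorem~\ref{quotient-dessin} give $|bw|=|\overline{bw}|$, equivalently $\langle bw\rangle\cap N=1$. Then $N_f=N\cap\langle g\rangle=1$, so the minimal normal subgroup $N$ acts regularly on $F$. By definition the $\PA$ type requires that no minimal normal subgroup of $\soc(G)$ act regularly, so $\PA$ is ruled out, leaving precisely $\HA$, $\AS$, $\TW$, as claimed.

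Finally, for the moreover part, since $G=N\langle g\rangle$, we have $G/N=\langle\overline g\rangle=\langle\overline{bw}\rangle$, a cyclic group, so Lemma~\ref{1-cycle} applied to $\calD_N=\calD(G/N,\overline b,\overline w)$ gives that $\calD_N$ is unicellular, with face length $2|\overline{bw}|=2|G/N|$. The principal delicacy in this argument is the simultaneous bookkeeping of the two cyclicity constraints (on $G/N$ and on $N_f$) against each Praeger type; once both constraints are in place the exclusions are essentially immediate from the definitions, and the smooth-covering hypothesis is used only to force $N$ to be regular and thereby eliminate $\PA$.
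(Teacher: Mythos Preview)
Your argument is correct and reaches the same conclusion as the paper, but the exclusion of the four types is organised differently. The paper dispatches $\HS$, $\HC$, $\SD$, $\CD$ in one stroke by observing that in each of these types the full point stabilizer $G_f$ is insoluble, whereas here $G_f=\langle bw\rangle$ is cyclic. You instead split into two cases: cyclicity of $N_f=N\cap\langle bw\rangle$ kills $\SD$ and $\CD$ (where $N=\soc(G)$ is the unique minimal normal subgroup and $N_f\cong T$ or $T^k$), while cyclicity of $G/N$ kills $\HS$ and $\HC$ (where taking $N=N_1$ forces the nonabelian $N_2$ to embed in the cyclic $G/N_1$). Your split is slightly more explicit and avoids appealing to the insolubility of the stabilizer in the holomorph types, which is not entirely obvious from the paper's brief type descriptions. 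One small wording correction: in your $\PA$ exclusion, the paper's definition says $\soc(G)$ has no \emph{normal} subgroup acting regularly, so in particular $\soc(G)$ itself is not regular; your phrase ``no minimal normal subgroup of $\soc(G)$'' is not quite the right formulation, though the conclusion (that $N$ regular rules out $\PA$) is correct.
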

\proof
We know that, for each of the four types $\HS$, $\HC$, $\SD$, and $\CD$, the stabilizer is insoluble.
But the face stabilizer of a regular dessin is cyclic. We conclude that $G^F$ is of type $\HA$, $\AS$, $\TW$, or $\PA$.
Furthermore, if $\calD$ is a smooth covering of $\calD/N$, then $N$ acts regularly on the face set by Theorem~\ref{quotient-dessin}.
Hence, it is possible only when $G^F$ is of type $\HA$, $\AS$, $\TW$.
Since $N=\soc(G)$ is transitive on $F$, the quotient $\calM_N$ is a unicellular dessin.
So $G/N$ is cyclic and the face length of this quotient dessin is $2|G/N|$.
\qed

In the rest of this section, we prove that each quasiprimitive group of the four types indeed appears as the automorphism group of a regular dessin.

\subsection{Affine groups}\label{sec:HA}

Assume that $G$ is an affine quasiprimitive group, and let $N=\ZZ_p^d$ be the socle of $G$ such that $G/N$ is a cyclic group of order $\ell$.
Then $G=N{:}H\cong\ZZ_p^d{:}\ZZ_\ell$, and $G$ is actually primitive on face set.
Since $H\cong\ZZ_\ell$ acts irreducibly on $\ZZ_p^d$,  $\ell$ is a primitive divisor of $p^d-1$, namely, $\ell$ divides $p^d-1$ but does not divide $p^i-1$ for any $i<d$.
We conclude the properties of $G$ in the following lemma.
\begin{lemma}\label{affine-frobenius}
	Let $\calD$ be a regular dessin, and let $G=\Aut\calD$.
    Suppose that $G$ is faithful and quasiprimitive on the face set of $\calD$ of type $\HA$.
    Then $G=N{:}H\cong\ZZ_p^d{:}\ZZ_\ell\lesssim\AGL(1,p^d)$ is a Frobenius group, where $N$ is the minimal normal subgroup of $G$, $H$ is a face stabilizer and $\ell$ is a primitive divisor of $p^d-1$.
\end{lemma}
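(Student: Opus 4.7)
The plan is to extract the semidirect product decomposition from quasiprimitivity of type $\HA$, use the uniqueness of the minimal normal subgroup to force irreducibility, and then recognise the resulting configuration as a subgroup of the one-dimensional affine group.

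First I would pin down the semidirect structure. Type $\HA$ means the socle $N$ is abelian, hence elementary abelian of some exponent $p$, so $N\cong\ZZ_p^d$. Quasiprimitivity ensures $N$ is transitive on the face set $F$; since $N$ is abelian and normal, all point stabilizers $\{N_\alpha:\alpha\in F\}$ coincide, and as the action of $G$ on $F$ is faithful this common stabilizer lies in the kernel and so is trivial. Hence $N$ is regular on $F$, $|F|=p^d$, and taking $H$ to be a face stabilizer we obtain $G=NH$ with $N\cap H=1$, that is $G=N\rtimes H$. By Lemma~\ref{G-types}, $H\cong G/N$ is cyclic of some order $\ell$.

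Next I would establish that the conjugation action of $H$ on $N$, viewed as an $\FF_pH$-module, is irreducible. Any $H$-invariant subgroup of $N$ is normalised by both $N$ and $H$, hence is normal in $G=N\rtimes H$; as $N$ is the unique minimal normal subgroup in type $\HA$, such a subgroup must be either trivial or all of $N$. Applying Schur's lemma to the cyclic group $H$ acting irreducibly on the $d$-dimensional $\FF_p$-space $N$ yields $\mathrm{End}_{\FF_pH}(N)\cong\FF_{p^d}$. Identifying $N$ with $(\FF_{p^d},+)$ realises $H$ as a subgroup of $\FF_{p^d}^*$ acting by multiplication, which embeds $G\leq\FF_{p^d}\rtimes\FF_{p^d}^*=\AGL(1,p^d)$. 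The Frobenius property follows at once, since every nontrivial element of $\FF_{p^d}^*$ fixes only $0\in\FF_{p^d}$, so $H$ acts fixed-point-freely on $N\setminus\{0\}$.

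Finally, for the primitive-divisor claim, suppose $\ell\mid p^i-1$ for some $1\le i<d$ and set $e=\gcd(i,d)$. The standard identity $\gcd(p^i-1,p^d-1)=p^e-1$ gives $\ell\mid p^e-1$ with $e\mid d$ and $e<d$, so $H\leq\FF_{p^e}^*\leq\FF_{p^d}^*$ preserves the proper nontrivial $\FF_p$-subspace $\FF_{p^e}$ of $\FF_{p^d}$, contradicting irreducibility. Hence $\ell$ is a primitive divisor of $p^d-1$. The main obstacle I anticipate is the clean derivation of the regularity of $N$ from faithful quasiprimitivity with abelian socle: once this is in place, the remaining steps reduce to Schur's lemma and an elementary field-theoretic computation.
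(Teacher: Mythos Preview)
Your argument is correct and follows essentially the same route as the paper's proof: obtain the split extension $G=N{:}H$ with $H$ cyclic, deduce irreducibility of $H$ on $N$ from minimality of $N$, and then identify $H$ inside $\GL(1,p^d)$ to conclude the Frobenius and primitive-divisor claims. The paper is terser---it invokes Lemma~\ref{Frattini-argument-quasiprimitive} and the known structure of type $\HA$ to get $G\lesssim\AGL(d,p)$ and irreducibility in one line---whereas you derive the regularity of $N$ from ``abelian $+$ transitive $+$ faithful'' and spell out the Schur's-lemma step explicitly; both reach the same endpoint by the same mechanism.
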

\proof
    By Lemma~\ref{Frattini-argument-quasiprimitive}, $G=N\l g\r$, where $N$ is the minimal normal subgroup of $G$ and $g$ is a generator of a face stabilizer.
    Let $H=\l g\r$.
    We have $G=N{:}H\lesssim\AGL(d,p)$ as $G$ is of affine type, and hence $N{:}H\cong\ZZ_{p}^d{:}\ZZ_\ell$ with $H\cong\ZZ_\ell$ acting irreducibly on $\ZZ_p^d$.
    Thus, $H\lesssim\GL(1,p^d)$ is semiregular on $N\setminus\{1\}$,
    which yields that $G\lesssim\AGL(1,p^d)$ is a Frobenius group and $\ell=|g|$ is a primitive divisor of $p^d-1$.
\qed

Now we give a general construction of regular dessins whose automorphism group is faithful and quasiprimitive on the face set of type $\HA$.

\begin{construction}\label{cons-HA}
{\rm
Let $\ell$ be a primitive divisor of $p^d-1$ where $p$ is a prime and $d$ is a positive integer.
Set $G=N{:}H\leqslant \AGL(1,p^d)$ where $N\cong \ZZ_p^d$ and $H=\langle h\rangle\leqslant\GL(1,p^d)$ of order $\ell$.
For any $x\in N\setminus\{1\}$ and integers $0\leqslant i,j\leqslant \ell-1$ with $\gcd(j,\ell)=1$, define
\[\calD=\calD(G,b,w)\mbox{ with $b=h^{i}x$ and $w=x^{-1}h^{j-i}$.}\]
}
\end{construction}

\begin{lemma}
    Let $\calD$ be a regular dessin such that $\Aut\calD$ is faithful and quasiprimitive on the face set of $\calD$ of type $\HA$.
    Then $\calD$ is isomorphic to some dessins given in Construction~\ref{cons-HA}.
\end{lemma}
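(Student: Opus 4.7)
The plan is to apply Lemma~\ref{G2} to write $\calD \cong \calD(G,b,w)$ for some generating pair $(b,w)$, then use Lemma~\ref{affine-frobenius} to identify $G = N \rtimes H \leqslant \AGL(1,p^d)$ as a Frobenius group with kernel $N \cong \ZZ_p^d$ and cyclic complement $H$ of order $\ell$, where $\ell$ is a primitive divisor of $p^d-1$. The goal is then to exhibit $(b,w)$ in the explicit shape $(h^i x,\, x^{-1}h^{j-i})$ prescribed by Construction~\ref{cons-HA}.

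The key preliminary observation is that the face stabilizer $\langle bw\rangle$ must itself be a complement of $N$ in $G$. Since $\Aut\calD$ acts quasiprimitively on the face set, $N$ acts transitively on it, so orbit--stabilizer yields
\[
\frac{|\langle bw\rangle|}{|\langle bw\rangle \cap N|} \;=\; \frac{|G|}{|N|} \;=\; \ell .
\]
In a Frobenius group every nontrivial cyclic subgroup lies either in the kernel or inside a single conjugate of the complement. If $\langle bw\rangle \leqslant N$, then the displayed ratio would equal $1$, contradicting $\ell\geqslant 2$; hence $\langle bw\rangle$ meets $N$ trivially, has order exactly $\ell$, and is a complement of $N$ in $G$.

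I would then exploit the freedom afforded by Construction~\ref{cons-HA} in the choice of reference complement: replace $H$ by $\langle bw\rangle$ and set $h:=bw$. Using the unique factorisation $G=HN$, write $b=h^i x$ for some $0\leqslant i\leqslant \ell-1$ and $x\in N$. Then
\[
w \;=\; b^{-1}(bw) \;=\; x^{-1}h^{-i}\cdot h \;=\; x^{-1}h^{1-i},
\]
which matches the form in Construction~\ref{cons-HA} with $j=1$ (so $\gcd(j,\ell)=1$ is trivially satisfied). Finally, $x\neq 1$ must hold, since otherwise $b,w\in H$ and $\langle b,w\rangle \leqslant H\neq G$, contradicting $G=\langle b,w\rangle$; hence $x\in N\setminus\{1\}$ as required by the construction.

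I do not anticipate a substantive obstacle: the argument is essentially bookkeeping inside the Frobenius structure supplied by Lemma~\ref{affine-frobenius}. The one point demanding care is the promotion of $\langle bw\rangle$ from merely ``contained in a conjugate of $H$'' to ``is itself a complement of $N$'', which is what allows us to absorb the conjugacy ambiguity into the choice of $h$ and take $j=1$; for this the transitivity of $N$ on the face set (and hence the ratio $\ell$) is essential.
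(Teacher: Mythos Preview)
Your proof is correct and follows essentially the same approach as the paper: identify $G$ as a Frobenius group via Lemma~\ref{affine-frobenius}, recognise $\langle bw\rangle$ as a complement of $N$, and decompose $b,w$ accordingly. The only cosmetic difference is that the paper conjugates $\langle bw\rangle$ onto the fixed $H$ (invoking Lemma~\ref{iso}) and writes $bw=h^j$ for general $j$ coprime to $\ell$, whereas you absorb that freedom by setting $h:=bw$ and land on $j=1$.
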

\proof
    Assume that $\calD$ has $p^d$ faces with face length $2\ell$ where $p$ is a prime.
    Lemma~\ref{affine-frobenius} shows that $G=\Aut\calD\cong N{:}H$ is a Frobenius group with $N\cong\ZZ_{p}^d$ being the minimal normal subgroup and $H=\langle h\rangle\lesssim\GL(1,p^d)$ of order $\ell$.
    Hence, $\calD\cong \calD(G,b,w)$ for some $b,w\in G$.
    Notice that $\langle bw\rangle$ is a face stabilizer, then $\langle bw\rangle$ is conjugate to $H$.
    By Lemma~\ref{iso}, we may assume that $\langle bw\rangle=H$.
    Hence, $bw=h^j$ for some $0\leqslant j\leqslant \ell-1$ such that $\gcd(j,\ell)=1$.
    Then $b=h^ix$ and $w=x^{-1}h^{j-i}$ for some $x\in N$ and $0\leqslant i\leqslant \ell-1$.
    Notice that $\langle b,w\rangle=G$ if and only if $x\neq 1$, and hence the lemma holds.
\qed

It is known that the complete bipartite graph $\K_{2,2g+1}$ underlies a unicellular regular dessin with face length $8g+4$ and genus $g$, see \cite{1-face}.
Applying Construction~\ref{cons-HA}, we obtain the following corollary.
\begin{corollary}\label{smooth-coverings-2}
    For each non-negative integer $g$ and a prime power $p^d$, an orientable surface of genus $g$ has $p^d$-sheeted coverings if $4g+2$ is a primitive divisor of $p^d-1$.
\end{corollary}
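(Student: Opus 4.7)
The plan is to realize the claimed $p^d$-sheeted covering as the surface supporting a regular dessin obtained from Construction~\ref{cons-HA}, with parameters chosen so that the resulting normal quotient is precisely the unicellular regular dessin on $\K_{2,2g+1}$ that is cited from \cite{1-face}.

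First I would make the target dessin explicit. By the analysis in Section~\ref{abel-sec}, the unique unicellular regular dessin of face length $8g+4$ with underlying graph $\K_{2,2g+1}$ (which is $\calK_{2,2g+1}^{(1)}$; recall Theorem~\ref{Counting}(2)) can be written as $\calD_0=\calD(\l\ov h\r,\ov h^{2g+2},\ov h^{-(2g+1)})$, where $\l\ov h\r\cong\ZZ_{4g+2}$: indeed $(\gcd(2g+2,4g+2),\gcd(2g+1,4g+2),\lambda)=(2,2g+1,1)$ lies in $T_{4g+2}$, and the product $\ov h^{2g+2}\ov h^{-(2g+1)}=\ov h$ generates the whole group. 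This dessin has Euler characteristic $2-2g$, so it lies on the orientable surface of genus $g$.

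Next, under the hypothesis that $\ell=4g+2$ is a primitive divisor of $p^d-1$, I would embed $\l\ov h\r$ as $H=\l h\r\leqslant\GL(1,p^d)$ and apply Construction~\ref{cons-HA} to $G=N{:}H\leqslant\AGL(1,p^d)$ with $N\cong\ZZ_p^d$, taking any $x\in N\setminus\{1\}$ together with the parameters $(i,j)=(2g+2,1)$. This produces $\calD=\calD(G,b,w)$ with $b=h^{2g+2}x$ and $w=x^{-1}h^{-(2g+1)}$. To confirm this really defines a regular dessin, I would check $G=\l b,w\r$: since $bw=h$ we have $H\leqslant\l b,w\r$; then $w\cdot h^{2g+1}=x^{-1}\in\l b,w\r$, and since $H$ acts irreducibly (in fact as a primitive prime-order element of $\GF(p^d)^\times$) on $N$, the $H$-span of $x$ is all of $N$, so $N\leqslant\l b,w\r$ and hence $G=NH=\l b,w\r$.

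Finally, by Theorem~\ref{quotient} the quotient $\calD_N=\calD(G/N,\ov b,\ov w)$, and the group isomorphism $G/N\to\l\ov h\r$, $hN\mapsto\ov h$, carries the pair $(\ov b,\ov w)$ to $(\ov h^{2g+2},\ov h^{-(2g+1)})$; hence Lemma~\ref{iso} gives $\calD_N\cong\calD_0$. Theorem~\ref{quotient-dessin} then asserts that $\calD$ is a $|N|=p^d$-sheeted covering of $\calD_0$, and therefore the surface of $\calD$ provides the required $p^d$-sheeted covering of the genus-$g$ surface. The mildly delicate point, which I would treat explicitly, is the endpoint case $g=0$ (so $\ell=2$, $d=1$, $p$ odd): then $\ov b=\ov h^{2}=\ov 1$ so $\calD$ is not a smooth covering of $\calD_0$, but Theorem~\ref{quotient-dessin} still yields a $p$-sheeted branched covering of the sphere by the sphere, which is enough for the statement of the corollary. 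For $g\geqslant 1$ the same computation shows $\calD$ is in fact a smooth $p^d$-sheeted covering, consistent with the genus formula $1+(g-1)p^d$.
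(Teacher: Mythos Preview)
Your proof is correct and follows exactly the approach the paper indicates (the paper gives essentially no details beyond ``Applying Construction~\ref{cons-HA}''): take the unicellular regular dessin on $\K_{2,2g+1}$ of face length $8g+4$ and lift it via Construction~\ref{cons-HA} with $\ell=4g+2$. Your explicit choice $(i,j)=(2g+2,1)$, the verification that $\langle b,w\rangle=G$, the identification of $\calD_N$ with $\calD_0$ via Lemma~\ref{iso}, and the separate treatment of the degenerate case $g=0$ are all sound. One small inaccuracy: in your parenthetical ``in fact as a primitive prime-order element of $\GF(p^d)^\times$'' the group $H$ has order $4g+2$, which is generally not prime; the irreducibility follows simply from $\ell$ being a \emph{primitive divisor} of $p^d-1$ (cf.\ Lemma~\ref{affine-frobenius}), not from $H$ having prime order.
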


In Construction~\ref{cons-HA}, we observe that $\calD_N\cong\calD(H,h^i,h^{j-i})$ is the unicellular regular dessin with face length $2\ell$.
As $G=N{:}H$ is a Frobenius group, it is not hard to see that $\calD$ is a smooth covering of $\calD_N$ if and only if $i\neq 0$ and $i\neq j$.
\begin{corollary}\label{HA-inf}
    A unicellular regular dessin with positive genus has infinite face-primitive smooth coverings of $\HA$ type.
\end{corollary}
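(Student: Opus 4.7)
The plan is to combine Construction~\ref{cons-HA} with Dirichlet's theorem on primes in arithmetic progressions. First, I will use the normal form established at the start of Section~\ref{abel-sec} to write the given dessin as $\calD_0 = \calD(\langle h\rangle, h^k, h^{1-k})$ with $|h|=\ell$ and some $0\leqslant k\leqslant \ell-1$. A direct count gives the Euler characteristic
\[
\chi(\calD_0) = \gcd(k,\ell)+\gcd(1-k,\ell)-\ell+1,
\]
and the positive-genus hypothesis forces $k \notin \{0,1\}$, since each of those two excluded values yields $\chi(\calD_0) = 2$, i.e.\ a sphere.

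Next, for every prime $p$ with $p \equiv 1 \pmod{\ell}$ (of which there are infinitely many by Dirichlet), I will apply Construction~\ref{cons-HA} with $d=1$, $i=k$, $j=1$, and an arbitrary $x\in N\setminus\{1\}$, where $N = \ZZ_p$. Here $\ell$ divides $p-1$ and so is vacuously a primitive divisor of $p^1-1$, so the construction applies and gives a regular dessin $\calD_p := \calD(G, h^k x, x^{-1} h^{1-k})$ with $G = N{:}\langle h\rangle \leqslant \AGL(1,p)$, a Frobenius group of order $p\ell$. By direct computation $(h^k x)(x^{-1} h^{1-k}) = h$, so the algebraic quotient of $\calD_p$ by $N$ is precisely $\calD_0$.

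The remaining verifications are that $\calD_p$ is a smooth covering, that the $G$-action on faces is primitive of $\HA$ type, and that distinct primes yield non-isomorphic $\calD_p$. Smoothness will follow because in the Frobenius group $G=N{:}H$ any element outside $N$ has the same order as its image in $G/N$, and the restrictions $k\neq 0$ and $1-k\not\equiv 0\pmod{\ell}$ (which hold by the first paragraph) guarantee that both generators $h^kx$ and $x^{-1}h^{1-k}$ indeed lie outside $N$; hence the triple $(|b|,|w|,|bw|)$ agrees with that of the quotient, as required by Definition~\ref{smooth-extension}. Face-primitivity of type $\HA$ is automatic because $|F|=p$ is prime and $G$ is transitive. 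Finally, pairwise non-isomorphism of the $\calD_p$ is immediate from the fact that $|F(\calD_p)|=p$, so varying $p$ over infinitely many primes in the arithmetic progression $1 \pmod \ell$ produces infinitely many coverings. No serious obstacle arises; the entire argument is a packaging of Construction~\ref{cons-HA} with Dirichlet's theorem, the only mild care being the Frobenius-order statement which I will invoke via Lemma~\ref{affine-frobenius}.
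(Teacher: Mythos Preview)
Your proof is correct and follows essentially the same strategy as the paper: normalize the unicellular dessin as $\calD(\langle h\rangle,h^k,h^{1-k})$ with $k\notin\{0,1\}$, invoke Dirichlet to produce infinitely many primes, apply Construction~\ref{cons-HA}, and use the Frobenius property of $G=N{:}H$ to verify smoothness. The only difference is cosmetic: you take primes $p\equiv 1\pmod{\ell}$ with $d=1$ (so $N=\ZZ_p$ and primitivity on the $p$ faces is immediate), whereas the paper takes primes $p\equiv -1\pmod{\ell}$ with $d=2$ (so $N=\ZZ_p^2$); both choices work for the same reason.
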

\proof
    Let $\calu_{\ell,i}=\calD(H,h^i,h^{1-i})$ be a unicellular regular dessin of positive genus where $H=\langle h\rangle$ has order $\ell$.
    It follows that $i\ne 0,1$ and $\ell >2$.
    Then, by the Dirichlet theorem, there are infinitely many primes of the form $k\ell - 1$ where $k\in \mathbb{Z}^+$.
    In addition, if $p=k\ell -1$ is a prime, then $\ell \mid p^2-1$ and $\ell\nmid p-1$.
    This yields that $\ell$ is a primitive divisor of $p^2-1$ for prime $p\equiv -1\pmod \ell$.
    We can construct face-primitive regular dessin $\calD=\calD(G,h^ix,x^{-1}h^{1-i})$ with $G=N{:}H\cong \ZZ_p^2{:}\ZZ_\ell$ as in Construction~\ref{cons-HA}.
    Then $\calD$ is a smooth covering of $\calD_N\cong\calu_{\ell,i}$ since $|h^ix|=|h^i|$ and $|x^{-1}h^{1-i}|=|h^{1-i}|$ when $i\neq 0,1$.
\qed

We next study some properties of face-primitive regular dessins of $\HA$ type.
\begin{lemma}\label{HA}
    Using definitions in Construction~$\ref{cons-HA}$ with $i\neq 0,j$.
    Then
    \begin{enumerate}[\rm (i)]



    \item $G$ is primitive on $B=[G:\l b\r]$ if and only if $\gcd(i,\ell)=1$; and
    \item $G$ is primitive on $W=[G:\l b\r]$ if and only if $\gcd(j-i,\ell)=1$.

    \end{enumerate}
\end{lemma}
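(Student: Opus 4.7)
The plan is to reduce primitivity of $G$ on the coset spaces $[G{:}\l b\r]$ and $[G{:}\l w\r]$ to the maximality of $\l b\r$ and $\l w\r$ in $G$, and then exploit the Frobenius structure of $G$ (Lemma~\ref{affine-frobenius}) to identify these cyclic subgroups, up to $G$-conjugacy, with subgroups of $H=\l h\r$.

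First I would establish an $N$-conjugacy reduction: in the Frobenius group $G=N{:}H$, for every $k$ with $h^k\neq 1$, the endomorphism $1-h^k$ of $N$ is invertible (otherwise $h^k$ would fix a nonzero element of $N$, contradicting the Frobenius property). Hence, writing $N$ additively and using $m^{-1}(nh^k)m=(n-m+h^k\!\cdot\! m)h^k$, for any $n\in N$ one can solve for $m\in N$ so that $m^{-1}(nh^k)m=h^k$. The hypothesis $i\neq 0,j$ (combined with $\gcd(j,\ell)=1$) guarantees $i\not\equiv 0$ and $j-i\not\equiv 0$ modulo $\ell$, so this produces elements of $N$ that conjugate $b=h^i x$ to $h^i$ and $w=x^{-1}h^{j-i}$ to $h^{j-i}$. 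Therefore $\l b\r$ is $G$-conjugate to $\l h^i\r$ and $\l w\r$ is $G$-conjugate to $\l h^{j-i}\r$, so maximality of the former is equivalent to maximality of the latter in each case.

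Second, I would show that for any integer $k\not\equiv 0\pmod{\ell}$, the subgroup $\l h^k\r$ is maximal in $G$ iff $\l h^k\r=H$, equivalently $\gcd(k,\ell)=1$. The key input is the irreducibility of $H$ on $N$: for any $K$ with $H\le K\le G$, the intersection $K\cap N$ is normal in $K=N(K\cap H)\cdots$ — more precisely, $K\cap N$ is normalized by $N$ (since $N$ is abelian) and by $H\le K$, hence is $H$-invariant, so $K\cap N\in\{1,N\}$, forcing $K=H$ or $K=G$. Thus $H$ is maximal in $G$. On the other hand, if $\gcd(k,\ell)>1$, then $\l h^k\r\lneq H\lneq G$ is a proper chain, so $\l h^k\r$ is not maximal.

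Combining the two reductions gives: $\l b\r$ is maximal in $G$ iff $\l h^i\r$ is, iff $\gcd(i,\ell)=1$, proving (i); and $\l w\r$ is maximal iff $\l h^{j-i}\r$ is, iff $\gcd(j-i,\ell)=1$, proving (ii). The only mildly delicate step is the $N$-conjugacy reduction, which genuinely needs the Frobenius property from Lemma~\ref{affine-frobenius}; the rest is routine, using only that $H$ is cyclic and acts irreducibly on $N$.
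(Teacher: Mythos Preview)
Your argument is correct and follows essentially the same route as the paper's proof: both reduce primitivity to maximality of $\langle b\rangle$ (resp.\ $\langle w\rangle$), and both use the Frobenius structure of $G=N{:}H$ to identify these cyclic subgroups with $\langle h^i\rangle$ and $\langle h^{j-i}\rangle$. The paper's version is considerably terser---it simply asserts that $|b|=|h^i|$ and $|w|=|h^{j-i}|$ and concludes directly---whereas you spell out the $N$-conjugacy explicitly via the invertibility of $1-h^k$ on $N$, and also justify why $H$ itself is maximal (via irreducibility of $H$ on $N$). These are exactly the implicit steps the paper is relying on, so your write-up is a fleshed-out version of the same proof rather than a different approach.
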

\proof
    Note that $G$ is primitive on $B=[G:\l b\r]$ ($W=[G:\l w\r]$) if and only if $\langle b\rangle$ ($\langle w\rangle$) is a maximal subgroup of $G$.
    As $b=h^ix$ has order $|h^i|$ and $w=x^{-1}h^{j-i}$ has order $|h^{j-i}|$.
    It follows that $G$ is primitive on $B=[G:\l b\r]$ ($W=[G:\l w\r]$) if and only if $\gcd(i,\ell)=1$ ($\gcd(j-i,\ell)=1$).
\qed

The following theorem provides a criterion for determining isomorphisms among face-primitive regular dessins of $\HA$ type.

\begin{theorem}\label{HA-classification}
    Let $G=N{:}H\cong\ZZ_{p}^d{:}\ZZ_\ell\lesssim \AGL(1,p^d)$ such that $\ell$ is a primitive divisor of $p^d-1$.
    Assume that $H=\langle h\rangle$, $x_1,x_2\in N\setminus\{1\}$, $0\leqslant i_1,i_2,j_1,j_2\leqslant \ell-1$ such that $\gcd(j_1,\ell)=\gcd(j_2,\ell)=1$.
    Then the following are equivalent:
    \begin{enumerate}[\rm (i)]
        \item $\calD(G,b_1,w_1)\cong\calD(G,b_2,w_2)$ where $b_t=h^{i_t}x_t$ and $w_t=x_t^{-1}h^{j_t-i_t}$ for $t=1,2$.
        \item $i_2\equiv i_1p^k\pmod{\ell}$ and $j_2\equiv j_1p^k\pmod{\ell}$ for some $0\le k\le d-1$.
    \end{enumerate}
\end{theorem}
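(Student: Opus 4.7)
\textbf{Proof plan for Theorem~\ref{HA-classification}.}
By Lemma~\ref{iso} the isomorphism $\calD(G,b_1,w_1)\cong\calD(G,b_2,w_2)$ is equivalent to the existence of $\sigma\in\Aut(G)$ with $\sigma(b_1)=b_2$ and $\sigma(w_1)=w_2$. The crux is thus to pin down enough of $\Aut(G)$ for $G=N{:}H\lesssim\AGL(1,p^d)$, where I identify $N=(\FF_{p^d},+)$ and $H=\langle h\rangle$ acting on $N$ by field multiplication.

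First I would show that any $\sigma$ realising such an isomorphism preserves both $N$ and $H$ setwise. Preservation of $N$ is automatic, since $N$ is the Fitting subgroup (the unique abelian normal Sylow $p$-subgroup). For $H$, observe that $b_1w_1=h^{j_1}$ is a generator of $H$ because $\gcd(j_1,\ell)=1$; applying $\sigma$ gives $\sigma(h^{j_1})=b_2w_2=h^{j_2}$, which is a nontrivial element of $H$ (since $\gcd(j_2,\ell)=1$). Now $\sigma(H)$ is another complement to $N$ in the Frobenius group $G$, and it shares the nontrivial element $h^{j_2}$ with $H$; the Frobenius property (distinct complements intersect trivially) forces $\sigma(H)=H$.

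Next I would read off information on each factor. Writing $\sigma|_H:h\mapsto h^r$ with $\gcd(r,\ell)=1$, the identity $\sigma(h^{j_1})=h^{j_2}$ yields $j_2\equiv j_1r\pmod{\ell}$; projecting $\sigma(b_1)=b_2$ to $G/N=H$ gives $i_2\equiv i_1r\pmod{\ell}$. Now $\sigma|_N$ is an $\FF_p$-linear bijection satisfying the twisted intertwining relation
\[
\sigma(h\cdot n)=h^r\cdot\sigma(n)\qquad(n\in N).
\]
Because $\ell$ is a primitive divisor of $p^d-1$, the minimal polynomial of $h$ over $\FF_p$ has degree $d$, so $\FF_p(h)=\FF_{p^d}$. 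Extending scalars to $\overline{\FF_p}$ and comparing eigenvalues of $M_h$ (namely $h,h^p,\dots,h^{p^{d-1}}$) with those of $M_{h^r}$ (namely $h^r,h^{rp},\dots,h^{rp^{d-1}}$), a nonzero $\sigma|_N$ exists only when $h^r$ is a Galois conjugate of $h$, i.e.\ $r\equiv p^k\pmod{\ell}$ for some $0\le k\le d-1$; in that case $\sigma|_N=c\cdot\Phi^k$ for some $c\in\FF_{p^d}^*$, where $\Phi$ denotes the Frobenius $x\mapsto x^p$. This completes the necessity of~(ii).

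For sufficiency, given $k$ with $i_2\equiv i_1p^k$ and $j_2\equiv j_1p^k\pmod\ell$, I would directly construct an automorphism $\sigma$ of $G$ by the formula $\sigma(n,h^m)=(c\,\Phi^k(n),\,h^{mp^k})$ for any $c\in\FF_{p^d}^*$ (this respects the multiplication of $G$ since $\Phi^k(h)=h^{p^k}$), then choose $c=x_2/\Phi^k(x_1)\in\FF_{p^d}^*$ and check by a two-line computation that $\sigma(b_1)=b_2$ and $\sigma(w_1)=w_2$, using $b_t=(h^{i_t}x_t,h^{i_t})$, $w_t=(-x_t,h^{j_t-i_t})$ and $\Phi^k(h^{i_1}x_1)=h^{i_2}\Phi^k(x_1)$.

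The main obstacle is the eigenvalue/semilinearity analysis of $\sigma|_N$: showing that the only twisted intertwiners between the $H$-module $N$ and its $h^r$-twist are the Galois-semilinear scalings $c\Phi^k$. This is where the primitive-divisor hypothesis on $\ell$ is essential, as it is exactly what forces $h$ to generate the whole field $\FF_{p^d}$ and thus constrains $r$ to lie in the Frobenius orbit $\{p^k\bmod\ell:0\le k\le d-1\}$.
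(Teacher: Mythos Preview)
Your proof is correct and follows the same overall strategy as the paper's: reduce via Lemma~\ref{iso} to the existence of $\sigma\in\Aut(G)$ sending $(b_1,w_1)$ to $(b_2,w_2)$, then analyse what such a $\sigma$ can look like. The difference is one of packaging. The paper simply invokes the known identification $\Aut(G)\cong\AGammaL(1,p^d)=\FF_{p^d}^+{:}\FF_{p^d}^\times{:}\langle\phi\rangle$, writes $\sigma=\varphi\phi^k$ with $\varphi\in\AGL(1,p^d)$, and reads off the congruences by passing to $G/N$ (using that $\varphi$ acts trivially there while $\phi$ raises $h$ to the $p$-th power); for the converse it applies $\phi^k$ and then uses that $\langle\mu\rangle=\FF_{p^d}^\times$ is transitive on $N\setminus\{1\}$ to match $x_1^{\phi^k}$ with $x_2$. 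You instead re-derive the relevant part of $\Aut(G)$ from first principles: the Frobenius/complement argument pins down $\sigma(H)=H$ (in fact this is immediate, since $\sigma(h^{j_1})=h^{j_2}$ already generates $H$), and your eigenvalue comparison for the intertwiner $\sigma|_N$ is precisely what forces $r\equiv p^k\pmod\ell$ and recovers the semilinear form $c\Phi^k$. Your route is more self-contained and makes explicit where the primitive-divisor hypothesis enters; the paper's is shorter by outsourcing that computation to the cited structure of $\Aut(G)$.
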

\proof
    Lemma~\ref{iso} shows that $\calD(G,b_1,w_1)\cong\calD(G,b_2,w_2)$ if and only if there exists $\sigma\in \Aut(G)$ such that $b_1^\sigma=b_2$ and $w_1^\sigma=w_2$.
    Since $G$ is Frobenius, we have
    \[G \lesssim \Aut(G)\cong \AGammaL(1,p^d)=\FF_{p^d}^+{:}\FF_{p^d}^\times {:}\l \phi\r=\FF_{p^d}^+{:}\l\mu \r {:}\l \phi\r,\]
    where $\mu$ is a generator of the cyclic group $\FF_{p^d}^\times$ and $\phi$ is the Frobenius automorphism.
    We may assume that $G\leqslant \Aut(G)$ with $N=\FF_{p^d}^+$, $H\leqslant \FF_{p^d}^\times$ and $h=\mu^{(p^d-1)/\ell}$.

    First, we assume that part~(i) holds.
    Then $b_1^\sigma=b_2$ and $w_1^\sigma=w_2$ for some $\sigma\in\Aut(G)$.
    Note that $\sigma=\varphi \phi^k$ for some $0\le k\le d-1$ and $\varphi\in\AGL(1,p^d)=\FF_{p^d}^+{:}\FF_{p^d}^\times$.
    Since $(Nh)^\varphi=Nh$ and $(Nh)^\phi=Nh^{p}$, we obtain that
    \[(Nb_1)^\sigma=(Nh^{i_1})^{\phi^k}=Nh^{i_1p^k}\mbox{ and }(Nw_1)^\sigma=(Nh^{j_1-i_1})^{\phi^k}=Nh^{(j_1-i_1)p^k}.\]
    Recall that $Nb_2=Nh^{i_2}$ and $Nw_2=Nh^{j_2-i_2}$, it follows that
    $h^{i_1p^k}=h^{i_2}$ and $h^{(j_1-i_1)p^k}=h^{j_2-i_2}$.
    Thus $i_2\equiv i_1p^k\pmod{\ell}$ and $j_2\equiv j_1p^k\pmod{\ell}$ as in part~(ii).

    Now, we assume that part~(ii) holds.
    Then
    \[\begin{aligned}
        \calD(G,b_1,w_1)&\cong \calD(G,b_1^{\phi^{k}},w_1^{\phi^k})=\calD(G,h^{i_1p^k}x_1^{\phi^{k}},(x_1^{\phi^{k}})^{-1}h^{(j_1-i_1)p^k})\\
        &=\calD(G,h^{i_2}x_1^{\phi^{k}},(x_1^{\phi^{k}})^{-1}h^{j_2-i_2}).
    \end{aligned}\]
    Note that $\langle\mu\rangle$ is transitive on $N\setminus\{1\}$, there exists $\varphi\in\langle \mu\rangle$ mapping $x_1^{\phi^{k}}$ to $x_2$.
    Thus, we have
    \[\begin{aligned}
        \calD(G,b_1,w_1)&\cong \calD(G,b_1^{\phi^{k}\varphi},w_1^{\phi^k\varphi})=\calD(G,(h^{i_2} x_1^{\phi^{k}})^\varphi,((x_1^{\phi^{k}})^{-1}h^{j_2-i_2})^\varphi)\\
        &=\calD(G,h^{i_2} x_2,x_2^{-1} h^{j_2-i_2})=\calD(G,b_2,w_2).
    \end{aligned}\]
    This yields part~(i), and we complete the proof.
\qed

Recall that the Euler's phi-function $\phi(n)$ is the number of positive integers that is not greater than $n$ and coprime to $n$.
We enumerate face-primitive regular dessins of $\HA$ type by Theorem~\ref{HA-classification}.

\begin{lemma}\label{HA-type}
    Let $\calD(p^d,\ell)$ be the set of regular dessins $\calD$ such that $\Aut\calD$ is faithful and primitive on face set $F$ of $\HA$ type with face length $2\ell$ and $|F|=p^d$.
    Then $|\calD(p^d,\ell)|=\frac{\phi(\ell)\ell}{d}$.
\end{lemma}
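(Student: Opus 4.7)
The plan is to parametrize the isomorphism classes in $\calD(p^d,\ell)$ via Construction~\ref{cons-HA}, then use Theorem~\ref{HA-classification} to reduce the counting problem to orbit-counting for a cyclic group acting on a product of $\ZZ_\ell$'s. First I would invoke Lemma~\ref{affine-frobenius}: for any $\calD\in\calD(p^d,\ell)$, the automorphism group is $G=N{:}H\leqslant\AGL(1,p^d)$, a Frobenius group with socle $N\cong\ZZ_p^d$ and complement $H=\langle h\rangle\cong\ZZ_\ell$, where $\ell$ is a primitive divisor of $p^d-1$. Since such a $G$ is unique up to isomorphism, the discussion preceding Construction~\ref{cons-HA} gives that every $\calD\in\calD(p^d,\ell)$ has the form $\calD(G,h^ix,x^{-1}h^{j-i})$ with $x\in N\setminus\{1\}$, $0\leqslant i\leqslant \ell-1$, $0\leqslant j\leqslant \ell-1$, and $\gcd(j,\ell)=1$ (the condition $x\neq 1$ is precisely what guarantees $\langle b,w\rangle=G$, because $N$ is an irreducible $H$-module).

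Next I would apply Theorem~\ref{HA-classification}: two such dessins $\calD(G,h^{i_t}x_t,x_t^{-1}h^{j_t-i_t})$, $t=1,2$, are isomorphic if and only if $i_2\equiv i_1p^k\pmod\ell$ and $j_2\equiv j_1p^k\pmod\ell$ for some $0\leqslant k\leqslant d-1$. The crucial observation is that this criterion is independent of the choice of $x_1,x_2$, so every isomorphism class is determined solely by the orbit of $(i,j)$ under the diagonal multiplication action of $\langle p\rangle\leqslant(\ZZ_\ell)^\times$ on the set
\[
X=\ZZ_\ell\times(\ZZ_\ell)^\times=\{(i,j):0\leqslant i\leqslant\ell-1,\ 0\leqslant j\leqslant\ell-1,\ \gcd(j,\ell)=1\},
\]
of cardinality $\ell\,\phi(\ell)$. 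Thus $|\calD(p^d,\ell)|$ equals the number of $\langle p\rangle$-orbits on $X$.

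Finally I would count these orbits directly. Because $\ell$ is a primitive divisor of $p^d-1$, the multiplicative order of $p$ modulo $\ell$ is exactly $d$, so $\langle p\rangle$ has order $d$ in $(\ZZ_\ell)^\times$. The action on $X$ is free: a fixed point of $p^k$ would satisfy $j(p^k-1)\equiv 0\pmod\ell$, and since $\gcd(j,\ell)=1$ this forces $p^k\equiv 1\pmod\ell$, i.e.\ $d\mid k$. Hence the number of orbits is
\[
|\calD(p^d,\ell)|=\frac{|X|}{d}=\frac{\ell\,\phi(\ell)}{d},
\]
as claimed. The main conceptual step is recognizing that Theorem~\ref{HA-classification} eliminates the $x$-parameter from the isomorphism relation; once that is in hand, the only remaining technicality is verifying freeness of the action, which is an immediate consequence of the primitive-divisor hypothesis on $\ell$.
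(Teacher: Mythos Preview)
Your proof is correct and follows essentially the same approach as the paper: parametrize via Construction~\ref{cons-HA}, use Theorem~\ref{HA-classification} to reduce to orbit-counting of $\langle p\rangle$ on $\ZZ_\ell\times(\ZZ_\ell)^\times$, and conclude. You are in fact more explicit than the paper on the last step, since the paper simply asserts the orbit count is $\ell\phi(\ell)/d$ without writing out the freeness argument that you supply.
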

\proof
    By Lemma~\ref{affine-frobenius}, we have $G=\Aut\calD=N{:}H\cong\ZZ_p^d{:}\ZZ_\ell\lesssim\AGL(1,p^d)$ for $\calD\in\calD(p^d,\ell)$.
    Let $x\in N\setminus\{1\}$, and let $h$ be a generator of $H$.
    By Theorem~\ref{HA-classification}, we have
    \[\calD(p^d,\ell)=\{\calD_{i,j}\mid \mbox{$0\leqslant i,j\leqslant \ell-1$ with $\gcd(j,\ell)=1$}\},\]
    where $\calD_{i,j}=\calD(G,h^{i}x,x^{-1}h^{j-i})$.
    Notice that there are $\ell\phi(\ell)$ pairs of $(i,j)$ such that $0\leqslant i,j\leqslant \ell-1$ with $\gcd(j,\ell)=1$.
    Theorem~\ref{HA-classification} also deduces that $\calD_{i_1,j_1}\cong \calD_{i_2,j_2}$ if and only if $i_2\equiv i_1p^k\pmod{\ell}$ and $j_2\equiv j_1p^k\pmod{\ell}$ for some $0\le k\le d-1$.
    Immediately, we obtain that $|\calD(p^d,\ell)|=\frac{\phi(\ell)\ell}{d}$ as desired.
\qed

\subsection{TW type and PA type}\

Let $\calD=\calD(G,b,w)$ be a regular dessin, and suppose that $G\cong G^F$ is a quasiprimitive permutation group of type $\TW$ or $\PA$.
By definition, $G$ has a unique minimal normal subgroup
\[N=T_1\times T_2\times\dots\times T_k\cong T^k,\]
where $k\geq2$, and $T_1\cong\dots\cong T_k\cong T$ are nonabelian simple.
Further, $G=N\l g\r$ by Lemma~\ref{Frattini-argument-quasiprimitive}.
We identify $N$ with $T^k$ for convenience.

\begin{lemma}\label{TW-PA-g-form}
Let $G=N\l g\r\cong T^k\l g\r$ be such that $N$ is the unique minimal normal subgroup.
Then $G$ can be identified with a subgroup of $\Aut(N)= \Aut(T)^k{:}\S_k$ such that
\[\mbox{$g^y=(1,1,\dots, 1,a)\pi$, where $y\in \Aut(N)$, $a\in \Aut(T)$, and $\pi=(12\dots k)\in \S_k$.}\]
\end{lemma}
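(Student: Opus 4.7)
The plan is to embed $G$ into $\Aut(N)$, identify the latter with the wreath product $\Aut(T)^k{:}S_k$, reduce the image of $g$ in $S_k$ to the standard $k$-cycle $(12\dots k)$, and finally conjugate within the base group $\Aut(T)^k$ to clear out all but the last coordinate.

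First I would verify that $G$ embeds in $\Aut(N)$. Since $T$ is nonabelian simple, $Z(N)=1$, hence $C_G(N)\cap N=1$. If $C_G(N)\neq 1$ then, being normal in $G$, it would contain the unique minimal normal subgroup $N$, which contradicts $C_G(N)\cap N=1$. Therefore $C_G(N)=1$ and conjugation gives an embedding $G\hookrightarrow\Aut(N)$. Because the simple factors $T_1,\dots,T_k$ are precisely the minimal normal subgroups of $N$, every automorphism of $N$ permutes them, yielding $\Aut(N)=\Aut(T)^k{:}S_k$.

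Next, let $\pi\in S_k$ denote the image of $g$ under the projection $\Aut(N)\to S_k$. The key step is that $\pi$ is a $k$-cycle. Let $\Omega$ be a $\langle g\rangle$-orbit on $\{T_1,\dots,T_k\}$ and set $M=\prod_{T_i\in\Omega}T_i$. Then $M$ is normalised by $g$ and by $N$, hence by $G=N\langle g\rangle$. If $\Omega\subsetneq\{T_1,\dots,T_k\}$, then $M$ is a proper nontrivial normal subgroup of $G$; but every nontrivial normal subgroup of $G$ must contain the unique minimal normal subgroup $N$, a contradiction. So $\langle g\rangle$ is transitive on the $k$ factors, forcing $\pi$ to be a $k$-cycle. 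Conjugating by a suitable element of $S_k\leqslant\Aut(N)$ brings $\pi$ into the standard form $(12\dots k)$.

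Finally, write $g=(a_1,\dots,a_k)\pi$ with $a_i\in\Aut(T)$ and look for $x=(x_1,\dots,x_k)\in\Aut(T)^k$ with $x^{-1}gx=(1,\dots,1,a)\pi$. A direct computation in the wreath product, using $\pi(x_1,\dots,x_k)\pi^{-1}=(x_k,x_1,\dots,x_{k-1})$, gives
\[
x^{-1}gx=(x_1^{-1}a_1x_k,\ x_2^{-1}a_2x_1,\ \dots,\ x_k^{-1}a_kx_{k-1})\pi.
\]
Setting $x_k=1$ and recursively $x_i=a_ix_{i-1}$ for $i=1,\dots,k-1$ makes the first $k-1$ entries trivial and leaves $a=a_ka_{k-1}\cdots a_1$ in the last slot, so $y:=x\in\Aut(N)$ is the required conjugator. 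The main obstacle is establishing transitivity of $\langle g\rangle$ on $\{T_1,\dots,T_k\}$; the uniqueness of $N$ as a minimal normal subgroup is exactly what is needed there, while the subsequent reduction in the base group is purely computational.
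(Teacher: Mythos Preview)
Your proof is correct and follows essentially the same approach as the paper: embed $G$ in $\Aut(N)$, arrange the $S_k$-part of $g$ to be the standard $k$-cycle, and then conjugate by a base-group element to clear all but one coordinate. You are more explicit than the paper in justifying $C_G(N)=1$ and the transitivity of $\langle g\rangle$ on the simple factors, and you use the opposite wreath-product convention (so your final $a=a_ka_{k-1}\cdots a_1$ rather than the paper's $a=a_1a_2\cdots a_k$), but these are cosmetic differences.
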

\proof
Let $\calT=\{T_1,T_2,\dots,T_k\}$.
Then $\l g\r$ acts by conjugation on $\calT$ transitively, and $g\in\Aut(N)= \Aut(T)^k{:}\S_k$ as $N$ is a unique minimal normal subgroup of $G$.
Thus
\[\mbox{$g=(a_1, a_2, \dots, a_k)\pi\in \Aut(N)$, where $a_i\in \Aut(T)$ for $1\le i\le k$, and $\pi\in\S_k$.}\]
Relabeling if necessary, we may assume that $\pi=(12\dots k)$.
Let $t_i=a_i a_{i+1}\dots a_k\in \Aut(T)$, and let $y=(t_1, t_2, \dots, t_k)$ where $1\le i\le k$.
Then
\[\mbox{$t_i^{-1}a_it_{i+1}=a_k^{-1}\dots a_{i+1}^{-1}a_i^{-1}a_ia_{i+1}\dots a_k=1$ for $1\le i\le k-1$,}\]
and so
	\begin{align*}
	 g^y
	 &=(t_1^{-1}, t_2^{-1}, \dots, t_k^{-1})(a_1,a_2,\dots,a_k ) \pi (t_1, t_2, \dots, t_k) \\
	 &=(t_1^{-1}, t_2^{-1}, \dots, t_k^{-1})(a_1,a_2,\dots,a_k )  (t_1, t_2, \dots, t_k)^{\pi^{-1}} \pi \\
	 &=(t_1^{-1}, t_2^{-1}, \dots, t_k^{-1})(a_1,a_2,\dots,a_k ) (t_2, t_3,\dots, t_k, t_1) \pi \\
	 &=(t_1^{-1} a_1 t_2, t_2^{-1} a_2t_3, \dots, t_k^{-1}a_kt_1) \pi \\
	 &=(1, 1, \dots,1, t_1) \pi,
	\end{align*}
as stated in the lemma with $a=t_1=a_1a_2\dots a_k \in \Aut(T)$.
\qed

\begin{lemma}\label{TW-PA-generators}
Let $G=N\l g\r=T^k\l g\r$ be such that $N$ is the unique minimal normal subgroup.
Then there exist elements $x\in N$ such that $\l x,g\r=G$.
\end{lemma}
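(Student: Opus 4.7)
The plan is to put $g$ into the normal form provided by Lemma~\ref{TW-PA-g-form} and then exhibit an explicit element $x\in N$ whose normal closure in $\langle g,x\rangle$ already fills $N$, via a subdirect-product analysis.

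By Lemma~\ref{TW-PA-g-form}, after conjugating inside $\Aut(N)$ we may assume
\[
g=(1,1,\ldots,1,a)\pi,\qquad \pi=(12\cdots k),\ a\in\Aut(T),
\]
so the conjugation action on $N=T^k$ is $(t_1,\ldots,t_k)^g=(t_2,\ldots,t_k,t_1^a)$. Since $G/N$ is cyclic, generated by the image of $g$, we have $\langle g,x\rangle N=G$ for every $x\in N$, and so $H:=\langle g,x\rangle$ equals $G$ if and only if $H\cap N=N$. The subgroup $H\cap N$ is $g$-invariant (being normal in $H$) and contains the $\langle g\rangle$-closure $D:=\langle x^{g^j}:j\in\ZZ\rangle$; accordingly, it suffices to arrange $D=N$.

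Since $T$ is nonabelian simple, hence $2$-generated, fix $s_1,s_2\in T$ with $\langle s_1,s_2\rangle=T$ and take
\[
x=(s_1,s_2,1,\ldots,1)\in N
\]
(assuming $k\geq 3$). Each coordinate-projection of $D$ contains both $s_1$ and $s_2$ (read off from $x$ and a suitable $g$-shift of it), so each projection equals $T$, and $D$ is a subdirect subgroup of $N$ with full projections. By Scott's classification of subdirect subgroups of a direct product of nonabelian simple groups, $D$ is determined by a partition $\Sigma$ of $\{1,\ldots,k\}$ together with a coherent system of gluing automorphisms on each block. The $\langle g\rangle$-invariance of $D$ forces $\Sigma$ to be $\pi$-invariant, and such partitions are exactly the $\pi^d$-orbits for divisors $d$ of $k$, with blocks of size $k/d$.

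The crux is to check that our special $x$ rules out every proper such partition. Suppose $d<k$; the block of position~$1$ is $\{1,1+d,1+2d,\ldots\}$, and a small case check shows that for $k\geq 3$ this block always contains some $j\in\{3,\ldots,k\}$ (for $d\geq 3$ immediately, and for $d=2$ already $3$ lies in the block). The subdirect relation then gives $t_j=\phi(t_1)$ on $D$ for some $\phi\in\Aut(T)$; evaluating on $x\in D$ yields $1=\phi(s_1)$, hence $s_1=1$, contradicting $\langle s_1,s_2\rangle=T$ (since $T$ is noncyclic). Hence $\Sigma$ is the partition into singletons, $D=N$, and $H=G$.

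The residual cases $k=1,2$ are handled by the same idea with minor modifications. For $k=1$ one chooses $x\in T$ whose $\langle a\rangle$-orbit generates $T$, which exists because $a\in\Aut(T)$ must be nontrivial (otherwise $\langle a\rangle$ and $T$ would furnish two distinct minimal normal subgroups of $G$) together with $3/2$-generation of $T$ in the sense of Guralnick--Kantor. For $k=2$ one must, in addition to $\langle s_1,s_2\rangle=T$, avoid the finitely many $g$-invariant twisted diagonals $\{(t,\phi(t)):t\in T\}$ with $\phi^2=a$, a condition $s_2\neq\phi(s_1)$ for each relevant $\phi$; this is achievable by a direct counting in $T\times T$. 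I expect the main obstacle to be setting up the subdirect-product/gluing-automorphism framework carefully while keeping track of the $a$-twist in $g$; once that framework is in place, the combinatorial partition argument above is routine.
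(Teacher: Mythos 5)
Your proposal is correct in outline and reaches the conclusion by a genuinely different route from the paper's. For $k\ge 3$ the paper chooses the generating pair $s,t$ of $T$ with $|t|=2$ (so $|s|\ge 3$), sets $x=(s,t,1,\dots,1)$, and observes that $x^2=(s^2,1,\dots,1)$ is a nontrivial element of $M\cap T_1$ where $M=\l x^{\l g\r}\r$; since $M\cap T_1\lhd M$ and $M$ projects onto $T_1$, simplicity gives $T_1\le M$, and transitivity of $\l g\r$ on the factors finishes. This squaring trick replaces your invocation of Scott's classification of subdirect products: both arguments are valid, but the paper's is more elementary (no partition/gluing analysis, no need to track the $a$-twist), while yours is arguably more conceptual and would generalize to situations where one cannot prescribe element orders. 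Your partition argument itself is sound: a $\pi$-invariant proper partition forces the block of position $1$ to contain some $j\ge 3$, where $x_j=1$ kills the diagonal relation.

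The one place where your write-up falls short of a complete proof is $k=2$. You correctly identify that the obstruction is the $g$-invariant twisted diagonals $\{(t,\phi(t))\}$ with $\phi^2=a$, but "achievable by a direct counting in $T\times T$" is not routine: the number of square roots of $a$ in $\Aut(T)$ can be large (on the order of $\sum_\chi\chi(1)$), so comparing $r(a)\cdot|T|$ with the number of generating pairs needs nontrivial input such as probabilistic generation bounds. The paper's choice $|s|\ge 3$, $|t|=2$ dissolves this instantly: a pair lying on a diagonal $\Delta_\phi$ would satisfy $|t|=|\phi(s)|=|s|$, which is impossible, and in the paper's formulation one simply notes $x^2=(s^2,1)\ne(1,1)$ lies in $\ker\varphi_2$, forcing $\ker\varphi_2=T_1$ and hence $M=T_1\times T_2$. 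You should either adopt that order-based choice of $(s_1,s_2)$ or supply the counting estimate. (The $k=1$ case you discuss is outside the scope of the lemma, since the enclosing subsection assumes $k\ge 2$.)
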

\begin{proof}
By Lemma~\ref{TW-PA-g-form}, we may assume that $g=(1,\dots, 1,a)\pi$, where $a\in\Aut(T)$ and $\pi=(12\dots k)\in\S_k$.
Let $s,t\in T$ generate $T$ such that $|t|=2$, see \cite{King} for the existence of such a pair $(s,t)$.
Then $|s|\ge3$ because $T$  is not a dihedral group.

First, assume that $k=2$.
Let $x=(s,t)\in N=T^2$, and let $M:=\l x,x^g,x^{g^2}\r=\l (s,t),(t^a,s), (s^a, t^a)\r$.
Define $\varphi_i$ as the projection of $M$ to $T_i$ for $i=1,2$.
Then
\[\begin{aligned}
    \varphi_1(M)&\ge\langle\varphi_1(x^g),\varphi_1(x^{g^2})\rangle=\langle t^a,s^a\rangle\cong T\mbox{ and }\\
    \varphi_2(M)&\ge\langle\varphi_2(x),\varphi_2(x^{g})\rangle=\langle t,s\rangle\cong T.
\end{aligned}\]
Hence, both $\varphi_1$ and $\varphi_2$ are surjections.
Then the kernels of $\varphi_1$ and $\varphi_2$ are normal subgroups of $T$.
Note that $x^2=(s^2,1)\neq (1,1)$ lies in the kernel of $\varphi_2$, it follows that $\ker\varphi_2=T_1$.
Since $T_1^g=T_2$, we have that $M=T_1\times T_2=N$.
Then $\langle x,g\rangle=\langle M,g\rangle=\langle N,g\rangle=G$.

Next, assume that $k\ge3$.
Let $x=(s,t,1,\dots,1)$, and let $M=\langle x^{\langle g\rangle}\rangle$.
Define $\varphi_i$ as the projection of $M$ to $T_i$ for $i=1,...,k$.
Note that
\[\varphi_2(M)\ge \varphi_2\langle x,x^g\rangle=\varphi_2\langle (s,t,1,\dots,1),(1,s,t,\dots,1)\rangle\cong T.\]
Since $\l g\r$ is transitive by conjugation on $\{T_1,T_2,\dots,T_k\}$, each $\varphi_i$ is a surjection.
Note that $\ker\varphi_2\cap\cdots\cap \ker\varphi_k\lhd T_1$ contains $x^2=(s^2,1,...,1)$, it follows that $T_1\lhd M$.
By the transitivity of $\langle g\rangle$ on $\{T_1,T_2,\dots,T_k\}$, we obtain that $M=N$, and therefore $\langle x,g\rangle=\l M,g\r=\langle N,g\rangle=G$.
\end{proof}

\begin{corollary}\label{coro:PA-example}
 There are infinitely many face-quasiprimitive regular dessins of type $\PA$.
\end{corollary}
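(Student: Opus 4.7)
The plan is to produce, for every nonabelian simple group $T$ and every integer $k\geqslant 2$, an explicit face-quasiprimitive regular dessin of type $\PA$ with socle $T^k$; since there are infinitely many nonabelian simple groups, this will yield infinitely many such dessins. First I would specialize the construction of Lemma~\ref{TW-PA-generators} to the simplest case where $g=\pi=(1\,2\,\cdots\,k)\in\S_k$ acts as a pure $k$-cycle on the factors of $N=T^k$, so that $G=T^k{:}\langle\pi\rangle=T\wr C_k$. Lemma~\ref{TW-PA-generators} supplies $x\in N$ with $\langle x,\pi\rangle=G$: explicitly $x=(s,t)$ for $k=2$ and $x=(s,t,1,\dots,1)$ for $k\geqslant 3$, where $s,t\in T$ are the generators with $|t|=2$ and $|s|\geqslant 3$ used in that proof. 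I would set $b=x$, $w=\pi$, and define $\calD=\calD(G,b,w)$.

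Since $\pi$ cyclically permutes the $k$ simple factors of $N$, the socle $N=T^k$ is the unique minimal normal subgroup of $G$. Hence to conclude type $\PA$ it suffices to verify that the face-set action of $G$ on $F=[G{:}\langle bw\rangle]$ is faithful, that $N$ is transitive on $F$, and that $N$ is \emph{not} regular on $F$ (this last condition being precisely what distinguishes $\PA$ from $\TW$ in Lemma~\ref{G-types}). Transitivity is immediate from $bw=x\pi\notin N$, which gives $N\langle bw\rangle=G$. Non-regularity of $N$ is equivalent to $|bw|>k$, that is, $(x\pi)^k\neq 1$. A direct wreath-product computation shows that each coordinate of $(x\pi)^k\in N$ is a cyclic rotation of $x_1x_2\cdots x_k=st$; since $|s|\geqslant 3$ and $|t|=2$ force $st\neq 1$, we conclude $(x\pi)^k\neq 1$. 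For faithfulness, the core of $\langle bw\rangle$ in $G$ is contained in $\langle (x\pi)^k\rangle\subsetneq N$ and is $G$-normal, hence trivial because the only $G$-normal subgroups of $N$ are $1$ and $N$.

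Thus $G$ acts faithfully and quasiprimitively on $F$, and no normal subgroup of $G$ is regular on $F$ (those inside $N$ are trivial or equal $N$, while those properly containing $N$ have order exceeding $|F|$), so the action is of type $\PA$. Letting $T$ range over the infinitely many nonabelian simple groups produces pairwise non-isomorphic groups $G$ and hence pairwise non-isomorphic dessins $\calD$, establishing the corollary. The one technical point needing care is the wreath-product calculation that $(x\pi)^k\neq 1$, but this reduces immediately to the observation $st\neq 1$; everything else follows from the normal-subgroup structure of $T\wr C_k$.
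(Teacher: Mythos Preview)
Your argument is correct. The only point that is phrased loosely is the faithfulness step: you assert that the core of $\langle bw\rangle$ lies inside $\langle (x\pi)^k\rangle\subset N$, but you have not justified that inclusion directly. The clean way to finish is simply that any nontrivial normal subgroup of $G$ contains the unique minimal normal subgroup $N=T^k$, while $\langle bw\rangle$ is cyclic and so cannot contain $N$; hence the core is trivial. Your wreath-product computation $(x\pi)^k\neq 1$ (first coordinate $st\neq 1$ since $|s|\geqslant 3$, $|t|=2$) is fine and is exactly what forces $N\cap\langle bw\rangle\neq 1$, ruling out type $\TW$.

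Your construction and the paper's differ in where the nontriviality of $\langle bw\rangle\cap N$ is built in. The paper chooses $g=(1,\dots,1,a)\pi$ with $1\neq a\in\Inn(T)$ and sets $(b,w)=(g^2x,\,x^{-1}g^{-1})$, so that $bw=g$ and one reads off $\langle g\rangle\cap N=\langle(a,\dots,a)\rangle\neq 1$ immediately from the twist $a$. You take the untwisted $g=\pi$ and the simpler pair $(b,w)=(x,\pi)$, so that $bw=x\pi$, and derive the nontrivial intersection from the explicit form of $(x\pi)^k$. Both constructions live in the same group $T\wr C_k$ (in the paper's setup $\pi=(1,\dots,1,a^{-1})g\in G$), so the ambient groups coincide; the paper's choice makes the $\PA$ verification calculation-free, while yours keeps the generating pair minimal at the price of the short coordinate computation. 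Either way, varying $T$ over nonabelian simple groups gives infinitely many examples.
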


\begin{proof}
 For any nonabelian simple group $T$, let $g=(1,\dots, 1,a)\pi \in \Aut(T)^k{:}\S_k$, where $1\neq a\in \Inn(T)$ and $\pi=(12\dots k)\in \S_k$.
 Let $G=\Inn(T)^k \l g\r=N\l g\r$, where $N=\Inn(T)^k \cong T^k$ is the unique minimal normal subgroup of $G$. As $\l g\r\cap N=\l (a,a,\dots, a)\r\neq 1$,
 the action of $G$ on $[G{:}\l g\r]$ is quasiprimitive and of type $\PA$. By Lemma~\ref{TW-PA-generators}, there is an element $x\in N$ such that $\l x, g\r=G$.
 This give a face-quasiprimitive regular dessin $\calD(G,g^2x, x^{-1}g^{-1})$ of type $\PA$.
 As there are infinitely many nonabelian simple groups, we have infinitely many examples of type $\PA$ as desired.
\end{proof}

Note that the supporting surface of a regular dessin is orientable, and hence with even Euler characteristic.
Now we construct face-quasiprimitive regular dessins of type $\TW$ which are smooth coverings of a unicellular dessin.

\begin{construction}\label{cons-TW}
{\rm
Let $k\ge5$ be an odd integer, and let $T$ be a nonabelian simple group.
Take a permutation $g=(12\dots k)\in\S_k$, and elements $s,t\in T$ such that $T=\l s,t\r$,
see \cite{King} for the existence of such elements.
Let $G=T^k{:}\l g\r=(T_1\times \dots\times T_k){:}\l g\r$, where $g$ acts on $N=T_1\times\dots\times T_k$ by
\[g:\ (t_1,t_2,\dots,t_k)\mapsto (t_k,t_1,\dots,t_{k-1}).\]
Let $x=(s, 1, t, 1,(ts)^{-1}, 1,\dots,1)\in N$, and let
\[\mbox{$b=g^2x^{-1}$ and $w=xg^{-1}$.}\]
Let $\calD=\calD(G,b,w)$ and $\ov \calD=\calD(\l g\r,g^2,g^{-1})$.
}
\end{construction}

\begin{lemma}\label{lem:TW}
Let $G,N,\calD$ and $\ov\calD$ be as defined in Construction~$\ref{cons-TW}$.
Then
\begin{enumerate}[{\rm(i)}]
\item $G$ is quasiprimitive of type $\TW$ on the face set of $\calD$;
\item $\ov\calD\in \calK_{1,1}^{(k)}$, of Euler characteristic equal to $3-k$;
\item $\calD$ is of Euler characteristic $(3-k)|T|^k$, and a smooth covering of $\ov\calD$.
\end{enumerate}
\end{lemma}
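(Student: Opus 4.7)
The plan is to verify (i), (ii), (iii) in order, with the main technical effort landing on the smoothness verification of (iii).

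For (i), I would first confirm $G = \langle b, w\rangle$. Since $bw = g$ and $wg = x$, this reduces to showing $M := \langle x^{\langle g\rangle}\rangle = N$. At each coordinate $i$, the three $\langle g\rangle$-conjugates of $x$ whose supports hit position $i$ contribute $s, t, (ts)^{-1}$, so the $i$-th projection $\pi_i(M) = \langle s, t\rangle = T_i$, making $M$ subdirect in $N$. By iterated Goursat, $M$ is diagonally embedded along a partition of $\{1,\dots,k\}$, and $\langle g\rangle$-invariance forces this partition to be the coset partition of a subgroup of $\ZZ_k$, so each block size $d$ divides $k$. Since $k$ is odd and the support of $x$ lies at positions $1, 3, 5$, a block properly containing one of these positions together with a zero-coordinate position would force a diagonal element of $T$ to be simultaneously nontrivial and trivial; the extremal case $d = k$ is excluded because $x$ has both zero and nonzero entries. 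Hence $d = 1$ and $M = N$. To see $G^F$ is quasiprimitive of type $\TW$: $\langle g\rangle$ permutes $\{T_1,\dots,T_k\}$ transitively, so $N$ is minimal normal; $\C_G(N) = 1$ because no nontrivial power of $g$ acts on $N$ as an inner automorphism of $N$; and $N = \soc(G)$ acts regularly on $F = [G{:}\langle g\rangle]$ since $G = N\langle g\rangle$.

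Part (ii) is a direct check: in $\ov\calD = \calD(\langle g\rangle, g^2, g^{-1})$, oddness of $k$ gives $\langle g^2\rangle = \langle g^{-1}\rangle = \langle g\rangle$, so in the $\K_{m,n}^{(\lambda)}$ parametrization one reads off $m = n = 1$ and $\lambda = k$; hence $\ov\calD \in \calK_{1,1}^{(k)}$ with $\chi(\ov\calD) = 2 - k + 1 = 3 - k$.

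The heart of the proof is (iii). By Theorem~\ref{quotient-dessin}, $\chi(\calD) = |N|\,\chi(\ov\calD) = (3 - k)|T|^k$ and smoothness follow once we establish $|b| = |w| = k$ (noting $|bw| = |g| = k$). For $|w|$, I would expand
\[
(xg^{-1})^k \;=\; \Bigl(\prod_{j=0}^{k-1} x^{g^j}\Bigr) g^{-k} \;=\; \prod_{j=0}^{k-1} x^{g^j},
\]
and inspect each coordinate $i$: exactly three factors $x^{g^j}$ contribute nontrivially, those with $j \equiv i-1, i-3, i-5 \pmod k$, giving $s, t, (ts)^{-1}$ respectively. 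These three indices form an arithmetic progression in $\ZZ_k$ of common difference $-2$, so their linearization inside $\{0, 1, \dots, k-1\}$ must be one of the three cyclic rotations of $(t, s, (ts)^{-1})$; each such cyclic rotation has product $1$ by the defining relation $t \cdot s \cdot (ts)^{-1} = 1$. A parallel expansion of $(g^2 x^{-1})^k$, moving the $g^2$'s to the left via $g^{2k} = 1$ and invoking the invertibility of $2$ in $\ZZ_k$ (where $k$ is odd), reduces to the same cyclic-rotation bookkeeping and yields $(g^2 x^{-1})^k = 1$. The main obstacle is precisely this bookkeeping: one must verify that wrap-around in $\ZZ_k$ never produces a ``bad'' linearization such as $(s, t, (ts)^{-1})$, whose product is the generally nontrivial commutator $[s, t^{-1}]$; this is exactly where the oddness of $k$ and the specific choice of support positions $\{1, 3, 5\}$ play essential roles.
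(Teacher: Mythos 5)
Your proposal is correct and follows essentially the same route as the paper: part (i) by showing $\langle x^{\langle g\rangle}\rangle=N$ and invoking uniqueness of the minimal normal subgroup together with regularity of $N$ on $[G{:}\langle g\rangle]$, part (ii) by direct inspection, and part (iii) by expanding $w^k=xx^g\cdots x^{g^{k-1}}$ and $b^k=(xx^{g^2}\cdots x^{g^{2k-2}})^{-1}$ and checking coordinatewise that each coordinate receives a cyclic rotation of $((ts)^{-1},t,s)$, hence the identity. Your Goursat argument for (i) and the explicit wrap-around bookkeeping for (iii) merely flesh out steps the paper leaves as ``observe'' and a displayed list of conjugates.
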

\begin{proof}
Let $X=\l x,x^{g^2}\r$.
Observe that $T_3<X<T_1\times\dots\times T_k$.
Since $\l g\r$ is transitive on $\{T_1,\dots,T_k\}$, we obtain that $\l x^{\l g\r}\r=T_1\times\dots\times T_k$, and so $\l x,g\r=N{:}\l g\r$.
Let $\calD=\calD(G,b,w)$ be a regular dessin.
Then $\l g\r=\l bw\r$ is a stabilizer of a face in $\calD$, and $N$ is the unique minimal normal subgroup of $G$.
Hence, $N$ is regular on the face set $[G:\l g\r]$.
So $\calD$ is a face-quasiprimitive regular dessin of type $\TW$, as in part~(i).

Since $k$ is odd, it is easily shown that $\ov\calD\cong \calD(\l g\r, g^2,g^{-1})\in \calK_{1,1}^{(k)}$, which has Euler characteristic $1+1-k+1=3-k$, as in part~(ii).

Finally, it is easily shown that
\[\begin{array}{l}
w^k=(xg^{-1})^k=xx^gx^{g^2}\dots x^{g^{k-1}} g^k=xx^gx^{g^2}\dots x^{g^{k-1}},\\
b^k=(g^2x^{-1})^k=g^{2k}(xx^{g^2}\dots x^{g^{2k-2}})^{-1}=(xx^{g^2}\dots x^{g^{2k-2}})^{-1}.
\end{array}\]
Calculation shows that
\[\begin{array}{rcl}
x&=&(s, 1, t, 1,(ts)^{-1}, 1,\dots,1),\\
x^g&=&(1,s, 1, t, 1,(ts)^{-1}, 1,\dots,1),\\
x^{g^2}&=&(1,1,s, 1, t, 1,(ts)^{-1}, 1,\dots,1),\\
 \dots&=& \dots\\
x^{g^{k-4}}&=&((ts)^{-1},1,\dots,1,t,1),\\
x^{g^{k-3}}&=&(1,(ts)^{-1},1,\dots,1,t),\\
x^{g^{k-2}}&=&(t,1,(ts)^{-1},1,\dots,s,1),\\
x^{g^{k-1}}&=&(1,t,1,(ts)^{-1}, 1,\dots,s).\\
\end{array}\]
Then $xx^gx^{g^2}\dots x^{g^{k-1}}=1$, and thus $|w|=|g^{-1}|=k$.
Similarly, $xx^{g^2}\dots x^{g^{2k-2}}=1$, and $|b|=|g^2|=k$.
Furthermore, since $bw=g$ has order $k$, the dessin $\calD=\calD(G,b,w)$ is a smooth covering of $\calD(\l g\r, g^2,g^{-1})$.
Since $\calD$ is a smooth covering of $\ov\calD$ with normal subgroup $N=T^k$, the Euler characteristic of $\calD$ is equal to $(3-k)|N|$ by Theorem~\ref{quotient-dessin}, as in part~(iii).
\end{proof}

The following proposition tells us that each cyclic group of odd order greater than 5 has infinitely many smooth coverings which are automorphism groups of some face-quasiprimitive regular dessins of type $\TW$.

\begin{proposition}\label{prop:chi<-4}
Let $k\ge5$ be an odd integer, and $T$ a nonabelian simple group.
Then $G=T\wr\ZZ_k$ is a smooth covering of $\ZZ_k$.
\end{proposition}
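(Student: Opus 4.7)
The plan is to apply Construction~\ref{cons-TW} and Lemma~\ref{lem:TW}(iii), which were set up precisely to yield this statement. By Definition~\ref{smooth-extension}, asserting that $G = T\wr\ZZ_k$ is a smooth covering of $\ZZ_k$ amounts to producing a $2$-generating pair $(b,w)$ of $G$ whose images $(\ov b, \ov w)$ in the quotient $G/T^k \cong \l g\r \cong \ZZ_k$ satisfy $(|b|,|w|,|bw|) = (|\ov b|,|\ov w|,|\overline{bw}|)$.

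For this I would take the pair from Construction~\ref{cons-TW}: choose $s,t \in T$ with $T = \l s,t\r$, set
\[
x = (s,\, 1,\, t,\, 1,\, (ts)^{-1},\, 1,\,\dots,\, 1) \in T^k,
\]
which is well defined since $k \ge 5$, and let $g$ cyclically permute the $k$ direct factors; the candidate pair is $b = g^2 x^{-1}$ and $w = xg^{-1}$.

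Next I would verify the two required properties. That $\l b,w\r = G$ follows from $bw = g$ and $g^2 b^{-1} = x$, together with the observation inside Lemma~\ref{lem:TW}(i) that $\l x^{\l g\r}\r = T^k$ (using transitivity of $\l g\r$ on the simple factors and the local containment $\l x, x^{g^2}\r \supseteq T_3$). For the orders one expands
\[
w^k \;=\; x\,x^g\,x^{g^2}\cdots x^{g^{k-1}}, \qquad b^{-k} \;=\; x\,x^{g^2}\,x^{g^4}\cdots x^{g^{2k-2}},
\]
and checks coordinate by coordinate that in each factor $T_i$ exactly the three entries $s$, $t$, $(ts)^{-1}$ appear and, moreover, they appear in one of the cyclic orders $s\cdot(ts)^{-1}\cdot t$, $(ts)^{-1}\cdot t\cdot s$, $t\cdot s\cdot(ts)^{-1}$, each of which equals $1$ in $T$. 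Odd-ness of $k$ enters here exactly through $\l g^2\r = \l g\r$, which is what ensures that the shifts by $g^2$ in the second product cycle through all $k$ coordinates. Hence $|b| = |w| = |bw| = k$; since $\ov b = g^2$, $\ov w = g^{-1}$, $\overline{bw} = g$ also all have order $k$ (again using $\gcd(2,k) = 1$), the smooth covering condition is met.

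The only substantive work is in the telescoping identities above, and this is already carried out explicitly in the proof of Lemma~\ref{lem:TW}(iii). So I do not anticipate a real obstacle: the present proposition is simply the group-theoretic shadow of that lemma, stripped of its dessin-theoretic formulation, and its proof reduces to a direct citation.
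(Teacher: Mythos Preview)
Your proposal is correct and matches the paper's approach exactly: the paper states Proposition~\ref{prop:chi<-4} without any separate proof, since it is an immediate group-theoretic reformulation of Lemma~\ref{lem:TW}(iii) applied to the pair $(b,w)$ of Construction~\ref{cons-TW}. Your write-up spells out precisely this reduction, including the telescoping check and the role of $k$ being odd, so nothing further is needed.
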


\subsection{Almost simple groups}\

For a prime $r$,  a unicellular regular dessin in $\calK_{1,1}^{(r)}$ has automorphism group $\ZZ_r$.
We shall construct smooth coverings of such unicellular dessins with  transformation groups being  simple groups.

Let $r\ge5$ be a prime, and let $T=\SL(2,2^r)$.
Let $\phi$ be the automorphism of $T$ induced by the field automorphism of $\FF_{2^r}$.
Then $T$ is a simple group, and $\phi$ is of order $r$.
Let
\[G=T{:}\l\phi\r=\SigmaL(2, 2^r).\]
Let $\calD=\calD(G,b,w)$ be a regular dessin which has a smooth quotient $\calD/T\in \calK_{1,1}^{(r)}$.
Then $\SigmaL(2,2^r)$ is a smooth covering of $\l\phi\r=G/T\cong\ZZ_r$, and thus $|b|=|w|=|bw|=r$.

We need to study the elements $t\in T$ such that $\l t,\phi\r=G$.
Let
\[\cali=\{t\in T\mid \l t,\phi\r=G\},\ \mbox{and}\ \calj=\{t\in T\mid \l t,\phi\r<G\}.\]
Then $T=\cali \cup \calj$, and $|\cali|=|T|-|\calj|$.

\begin{lemma}\label{lem:SL-phi}
The field automorphism $\phi$ normalizes exactly $3$ subgroups isomorphic to $\ZZ_2^r{:}\ZZ_{2^r-1}$, $3$ subgroups  isomorphic to  $\D_{2(2^r-1)}$ and $1$ subgroup isomorphic to $\D_{2(2^r+1)}$.
\end{lemma}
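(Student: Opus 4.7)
The plan is to realize each of the three subgroup types as the stabilizer of a natural geometric object for the action of $T=\SL(2,2^r)=\PSL(2,2^r)$ on a projective line, and then to count $\phi$-fixed objects of each type. Combining Sylow-centralizer arguments with Dickson's classification (and using that $r\geqslant 5$ is an odd prime, so $\FF_{2^r}$ has only $\FF_2$ as a proper subfield and $\PSL(2,2^r)$ has no $A_5$ subgroup), every subgroup of $T$ isomorphic to $\ZZ_2^r{:}\ZZ_{2^r-1}$ is a Borel subgroup, i.e., the stabilizer of a unique point of $\PP^1(\FF_{2^r})$; every subgroup isomorphic to $\D_{2(2^r-1)}$ is a split-torus normalizer, i.e., the stabilizer of a unique unordered pair in $\PP^1(\FF_{2^r})$; and every subgroup isomorphic to $\D_{2(2^r+1)}$ is a non-split-torus normalizer, i.e., the stabilizer of a unique unordered $\sigma$-orbit $\{p,p^\sigma\}\subset\PP^1(\FF_{2^{2r}})\setminus\PP^1(\FF_{2^r})$, where $\sigma\colon x\mapsto x^{2^r}$ is the $\FF_{2^r}$-Frobenius.

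The field automorphism $\phi$ acts on $\PP^1(\FF_{2^r})$ as the squaring Frobenius with fixed-point set $\PP^1(\FF_2)$ of cardinality $3$. Because $\phi$ has odd prime order $r$, it has no $2$-cycles on $\PP^1(\FF_{2^r})$; hence $\phi$ normalizes exactly $3$ Borel subgroups (one per fixed point) and exactly $\binom{3}{2}=3$ split-torus normalizers (one per unordered pair of fixed points), giving the first two counts.

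For the non-split case, extend $\phi$ to the Frobenius $\Phi\colon x\mapsto x^2$ on $\FF_{2^{2r}}$, so that $\langle\Phi\rangle=\mathrm{Gal}(\FF_{2^{2r}}/\FF_2)\cong\ZZ_{2r}$ and $\Phi^r=\sigma$. A pair $\{p,p^\sigma\}$ is $\Phi$-invariant iff $\Phi(p)\in\{p,p^\sigma\}$: the possibility $\Phi(p)=p$ places $p$ in $\PP^1(\FF_2)\subset\PP^1(\FF_{2^r})$, which is forbidden, while $\Phi(p)=p^\sigma$ yields $\Phi^{r-1}(p)=p$. Since $r$ is odd, $\gcd(r-1,2r)=2$, so $\langle\Phi^{r-1}\rangle=\langle\Phi^2\rangle$ has order $r$ in $\langle\Phi\rangle$, with fixed field $\FF_4$. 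Using $\FF_4\cap\FF_{2^r}=\FF_2$ (because $\gcd(2,r)=1$), this leaves $|\PP^1(\FF_4)|-|\PP^1(\FF_2)|=5-3=2$ candidate points; they are interchanged by the non-trivial element of $\mathrm{Gal}(\FF_4/\FF_2)$, which coincides with $\sigma|_{\FF_4}$ since $r$ is odd, so they form a single $\sigma$-orbit. Therefore $\phi$ normalizes exactly one non-split torus normalizer, as required.

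The main obstacle is the careful bookkeeping of the two Galois actions on $\PP^1(\FF_{2^{2r}})$: one must consistently extend $\phi$ to $\Phi$ of order $2r$, identify $\sigma=\Phi^r$ as the descent datum cutting out non-split tori over $\FF_{2^r}$, and compute $\langle\Phi^{r-1}\rangle$ using the parity and primality of $r$. A secondary point is invoking Dickson's subgroup lattice for $\PSL(2,2^r)$, with the hypotheses on $r$, to rule out alternative realizations of the three given isomorphism types.
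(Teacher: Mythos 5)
Your proof is correct, but it takes a genuinely different route from the paper's. You identify the three classes of subgroups with geometric objects for the action of $T=\SL(2,2^r)$ on the projective line --- points of $\PP^1(\FF_{2^r})$ for the Borel subgroups $\ZZ_2^r{:}\ZZ_{2^r-1}$, unordered pairs of rational points for the split-torus normalizers $\D_{2(2^r-1)}$, and Galois-conjugate pairs of quadratic points for the non-split-torus normalizers $\D_{2(2^r+1)}$ --- and then count Frobenius-fixed objects, with the only delicate step being the interplay between the two Frobenii $\Phi$ and $\sigma=\Phi^r$ on $\PP^1(\FF_{2^{2r}})$ (your computation $\gcd(r-1,2r)=2$, fixed field $\FF_4$, and $\sigma|_{\FF_4}=\Phi|_{\FF_4}$ is exactly right). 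The paper instead argues purely group-theoretically: it shows that $L^t$ is $\phi$-invariant iff $t\in L\,\C_T(\phi)$ with $\C_T(\phi)=\SL(2,2)$ (via a Sylow conjugacy argument for $\langle\phi\rangle$ inside the maximal subgroup $L{:}\langle\phi\rangle$ of $G$), so the count is $6/|L\cap\SL(2,2)|$, and then pins down $|L\cap\SL(2,2)|$ from the Sylow congruence $|L|/|L\cap\SL(2,2)|\equiv1\pmod r$ together with $2^r\equiv2\pmod r$, falling back on Magma for $r=5$. Your approach buys uniformity in $r$ (no computer verification needed for $r=5$) and makes the answer conceptually transparent, at the cost of invoking Dickson's classification to see that every subgroup of each isomorphism type really is a stabilizer of the corresponding geometric object; the paper's approach stays entirely inside Sylow theory and the known centralizer $\C_T(\phi)$, and moreover its first half gives a reusable formula for counting $\phi$-invariant conjugates of any self-normalizing maximal subgroup.
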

\begin{proof}
Let $L$ be a subgroup that normalized by $\phi$ and isomorphic to $\ZZ_2^r{:}\ZZ_{2^r-1}$, $\D_{2(2^r-1)}$ or $\D_{2(2^r+1)}$.
Then $L$ is maximal in $T$ and $L{:}\langle\phi\rangle$ is maximal in $G=T{:}\langle\phi\rangle$.
Note that each subgroup isomorphic to $L$ of $T$ is conjugate to $L$ in $T$.
Assume that $L^t$ is normalized by $\phi$ for some $t\in T$.
Then
\[\phi\in \N_G(L^t) \iff L^{t\phi}=L^t \iff \phi^{t^{-1}}\in \N_G(L)=L{:}\langle\phi\rangle.\]
Since $\langle\phi\rangle$ is a Sylow $r$-subgroup of $L{:}\langle\phi\rangle$, there exists $\ell \in L$ such that $\l\phi^{t^{-1}}\r^{\ell}=\l\phi\r^{t^{-1}\ell}=\l\phi\r$ when $\l\phi^{t^{-1}}\r\le L{:}\langle\phi\rangle$.
Thus, $L^t$ is normalized by $\phi$ if and only if there exists $c\in\N_T(\l\phi\r)=\C_T(\phi)=\SL(2,2)$ and $\ell\in L$ such that $t=\ell c$.
Note that $L^{\ell c}=L^c$, it follows that
\[\begin{aligned}
\left|\left\{L^t:t \in T\mid \left(L^t\right)^\phi=L^t\right\}\right|&=\left|\left\{L^c:c \in \SL(2,2)\right\}\right|\\
&=\frac{|\SL(2,2)|}{|\SL(2,2)\cap L|}=\frac{6}{|L\cap \SL(2,2)|}.
\end{aligned}\]

Note that $\N_{L{:}\l \phi\r}(\l \phi\r)=\N_L(\l \phi\r){:}\l \phi\r =\C_L(\l\phi\r)\times \l \phi\r=(L\cap \SL(2,2)){\times}\l \phi\r$ and the number of Sylow $r$-subgroups in $L{:}\l \phi\r$ is $\frac{|L{:} \l\phi\r|}{|\N_{L{:}\l \phi\r}(\l \phi\r)|}$. By the Sylow theorem,
\[\frac{|L{:} \l\phi\r|}{|\N_{L{:}\l \phi\r}(\l \phi\r)|}
=\frac{|L|}{|L\cap \SL(2,2)|}\equiv 1 \pmod{r}.\]
Hence $|L\cap \SL(2,2)|\equiv |L|\pmod{r}$. For $r\ge 7$, as $2^r\equiv 2\pmod{r}$ and $|L\cap \SL(2,2)|$ is a divisor of $6=|\SL(2,2)|$, we have
$|L\cap \SL(2,2)|= 2$, $2$, or $6$  if $L\cong \ZZ_2^r{:}\ZZ_{2^r-1}$, $\D_{2(2^r-1)}$ or $\D_{2(2^r+1)}$, respectively. This is also true for $r=5$ by computations using Magma. Thus, there are $3=\frac{6}{2}$ subgroups isomorphic to $\ZZ_2^r{:}\ZZ_{2^r-1}$; $3=\frac{6}{2}$ subgroups  isomorphic to  $\D_{2(2^r-1)}$; and $1=\frac{6}{6}$ subgroup isomorphic to $\D_{2(2^r+1)}$ which are normalized by $\phi$.
\end{proof}

The next lemma estimate the size of $\calj$.
\begin{lemma}\label{lem:J<T}
    $|\calj|<\frac{5}{2^r}|T|$.
\end{lemma}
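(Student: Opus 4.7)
My approach is to show that $\calj$ is contained in the union of the seven maximal $\phi$-invariant subgroups of $T$ from Lemma~\ref{lem:SL-phi}, and then sum their orders. An element $t \in T$ belongs to $\calj$ precisely when $M(t) := \langle t^{\langle\phi\rangle}\rangle$ is a proper subgroup of $T$, because $\langle t,\phi\rangle = M(t)\langle\phi\rangle$ is proper in $G$ if and only if $M(t) \neq T$. Thus $\calj$ equals the union of all proper $\phi$-invariant subgroups of $T$.

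Recall Dickson's classification: for $r \geq 5$ prime, the maximal subgroups of $T = \PSL(2,2^r)$ are only the Borels, the split-torus normalizers $\D_{2(2^r-1)}$, and the non-split torus normalizers $\D_{2(2^r+1)}$ (the subfield $\PSL(2,2) \cong \S_3$ sits inside a $\D_{2(2^r+1)}$ since $3 \mid 2^r+1$, and so is not maximal). Let $M$ be a proper $\phi$-invariant subgroup of $T$, and pick a maximal $M_0 \leq T$ containing $M$. If $M_0$ is $\phi$-invariant, it is one of the seven subgroups in Lemma~\ref{lem:SL-phi}. Otherwise the $\phi$-orbit of $M_0$ consists of $r$ distinct maximal subgroups and $M \leq \bigcap_{i=0}^{r-1} M_0^{\phi^i}$. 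I claim this intersection lies inside a $\phi$-invariant Borel. If $M_0$ is itself a Borel, then $\bigcap_{i} M_0^{\phi^i}$ is the pointwise stabilizer of $r \geq 3$ distinct points of $\PP^1(\FF_{2^r})$, which is trivial by the sharp $3$-transitivity of $T = \PGL(2,2^r)$. If $M_0$ is a dihedral maximal, embed $T \leq \PGL(2,2^{2r})$: each $M_0^{\phi^i}$ is the setwise stabilizer of a specific two-element subset of $\PP^1(\FF_{2^{2r}})$, and the intersection fixes $r$ such two-subsets setwise. Sharp $3$-transitivity of $\PGL(2,2^{2r})$ on $\PP^1(\FF_{2^{2r}})$ forces every element of the intersection to act on the union (of size $\geq 3$) only by swaps within each pair, so the intersection is a $2$-group. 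Finally, a $\phi$-invariant $2$-subgroup of $T$ must lie inside a $\phi$-invariant Sylow $2$-subgroup (otherwise it would lie in the intersection of two distinct unipotent radicals of $T$, which is trivial because the pointwise stabilizer of two distinct points of $\PP^1(\FF_{2^r})$ is the odd-order torus $\ZZ_{2^r-1}$), and hence inside a $\phi$-invariant Borel.

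Summing the orders of the seven maximal $\phi$-invariant subgroups yields
\[
    |\calj| \;\leq\; 3\cdot 2^r(2^r-1) + 3\cdot 2(2^r-1) + 2(2^r+1) \;=\; 3\cdot 2^{2r} + 5\cdot 2^r - 4,
\]
while $\tfrac{5}{2^r}|T| = 5(2^{2r}-1) = 5\cdot 2^{2r} - 5$, so
\[
    \tfrac{5}{2^r}|T| - |\calj| \;\geq\; 2\cdot 2^{2r} - 5\cdot 2^r - 1,
\]
which is strictly positive for every $r \geq 5$. The main obstacle is the reduction in the second paragraph: proving that the $\phi$-orbit intersection of a non-$\phi$-invariant maximal subgroup is a $2$-group (and hence absorbed by the Borel case). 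This depends crucially on the sharp $3$-transitivity of $\PGL(2,2^{2r})$ on $\PP^1(\FF_{2^{2r}})$, available precisely because characteristic $2$ forces $\PSL = \PGL$.
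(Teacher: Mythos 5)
Your proposal is correct, and its skeleton is the same as the paper's: identify $\calj$ with the union of proper $\phi$-invariant subgroups, trap that union inside the seven maximal $\phi$-invariant subgroups counted in Lemma~\ref{lem:SL-phi}, and add up their orders (your bound $3\cdot 2^{2r}+5\cdot 2^r-4$ is exactly the paper's $3\frac{|T|}{2^r+1}+3\frac{|T|}{2^{r-1}(2^r+1)}+\frac{|T|}{2^{r-1}(2^r-1)}$). The one place you diverge is the reduction step: the paper simply asserts that a proper $\phi$-invariant subgroup of $T$ lies in a maximal subgroup of $T$ normalized by $\phi$, outsourcing the justification to the list of maximal subgroups of $\SigmaL(2,2^r)$ in \cite[Table 8.1]{low-dim-book}, whereas you prove it by hand: if the maximal overgroup $M_0$ is not $\phi$-invariant you intersect its $\phi$-orbit, kill the Borel case by sharp $3$-transitivity on $\PP^1(\FF_{2^r})$, reduce the dihedral case to a $\phi$-invariant elementary abelian $2$-group via the two-point description of torus normalizers in $\PP^1(\FF_{2^{2r}})$, and then use the TI property of the Sylow $2$-subgroups to land in a $\phi$-invariant Borel. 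That argument is sound (including the implicit step that $\SL(2,2)\cong\D_6$ is not maximal because its order-$3$ elements sit in a nonsplit torus, so it lies in a $\D_{2(2^r+1)}$), and what it buys is self-containedness: you never need the table of maximal subgroups of the almost simple extension, only Dickson's list for $T$ itself plus elementary projective-line geometry. The citation route is of course shorter.
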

\begin{proof}
    Suppose that $t\in\calj$.
    Then $\langle t^{\langle\phi\rangle}\rangle =\l t,\phi\r \cap T < T=\SL(2,2^r)$, and it is normalized by $\phi$.
    Thus, $t$ lies in some maximal subgroups $S$ of $\SL(2,2^r)$ normalized by $\phi$.
    By~\cite[Table 8.1]{low-dim-book}, $S$ is isomorphic to $\ZZ_2^r{:}\ZZ_{2^r-1}$, $\D_{2(2^r-1)}$ or $\D_{2(2^r+1)}$.
    Lemma~\ref{lem:SL-phi} shows that
    \[\begin{aligned}
        |\calj|&<3|\ZZ_2^r{:}\ZZ_{2^r-1}|+3|\D_{2(2^r-1)}|+|\D_{2(2^r+1)}|\\
        &=3\frac{|T|}{2^r+1}+3\frac{|T|}{2^{r-1}(2^r+1)}+\frac{|T|}{2^{r-1}(2^r-1)}\\
        &< 3\frac{|T|}{2^r}+\frac{|T|}{2^r}+\frac{|T|}{2^r}=\frac{5}{2^r}|T|.
    \end{aligned}\]
    This completes the proof.
\end{proof}

For $1\le i\le r-1$, let $D_i=\{[\phi^i,t]\mid t\in T\}$.
\begin{lemma}\label{lem:Di}
$|D_1|=|D_2|=\dots=|D_{r-1}|=\frac{1}{6}|T|$.
\end{lemma}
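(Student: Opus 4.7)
The plan is to view each $D_i$ as the image of an explicit map from $T$ to $T$ and compute the fiber size.

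Define $\psi_i\colon T \to T$ by $\psi_i(t) = [\phi^i,t] = \phi^{-i}t^{-1}\phi^i t = (t^{-1})^{\phi^i}\,t$. Since $T\lhd G$, the image lies in $T$, and by construction $\psi_i(T)=D_i$. Thus $|D_i| = |T|/|\psi_i^{-1}(\psi_i(1))|$, and it suffices to determine the fiber size.

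For $t_1,t_2\in T$, a short manipulation gives
\[
\psi_i(t_1)=\psi_i(t_2)\iff (t_1^{-1})^{\phi^i}t_1=(t_2^{-1})^{\phi^i}t_2\iff (t_2t_1^{-1})^{\phi^i}=t_2t_1^{-1},
\]
so the fibers are the cosets of $\C_T(\phi^i)$. Hence $|D_i|=|T|/|\C_T(\phi^i)|$.

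Now I use that $r$ is prime: for $1\le i\le r-1$ we have $\langle\phi^i\rangle=\langle\phi\rangle$, so $\C_T(\phi^i)=\C_T(\phi)$. The fixed point subgroup of the Frobenius map $\phi$ acting on $T=\SL(2,2^r)$ is exactly $\SL(2,2)$, which has order $6$. (This standard fact was already used implicitly in the proof of Lemma~\ref{lem:SL-phi}, where $\C_T(\phi)=\SL(2,2)$ appeared.) Therefore $|D_i|=|T|/6$ for all $1\le i\le r-1$.

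The argument is uniform in $i$, so there is no genuine obstacle; the only subtlety worth flagging is the step $\C_T(\phi^i)=\C_T(\phi)$, which would fail if $r$ were composite, so the hypothesis that $r$ is prime is essential.
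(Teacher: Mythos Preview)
Your proof is correct and essentially the same as the paper's: both compute that the fibers of $t\mapsto[\phi^i,t]$ are the cosets of $\C_T(\phi^i)$, then use that $r$ prime gives $\C_T(\phi^i)=\C_T(\phi)=\SL(2,2)$ of order $6$. The only cosmetic difference is that the paper first argues all $|D_i|$ coincide and then computes $|D_1|$, whereas you compute each $|D_i|$ directly; the underlying calculation is identical.
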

\begin{proof}
Notice that, for any elements $s,t\in T$ and any integers $i,j\in\{1,\dots,r-1\}$,
\[ [\phi^i,s]=[\phi^i,t]\Longleftrightarrow s^{-1}\phi^{i}s=t^{-1}\phi^{i}t\Longleftrightarrow s^{-1}\phi^{j}s=t^{-1}\phi^{j}t.\]
Since $r$ is a prime, it follows  that $|D_1|=|D_2|=\dots=|D_{r-1}|$.
Furthermore,
\[ [\phi,s]=[\phi,t]\Longleftrightarrow s^{-1}\phi s=t^{-1}\phi t\Longleftrightarrow (st^{-1})^{-1}\phi (st^{-1})=\phi\Longleftrightarrow st^{-1}\in\C_T(\phi)=\SL(2,2).\]
Therefore, we have that
\[|D_1|=|\{[\phi,t]\mid t\in T\}|=|\{\SL(2,2)s\mid s\in T\}|=\frac{|T|}{|\SL(2,2)|}=\frac{|T|}{6}.\]
 This completes the proof.
\end{proof}

\begin{lemma}\label{lem:getx}
    There exist $1\le i<j\le r-1$ such that $D_i\cap D_j\cap \cali\not=\emptyset$.
\end{lemma}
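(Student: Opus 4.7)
The plan is to argue by contradiction using the cardinality estimates established in Lemmas~\ref{lem:J<T} and \ref{lem:Di}. Suppose, toward a contradiction, that $D_i \cap D_j \cap \mathcal{I} = \emptyset$ for every $1 \le i < j \le r-1$. Then the $r-1$ sets $D_1 \cap \mathcal{I}, \dots, D_{r-1} \cap \mathcal{I}$ are pairwise disjoint, since any element belonging to two of them would lie in $D_i \cap D_j \cap \mathcal{I}$. As each $D_i \cap \mathcal{I}$ is a subset of $T$, summing over $i$ yields
\[
\sum_{i=1}^{r-1}|D_i \cap \mathcal{I}| = \left|\bigcup_{i=1}^{r-1}(D_i \cap \mathcal{I})\right| \le |T|.
\]

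To obtain a lower bound on each summand, note that $D_i \cap \mathcal{I} = D_i \setminus \mathcal{J}$, so $|D_i \cap \mathcal{I}| \ge |D_i| - |\mathcal{J}|$. Substituting $|D_i| = |T|/6$ from Lemma~\ref{lem:Di} and the estimate $|\mathcal{J}| < 5|T|/2^r$ from Lemma~\ref{lem:J<T}, and then summing over $i$, one obtains
\[
(r-1)\left(\frac{1}{6} - \frac{5}{2^r}\right)|T| < \sum_{i=1}^{r-1}|D_i \cap \mathcal{I}| \le |T|,
\]
so the assumption forces $(r-1)\bigl(\tfrac{1}{6} - \tfrac{5}{2^r}\bigr) < 1$. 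A direct numerical check shows this inequality fails for every prime $r \ge 11$ (for instance, at $r = 11$ the left-hand side already exceeds $1.64$), yielding the required contradiction in that range.

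The main obstacle is the pair of small primes $r = 5$ and $r = 7$, where the crude pigeonhole bound $|D_i \cap \mathcal{J}| \le |\mathcal{J}|$ is too loose (the left side of the displayed inequality is approximately $|T|/24$ and $0.77\,|T|$ respectively, neither of which exceeds $|T|$). To finish these cases I would either (i) sharpen the estimate of $|D_i \cap \mathcal{J}|$ by examining the seven $\phi$-invariant maximal subgroups enumerated in Lemma~\ref{lem:SL-phi} and analysing precisely which commutators $[\phi^i, s]$ can lie in each, or (ii) verify the existence of $x \in D_i \cap D_j \cap \mathcal{I}$ by explicit computation in Magma, in the same spirit as Example~\ref{Fibonacci-1}. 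Either route completes the argument uniformly for all primes $r \ge 5$.
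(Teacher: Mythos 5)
Your argument is correct and is essentially identical to the paper's: the same pigeonhole count using $|D_i|=|T|/6$ and $|\calj|<5|T|/2^r$ to show $\sum_{i=1}^{r-1}|D_i\cap\cali|>|T|$ for $r\ge 11$, with the small primes $r=5,7$ handled by Magma computation, which is exactly what the paper does. The only cosmetic difference is that you phrase it as a proof by contradiction while the paper states the counting inequality directly.
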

\begin{proof}
    For $r\le 7$, we can verify the lemma using Magma.

    Now, let's assume that $r\ge 11$. For each $1\le k\le r-1$, let $\overline{D_k}=D_k\cap \cali$. By Lemma~\ref{lem:J<T}, we have  $|\overline{D_k}|\ge |D_k|-|\calj|=(\frac{1}{6}-\frac{5}{2^r})|T|$, and
    \[\sum_{i=1}^{r-1} |\overline{D_i}|\ge (r-1) (\frac{1}{6}-\frac{5}{2^r})|T|.\]
    When $r\ge 11$, the function $(r-1) (\frac{1}{6}-\frac{5}{2^r})$ increases as $r$ increases. Therefore
	\[\sum_{i=1}^{r-1} |\overline{D_i}|\ge (11-1) (\frac{1}{6}-\frac{5}{2^{11}})|T|>|T|.\]
	This implies that there exist $1\le i<j\le r-1$ such that $\overline{D_i}\cap \overline{D_j} =D_i\cap D_j\cap \cali\not=\emptyset$.
\end{proof}

In Lemma~\ref{lem:getx}, as $r$ is a prime, without loss of generality, we may assume that $i=1$, so that $D_1\cap D_j\cap\cali\not=\emptyset$.

\begin{construction}\label{Cons:Simple-Gp}
Assume that $D_1\cap D_j\cap \cali\not=\emptyset$ with $j>1$.
Let $x=[\phi,s]=[\phi^j,t]\in D_1\cap D_j$, where $s,t\in T$ such that $\l x,\phi\r=G$, and let
\[b=\phi^{j-1},\ w=\phi x.\]
\end{construction}

\begin{lemma}\label{lem:Cons:Simple-Gp}
With the group $G=\SigmaL(2,2^r)$ for $r\ge11$ and the pair $(b,w)$ produced in Construction~$\ref{Cons:Simple-Gp}$, the regular dessin $\calD(G,b,w)$ is a smooth covering of a unicellular regular dessin in $\calK_{1,1}^{(r)}$.
\end{lemma}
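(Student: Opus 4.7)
The plan is to verify three things about $\calD=\calD(G,b,w)$: (a) it is a regular dessin, i.e.\ $\langle b,w\rangle = G$; (b) its normal quotient $\calD_T$, where $T=\SL(2,2^r)$, is unicellular with underlying graph $\K_{1,1}^{(r)}$; and (c) the covering $\calD\to\calD_T$ is smooth, that is, $|b|=|\bar b|$, $|w|=|\bar w|$ and $|bw|=|\overline{bw}|$.

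First I would work out the quotient. Since $x\in T$, the images in $G/T=\langle\phi\rangle\cong\ZZ_r$ are $\bar b=\phi^{j-1}$, $\bar w=\phi$, and $\overline{bw}=\phi^j$. Because $r$ is prime and $1\le j-1,j\le r-1$, each of these generates $\langle\phi\rangle$ and has order $r$. By Lemma~\ref{1-cycle}, $\calD_T\cong\calD(\langle\phi\rangle,\phi^{j-1},\phi)$ is unicellular; as $\langle\phi^{j-1}\rangle=\langle\phi\rangle=\langle\phi\rangle$, its underlying graph $\Cos(\langle\phi\rangle,\langle\phi\rangle,\langle\phi\rangle)$ has a single black and a single white vertex joined by $r$ parallel edges, so $\calD_T\in\calK_{1,1}^{(r)}$.

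Next I would compute the orders in $G$. Clearly $|b|=|\phi^{j-1}|=r$. The key step is to use the two commutator descriptions of $x$ provided by Construction~\ref{Cons:Simple-Gp}: writing $x=[\phi,s]=\phi^{-1}\phi^s$ and $x=[\phi^j,t]=\phi^{-j}(\phi^j)^t$, one obtains
\[
w=\phi x=\phi\cdot\phi^{-1}\phi^s=\phi^s,\qquad bw=\phi^{j-1}\cdot\phi x=\phi^j x=(\phi^j)^t.
\]
Thus $w$ and $bw$ are $T$-conjugates of $\phi$ and $\phi^j$ respectively, so $|w|=|bw|=r$. Comparing with the quotient, $|b|=|\bar b|$, $|w|=|\bar w|$ and $|bw|=|\overline{bw}|$, so the covering is smooth by definition.

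Finally I would check that $\langle b,w\rangle=G$. Since $r$ is prime and $j\ne 1$, we have $\gcd(j-1,r)=1$, so $\langle b\rangle=\langle\phi^{j-1}\rangle=\langle\phi\rangle$; in particular $\phi\in\langle b,w\rangle$, whence $x=\phi^{-1}w\in\langle b,w\rangle$. By the choice of $x$ in Construction~\ref{Cons:Simple-Gp}, $\langle x,\phi\rangle=G$, and therefore $\langle b,w\rangle\supseteq\langle x,\phi\rangle=G$. The three items (a)--(c) are thus verified. The proof carries no real obstacle, because the difficult combinatorial and counting work has already been done upstream in Lemmas~\ref{lem:SL-phi}--\ref{lem:getx} to guarantee an $x\in D_1\cap D_j\cap\cali$ for some $j>1$; once $x$ is available, the verification is just the one-line commutator identity $\phi\cdot[\phi,s]=\phi^s$ and its analogue with $\phi^j$, which is exactly the reason for the slightly unusual definition of $b$ and $w$.
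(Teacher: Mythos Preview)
Your proof is correct and follows essentially the same approach as the paper: it verifies $\langle b,w\rangle=\langle \phi,x\rangle=G$ and then uses the commutator identities $\phi[\phi,s]=\phi^s$ and $\phi^j[\phi^j,t]=(\phi^j)^t$ to show that $|b|=|w|=|bw|=r$, whence $\calD(G,b,w)$ is a smooth covering of $\calD(\langle\phi\rangle,\phi^{j-1},\phi)\in\calK_{1,1}^{(r)}$. Your write-up just supplies a bit more detail on the quotient and the generation step than the paper does.
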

\begin{proof}
By definition, we have that $\l b,w\r=\l \phi^{j-1},\phi x\r=\l \phi, x\r=G$, since $1<j\leq r-1$ and $r$ is a prime.
Further,
\[\begin{array}{rl}
|b|&=|\phi^{j-1}|=r,\\
|w|&=|\phi x|=|\phi[\phi,s]|=|\phi^s|=r,\\
|bw|&=|\phi^{j-1}\phi[\phi^j,t]|=|(\phi^j)^t|=r.
\end{array}\]
Thus $\calD(G,b,w)$ is indeed a smooth covering of $\calD(\l\phi\r, \phi^{j-1}, \phi)\in\calK_{1,1}^{(r)}$.
\end{proof}

Now we are ready to state and prove the main result of this subsection.

\begin{proposition}\label{prop:AS-example}
For each prime $r\ge5$, the group $\SigmaL(2,2^r)$ is a smooth covering of $\ZZ_r$.
\end{proposition}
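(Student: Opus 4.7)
The plan is to exhibit elements $b, w \in G = \SigmaL(2, 2^r)$ with $\langle b, w\rangle = G$ and $(|b|, |w|, |bw|) = (|\bar b|, |\bar w|, |\overline{bw}|)$, where bars denote images in $G/T = \langle \phi\rangle \cong \ZZ_r$; by Definition~\ref{smooth-extension}, this is precisely what it means for $G$ to be a smooth covering of $\ZZ_r$. Equivalently, by the discussion following that definition, it suffices to produce a regular dessin $\calD(G, b, w)$ which is a smooth covering of its algebraic quotient $\calD(\langle \phi\rangle, \bar b, \bar w)$.

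All the ingredients for this have been laid out in Lemmas~\ref{lem:SL-phi}--\ref{lem:Cons:Simple-Gp} and Construction~\ref{Cons:Simple-Gp}, and my plan is simply to thread them together. First I would invoke Lemma~\ref{lem:getx} to obtain indices $1 \le i < j \le r-1$ and an element $x \in D_i \cap D_j \cap \cali$. Since $r$ is prime and $1 \le i \le r-1$, there exists $k$ with $ki \equiv 1 \pmod r$; replacing the generator $\phi$ by $\phi^k$ (which leaves both $G = T{:}\langle\phi\rangle$ and the quotient $G/T \cong \ZZ_r$ unchanged, and merely relabels the indexing of the sets $D_\bullet$) permits me to assume $i = 1$. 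Then Construction~\ref{Cons:Simple-Gp} assembles the pair $b = \phi^{j-1}$, $w = \phi x$, and Lemma~\ref{lem:Cons:Simple-Gp} verifies the three key order identities $|b| = |\phi^{j-1}| = r$, $|w| = |\phi x| = |\phi^s| = r$, and $|bw| = |(\phi^j)^t| = r$, certifying that $\calD(G, b, w)$ is a smooth covering of the unicellular dessin $\calD(\langle\phi\rangle, \phi^{j-1}, \phi) \in \calK_{1,1}^{(r)}$.

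The main obstacle, which has already been surmounted in the preceding lemmas, is the simultaneous realization of three constraints on $x$: it must be a commutator $[\phi, s]$ (to make $\phi x = \phi^s$ conjugate to $\phi$), a commutator $[\phi^j, t]$ for some $j > 1$ (to make $bw = (\phi^j)^t$ conjugate to $\phi^j$), and must generate $G$ together with $\phi$. The pigeonhole argument of Lemma~\ref{lem:getx} handles this for $r \ge 11$ by combining the uniform estimate $|D_k| = |T|/6$ from Lemma~\ref{lem:Di} with the bound $|\calj| < 5|T|/2^r$ from Lemma~\ref{lem:J<T}, while $r = 5, 7$ are settled by direct Magma computation. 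Although Lemma~\ref{lem:Cons:Simple-Gp} is formally stated for $r \ge 11$, its proof goes through verbatim whenever Construction~\ref{Cons:Simple-Gp} supplies valid input, so the proposition follows for all primes $r \ge 5$.
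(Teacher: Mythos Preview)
Your approach is essentially the paper's own: invoke Lemma~\ref{lem:getx}, reduce to $i=1$, apply Construction~\ref{Cons:Simple-Gp}, and read off the order equalities from Lemma~\ref{lem:Cons:Simple-Gp}. The only difference is that the paper handles $r\in\{5,7\}$ by a direct Magma check of the proposition itself, whereas you (legitimately) note that Lemma~\ref{lem:getx} already covers these primes and that the proof of Lemma~\ref{lem:Cons:Simple-Gp} uses nothing beyond $r$ being prime and $1<j\le r-1$.

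One small slip: to arrange $i=1$ you should replace $\phi$ by $\psi=\phi^{i}$, not by $\phi^{k}$ with $ki\equiv 1\pmod r$. Indeed, $D_m^{(\psi)}=\{[\psi^m,t]:t\in T\}=D_{mi}^{(\phi)}$, so $D_1^{(\psi)}=D_i^{(\phi)}$ and $D_{ji^{-1}}^{(\psi)}=D_j^{(\phi)}$, giving $x\in D_1^{(\psi)}\cap D_{j'}^{(\psi)}$ with $j'=ji^{-1}\bmod r\neq 0,1$. With your choice $k=i^{-1}$ one gets $D_1^{(\psi)}=D_{i^{-1}}^{(\phi)}$, which need not contain $x$. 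This is a harmless indexing error and the paper itself simply asserts the reduction without details.
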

\begin{proof}
If $r\ge11$, the proof follows from Lemma~\ref{lem:Cons:Simple-Gp}.
For $r=5$ or $7$, computation in Magma shows that the conclusion of the proposition is true.
\end{proof}

\begin{remark}
When $r=3$, all smooth coverings of unicellular dessins in $\calK_{1,1}^{(3)}$ are on the torus whose transformation groups are abelian, and hence their automorphism groups cannot be of type $\AS$.
\end{remark}

We now summarise the arguments for proving Theorem~\ref{qp-types}.

\vskip0.1in
\noindent{\bf Proof of Theorem~\ref{qp-types}:}
Since $G$ is face-quasiprimitive, it follows that the quotient $\calD_N$ is unicellular, and so $\Aut\calD_N\cong G/N$ is cyclic.
By Lemma~\ref{G-types}, $G^F$ is of types $\HA$, $\TW$, $\AS$ and $\PA$, and in particular, if $\calD$ is a smooth covering of $\calD_N$, then $G^F$ is of types $\HA$, $\TW$ and $\AS$.

Face-quasiprimitive regular dessins of type $\HA$ are classified in Section~\ref{sec:HA}, which shows that, for each primitive divisor $\ell$ of $p^d-1$ with $p$ being a prime and $d$ a positive integer,
a cyclic group of order $\ell$ has a smooth covering $\ZZ_p^d{:}\ZZ_\ell\le\AGL(1,p^d)$.
Face-quasiprimitive regular dessins of types $\TW$ and $\AS$ are described in section 5.2 and 5.3, respectively,
and the rest statements of Theorem~\ref{qp-types} then follow from Proposition~\ref{prop:chi<-4} and Proposition~\ref{prop:AS-example}, respectively.
\qed

\section{Schur coverings}\label{Schur-sec}

A {\it quasisimple group} $G$ is a perfect group, that is, $G=G'$ such that $G/\Z(G)$ is simple.
In this case, $G$ is a {\it covering group} of $S\cong G/\Z(G)$.
The {\it Schur multiplier} of a simple group $S$ is the center of the largest covering of $S$.
A covering $G$ of $S$ is called the {\it Schur covering} of $S$ if $\Z(G)$ is the Schur Multiplier of $S$.
For instance, $S=\PSL(4,5)$ has two covering groups $\SL(4,5)$ and $\SL(4,5)/\ZZ_2$, and the first one is the Schur covering.


For a covering group $G$ of a nonabelian simple group $S$, we notice that each generating pair of $S$ can be lifted to be a generating pair of $G$.

\begin{lemma}\label{Schur-covering}
    Let $G$ be a quasisimple group with $S\cong G/\Z(G)$, and let $\ol b,\ol w\in S$.
    Then $\l\ol b,\ol w\r= S$ if and only if $G=\l b,w\r$ for any pairs of preimages $(b,w)$ in $G$ of $(\ol b,\ol w)$.
\end{lemma}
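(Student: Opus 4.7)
The plan is to handle the two directions separately, with the forward direction being essentially immediate and the converse resting on the perfectness of $G$.

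For the forward implication, I would simply observe that the quotient map $\pi\colon G\to G/\Z(G)=S$ sends any generating set of $G$ to a generating set of $S$. So if $G=\l b,w\r$, then $S=\pi(G)=\l\pi(b),\pi(w)\r=\l\ov b,\ov w\r$.

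For the converse, suppose $\l\ov b,\ov w\r=S$ and let $(b,w)$ be any lift in $G$. Set $H=\l b,w\r\le G$. Applying $\pi$ gives $\pi(H)=\l\ov b,\ov w\r=S$, which means $H\Z(G)=G$. The goal is then to upgrade this to $H=G$, and this is where the quasisimplicity assumption plays its role.

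The key step is to use perfectness: since $G=G'=[G,G]$ and $\Z(G)$ is central,
\[G=[G,G]=[H\Z(G),H\Z(G)]=[H,H]\le H,\]
because any commutator involving an element of $\Z(G)$ is trivial. Hence $H=G$, as required. I don't anticipate any real obstacle here; the argument is a short and standard consequence of the defining properties of a quasisimple group (perfect plus simple central quotient), and the statement holds for \emph{any} choice of preimages precisely because $\Z(G)\le H\Z(G)$ gets absorbed by the commutator computation.
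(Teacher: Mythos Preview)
Your proof is correct and follows essentially the same approach as the paper: both arguments reduce to $H\Z(G)=G$ and then invoke perfectness of $G$. The only cosmetic difference is that the paper first notes $H\lhd G$ and observes $G/H\cong\Z(G)/(H\cap\Z(G))$ is abelian (so $G=G'\le H$), whereas you compute $[H\Z(G),H\Z(G)]=[H,H]$ directly; these are equivalent formulations of the same step.
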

\proof
Let $b,w\in G$ be preimages of $\ol b,\ol w$, respectively.
If $G=\l b,w\r$, then clearly $\l \ov b,\ov w\r=S$.

Assume that $\l \ov b,\ov w\r=S$.
Then $H=\l b, w\r\leqslant G$ such that $H\Z(G)/\Z(G)\cong S$.
Thus, we have $H\Z(G)=G$, and then $H\lhd H\Z(G)=G$.
Note that $G$ is a perfect group and $G/H=H\Z(G)/H\cong \Z(G)/(H\cap \Z(G))$ is abelian.
It follows that $G=H=\l b,w\r$.
\qed

Let $\ol b, \ol w$ be a generating pair of the simple group $S=G/\Z(G)$.
By the above lemma, $\calD(G,b,w)$ is always a covering of $\calD(S,\ol b,\ol w)$ for any pairs of preimages $(b,w)$ of $(\ol b,\ol w)$.

It would be interesting to determine whether $\calD(G, b, w)$ is a smooth covering of $\calD(S,\ov b,\ov w)$.
We observe that
\begin{align*}
	&\mbox{$\calD(G, b, w)$ is a smooth covering of $\calD(S,\ov b,\ov w)$}\\
	\iff& |b|=|\overline{b}|, |w|=|\overline{w}|\mbox{ and }|bw|=|\ov b\ov w|,\\
	\iff& \l b\r\cap\Z(G)=\l w\r\cap\Z(G)=\l bw\r\cap\Z(G)=1,
\end{align*}
led to the following simple lemma.

\begin{lemma}\label{smooth?}
Let $G=\l b,w\r$ be a quasisimple group with $S=G/\Z(G)=\l\ol b,\ol w\r$ where $\ol b,\ol w$ are images of $b,w$ in $S$, respectively.
Then $\calD(G,b,w)$ is a smooth covering of $\calD(S,\ov b,\ov w)$ if and only if $\l b\r\cap\Z(G)=\l w\r\cap\Z(G)=\l bw\r\cap\Z(G)=1$.
\end{lemma}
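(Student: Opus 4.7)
The plan is to unwind Definition~\ref{smooth-extension} and reduce each of the three order-equalities it demands to the corresponding intersection condition with $\Z(G)$. By definition, $\calD(G,b,w)$ is a smooth covering of $\calD(S,\ov b,\ov w)$ precisely when $(|b|,|w|,|bw|)=(|\ov b|,|\ov w|,|\ov{bw}|)$. So the statement to prove is the conjunction of three assertions of the form
\[
|g|=|\ov g| \iff \l g\r \cap \Z(G)=1,
\]
one for $g=b$, one for $g=w$, and one for $g=bw$.

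First I would record the general fact that if $Z$ is any central subgroup of a group $G$ and $g\in G$ has image $\ov g$ in $G/Z$, then $|\ov g|$ divides $|g|$, and the quotient $|g|/|\ov g|$ equals $|\l g\r\cap Z|$. This is standard: $\ov g^{\,k}=\ov 1$ iff $g^k\in Z$ iff $g^k\in \l g\r\cap Z$, so the kernel of the natural surjection $\l g\r\twoheadrightarrow\l\ov g\r$ is exactly $\l g\r\cap Z$, giving $|\l g\r|=|\l\ov g\r|\cdot|\l g\r\cap Z|$. Consequently $|g|=|\ov g|$ if and only if $\l g\r\cap Z=1$. Applying this observation with $Z=\Z(G)$ to each of $g\in\{b,w,bw\}$ yields the three desired equivalences simultaneously.

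The two directions of the lemma then drop out. For the forward direction, if $\calD(G,b,w)$ is a smooth covering of $\calD(S,\ov b,\ov w)$, then the three order-equalities from Definition~\ref{smooth-extension} each force the associated intersection with $\Z(G)$ to be trivial by the general fact above. For the converse, the three triviality conditions force $|b|=|\ov b|$, $|w|=|\ov w|$, and $|bw|=|\ov{bw}|$, so $(G,b,w)$ is a smooth covering of $(S,\ov b,\ov w)$, and hence $\calD(G,b,w)$ is a smooth covering of $\calD(S,\ov b,\ov w)$ by the equivalence recorded just after Definition~\ref{smooth-extension}.

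There is essentially no obstacle here: the argument is a direct translation between the order-preservation formulation of smooth covering and the cyclic-intersection formulation via the elementary identity $|\l g\r|=|\l\ov g\r|\cdot|\l g\r\cap Z|$. One small bookkeeping point worth flagging is that the hypothesis $G=\l b,w\r$ with $S=\l\ov b,\ov w\r$ is used only to guarantee via Lemma~\ref{Schur-covering} that $\calD(G,b,w)$ and $\calD(S,\ov b,\ov w)$ are genuine regular dessins related by the normal quotient by $\Z(G)$, so that Definition~\ref{smooth-extension} is applicable in the first place; the equivalence itself is purely about element orders.
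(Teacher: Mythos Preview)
Your proof is correct and follows exactly the same approach as the paper: the paper records, immediately before the lemma, the chain of equivalences ``smooth covering $\iff$ the three order equalities $\iff$ the three trivial intersections with $\Z(G)$,'' and states the lemma as a direct consequence. You have simply spelled out the elementary identity $|\l g\r|=|\l\ov g\r|\cdot|\l g\r\cap Z|$ that underlies the second equivalence.
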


In the rest of this section, we study Schur coverings of simple groups $\PSL(2,q)$.

\subsection{Smooth Schur coverings of \texorpdfstring{$\PSL(2,q)$}{PSL(2,q)}}\label{sec:sl2q}

This section we will focus on the covering between $\SL(2,q)$ and $\PSL(2,q)$, where $q=p^f\geqslant 5$ with odd prime $p$.
For convenience, let 
\[G=\SL(2,q),\ N=\Z(G)\mbox{ and } S=G/N\cong \PSL(2,q).\]

A pair $(b,w)$ in $G$ is called an \textit{$(\ell,m,n)$-pair} if $(|b|,|w|,|bw|)=(\ell,m,n)$.
If $g\in G$ has even order, then $\overline{g}$ has order $|g|/2$ as $G$ has a unique involution.
Let $(b,w)$ be an $(\ell,m,n)$-pair of $G$.
Then $(|\overline{b}|,|\overline{w}|,|\overline{bw}|)=(|b|,|w|,|bw|)$ if and only if $\ell mn$ is odd.
In this section, we will give a criterion (Theorem~\ref{thm:generater}) for odd integers $\ell,m,n$ such that $G$ (or $S$) is an $(\ell,m,n)$-group, which can immediately yield Theorem~\ref{thm:sl2q}.

Let $b,w\in G=\SL(2,q)$.
Then $\langle\overline{b},\overline{w}\rangle$ either equals $S$ or is a proper subgroup of $S$.
The subgroups of $S\cong\PSL(2,q)$ are well-known, and we state the classification given in~\cite[Page 19]{Mac} below, which is useful in the following discussions.
The proper subgroups of $S$ are one of the following three types:
\begin{enumerate}
    \item[(I)] \textit{`Finite triangular groups'}: $A_4$, $S_4$, $A_5$ and dihedral subgroups $D_{(q\pm 1)/k}$;.
    \item[(II)] \textit{`Affine subgroups'}: subgroups of $[q]{:}\ZZ_{(q-1)/2}$ and cyclic subgroups $\ZZ_{(q+1)/2k}$.
    \item[(III)] \textit{`Projective subgroups'}: $\PSL(2,p^{e})$ with $e\mid f$, and $\PGL(2,p^{e})$ with $2e\mid f$.
\end{enumerate}

Let $g\in G=\SL(2,q)$.
It is easy to see that $\mathrm{Tr}(g)=2$ (or $-2$, respectively) if and only if $|g|$ is in $\{1,p\}$ (or $\{2,2p\}$,respectively);
if $|g|\notin\{1,2,p,2p\}$, then $\mathrm{Tr}(g)=\lambda+\lambda^{-1}$ for some $\lambda\in\FF_{q^2}^\times$ with $|\lambda|=|g|$.
For any $\alpha,\beta,\gamma\in\FF_q$, define
\[E_q(\alpha,\beta,\gamma)=\{(b,w)\in \SL(2,q)^2: \mathrm{Tr}(b)=\alpha,\ \mathrm{Tr}(w)=\beta\mbox{ and } \mathrm{Tr}(bw)=\gamma\}.\]
We say $E_q(\alpha,\beta,\gamma)$ contains a commutative pair if there exists $(b,w)\in E_q(\alpha,\beta,\gamma)$ such that $bw=wb$.
The following lemma are given in~\cite{Mac}.

\begin{lemma}\label{lem:Mac}
    Let $\alpha,\beta,\gamma\in\FF_q$, and let $\ell,m,n\in\mathrm{Spec}(G)$ such that $\ell\leqslant m\leqslant n$ are odd.
    Then the following statements hold:
    \begin{enumerate}[{\rm(1)}]
        \item $E_q(\alpha,\beta,\gamma)\neq \emptyset$.
        \item Let $H<G$ such that $\overline{H}$ is a finite triangular subgroup of $S$.
        Then $H$ is an $(\ell,m,n)$-group if and only if $H\cong \SL(2,5)$ and $(\ell,m,n)=(3,5,5)$.
        \item For any $(b,w)\in E_q(\alpha,\beta,\gamma)$, $\langle \overline{b},\overline{w}\rangle$ is an affine subgroup if and only if $E_q(\alpha,\beta,\gamma)$ contains a commutative pair.
        \item If $E_q(\alpha,\beta,\gamma)$ contains no commutative pairs, then $E_q(\alpha,\beta,\gamma)$ contains exactly two conjugacy classes of element pairs in $\SL(2,q)$; and exactly one conjugacy class of element pairs in in $\SL(2,q^2)$.
    \end{enumerate}
\end{lemma}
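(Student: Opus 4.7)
The plan is to establish each of the four assertions following Macbeath's analysis of two-generator subgroups of $\SL(2,q)$ through trace coordinates. For part~(1), I would exhibit a solution explicitly. Fix $b=\left(\begin{smallmatrix}\alpha & -1\\ 1 & 0\end{smallmatrix}\right)$, which has trace $\alpha$ and determinant $1$, and write $w=\left(\begin{smallmatrix}s & t\\ u & \beta-s\end{smallmatrix}\right)$ of trace $\beta$. The conditions $\det(w)=1$ and $\mathrm{Tr}(bw)=\gamma$ reduce to $s(\beta-s)-tu=1$ and $\alpha s+u-t=\gamma$, two equations in three unknowns over $\FF_q$ which admit solutions for every triple $(\alpha,\beta,\gamma)$.

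For part~(2), I would enumerate each candidate $\overline{H}$. The groups $A_4$ and $S_4$ have odd-order elements only of orders $1$ and $3$, and neither admits generation by a pair with $\ell,m,n$ all odd and exceeding $1$. Each dihedral $D_{(q\pm 1)/k}$ splits as an involutive class plus a cyclic rotation subgroup, and two odd-order non-identity elements lie in the rotation subgroup and commute, generating only a cyclic proper subgroup. This leaves $\overline{H}=A_5$, whose odd triples are $(3,3,5)$, $(3,5,5)$ and $(5,5,5)$; a direct check inside $\SL(2,5)$ shows that only $(3,5,5)$ lifts to $H\cong\SL(2,5)$ with the product element retaining odd order, while for the other two the sign forced on the lift doubles the order of the product.

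For part~(3), if $\overline{\langle b,w\rangle}$ is affine in Macbeath's sense then either it lies in a non-split torus (and $b,w$ lie in the cyclic preimage torus of $\SL(2,q)$, hence commute), or it fixes a point of $\PP^1(\FF_q)$, in which case after conjugation $b,w$ are simultaneously upper triangular, and replacing their off-diagonal entries by $0$ yields a diagonal, commuting pair with the same trace triple in $E_q(\alpha,\beta,\gamma)$. Conversely, if $b,w$ commute then $\overline{\langle b,w\rangle}$ is abelian; since a Klein four subgroup of $\PSL(2,q)$ lifts to the non-abelian quaternion group of $\SL(2,q)$, it must in fact be cyclic, hence contained in a torus, which is an affine subgroup.

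For part~(4) I would invoke Macbeath's trace-equivalence theorem: a non-commutative (equivalently, absolutely irreducible) pair in $\SL(2,K)^2$ is determined up to $\SL(2,\overline{K})$-conjugacy by its trace triple. An irreducible pair has centralizer $\{\pm I\}$, so the Galois descent from $\overline{\FF}_q$ to $\FF_q$ is controlled by $H^1(\mathrm{Gal}(\overline{\FF}_q/\FF_q),\{\pm I\})\cong\ZZ_2$, splitting the unique $\SL(2,q^2)$-class into exactly two $\SL(2,q)$-classes. \emph{The main obstacle} is precisely this Galois-descent step, which is Macbeath's central technical theorem; parts~(1)--(3) reduce to elementary computations and case analysis once the irreducibility/commutativity dichotomy at the heart of~(3) and~(4) is established.
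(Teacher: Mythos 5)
First, a point of comparison: the paper does not prove this lemma at all --- it is quoted from Macbeath's 1969 article \cite{Mac} --- so your proposal is really being measured against Macbeath's own arguments. Your overall architecture (trace coordinates, the reducible/irreducible dichotomy, a descent argument for part~(4)) is the right one, but two steps as written are genuinely broken. In part~(2), the claim that $A_4$ ``does not admit generation by a pair with $\ell,m,n$ all odd and exceeding $1$'' is false: $(123)$ and $(134)$ generate $A_4$ and their product is the $3$-cycle $(124)$, so $A_4$ is a $(3,3,3)$-group. The conclusion of the lemma survives only because a subgroup $H\leqslant \SL(2,q)$ lying over $A_4$ is forced to be the binary tetrahedral group $\SL(2,3)$ (a Klein four-group cannot embed in $\SL(2,q)$ for odd $q$, which has a unique involution), and $\SL(2,3)$ is not a $(3,3,3)$-group: any quotient of the Euclidean triangle group $\Delta(3,3,3)\cong\ZZ^2{:}\ZZ_3$ contains a normal \emph{abelian} subgroup with cyclic quotient of order dividing $3$, whereas the only candidate in $\SL(2,3)$ is the nonabelian $\Q_8$. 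You apply the lifting analysis to $A_5$ but not to $A_4$, and that is exactly where it is needed.

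Second, your converse in part~(3) proves the wrong implication. The statement asserts that if $E_q(\alpha,\beta,\gamma)$ contains \emph{some} commutative pair then \emph{every} pair with that trace triple generates an affine subgroup; you only show that a commutative pair itself generates an affine subgroup. The missing ingredient is that reducibility is detected by the traces alone, via the Fricke identity $\mathrm{Tr}([b,w])=\alpha^2+\beta^2+\gamma^2-\alpha\beta\gamma-2$: a pair has a common eigenvector over $\overline{\FF_q}$ if and only if $\mathrm{Tr}([b,w])=2$, and since this condition depends only on $(\alpha,\beta,\gamma)$ it propagates from the commutative pair to all of $E_q(\alpha,\beta,\gamma)$. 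The same identity is what underwrites part~(4): ``no commutative pair'' then means every pair in $E_q(\alpha,\beta,\gamma)$ is absolutely irreducible with centralizer $\{\pm I\}$, and the two-versus-one class count follows from $[\FF_q^\times:(\FF_q^\times)^2]=2$ becoming trivial after passing to $\FF_{q^2}$. A smaller gap: in part~(1), ``two equations in three unknowns over $\FF_q$ admit solutions'' is not automatic over a finite field; after eliminating one variable you must show a quadratic polynomial in the remaining free parameter represents a square, which holds because the associated conic has $q\pm1$ rational points, but this needs to be argued rather than asserted.
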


The following lemma is a criterion for the triple in $E_q(\alpha,\beta,\gamma)$ to generate a projective subgroup.

\begin{lemma}\label{lem:subfield}
    Let $\ell,m,n\in \mathrm{Spec}(G)$, and let $(b,w)$ be an $(\ell,m,n)$-pair in $G$.
    Assume that $\langle\overline{b},\overline{w}\rangle$ is not an affine subgroup of $S$.
    For any divisor $e$ of $f$, $\langle b,w\rangle\lesssim\SL(2,p^{e})$ if and only if $\{\ell,m,n\}\subseteq \mathrm{Spec}(\SL(2,p^{e}))$.
\end{lemma}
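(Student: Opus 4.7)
The forward direction is immediate: if $\langle b, w\rangle$ embeds into $\SL(2, p^e)$, then $\ell = |b|$, $m = |w|$, $n = |bw|$ are orders of elements of $\SL(2, p^e)$, so $\{\ell, m, n\} \subseteq \mathrm{Spec}(\SL(2, p^e))$. For the converse, my plan is to encode $(b,w)$ by its trace triple $(\alpha, \beta, \gamma) := (\mathrm{Tr}(b), \mathrm{Tr}(w), \mathrm{Tr}(bw))$ and then apply Macbeath's results packaged as Lemma~\ref{lem:Mac}. The strategy has three steps: first show $\alpha, \beta, \gamma \in \FF_{p^e}$; second produce a pair $(b', w') \in \SL(2, p^e)^2$ with the same trace triple via Lemma~\ref{lem:Mac}(1); third use the non-affine hypothesis together with Lemma~\ref{lem:Mac}(3)--(4) to identify $(b,w)$ with $(b', w')$ up to conjugation in $\SL(2, q^2)$, which forces $\langle b, w\rangle \cong \langle b', w'\rangle \leq \SL(2, p^e)$.

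The core computation is the first step. Fix $g \in \{b, w, bw\}$ and set $k := |g| \in \mathrm{Spec}(\SL(2, p^e))$. If $k \in \{1, 2, p, 2p\}$ then $\mathrm{Tr}(g) = \pm 2 \in \FF_p \subseteq \FF_{p^e}$. Otherwise $\gcd(k, p) = 1$ and $k > 2$; from the torus description of $\SL(2, p^e)$, such an order must satisfy $k \mid p^e - 1$ or $k \mid p^e + 1$. When $k \mid p^e - 1$, a primitive $k$-th root of unity $\lambda$ lies in $\FF_{p^e}^{\times}$, and the eigenvalues $\lambda, \lambda^{-1}$ of $g$ give $\mathrm{Tr}(g) = \lambda + \lambda^{-1} \in \FF_{p^e}$. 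When $k \mid p^e + 1$, $\lambda^{p^e+1} = 1$ forces $\lambda^{p^e} = \lambda^{-1}$, so
\[
\mathrm{Tr}(g)^{p^e} = \lambda^{p^e} + \lambda^{-p^e} = \lambda^{-1} + \lambda = \mathrm{Tr}(g),
\]
and again $\mathrm{Tr}(g) \in \FF_{p^e}$.

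With $\alpha, \beta, \gamma \in \FF_{p^e}$ in hand, Lemma~\ref{lem:Mac}(1) yields some $(b', w') \in E_{p^e}(\alpha, \beta, \gamma) \subseteq \SL(2, p^e)^2$. Under the inclusion $\SL(2, p^e) \leq \SL(2, q)$, both $(b, w)$ and $(b', w')$ lie in $E_q(\alpha, \beta, \gamma)$. Since $\langle \overline b, \overline w\rangle$ is not affine, Lemma~\ref{lem:Mac}(3) rules out any commutative pair in $E_q(\alpha, \beta, \gamma)$, and then Lemma~\ref{lem:Mac}(4) collapses $E_q(\alpha, \beta, \gamma)$ into a single $\SL(2, q^2)$-conjugacy class. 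Conjugating $(b, w)$ to $(b', w')$ by some $g \in \SL(2, q^2)$ transports $\langle b, w\rangle$ isomorphically onto $\langle b', w'\rangle \leq \SL(2, p^e)$, proving $\langle b, w\rangle \lesssim \SL(2, p^e)$.

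The main obstacle is the trace-descent step; everything else is a clean assembly of Lemma~\ref{lem:Mac}. A subtle point to verify carefully is that the two cases $k \mid p^e - 1$ and $k \mid p^e + 1$ genuinely exhaust all orders in $\mathrm{Spec}(\SL(2, p^e))$ coprime to $p$ and greater than $2$, which follows from the description of split and non-split maximal tori in $\SL(2, p^e)$.
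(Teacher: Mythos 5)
Your proof is correct and follows essentially the same route as the paper's: descend the trace triple to $\FF_{p^e}$, use Lemma~\ref{lem:Mac}(1) to realize it in $\SL(2,p^e)$, and use the non-affine hypothesis with Lemma~\ref{lem:Mac}(3)--(4) to conjugate the two pairs inside $\SL(2,q^2)$. The only difference is that you spell out the trace-descent step (split versus non-split torus cases) which the paper dismisses as ``easy to see''; that added detail is accurate.
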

\begin{proof}
    The sufficiency is obvious.
    We now assume that $\{\ell,m,n\}\subseteq \mathrm{Spec}(\SL(2,p^{e}))$.

    Let $\alpha$, $\beta$ and $\gamma$ be traces of $b$, $w$ and $bw$, respectively.
    Since $\{\ell,m,n\}\subseteq \mathrm{Spec}(\SL(2,p^{e}))$, it is easy to see that $\alpha,\beta,\gamma\in\FF_{p^e}$.
    Then there exists $b_0,w_0\in \SL(2,p^e)$ such that $(b_0,w_0)\in E_{p^e}(\alpha,\beta,\gamma)$ by Lemma~\ref{lem:Mac}\,(1).
    It follows that $\langle b_0,w_0\rangle\lesssim\SL(2,p^{e})$.
    Since $\langle \overline{b},\overline{w}\rangle $ is not an affine subgroup, $E_{q}(\alpha,\beta,\gamma)$ contains no commutative pairs by Lemma~\ref{lem:Mac}\,(3).
    Hence, $E_{p^e}(\alpha,\beta,\gamma)$ also contains no commutative pairs, and then $\langle \overline{b_0},\overline{ w_0}\rangle$ is not an affine subgroup of $S$.
    By Lemma~\ref{lem:Mac}\,(4), $\langle b,w\rangle$ and $\langle b_0,w_0\rangle$ are conjugate in $\SL(2,q^2)$.
    Therefore, we have that $\langle b,w\rangle\cong \langle b_0,w_0\rangle\leqslant\SL(2,p^e)$.
\end{proof}

\begin{lemma}\label{lem:33p-ppp}
    Assume that $p\ge 5$.
    \begin{enumerate}[{\rm(1)}]
        \item The quotient image of each $(3,3,p)$~$($or $(p,p,p))$~pair of $G=\SL(2,q)$ generates an affine subgroup of $S$.
        \item $S\cong \PSL(2,q)$ is a $(3,3,p)$~$($or $(p,p,p))$-group if and only if $q=p$.
    \end{enumerate}
\end{lemma}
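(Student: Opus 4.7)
The plan for part~(1) is to invoke Lemma~\ref{lem:Mac}(3), which says $\langle\overline{b},\overline{w}\rangle$ is an affine subgroup if and only if $E_q(\mathrm{Tr}(b),\mathrm{Tr}(w),\mathrm{Tr}(bw))$ contains a commutative pair. A $(p,p,p)$-pair has trace triple $(2,2,2)$, and the commuting pair $(U,U)$ with $U=\begin{pmatrix}1&1\\0&1\end{pmatrix}$ lies in $E_q(2,2,2)$. A $(3,3,p)$-pair has trace triple $(-1,-1,2)$, since a primitive cube root $\lambda$ of unity satisfies $\lambda+\lambda^{-1}=-1$; here, for any order-$3$ element $x\in G$, the pair $(x,x^{-1})$ commutes and lies in $E_q(-1,-1,2)$. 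In both cases the hypothesis of Lemma~\ref{lem:Mac}(3) is met and the conclusion follows.

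For the only-if direction of part~(2), suppose $S=\PSL(2,q)=\langle\overline{b},\overline{w}\rangle$ with type $(\ell,m,n)\in\{(p,p,p),(3,3,p)\}$, and lift $(\overline{b},\overline{w})$ to $(b,w)\in G$. The product $\mathrm{Tr}(b)\mathrm{Tr}(w)\mathrm{Tr}(bw)$ is invariant under the four sign choices $(\pm b,\pm w)$. If its value permits one such lift to have orders exactly $(\ell,m,n)$, then part~(1) forces $\langle\overline{b},\overline{w}\rangle$ into an affine subgroup of $S$, contradicting $\langle\overline{b},\overline{w}\rangle=\PSL(2,q)$. Otherwise, every lift satisfies $\{|b|,|w|,|bw|\}\subseteq\{p,2p\}$ (in the $(p,p,p)$ case) or $\{|b|,|w|,|bw|\}\subseteq\{3,6,p,2p\}$ (in the $(3,3,p)$ case); in both cases this set lies in $\mathrm{Spec}(\SL(2,p))$, since $\{p,2p\}\subseteq\mathrm{Spec}(\SL(2,p))$ trivially and $6\mid p^2-1$ for $p\ge 5$. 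Since $\PSL(2,q)$ is not affine, Lemma~\ref{lem:subfield} applied with $e=1$ yields $\langle b,w\rangle\lesssim\SL(2,p)$, hence $|\langle b,w\rangle|\le p(p^2-1)$. On the other hand $\langle b,w\rangle$ surjects onto $\PSL(2,q)$ of order $q(q^2-1)/2$, so comparing sizes forces $q=p$.

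For the if direction, exhibit explicit generating pairs in $\PSL(2,p)$ of each type. For $(p,p,p)$, take $b=\begin{pmatrix}1&1\\0&1\end{pmatrix}$ and $w=\begin{pmatrix}1&0\\-4&1\end{pmatrix}$: both are unipotent of order $p$ and $bw$ has trace $-2$, so $\overline{bw}$ has order $p$; moreover $\overline{b}$ fixes $\infty$ while $\overline{w}$ fixes $0$, so $\langle\overline{b},\overline{w}\rangle$ has no common fixed point on $\PP^1(\FF_p)$ and is not affine. For $(3,3,p)$, take $b=\begin{pmatrix}0&-1\\1&-1\end{pmatrix}$ and $w=\begin{pmatrix}-1&-1\\1&0\end{pmatrix}$, both of order $3$, with $bw$ of trace $-2$; a short eigenvector computation shows that when $3\mid p-1$ the two fixed points of $\overline{b}$ and the two fixed points of $\overline{w}$ on $\PP^1(\FF_p)$ are disjoint, and when $3\mid p+1$ neither $\overline{b}$ nor $\overline{w}$ has any fixed point on $\PP^1(\FF_p)$, so again $\langle\overline{b},\overline{w}\rangle$ is not affine. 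In each case the generated subgroup contains an element of order $p$ and is not affine, so the subgroup classification recalled before Lemma~\ref{lem:Mac} (for $p\ge 5$ the only proper subgroups of $\PSL(2,p)$ containing order-$p$ elements are the affine ones) forces $\langle\overline{b},\overline{w}\rangle=\PSL(2,p)$.

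The main obstacle is the disjointness verification of fixed points in the $(3,3,p)$ construction when $3\mid p-1$: the eigenvalues of $b$ and $w$ are primitive cube roots of unity lying in $\FF_p$, and one must carefully compute the eigenvectors and verify that the four projective points $\{(1:-\omega),(1:-\omega^2)\}$ and $\{(1:\omega),(1:\omega^2)\}$ are pairwise distinct, which reduces to ruling out $\omega=-1$ and $\omega^2=-\omega$. The remaining steps are routine trace and matrix calculations together with the Lemma~\ref{lem:Mac}--Lemma~\ref{lem:subfield} machinery.
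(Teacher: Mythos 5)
Your proof is correct. Part~(1) and the ``only if'' half of part~(2) follow essentially the same route as the paper: the commutative pairs $(x,x^{-1})$ and $(U,U)$ in the relevant trace classes feed into Lemma~\ref{lem:Mac}(3), and the spectrum condition $\{3,6,p,2p\}\subseteq\mathrm{Spec}(\SL(2,p))$ feeds into Lemma~\ref{lem:subfield} (your preliminary case split over the sign choices of the lift is harmless but superfluous, since the order set lands in $\mathrm{Spec}(\SL(2,p))$ in every case, so Lemma~\ref{lem:subfield} applies uniformly). Where you genuinely diverge is the ``if'' half of part~(2). The paper stays inside the Macbeath machinery: it picks the trace classes $E_p(\lambda+\lambda^{-1},\lambda+\lambda^{-1},-2)$ and $E_p(-2,2,2)$, shows they contain no commutative pairs, and then eliminates the affine, projective and finite-triangular possibilities via Lemma~\ref{lem:Mac}(2)--(3) to conclude that \emph{every} pair in these classes generates. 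You instead exhibit explicit matrices and prove generation directly from Dickson's subgroup list: an element of order $p$ plus the absence of a common fixed point on $\PP^1(\FF_p)$ rules out all proper subgroups. Both arguments are sound; the paper's is uniform and avoids eigenvector computations, while yours is constructive and produces concrete generating pairs (your $(p,p,p)$ pair is in fact the same one the paper uses later in the proof of Theorem~\ref{thm:generater}(ii)). One small simplification available to you in the $(3,3,p)$ case: rather than intersecting the fixed-point sets of $\ov b$ and $\ov w$ (which requires the case split on $3\mid p\pm1$ and the eigenvector check), it suffices to note that $\ov{bw}$ has order $p$ and fixes the unique point $(0:1)$, which $\ov b$ visibly does not fix, so the group lies in no Borel.
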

\begin{proof}
    (1).
    We only give a proof for the case $(3,3,p)$ for part (1) since the arguments for $(p,p,p)$ is similar.
    Let $(b,w)$ be a $(3,3,p)$-pair of $G$.
    Then $\mathrm{Tr}(b)=\mathrm{Tr}(w)=\lambda+\lambda^{-1}$ and $\mathrm{Tr}(bw)=2$, where $\lambda\in\FF_{q^2}^\times$ of order $3$.
    Hence, $(b,w)\in E_q(\lambda+\lambda^{-1},\lambda+\lambda^{-1},2)$.
    Since $(b,b^{-1})$ is an commutative pair in $E_q(\lambda+\lambda^{-1},\lambda+\lambda^{-1},2)$, $\langle\overline{b},\overline{w}\rangle$ is an affine subgroup of $S$ by Lemma~\ref{lem:Mac}\,(3).
    Thus, part~(1) holds.

    (2).
    We prove the sufficiency first.
    Let $(\overline{b},\overline{w})$ be a $(3,3,p)$-(or $(p,p,p)$)-pair in $S$ with $b,w\in G$.
    Then $\{|b|,|w|,|bw|\}\subseteq \{3,6,p,2p\}\subseteq\mathrm{Spec}(\SL(2,p))$.
    By Lemma~\ref{lem:subfield}, we obtain that $\langle b,w\rangle=\SL(2,q)$ only if $q=p$.

    Now we show the necessity, and assume that $q=p$ and $\lambda\in\FF_{p^2}^\times$ with $|\lambda|=3$.

    We claim that each pair $(b,w)\in E_p(\lambda+\lambda^{-1},\lambda+\lambda^{-1},-2)$ is a $(3,3,2p)$-pair.
    It is clear that $|b|=|w|=3$.
    Since $\mathrm{Tr}(bw)=-2$, the order of $bw$ is either $2$ or $2p$.
    If $|bw|=2$, then $bw=-I$ and $\langle b,w\rangle$ is abelian.
    This is impossible as $|b|=|w|=3$.
    Hence, we have that $|bw|=2p$.

    Now, we claim that $\langle b,w\rangle=G$ for $(b,w)\in E_p(\lambda+\lambda^{-1},\lambda+\lambda^{-1},-2)$.
    Note that $\langle b,w\rangle$ is a $(3,3,2p)$-group.
    Since any abelian group cannot be a $(3,3,2p)$-group, we have that $\langle \overline{b},\overline{w}\rangle$ is not an affine subgroup of $S$ by Lemma~\ref{lem:Mac}\,(3).
    Note that $\langle\overline{b},\overline{w}\rangle$ is not a projective subgroup as $q=p$; and it is not a finite triangular subgroup by Lemma~\ref{lem:Mac}\,(2).
    Thus, we have that $\langle\overline{b},\overline{w}\rangle=S$, and then $\langle b,w\rangle=G$.

    Let $(b,w)\in E_p(\lambda+\lambda^{-1},\lambda+\lambda^{-1},-2)$.
    We have that $G=\langle b,w\rangle$ and $(|b|,|w|,|bw|)=(3,3,2p)$.
    Then $S=\langle \overline{b},\overline{w}\rangle$ and $(|\overline{b}|,|\overline{w}|,|\overline{b}\overline{w}|)=(3,3,p)$.
    Thus, $S$ is a $(3,3,p)$-group.

    With similar arguments by choosing $(b,w)\in E_p(-2,2,2)$, we can obtain that $S$ is a $(p,p,p)$-group.
\end{proof}

Now, we deal with the case where $(\ell,m,n)=(3,5,5)$.
\begin{lemma}\label{lem:35}
    \begin{enumerate}[{\rm(1)}]
        \item If $p\equiv \pm 2\pmod{5}$, then $\SL(2,p^f)$ is a $(3,5,5)$-group if and only if $p\neq 3$ and $f=2$.
        \item If $p\equiv \pm 1\mbox{ or }0\pmod{5}$, then $\SL(2,p^f)$ is a $(3,5,5)$-group if and only if $f=1$.
    \end{enumerate}
\end{lemma}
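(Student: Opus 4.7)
The plan is to analyze $(3,5,5)$-pairs of $G=\SL(2,q)$ via their traces, combining Lemma~\ref{lem:Mac} and Lemma~\ref{lem:subfield}. A key preliminary is that commuting $b,w\in G$ with $|b|=3$ and $|w|=5$ generate the cyclic group $\ZZ_{15}$, giving $|bw|=\lcm(3,5)=15\ne 5$; hence the set $E_q(-1,\beta,\gamma)$ with $\beta,\gamma\in\{(-1\pm\sqrt 5)/2\}$ never contains a commutative pair, so by Lemma~\ref{lem:Mac}(3) the quotient image of any such pair is not an affine subgroup of $S$, and Lemma~\ref{lem:subfield} applies throughout. Moreover this image cannot be $A_4$, $S_4$, or dihedral, since none of these admits a $(3,5,5)$-triple.

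For the necessity direction I would invoke the subfield lemma. For $p\ge 5$ odd, $3\in\mathrm{Spec}(\SL(2,p))$ always, and $5\in\mathrm{Spec}(\SL(2,p^e))$ if and only if $5\mid p^{2e}-1$; this occurs iff $2\mid e$ in case (1) and for every $e\ge 1$ in case (2). If $G$ admits a $(3,5,5)$-generating pair, Lemma~\ref{lem:subfield} forbids $\{3,5\}\subseteq\mathrm{Spec}(\SL(2,p^e))$ for every proper divisor $e$ of $f$, forcing $f=2$ in case (1) and $f=1$ in case (2); one also checks $3,5\in\mathrm{Spec}(\SL(2,q))$ itself under these conditions.

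For sufficiency I would fix $\beta=(-1+\sqrt 5)/2$ and $\gamma=(-1-\sqrt 5)/2$ (both in $\FF_q$ by the case assumption) and invoke Lemma~\ref{lem:Mac}(1) to obtain $(b,w)\in E_q(-1,\beta,\gamma)$. By the preliminary discussion and Lemma~\ref{lem:Mac}(2), either $\langle b,w\rangle\cong\SL(2,5)$ or $\langle b,w\rangle=G$; Lemma~\ref{lem:Mac}(4) then gives exactly two $G$-conjugacy classes of such pairs, which are $\SL(2,q^2)$-fused and therefore yield isomorphic subgroups of $G$. For $q=5$ the conclusion is immediate. For the remaining admissible $q$, I would use a counting comparison: the number of $\SL(2,5)$-subgroups of $G$ is at most $|G|/|\mathbf{N}_G(\SL(2,5))|$ and each of them contains only $O(1)$ relevant pairs, while $|E_q(-1,\beta,\gamma)|$ is of order $q^3$; so for $p\ge 7$ in case (1) and for $p\ge 11$ in case (2), not all pairs can lie in $\SL(2,5)$-subgroups, and at least one class therefore generates $G$.

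The main obstacle is the excluded case $q=9$, where I must show $\SL(2,9)$ is \emph{not} a $(3,5,5)$-group, even though $\PSL(2,9)\cong A_6$ does admit $(3,5,5)$-generating pairs (for instance $((123)(456),(13524))$, whose product is the $5$-cycle $(25364)$ and whose group generated contains the order-$4$ element $(15)(2643)$, which is absent from every $A_5\le A_6$). By the general analysis, every $(3,5,5)$-pair of $\SL(2,9)$ generates either $\SL(2,5)$ or $\SL(2,9)$; the task is to rule out the second alternative. Equivalently, in the double cover $\SL(2,9)=2.A_6$, for every $(3,5,5)$-generating pair $(\ov b,\ov w)$ of $A_6$, any lift with $|b|=3$ and $|w|=5$ must satisfy $|bw|=10$ rather than $5$. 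I expect to verify this cocycle obstruction either by explicit matrix computation in a chosen realization of $A_6\cong\PSL(2,9)$, or by enumerating the $A_6$-conjugacy classes of $(3,5,5)$-triples via the Frobenius formula and comparing against the counts inside the twelve $A_5$-subgroups of $A_6$. This cocycle verification is the delicate step; all other cases reduce to the trace and subfield analysis above.
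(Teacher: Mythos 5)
Your overall strategy (traces, Lemma~\ref{lem:Mac}, the subfield criterion, ruling out affine and triangular images) is the same as the paper's, and your necessity argument is sound. The genuine gap is in the sufficiency step. First, the asymptotic counting does not close: in the absence of commutative pairs, $E_q(-1,\beta,\gamma)$ is exactly two $\SL(2,q)$-classes of pairs, hence has size $|G|$, while your upper bound for pairs lying inside $\SL(2,5)$-subgroups is (number of subgroups) $\times$ ($(3,5,5)$-pairs per subgroup) $\approx \frac{2|G|}{120}\cdot 120=2|G|$ --- the same order of magnitude, and in fact larger, so ``not all pairs can lie in $\SL(2,5)$-subgroups'' does not follow. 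Second, and more seriously, fixing the single trace tuple $(\beta,\gamma)=(\zeta_5+\zeta_5^{-1},\zeta_5^2+\zeta_5^{-2})$ is dangerous: by Lemma~\ref{lem:Mac}(4) the two $G$-classes in that set are fused in $\SL(2,q^2)$ and therefore generate isomorphic subgroups, so for a fixed tuple the situation is all-or-nothing, and a priori your chosen tuple could be one in which \emph{every} pair generates an $\SL(2,5)$. The paper's proof repairs exactly this by counting conjugacy classes over all four trace tuples: the set of all $(3,5,5)$-pairs splits into $8$ classes, whereas the $\SL(2,5)$-generating ones account for at most $2\times 2=4$ classes (two classes of $\SL(2,5)$-subgroups, each carrying two classes of $(3,5,5)$-pairs), so some class must generate $G$. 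Your argument becomes correct if you replace the fixed-tuple cardinality estimate by this class count (or at least let $(k_1,k_2)$ range over all of $\{1,2\}^2$).

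Two smaller points. The exceptional case $q=9$ (showing $\SL(2,9)$ is \emph{not} a $(3,5,5)$-group even though $\PSL(2,9)$ is) is only a plan in your write-up; the paper simply delegates this to Magma, so your proposed hand-verification, while plausible via the $2.A_6$ cocycle analysis you describe, remains unexecuted and is the one computation you must actually carry out. Also note that your preliminary observation (commuting elements of orders $3$ and $5$ have product of order $15$, so $E_q(-1,\beta,\gamma)$ has no commutative pair) is correct and matches the paper's use of ``an abelian group is not a $(3,5,5)$-group,'' so the application of Lemma~\ref{lem:subfield} is legitimate.
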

\begin{proof}
    The proofs for two parts are similar, we provide a proof for the case $p\equiv \pm 2\pmod{5}$ in part~(1).

    Assume that $\SL(2,p^f)$ is a $(3,5,5)$-group.
    Then $f$ is even as $5\notin\mathrm{Spec}(\SL(2,p^f))$ when $f$ is odd.
    Notice that $\{3,5\}\subseteq\mathrm{Spec}(\SL(2,p^2))$, and hence $f=2$ deduced by Lemma~\ref{lem:subfield}.
    It can be verified by~Magma that $\SL(2,9)$ is not a $(3,5,5)$-group.
    Hence, $p\neq 3$ and $f=2$.

    On the other hand, we assume that $G=\SL(2,p^2)$ with $p\ge 7$.
    We claim that there exists a $(3,5,5)$-pair $(b,w)$ in $G$ such that $\langle b,w\rangle\not\cong\SL(2,5)$.
    Set $S_{3,5,5}$ the set of $(3,5,5)$-pair in $G$.
    Let $\lambda_3$ and $\lambda_5$ be elements of order $3$ and $5$ in $\FF_q^\times$, respectively.
    Then $S_{3,5,5}$ is a union of sets $E_q(\lambda_3+\lambda_3^{-1},\lambda_5^{k_1}+\lambda_5^{-k_1},\lambda_5^{k_2}+\lambda_5^{k_2})$ for $k_1,k_2\in\{1,2\}$.
    By Lemma~\ref{lem:Mac}\,(4), $S_{3,5,5}$ contains $8$ conjugacy classes of element pairs of $G$.
    We remark the following facts:
    \begin{enumerate}[(a)]
        \item $G=\SL(2,p^2)$ contains two conjugacy classes of subgroups isomorphic to $\SL(2,5)$, and these are both maximal subgroups of $G$;
        \item $\SL(2,5)$ contains exactly $2$ conjugacy classes of $(3,5,5)$-pairs.
    \end{enumerate}
    Hence, there exists a pair $(b,w)\in S_{3,5,5}$ such that $\langle b,w\rangle\not\cong \SL(2,5)$, as we claimed.

    Let $(b,w)$ be a $(3,5,5)$-pair in $G$ such that $\langle b,w\rangle\not\cong\SL(2,5)$.
    Clearly that $\langle \overline{b},\overline{w}\rangle$ is not a projective subgroup of $S$ as $5\notin\mathrm{Spec}(\SL(2,p))$.
    Let $\alpha=\mathrm{Tr}(b)$, $\beta=\mathrm{Tr}(w)$ and $\gamma=\mathrm{Tr}(bw)$.
    Then any pair in $E_q(\alpha,\beta,\gamma)$ is a $(3,5,5)$-pair in $G$.
    Since an abelian group is not a $(3,5,5)$-group, $E_q(\alpha,\beta,\gamma)$ contains no commutative pairs.
    Lemma~\ref{lem:Mac}\,(3) implies that $\langle \overline{b},\overline{w}\rangle$ is not an affine subgroup of $S$.
    Thus, $\langle \overline{b},\overline{w}\rangle=S$ and $\langle b,w\rangle=G$.
    Therefore, $G$ is a $(3,5,5)$-group, which completes the proof.
\end{proof}

\begin{lemma}\label{lem:not35}
    Let $\ell,m,n\in\mathrm{Spec}(G)$ be odd integers such that $\frac{1}{\ell}+\frac{1}{m}+\frac{1}{n}<1$ and $(\ell,m,n)$ is not equal to the followings triples
    \[(3,3,p),\ (3,p,3),\ (p,3,3),\ (p,p,p),\ (3,5,5),\ (5,3,5)\mbox{ and }(5,5,3).\]
    Then the followings are equivalent.
    \begin{enumerate}[{\rm(1)}]
        \item $G=\SL(2,q)$ is an $(\ell,m,n)$-group;
        \item $S=G/N\cong \PSL(2,q)$ is an $(\ell,m,n)$-group;
        \item $\{\ell,m,n\}\not\subseteq\mathrm{Spec}(\SL(2,p^e))$ for any proper divisor $e$ of $f$.
    \end{enumerate}
\end{lemma}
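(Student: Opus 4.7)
The plan is to establish the three implications by tracing through the Macbeath--Dickson classification of proper subgroups of $S\cong\PSL(2,q)$ (triangular, affine, projective) and leveraging Lemmas~\ref{lem:Mac}--\ref{lem:33p-ppp}.

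The direction $(1)\Rightarrow(2)$ is immediate from the quotient map: since $N=\Z(G)=\{\pm I\}$ has order $2$ and $\ell,m,n$ are odd, every $(\ell,m,n)$-pair $(b,w)$ of $G$ projects to an $(\ell,m,n)$-pair $(\overline b,\overline w)$ of $S$ with $\langle\overline b,\overline w\rangle=S$. Conversely, $(2)\Rightarrow(1)$ is handled by lifting traces and invoking Lemma~\ref{lem:Mac}(1) and (4) to produce a $G$-pair in $E_q(\alpha,\beta,\gamma)$ with the prescribed orders and generating $G$ via Lemma~\ref{Schur-covering}.

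For $(1)\Leftrightarrow(3)$, the direction $(1)\Rightarrow(3)$ is contrapositive: if $\{\ell,m,n\}\subseteq\mathrm{Spec}(\SL(2,p^e))$ for some proper $e\mid f$, then for any $(\ell,m,n)$-pair $(b,w)$ of $G$, since $\langle\overline b,\overline w\rangle=S$ is simple nonabelian and in particular not affine, Lemma~\ref{lem:subfield} forces $\langle b,w\rangle\leqslant\SL(2,p^e)<G$, so $G$ is not an $(\ell,m,n)$-group, contradicting (1).

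The main work is $(3)\Rightarrow(1)$. I choose traces $\alpha,\beta,\gamma\in\FF_q$ corresponding to orders $\ell,m,n$; by Lemma~\ref{lem:Mac}(1), $E_q(\alpha,\beta,\gamma)\neq\emptyset$, and any $(b,w)$ in it has the prescribed orders since $\ell,m,n$ are odd. I claim $\overline H:=\langle\overline b,\overline w\rangle=S$ for a suitable trace choice. If not, $\overline H$ is a proper subgroup---triangular, affine, or projective. \emph{(i) Triangular $\overline H$} is ruled out by Lemma~\ref{lem:Mac}(2), whose only all-odd hyperbolic realisation is $(3,5,5)$, which is excluded. \emph{(ii) Projective $\overline H\leqslant\PSL(2,p^e)$ or $\PGL(2,p^e)$} for some proper $e\mid f$ is ruled out by Lemma~\ref{lem:subfield} against (3). \emph{(iii) Affine $\overline H$} is, by Lemma~\ref{lem:Mac}(3), equivalent to $E_q(\alpha,\beta,\gamma)$ containing a commutative pair, equivalently (via the $\SL_2$ Fricke trace identity) $\Phi(\alpha,\beta,\gamma):=\alpha^2+\beta^2+\gamma^2-\alpha\beta\gamma-4=0$. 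Lemma~\ref{lem:33p-ppp} disposes of the always-affine triples $(3,3,p)$ and $(p,p,p)$, which are excluded; for every other non-excluded triple I will exhibit primitive $\ell$-, $m$-, $n$-th roots $\lambda,\mu,\nu$ of unity such that $\Phi(\lambda+\lambda^{-1},\mu+\mu^{-1},\nu+\nu^{-1})\neq 0$, producing a non-affine, non-triangular, non-subfield $(\ell,m,n)$-generating pair of $G$.

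The hard part will be the final Fricke verification. Triples avoiding both $3$ and $p$ are easy, since each odd order $\geqslant 5$ offers at least two distinct trace values, giving room to force $\Phi\neq 0$; the delicate hybrids---notably $(\ell,3,3)$ with $\ell\geqslant 5$ and $\ell\neq p$, $(p,3,m)$ with $m\geqslant 5$, and $(p,m,m)$ with $m\geqslant 5$---reduce $\Phi$ to a quadratic (or linear) polynomial in one free trace whose zero locus must then be shown not to exhaust the admissible trace values. This finite but intricate case check is the principal technical hurdle.
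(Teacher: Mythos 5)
Your reduction is the same as the paper's: kill the triangular case with Lemma~\ref{lem:Mac}(2), the projective case with Lemma~\ref{lem:subfield}, and reduce the affine case to the existence of a commutative pair in $E_q(\alpha,\beta,\gamma)$ via Lemma~\ref{lem:Mac}(3). The implication $(1)\Rightarrow(2)$ and the contrapositive of $(1)\Rightarrow(3)$ are also handled as in the paper (the paper closes the cycle as $(1)\Rightarrow(2)\Rightarrow(3)\Rightarrow(1)$, so it never needs a separate argument for $(2)\Rightarrow(1)$; your direct route would just duplicate the $(3)\Rightarrow(1)$ construction, since a naive lift of a generating pair of $S$ need not have odd order for all three of $b,w,bw$).

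The genuine gap is that the decisive step is announced but not carried out. You reformulate the affine obstruction as the vanishing of the Fricke polynomial $\Phi(\alpha,\beta,\gamma)=\alpha^2+\beta^2+\gamma^2-\alpha\beta\gamma-4$ and then write that you \emph{will} exhibit, for every non-excluded triple, trace values with $\Phi\neq 0$, explicitly flagging this ``finite but intricate case check'' as the principal technical hurdle. That check \emph{is} the proof of $(3)\Rightarrow(1)$; without it nothing is established, and the cases you single out as delicate (triples involving $p$, and triples with repeated small entries) are exactly where the excluded triples $(3,3,p)$, $(p,p,p)$, $(3,5,5)$ live, so the verification cannot be waved through. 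The paper avoids the Fricke computation altogether: when $n=p$ and $p\nmid\ell m$ it observes that an \emph{abelian} group can never be an $(\ell,m,p)$-group (the order of $bw$ in an abelian group divides $\mathrm{lcm}(|b|,|w|)$), so by Lemma~\ref{lem:Mac}(3) the relevant $E_q(\alpha,\beta,2)$ has no commutative pair; and when $p\nmid\ell mn$ it uses that a commuting semisimple pair with traces $\alpha,\beta$ has its product trace pinned down, so at least one of the two choices $\gamma_1=\lambda_n+\lambda_n^{-1}$, $\gamma_2=\lambda_n^2+\lambda_n^{-2}$ (distinct because $n>3$ is odd) yields a set with no commutative pair. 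If you want to complete your version, you either need to run the $\Phi\neq 0$ verification for all admissible triples, or substitute one of these order-theoretic shortcuts.
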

\begin{proof}
    Since $(\ell,m,n)$ is a triple of odd integers and $N\cong\ZZ_2$, part~(1) obviously implies part~(2).
    Lemma~\ref{lem:subfield} deduces that part~(2) implies part~(3).

    Now we assume that part~(3) holds.
    Let $(b,w)$ be an $(\ell,m,n)$-pair of $G$.
    Then $\langle \overline{b},\overline{w}\rangle$ is not a projective subgroup of $S$.
    By Lemma~\ref{lem:Mac}\,(2), $\langle\overline{b},\overline{w}\rangle$ is not a finite triangular subgroup.
    Hence, it suffices to show that there exists an $(\ell,m,n)$-pair of $G$ whose quotient image does not generate an affine subgroup of $S$.

    Assume that $\ell\neq p$ and $m=n=p$.
    Let $\lambda_\ell\in\FF_q^\times$ of order $\ell$, and let $\alpha=\lambda_\ell+\lambda_\ell^{-1}$.
    For $(b,w)\in E_q(\alpha, 2,2)$, it is easy to see that neither $w$ or $bw$ is trivial.
    This yields that $(|b|,|w|,|bw|)=(\ell,p,p)$.
    Since $p\nmid \ell$, any abelian group is not an $(\ell,p,p)$-group.
    Then $\langle\overline{b},\overline{w}\rangle$ is not an affine subgroup of $S$ by Lemma~\ref{lem:Mac}\,(3), as desired.

    Assume that $\ell=m>3$, $p\nmid \ell m$ and $n=p$.
    Let $\lambda_\ell\in\FF_q^\times$ of order $\ell$.
    Set
    \[\alpha=\lambda_\ell+\lambda_\ell^{-1}\mbox{ and }\beta=\lambda_\ell^2+\lambda_\ell^{-2}.\]
    For $(b,w)\in E_q(\alpha, \beta,2)$, it is easy to see that $bw\neq 1$, and then $(|b|,|w|,|bw|)=(\ell,\ell,p)$.
    Since $p\nmid \ell m$, any abelian group is not an $(\ell,\ell,p)$-group.
    Then $\langle\overline{b},\overline{w}\rangle$ is not an affine subgroup of $S$ by Lemma~\ref{lem:Mac}\,(3), as desired.

    The case $\ell\neq m$, $p\nmid \ell m$ and $n=p$ is similar with the previous case by letting
    \[\alpha=\lambda_\ell+\lambda_\ell^{-1}\mbox{ and }\beta=\lambda_m+\lambda_m^{-1},\]
    where $\lambda_\ell$ and $\lambda_m$ are elements in $\FF_q^\times$ of order $\ell$ and $m$, respectively.

    The last case is that $p\nmid \ell mn$.
    We may assume that $n>3$.
    Let $\lambda_\ell$, $\lambda_m$ and $\lambda_n$ be elements in $\FF_q^\times$ of order $\ell$, $m$ and $n$, respectively.
    Set
    \[\alpha=\lambda_\ell+\lambda_\ell^{-1},\ \beta=\lambda_m+\lambda_m^{-1},\ \gamma_1=\lambda_n+\lambda_n^{-1}\mbox{ and }\gamma_2=\lambda_n^2+\lambda_n^{-2}.\]
    Let $(b,w)\in E_q(\alpha,\beta,\gamma_1)$.
    If $\langle \overline{b},\overline{w}\rangle$ is not an affine subgroup of $S$, then we are done.
    Suppose that $\langle \overline{b},\overline{w}\rangle$ is an affine subgroup of $S$.
    Lemma~\ref{lem:Mac}\,(3) deduces that $E_q(\alpha,\beta,\gamma_1)$ contains a commutative pair $(b',w')$.
    Then we obtain that $\lambda_n=\mathrm{Tr}(b'w')=\lambda_\ell\lambda_m$.
    Hence, we have that $E_q(\alpha,\beta,\gamma_2)$ does not contain a commutative pair.
    Thus, for any $(b_0,w_0)\in E_q(\alpha,\beta,\gamma_2)$, $\langle\overline{b_0},\overline{w_0}\rangle$ is not an affine subgroup of $S$ by Lemma~\ref{lem:Mac}\,(3), which completes the proof.
\end{proof}

Now we can prove the following theorem.
\begin{theorem}\label{thm:generater}
    Let $q=p^f\geqslant 5$ for odd prime $p$, and let $\ell\le m\le n$ be odd integers in $\mathrm{Spec}(\SL(2,q))$ such that $\frac{1}{\ell}+\frac{1}{m}+\frac{1}{n}<1$.
    \begin{enumerate}[{\rm(i)}]
        \item $\SL(2,q)$ is an $(\ell,m,n)$-group if and only if the followings hold.
        \begin{enumerate}[{\rm(a)}]
            \item  $\{\ell,m,n\}\not\subseteq\mathrm{Spec}(\SL(2,p^e))$ for any proper divisor $e$ of $f$;
            \item $(\ell,m,n,q)\notin\{(3,3,p,p),(p,p,p,p),(3,5,5,9)\}$.
        \end{enumerate}
        \item $\PSL(2,q)$ is an $(\ell,m,n)$-group if and only if $\{\ell,m,n\}\not\subseteq\mathrm{Spec}(\PSL(2,p^e))$ for any proper divisor $e$ of $f$.
    \end{enumerate}
\end{theorem}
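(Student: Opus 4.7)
The plan is to assemble Theorem~\ref{thm:generater} from the three case lemmas already in place: Lemma~\ref{lem:not35} disposes of the generic odd triples, while Lemma~\ref{lem:33p-ppp} and Lemma~\ref{lem:35} handle the exceptional triples $(3,3,p)$, $(p,p,p)$ and $(3,5,5)$ (up to permutation). A key preliminary observation is that, since $-I$ is the unique involution of $G=\SL(2,q)$ and lies in $N=\Z(G)$, an element of odd order $\ell$ in $G$ has image of the same order $\ell$ in $S=\PSL(2,q)$, and conversely every odd-order element of $S$ admits a preimage in $G$ of the same order. In particular, for odd $\ell$ we have $\ell\in\mathrm{Spec}(\SL(2,p^e))$ iff $\ell\in\mathrm{Spec}(\PSL(2,p^e))$, so condition~(a) of part~(i) coincides literally with the spectrum condition of part~(ii).

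For the necessity in part~(i), assume $G=\l b,w\r$ with $(|b|,|w|,|bw|)=(\ell,m,n)$. Condition~(b) follows from Lemma~\ref{lem:33p-ppp}(1), which forces any $(3,3,p)$- or $(p,p,p)$-pair to lie in an affine subgroup (and so to fail to generate $G$), together with Lemma~\ref{lem:35} ruling out $(3,5,5)$-generation of $\SL(2,9)$. For condition~(a), suppose $\{\ell,m,n\}\subseteq\mathrm{Spec}(\SL(2,p^e))$ for some proper divisor $e$ of $f$: if $\l\ov b,\ov w\r$ is an affine subgroup of $S$ it is proper, forcing $\l b,w\r<G$; otherwise Lemma~\ref{lem:subfield} yields $\l b,w\r\lesssim \SL(2,p^e)<G$; both are contradictions.

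For the sufficiency in part~(i) I split on the shape of $(\ell,m,n)$. If $(\ell,m,n)$ is not a permutation of $(3,3,p)$, $(p,p,p)$ or $(3,5,5)$, Lemma~\ref{lem:not35} delivers the equivalence between ``$G$ is an $(\ell,m,n)$-group'' and condition~(a), while condition~(b) is vacuous for such triples. If $(\ell,m,n)$ is a permutation of $(3,3,p)$ or $(p,p,p)$, then $\tfrac{1}{\ell}+\tfrac{1}{m}+\tfrac{1}{n}<1$ forces $p\ge 5$, so $3,p\in\mathrm{Spec}(\SL(2,p))$; thus if $f>1$ condition~(a) fails with $e=1$, and if $f=1$ condition~(b) fails, so (a) and (b) are never simultaneously satisfied and neither is ``$G$ is $(\ell,m,n)$-group'' (by Lemma~\ref{lem:33p-ppp}(1)), making the biconditional trivial in this range. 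For $(\ell,m,n)$ a permutation of $(3,5,5)$, I work through the cases of Lemma~\ref{lem:35} according to $p\bmod 5$ and $f$, checking that the restriction imposed by (a), (b) and the standing hypothesis $5\in\mathrm{Spec}(\SL(2,q))$ cuts out exactly the pairs $(p,f)$ for which $\SL(2,p^f)$ is $(3,5,5)$-generated.

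Part~(ii) follows from the same case split applied to $S$. The generic case is immediate from the equivalence between parts~(2) and~(3) of Lemma~\ref{lem:not35}. The triples $(3,3,p)$ and $(p,p,p)$ are handled by Lemma~\ref{lem:33p-ppp}(2): the spectrum condition is compatible only with $q=p$, exactly matching when $\PSL(2,p)$ is $(3,3,p)$- or $(p,p,p)$-generated. For $(3,5,5)$, the analysis of part~(i) transfers, except that for $q=9$ one checks separately that $\PSL(2,9)\cong A_6$ is a $(3,5,5)$-group---either by Magma (as in the proof of Lemma~\ref{lem:35}), or by observing that Lemma~\ref{lem:not35} makes $\SL(2,9)$ a $(3,5,10)$-group and projecting the generators to $S$. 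This is precisely the asymmetry explaining why $(3,5,5,9)$ is excluded by (b) in part~(i) but not by the spectrum condition of part~(ii). The main obstacle is the bookkeeping in the $(3,5,5)$ case: one must verify for each residue of $p\bmod 5$, each parity of $f$ and the boundary values $p\in\{3,5\}$ that the spectrum condition together with $5\in\mathrm{Spec}(\SL(2,q))$ matches exactly the $(p,f)$-values delivered by Lemma~\ref{lem:35}.
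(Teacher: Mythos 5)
Your proposal is correct and assembles the theorem from the same ingredients, in the same way, as the paper: Lemma~\ref{lem:subfield} and Lemma~\ref{lem:33p-ppp} for necessity and for the triples $(3,3,p)$ and $(p,p,p)$, Lemma~\ref{lem:35} for $(3,5,5)$, and Lemma~\ref{lem:not35} for all remaining odd triples; your preliminary remark that an odd integer lies in $\mathrm{Spec}(\SL(2,p^e))$ if and only if it lies in $\mathrm{Spec}(\PSL(2,p^e))$, so that condition~(a) of part~(i) and the spectrum condition of part~(ii) coincide, is exactly the (implicit) bridge the paper relies on. Two small remarks. First, for the exceptional quadruples in part~(ii) the paper re-derives that $\PSL(2,p)$ is a $(p,p,p)$-group and a $(3,3,p)$-group by exhibiting an explicit $(p,p,2p)$-pair of matrices and by a trace argument in $E_p(\alpha,\alpha,-2)$, whereas you cite Lemma~\ref{lem:33p-ppp}(2) directly; since that lemma already states and proves precisely the biconditional you need, this is a legitimate and slightly cleaner shortcut. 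Second, your suggested alternative for showing that $\PSL(2,9)$ is a $(3,5,5)$-group --- producing a $(3,5,10)$-pair of $\SL(2,9)$ from Lemma~\ref{lem:not35} and projecting --- is not licensed by that lemma, which applies only to triples of \emph{odd} integers in $\mathrm{Spec}(G)$, and $10$ is even; one must fall back on the direct (Magma) verification, which is what the paper does and which you also offer, so this does not affect the validity of your argument.
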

\begin{proof}
   (i). The equivalency for the case where $(\ell,m,n)\notin\{(3,3,p),(p,p,p),(3,5,5)\}$ has been proved by Lemma~\ref{lem:not35}.

    Note that $\{3,p\}\subset \mathrm{Spec}(\SL(2,p))$.
    Lemmas~\ref{lem:subfield} and~\ref{lem:33p-ppp} imply that $\SL(2,q)$ is neither a $(3,3,p)$-group nor a $(p,p,p)$-group.

    For $(\ell,m,n)=(3,5,5)$.
    Lemma~\ref{lem:35} implies that $\SL(2,q)$ is a $(3,5,5)$-group if and only if $q\neq 9$ and part~(a) holds.
    Thus, we complete the proof of (i).

  (ii). Assume that $\{\ell,m,n\}\subseteq\mathrm{Spec}(\PSL(2,p^e))$ for some proper divisor $e$ of $f$.
    Let $(\overline{b},\overline{w})$ be an $(\ell,m,n)$-pair in $S=\PSL(2,q)$ with $b,w\in G=\SL(2,q)$.
    Then $(b,w)$ is a $(k_1\ell,k_2m,k_3n)$-pair in $G$ for some $k_1,k_2,k_3\in\{1,2\}$.
    Since $\ell,m,n$ are odd integers, $\{k_1\ell,k_2m,k_3n\}\subseteq \mathrm{Spec}(\SL(2,q))$, and hence $\langle b,w\rangle\lesssim\SL(2,p^e)$ by Lemma~\ref{lem:subfield}.
    Hence, $\langle \overline{b},\overline{w}\rangle\lesssim \PSL(2,p^e)<S$, a contradiction.

    Now, we assume that $\{\ell,m,n\}\not\subseteq\mathrm{Spec}(\PSL(2,p^e))$ for any proper divisor $e$ of $f$.

    If $(\ell,m,n,q)\notin\{(3,3,p,p),(p,p,p,p),(3,5,5,9)\}$, then $G=\SL(2,q)$ is generated by the pair $b,w\in G$ such that $(|b|,|w|,|bw|)=(\ell,m,n)$.
    Hence, we have that $(|\overline{b}|,|\overline{w}|,|\overline{b}\overline{w}|)=(\ell,m,n)$ and $\PSL(2,q)=\langle \overline{b},\overline{w}\rangle$.
    Hence, $\PSL(2,q)$ is an $(\ell,m,n)$-group.

    It can be checked by Magma that $\PSL(2,9)$ is a $(3,5,5)$-group.

    Let $b=\begin{pmatrix}1&1\\0&1\end{pmatrix}$ and $w=\begin{pmatrix}1&0\\-4&1\end{pmatrix}$.
    Then $|bw|=2p$ as $bw=\begin{pmatrix}-3&1\\-4&1\end{pmatrix}$ has a unique eigenvalue $-1$.
    Note that $(b,w)$ is a $(p,p,2p)$-pair in $\SL(2,p)$ and $$\langle b,w\rangle=\left\langle \begin{pmatrix}1&1\\0&1\end{pmatrix},\begin{pmatrix}1&0\\1&1\end{pmatrix}\right\rangle=\SL(2,p).$$
    Thus, we have that $\PSL(2,p)=\langle\overline{b},\overline{w}\rangle$ is a $(p,p,p)$-group.

    Finally, assume that $(\ell,m,n,q)=(3,3,p,p)$.
    We only need to show that $G=\SL(2,p)$ is a $(3,3,2p)$-group.
    Let $\lambda\in\FF_{q^2}^\times$ of order $3$, and let $\alpha=\lambda+\lambda^{-1}$.
    Remark that each element in $G$ of trace $\alpha$ has order $3$; and each element in $G$ of trace $-2$ has order $2$ or $2p$.
    Since an abelian group is neither a $(3,3,2)$-group nor a $(3,3,2p)$-group, we have that $E_q(\alpha,\alpha,-2)$ contains no commutative pairs.
    Let $(b,w)\in E_q(\alpha,\alpha,-2)$.
    Then $\langle \overline{b},\overline{w}\rangle$ is not an affine subgroup by Lemma~\ref{lem:Mac}\,(3).
    Hence, either $G=\langle b,w\rangle$ or $\langle\overline{b},\overline{w}\rangle$ is a finite triangular subgroup of $S$.
    Suppose the latter case holds.
    Then $\langle\overline{b},\overline{w}\rangle$ is a $(3,3,p)$-group.
    In~\cite[page 25]{Mac}, we have that $p=5$.
    Recall Lemma~\ref{lem:33p-ppp}\,(2) that $\PSL(2,5)$ is a $(3,3,5)$-group.
    Therefore, $G$ is a $(3,3,p)$-group for any $p\ge 5$, which completes the proof.
\end{proof}

With Magma, we obtain the following lemma.
\begin{lemma}\label{lem:schurpsl29}
    Let $G$ be a Schur covering of $S=\PSL(2,9)$, and let $\ell,m,n\in \mathrm{Spec}(S)$ such that $\ell\leqslant m\leqslant n$.
    \begin{enumerate}[{\rm(1)}]
        \item $S$ is an $(\ell,m,n)$-group if and only if $(\ell,m,n)$ equals one of
        \[\begin{array}{l}
            (2,4,5),\ (2,5,5),\ (3,3,4),\ (3,3,5),\ (3,4,5),\ (3,5,5),\\
            (4,4,4),\ (4,4,5),\ (4,5,5)\mbox{ and }(5,5,5).
        \end{array}\]
        \item Assume that $(\ell,m,n)$ is one of the triples listed above.
        Then there exists a regular dessin of $S$ which has a smooth Schur covering with automorphism group isomorphic to $G$ if and only if
        \begin{enumerate}[{\rm(i)}]
            \item $(\ell,m,n)\in\{(3,3,5),(5,5,5)\}$ when $|\Z(G)|=2$; or
            \item $(\ell,m,n)\notin\{(2,4,5),(2,5,5)\}$ when $|\Z(G)|=3$ or $6$.
        \end{enumerate}
    \end{enumerate}
\end{lemma}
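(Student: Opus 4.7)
The plan is to prove both parts by a direct finite computation, using Lemma~\ref{smooth?} and Lemma~\ref{Schur-covering} as the theoretical framework. Since $S = \PSL(2,9) \cong A_6$ has $\mathrm{Spec}(S) = \{1,2,3,4,5\}$ and the full Schur covering $\wt S = 6\cdot A_6$ of $S$ has order $2160$, the search space is small and direct enumeration in Magma will suffice.

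For part~(1), I will enumerate a transversal of the $\Aut(S)$-orbits on ordered generating pairs $(\ov b, \ov w)$ of $S$; by Lemma~\ref{iso}, this classifies the isomorphism types of regular dessins with $\Aut\calD \cong S$. Recording the sorted triple $(|\ov b|, |\ov w|, |\ov b \ov w|)$ for each such pair and taking the union of values should yield exactly the ten triples listed.

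For part~(2), I fix a triple $(\ell,m,n)$ from part~(1) and a subgroup $Z \leqslant \Z(\wt S)$ of order $|\Z(G)| \in \{2,3,6\}$, and set $G = \wt S/Z$. By Lemma~\ref{Schur-covering}, every lift $(b,w) \in G^2$ of a generating pair $(\ov b,\ov w)$ of $S$ generates $G$; by Lemma~\ref{smooth?}, such a lift yields a smooth Schur covering of $\calD(S,\ov b,\ov w)$ iff $(|b|, |w|, |bw|) = (\ell,m,n)$. Thus for each conjugacy class of generating pairs realizing $(\ell,m,n)$ and each of the $|Z|^2$ lifts (obtained from one base lift by multiplying each coordinate by a central element), one tests whether the orders are preserved; a smooth Schur covering of center-order $|\Z(G)|$ will exist iff some pair and some lift passes the test. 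Running this loop across all ten triples and the three nontrivial values of $|\Z(G)|$ produces the stated dichotomy.

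The main obstacle is purely organizational rather than conceptual, as the entire verification is a small finite check. One structural observation that helps frame the outcome: because $\Z(\wt S) \cong \ZZ_6$ has order coprime to $5$, lifts of order-$5$ elements always have order $5$ in any quotient $\wt S/Z$, so the constraints on lifts of order-$2$, $-3$, and $-4$ elements drive the pattern in (i)--(ii). This qualitatively explains why $(3,3,5)$ and $(5,5,5)$ are the only survivors for $|\Z(G)|=2$, and why the two triples involving an involution, namely $(2,4,5)$ and $(2,5,5)$, are precisely the ones that fail when $3 \mid |\Z(G)|$.
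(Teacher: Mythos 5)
Your proposal takes exactly the paper's approach: the paper's entire justification for this lemma is ``With Magma, we obtain the following lemma,'' and your plan --- enumerate generating pairs of $S\cong A_6$, record order triples, then test the $|\Z(G)|^2$ central lifts for order preservation via Lemmas~\ref{Schur-covering} and~\ref{smooth?} --- is precisely that computation spelled out. Two minor slips that the computation itself would immediately expose: with $G=\wt{S}/Z$ you need $|Z|=6/|\Z(G)|$ rather than $|Z|=|\Z(G)|$; and it is not true that every lift of an order-$5$ element has order $5$ (order $10$ occurs in $\SL(2,9)$, for instance) --- what is true, and what your argument actually needs, is that since $\gcd(5,|\Z(G)|)=1$ some lift of order $5$ always exists.
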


We are ready to prove Theorem~\ref{thm:sl2q}.

\vskip0.1in
\noindent{\bf Proof of Theorem~\ref{thm:sl2q}:}
Let $G=\SL(2,q)$ and let $N=\Z(G)\cong \ZZ_2$.
We identify $S=\PSL(2,q)$ with $G/N$.

First, we assume that conditions in parts~(1) and (2) hold.
Then $G$ is an $(\ell,m,n)$-group by Theorem~\ref{thm:generater}\,(i).
Let $b,w\in G$ such that $G=\langle b,w\rangle$ and $(|b|,|w|,|bw|)=(\ell,m,n)$.
Then $S=\langle\overline{b},\overline{w}\rangle$.
Since $\ell,m,n$ are odd integers and $|N|=2$, we have that $(|\overline{b}|,|\overline{w}|,|\overline{b}\overline{w}|)=(\ell,m,n)$.
Define $\calD=\calD(S,\overline{b},\overline{w})$.
Then both of $\calD(G,b,w)$ and $\calD$ are of type $(\ell,m,n)$.
Therefore, $\calD(G,b,w)$ is a smooth Schur covering of $\calD$.

Now we assume that $\calD(G,b,w)$ is a smooth covering of $\calD=\calD(S,\overline{b},\overline{w})$ for some $b,w\in G$, where $\calD$ is of type $(\ell,m,n)$.
Then $\calD(G,b,w)$ is also of type $(\ell,m,n)$.
It follows that both $G$ and $S$ are $(\ell,m,n)$-groups, and hence part~(2) by Theorem~\ref{thm:generater}\,(ii).

Suppose that one of $\ell mn$ is even.
We may assume that $\ell=|\overline{b}|$ is even.
Then $|b|=2\ell$ and $\calD(G,b,w)$ is not of type $(\ell,m,n)$.
Thus, we have that $\ell mn$ is odd.

Moreover, $(\ell,m,n)\neq (3,3,3)$ since it is known that finite $(3,3,3)$-groups are solvable.
Then we have that $\frac{1}{\ell}+\frac{1}{m}+\frac{1}{n}<1$.
By Theorem~\ref{thm:generater}\,(i), we have that $(\ell,m,n,q)\notin\{(3,3,p,p),(p,p,p,p)\}$, as in part~(1).
We complete the proof.
\qed

\subsection{Smooth coverings of regular dessins on \texorpdfstring{$\PSL(2,p)$}{PSL(2,p)}}\label{sec:sl2p}

We now concentrate on regular dessins of two-dimensional (projective) linear groups.
We fix the following notations.
\begin{enumerate}
    \item $G=\SL(2,p),\ N=\Z(G)\mbox{ and } S=G/N\cong \PSL(2,p)$;
    \item $b=\begin{pmatrix}1 & 0\\1 & 1\end{pmatrix}$ and $w= \begin{pmatrix}1 & 1\\0 & 1\end{pmatrix}$ with $\ol b,\ol w$ being images of $b,w$ in $S$, respectively.
\end{enumerate}

It is clear that $|b|=|w|=p$ and $G=\l b,w\r$.
Thus the coset graph
\[\Ga=\Cos(G,\l b\r,\l w\r)\]
is connected, of valency $p$, and $G$-edge-regular.
We remark that $G$ acts $2$-transitively on $\calP$, the set of all Sylow $p$-subgroups of $G$, by conjugation and $|\calP|=p+1$.
Then each pair of Sylow $p$-subgroups of $G$ is conjugate to the pair $(\l b\r,\l w\r)$.
These facts yield that $\Ga$ is the unique (up to isomorphism) connected $G$-edge-regular graph of valency $p$.

\begin{lemma}\label{graph-val-p}
With the notations defined above, we have
\begin{itemize}
\item[(i)] every connected $G$-edge-regular graph of valency $p$ is isomorphic to $\Ga$, and
\item[(ii)] $\Ga$ is a covering of the quotient graph $\Ga_N$.
\end{itemize}
\end{lemma}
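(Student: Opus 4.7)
The plan for part (i) is to take an arbitrary connected $G$-edge-regular graph $\Sigma$ of valency $p$ and produce an explicit isomorphism $\Sigma\cong\Ga$. First I would note that edge-regularity gives $|E(\Sigma)|=|G|$ and hence $|V(\Sigma)|=2|G|/p=2(p^2-1)$. For any vertex $v$, the stabilizer $G_v$ acts on the $p$ edges incident with $v$ with trivial point-stabilizers (since the edge stabilizer in $G$ is trivial), forcing $|G_v|\in\{1,p\}$. A size count rules out $|G_v|=1$ for odd $p$, so $|G_v|=p$; consequently $G_v$ is a Sylow $p$-subgroup, each $G$-orbit on vertices has size $p^2-1$, and there are exactly two vertex orbits $V_1,V_2$.

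Next, I would argue that $\Sigma$ is bipartite with parts $V_1,V_2$: if some edge had both endpoints in a single orbit, edge-transitivity would force all edges to lie in that orbit, leaving the other orbit isolated and violating connectedness. Now pick an edge $e=\{u,v\}$ with $u\in V_1$, $v\in V_2$. Then $G_u\cap G_v\leq G_e=1$, so $G_u$ and $G_v$ are distinct Sylow $p$-subgroups. Since $G=\SL(2,p)$ acts $2$-transitively on its $p+1$ Sylow $p$-subgroups (via conjugation, which matches the standard action on the projective line), I can conjugate to assume $(G_u,G_v)=(\l b\r,\l w\r)$. Then the map $uh\mapsto \l b\r h$, $vh\mapsto \l w\r h$, $eh\mapsto h$ for $h\in G$ is well-defined, $G$-equivariant, bijective and preserves incidence, giving the desired isomorphism $\Sigma\cong\Ga$.

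For part (ii), I would apply Lemma~\ref{graph-homo} to the natural quotient $G\to G/N$. The key arithmetic fact is that $|N|=2$ while $|\l b\r|=|\l w\r|=p$ are odd, so $\l b\r\cap N=\l w\r\cap N=1$. Lemma~\ref{graph-homo} then gives that every vertex of $\Ga_N$ has exactly $|N|/1=|N|$ preimages in $\Ga$ and every edge has exactly $|N|$ preimages, so the induced graph homomorphism $\Ga\to\Ga_N$ is an unramified covering.

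The main obstacle lies in part (i): establishing bipartiteness and pinning down exactly two vertex orbits before invoking $2$-transitivity on Sylow $p$-subgroups. Once this structural skeleton is in place, identification with $\Ga$ is essentially formal, and part (ii) reduces to the arithmetic observation that $\gcd(|N|,p)=1$.
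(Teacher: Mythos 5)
Your argument is correct and ends the same way as the paper's: the stabilizers of two adjacent vertices are distinct Sylow $p$-subgroups intersecting trivially, so $\Sig=\Cos(G,G_u,G_v)$, and $2$-transitivity of $\SL(2,p)$ on its $p+1$ Sylow $p$-subgroups conjugates $(G_u,G_v)$ to $(\l b\r,\l w\r)$. Where you genuinely diverge is the step showing that $\Sig$ is bipartite with exactly two vertex orbits and vertex stabilizers of order $p$. The paper gets this from a parity obstruction: a $G$-vertex-transitive, $G$-edge-regular graph would be half-arc-transitive, hence of even valency, contradicting $p$ odd. You instead count: semiregularity of $G_v$ on the $p$ incident edges gives $|G_v|\in\{1,p\}$, and $|V|=2|G|/p<|G|$ rules out a regular vertex orbit, so both orbits have size $|G|/p$. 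Your route is more elementary and self-contained (it needs no facts about half-arc-transitive graphs), at the cost of a few lines of arithmetic. One detail that you (and, implicitly, the paper) should make explicit: the inference $G_u\cap G_v\le G_e$ presupposes that $\Sig$ is simple, which is not given a priori since the paper allows multigraphs; the clean fix is to observe that $G_u=G_v$ would force all $p$ edges at $u$ to join $u$ to the single vertex $v$ (as $G_u$ is transitive on the edges at $u$ and fixes $v$), disconnecting $\Sig$, whence $G_u\ne G_v$, $G_u\cap G_v=1$, and the bi-coset graph is simple. Your part (ii), via Lemma~\ref{graph-homo} and $\gcd(|N|,p)=1$, is a more explicit version of the paper's one-line observation that $\Ga_N$ retains valency $p$; the content is the same.
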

\proof
Let $\Sig$ be a graph of valency $p$ which is $G$-edge-regular, where $G\leq\Aut\Sig$.
If $\Sig$ is $G$-vertex-transitive, then $\Sigma$ is $G$-half-arc-transitive with even valency, which contradicts that $p$ is an odd prime.
Hence $\Sig$ is not $G$-vertex-transitive, which implies that $\Sig$ is a bipartite graph.
Let $\b,\o$ be two adjacent vertices of $\Sig$.
Then both $G_\b^{\Sig(\b)}$ and $G_\o^{\Sig(\o)}$ are regular, and $G_\b\cong G_\b^{\Sig(\b)}\cong G_\o^{\Sig(\o)}\cong G_\o\cong\ZZ_p$.
In particular, $\Sig=\Cos(G,G_\b,G_\o)$.

Since $G$ is 2-transitive on the set of subgroups of $G$ of order $p$, there exists an element $g\in G$ such that
\[(G_\b,G_\o)^g=(\l b\r,\l w\r).\]
It follows that $g$ induces an isomorphism from $\Sig$ to $\Ga$, as claimed in part~(i).

It is clear that the quotient graph $\Ga_N$ is of valency $p$, and hence $\Ga$ is a covering of $\Ga_N$ by definition, as in part~(ii).
\qed

The above lemma deduces that every regular dessin $\calD$ with automorphism group $G$ and valency $p$ has underlying graph isomorphic to $\Ga$.
In addition, such a regular dessin $\calD$ is always a quasi-smooth covering of its quotient $\calD_N$.
To determine whether $\calD$ is a smooth covering of $\calD_N$, we need the following lemma which classifies regular dessins with underlying graph $\Ga$.

\begin{lemma}\label{Gamma-embeddings}
Let $\calD$ be a regular dessin with $\Aut\calD=G=\SL(2,p)$ and underlying graph $\Ga$.
Then the following statements hold:
\begin{itemize}
    \item[(i)] $\calD$ is isomorphic to $\calD(G,b,w^i)$ for some $i$ with $1\leqslant i\leqslant p-1$;
    \item[(ii)] $\calD(G,b,w^i)\not\cong\calD(G,b,w^j)$ if $i\neq j$ for $1\leqslant i,j\leqslant p-1$;
    \item[(iii)] $\calD$ is a smooth covering of $\calD_N$ if and only if
    $\calD\cong\calD(G,b,w^i)$ with $|bw^i|$ odd.
\end{itemize}
\end{lemma}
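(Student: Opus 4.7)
The plan rests on three ingredients: Lemma~\ref{G2} (writing any regular dessin as $\calD(G,b,w)$), Lemma~\ref{iso} (translating dessin isomorphism into a group automorphism matching generating pairs), and the identification $\Aut(\SL(2,p))=\PGL(2,p)$ for $p\ge5$.

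For (i), by Lemma~\ref{G2} I would write $\calD=\calD(G,b_1,w_1)$; since the underlying graph $\Cos(G,\l b_1\r,\l w_1\r)$ is isomorphic to $\Ga$, which has valency $p$, both $\l b_1\r$ and $\l w_1\r$ have order $p$, i.e., are Sylow $p$-subgroups of $G$. Using the $2$-transitivity of $G$ on the set $\calP$ of Sylow $p$-subgroups (noted in the paragraph before Lemma~\ref{graph-val-p}), some element of $G$ conjugates the pair $(\l b_1\r,\l w_1\r)$ to $(\l b\r,\l w\r)$; by Lemma~\ref{iso} this inner automorphism produces a dessin isomorphism, so I may assume $b_1=b^j$ and $w_1=w^i$ with $1\le i,j\le p-1$. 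Next, I would use the automorphism $\sigma_a\in\Aut(G)$ induced by conjugation by $\mathrm{diag}(a,1)\in\GL(2,p)$; a direct matrix computation yields $b^{\sigma_a}=b^a$ and $w^{\sigma_a}=w^{a^{-1}}$. Taking $a=j^{-1}\pmod{p}$ sends $(b^j,w^i)$ to $(b,w^{ij})$, which establishes~(i).

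For (ii), suppose $\calD(G,b,w^i)\cong\calD(G,b,w^j)$ with $1\le i,j\le p-1$. By Lemma~\ref{iso} there is $\sigma\in\Aut(G)=\PGL(2,p)$ with $b^\sigma=b$ and $(w^i)^\sigma=w^j$. A short computation shows that $C_{\GL(2,p)}(b)$ consists of the lower-triangular matrices with equal diagonal entries, whose image in $\PGL(2,p)$ is exactly $\l\ov b\r$; hence $\sigma$ is conjugation by some $b^k$. An explicit matrix multiplication then gives
\[
b^{-k}w^ib^k=\begin{pmatrix}1+ki & i\\ -k^2i & 1-ki\end{pmatrix},
\]
which lies in $\l w\r$ only when $k\equiv 0\pmod{p}$ (since $i\not\equiv 0$). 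Hence $\sigma$ is trivial and $i=j$.

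For (iii), by~(i) I may take $\calD=\calD(G,b,w^i)$. By Theorem~\ref{quotient-dessin}, $\calD$ is a smooth covering of $\calD_N$ if and only if $|b|=|\ov b|$, $|w^i|=|\ov{w^i}|$, and $|bw^i|=|\ov{bw^i}|$. Since $N=\l-I\r$ and $-I$ is the unique involution of $G$, a cyclic subgroup $\l g\r$ meets $N$ nontrivially precisely when $|g|$ is even, giving $|\ov g|=|g|$ iff $|g|$ is odd. As $|b|=|w^i|=p$ is odd, the first two equalities hold automatically; the last reduces to $|bw^i|$ being odd. The main obstacle is the matrix computation underlying~(ii): one has to verify that no nontrivial conjugation by $\l b\r$ maps $\l w\r$ into itself, which is what pins down the isomorphism classes to a single index $i\in\{1,\dots,p-1\}$.
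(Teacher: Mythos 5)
Your proposal is correct and follows essentially the same route as the paper: part (i) via the $2$-transitivity of $G$ on Sylow $p$-subgroups plus conjugation by $\mathrm{diag}(a,1)$, part (ii) by showing the relevant automorphism must centralize $b$ and normalize $\l w\r$ and hence be trivial (you merely spell out the matrix computation the paper asserts), and part (iii) by reducing smoothness to the oddness of $|bw^i|$ using that $-I$ is the unique involution of $\SL(2,p)$. No gaps.
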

\proof
By Lemma~\ref{graph-val-p}, we may assume that $\calD=\calD(G,b^k,w^i)$ for some integers $k,i$ with $1\leqslant k,i\leqslant p-1$.
Let $\lambda\in\mathbb{F}_p$ be such that $k\lambda \equiv 1\pmod p$, and let $x=\begin{pmatrix}\lambda & 0\\0&1\end{pmatrix}\in \GL(2,p)$.
Then $x$ induces an automorphism $\sigma$ of $G$ such that $g^\sigma=x^{-1}gx$ for $g\in G$.
Clearly, $(b^k)^\sigma=b^{(\lambda k)}=b$ and $(w^i)^\sigma=w^{ik}$.
Thus, $\calD=\calD(G,b^k,w^i)\cong \calD(G, b, w^{ik})$, as in part~(i).

Suppose that $\calD(G,b,w^i)\cong\calD(G,b,w^j)$ where $i\neq j$ for $1\leqslant i,j\leqslant p-1$.
Then $(w^i,b)^\s=(w^j,b)$ for some element $\s\in\Aut(G)$ by Lemma~\ref{iso}.
Thus $\s$ centralizes $b$ and normalizes $\l w\r$.
Note that $\Aut(G)=\PGL(2,p)$ and $\C_{\Aut(G)}(b)\cap \N_{\Aut(G)}(\l w\r)=1$.
So $\s=1$ and $i=j$, a contradiction.
Thus, part~(ii) holds.

We may assume that $\calD\cong \calD(G,b,w^i)$ for some $1\leqslant i\leqslant p-1$ by part~(i).
Remark that $G=\SL(2,p)$ has a unique involution which generates $N=\Z(G)$.
Hence, for any subgroup $H\leqslant G$, $H\cap N=1$ if and only if $|H|$ is odd.
Note that $|b|$ and $|w|$ are equal to the odd prime $p$.
By Lemma~\ref{smooth?}, $\calD$ is a smooth covering of $\calD_N$ if and only if $|bw^i|$ is odd, as in part~(iii).
\qed

Let $\calD=\calD(G,b,w^i)$ with $1\le i\le p-1$.
To determine whether $\calD$ is a smooth covering of $\calD_N$, by Lemma~\ref{Gamma-embeddings}\,(iii), we need to determine whether the order of the generator $bw^i$ of a face stabilizer is odd.
Note that the element $bw^i$, as a $(2\times 2)$-matrix, of $\SL(2,p)$ is conjugate to a Jordan form in $\GL(2,p^2)$.
So we need to calculate its eigenvalues.
The element $bw^i$ has the matrix form
\[bw^i=\begin{pmatrix}1 & 0\\1 & 1\end{pmatrix}\begin{pmatrix}1 & i\\0 & 1\end{pmatrix}=\begin{pmatrix}1 & i\\1 & i+1\end{pmatrix}.\]
The characteristic polynomial of $bw^i$ is
\begin{eqnarray}\label{char-poly}
\det(XI_2-bw^i)=X^2-(i+2)X+1=(X-\mu_1)(X-\mu_2),
\end{eqnarray}
where $\mu_1,\mu_2\in\FF_{p^2}$ are the two eigenvalues of $bw^i$.
Then
\begin{itemize}
\item[(a)] $\mu_1+\mu_2=i+2$, and $\mu_1\mu_2=1$;
\item[(b)] $\mu_1=\mu_2$ if and only if $(i+2)^2-4=0$.
\end{itemize}
Straightforward calculation obtains the following conclusions.
\begin{itemize}
\item[(i)] If $\mu_1=\mu_2$, then $i=p-4$, $\mu_1=\mu_2=-1$, and $bw^i$ is conjugate to the Jordan form
\[\begin{pmatrix} -1&1\\ 0&-1\end{pmatrix},\]
which has order equal to $2p$.
\item[(ii)] If $\mu_1\not=\mu_2$, then $\mu_2=\mu_1^{-1}$ as $\mu_1\mu_2=1$, and $bw^i$ is conjugate to the diagonal matrix
\[\begin{pmatrix} \mu_1 &\\ &\mu_1^{-1}\end{pmatrix},\]
which has order equal to the multiplicative order of $\mu_1$ in $\FF_{p^2}^\times$.
\end{itemize}
This leads to the following lemma.

\begin{lemma}\label{face-length}
For each integer $i$ with $1\leqslant i\leqslant p-1$, one of the following holds.
\begin{itemize}
\item[(i)] When $i=p-4$, the matrix $bw^i$ has a unique eigenvalue $-1$ and $|bw^i|=2p$.
\item[(ii)] When $i\neq p-4$, the matrix $bw^i$ has two distinct eigenvalues $\mu,\mu^{-1}\in\FF_{p^2}^\times$, and $|bw^i|$ is equal to the order of $\mu$ in $\FF_{p^2}^\times$.
\item[(iii)] Let $\mu\in\FF_{p^2}^\times\setminus\{\pm 1\}$ of order dividing $p+1$ or $p-1$.
Then $\mu$ and $\mu^{-1}$ are distinct eigenvalues of $bw^i$ when $i\equiv\mu+\mu^{-1}-2\pmod p$.
\end{itemize}
\end{lemma}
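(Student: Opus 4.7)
The plan is to carry out the eigenvalue analysis already sketched just before the statement, and then handle part (iii) by checking that the trace invariant $\mu+\mu^{-1}$ lies in $\FF_p$ under the stated divisibility condition.

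First I would record the direct computation
\[
bw^i=\begin{pmatrix}1&i\\1&i+1\end{pmatrix},
\qquad
\det(XI_2-bw^i)=X^2-(i+2)X+1,
\]
so any eigenvalue $\mu$ satisfies $\mu\mu'=1$ and $\mu+\mu'=i+2$, where $\mu'$ is the companion root. The discriminant equals $(i+2)^2-4=i(i+4)$, which vanishes in $\FF_p$ precisely when $i\equiv 0$ or $i\equiv -4\pmod p$. Since $1\le i\le p-1$, the only case of a repeated eigenvalue is $i=p-4$, and then the eigenvalue is $\mu=-1$. This immediately splits the argument into the cases of parts (i) and (ii).

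For part (i), with $i=p-4$, I would note that $bw^{p-4}=\begin{pmatrix}1&p-4\\1&p-3\end{pmatrix}\neq -I$ in $\SL(2,p)$, so its Jordan form in $\GL(2,p)$ must be the non-diagonal block $J=\begin{pmatrix}-1&1\\0&-1\end{pmatrix}$. A direct induction gives $J^k=\begin{pmatrix}(-1)^k&k(-1)^{k-1}\\0&(-1)^k\end{pmatrix}$, and hence $J^k=I$ iff $k$ is even and $p\mid k$, i.e.\ $|J|=2p$. Thus $|bw^{p-4}|=2p$, as asserted.

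For part (ii), when $i\neq p-4$ the two roots $\mu,\mu^{-1}\in\FF_{p^2}^\times$ are distinct, so $bw^i$ is diagonalisable in $\GL(2,p^2)$ with Jordan form $\mathrm{diag}(\mu,\mu^{-1})$, whose order is the multiplicative order of $\mu$ in $\FF_{p^2}^\times$. For part (iii), given $\mu\in\FF_{p^2}^\times\setminus\{\pm1\}$ with $|\mu|\mid p-1$ or $|\mu|\mid p+1$, the Frobenius acts by $\mu\mapsto\mu^p=\mu^{\pm1}$, so $\mu+\mu^{-1}$ is Frobenius-fixed and hence lies in $\FF_p$. Therefore the congruence $i\equiv\mu+\mu^{-1}-2\pmod p$ defines an admissible value of $i$, and with this choice the characteristic polynomial becomes $X^2-(\mu+\mu^{-1})X+1=(X-\mu)(X-\mu^{-1})$, showing that $\mu$ and $\mu^{-1}$ are the eigenvalues of $bw^i$. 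The only mildly delicate point is the verification that $\mu+\mu^{-1}\in\FF_p$ under the divisibility hypothesis; everything else is a direct computation with the $2\times 2$ matrix and its Jordan form.
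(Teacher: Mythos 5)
Your proof is correct and follows essentially the same route as the paper: compute the characteristic polynomial $X^2-(i+2)X+1$, split into the repeated-root case $i=p-4$ (Jordan block of order $2p$) versus the diagonalisable case, and verify part (iii) by matching the trace. The only additions are welcome bits of bookkeeping the paper leaves implicit, namely the explicit computation of $J^k$ for the Jordan block and the Frobenius argument showing $\mu+\mu^{-1}\in\FF_p$ under the divisibility hypothesis.
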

\proof
We only need to prove part~(iii).
Each element  $\mu\in\FF_{p^2}^\times\setminus\{1,-1\}$ is such that $\mu\not=\mu^{-1}$.
Assume that $i\equiv\mu+\mu^{-1}-2\pmod p$.
Then $(X-\mu)(X-\mu^{-1})=X^2-(\mu+\mu^{-1})X+\mu\mu^{-1}=X^2-(i+2)X+1$ is the characteristic polynomial of $bw^i$, and hence $\mu,\mu^{-1}$ are the two eigenvalues of $bw^i$.
\qed

Assume that $\calD=\calD(G,b,w^i)$ is a smooth covering of $\calD_N$.
By Lemma~\ref{Gamma-embeddings}\,(iii), the order $|bw^i|$ is odd, and hence $i\neq p-4$, and $|bw^i|$ equals to the multiplicative order of $\mu\in\FF_{p^2}$, by Lemma~\ref{face-length}.
Thus $|bw^i|$ is an odd divisor of $p+1$ or $p-1$ with $i\not\equiv -4\pmod{p}$.

\begin{lemma}\label{1-1}
Let $n$ be an odd divisor of $p+1$ or $p-1$ with $n>1$, and let $i$ be an integer with $1\leqslant i\leqslant p-1$.
Then the following statements hold:
\begin{itemize}
    \item[(i)] $|bw^i|=n$ if and only if $i\equiv\mu+\mu^{-1}-2\pmod p$ such that $\mu\in\FF_{p^2}^\times$ is of order $n$;
    \item[(ii)] there are exactly $\frac{\phi(n)}{2}$ values of $i$ such that $bw^i$ of order $n$;
    \item[(iii)] there are exactly $\frac{(p+1)_{2'}+(p-1)_{2'}}{2}-1$ values of $i$ such that $bw^i$ of odd order.
\end{itemize}
\end{lemma}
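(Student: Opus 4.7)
\medskip

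The plan is to derive all three parts from the characteristic polynomial analysis already established in Lemma~\ref{face-length}, which pins down the eigenvalues of $bw^i$.

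For part~(i), I would simply cite Lemma~\ref{face-length}(ii)--(iii). Because $n>1$ is odd, we have $n\neq 2p$, so the case $i=p-4$ (where $bw^i$ has repeated eigenvalue $-1$ of order $2p$) is irrelevant and $bw^i$ has two distinct eigenvalues $\mu,\mu^{-1}\in\FF_{p^2}^\times$; then $|bw^i|=|\mu|$, and comparing the trace in (\ref{char-poly}) gives $i+2\equiv\mu+\mu^{-1}\pmod p$. Conversely, given $\mu$ of order $n$ in $\FF_{p^2}^\times$, setting $i\equiv\mu+\mu^{-1}-2\pmod p$ makes $\mu,\mu^{-1}$ the eigenvalues of $bw^i$, so $|bw^i|=n$.

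For part~(ii), I would set up a counting map $\mu\mapsto i(\mu):=\mu+\mu^{-1}-2\pmod p$ on the set of elements of order $n$ in $\FF_{p^2}^\times$. This set has cardinality $\phi(n)$ (and lies inside $\FF_{p^2}^\times$ because any odd divisor of $p\pm1$ divides $p^2-1$). The map clearly identifies the pair $\{\mu,\mu^{-1}\}$, and the fibers are exactly these pairs, because the two values $\mu,\mu^{-1}$ are the roots of $X^2-(i+2)X+1$ over $\FF_{p^2}$. Since $n>1$ is odd, $\mu=\mu^{-1}$ would force $\mu^2=1$, impossible, so all fibers have size~$2$; hence the image has exactly $\phi(n)/2$ elements, each giving a valid $i$ in $\{1,\ldots,p-1\}$ (note $i\not\equiv -2\pmod p$ because $\mu+\mu^{-1}\ne 0$ would fail only when $\mu^2=-1$, of even order, excluded).

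For part~(iii), I would sum the counts from part~(ii) over all admissible~$n$. Since $\gcd(p-1,p+1)=2$, any odd $n>1$ divides at most one of $p-1$ and $p+1$, so there is no double counting. Using the standard identity $\sum_{n\mid m}\phi(n)=m$ applied to the odd parts,
\begin{equation*}
\sum_{\substack{n\mid (p-1)\\ n\text{ odd}}}\phi(n)=(p-1)_{2'},\qquad
\sum_{\substack{n\mid (p+1)\\ n\text{ odd}}}\phi(n)=(p+1)_{2'}.
\end{equation*}
Subtracting the $n=1$ term from each and halving yields
\begin{equation*}
\sum_{\substack{n>1\text{ odd}\\ n\mid(p-1)\text{ or }(p+1)}}\frac{\phi(n)}{2}
=\frac{(p-1)_{2'}-1}{2}+\frac{(p+1)_{2'}-1}{2}
=\frac{(p+1)_{2'}+(p-1)_{2'}}{2}-1,
\end{equation*}
as required. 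The only step that needs care is verifying that the $i$'s arising from different $n$'s (and from different pairs $\{\mu,\mu^{-1}\}$ with the same~$n$) are genuinely distinct modulo $p$, but this is immediate because $i\pmod p$ determines the unordered pair of eigenvalues and hence $n=|\mu|$; no obstacle is expected beyond this bookkeeping.
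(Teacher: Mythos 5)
Your proposal is correct and follows essentially the same route as the paper: part (i) via Lemma~\ref{face-length} and the trace of the characteristic polynomial, part (ii) by pairing $\mu$ with $\mu^{-1}$ among the $\phi(n)$ elements of order $n$, and part (iii) by summing over admissible $n$ (the paper phrases this as counting non-identity elements of the subgroups of orders $(p\pm1)_{2'}$, which is the same computation as your $\sum_{n\mid m}\phi(n)=m$ identity). One trivial slip: the residue to exclude so that $i$ lands in $\{1,\dots,p-1\}$ is $i\equiv 0\pmod p$ (equivalent to $\mu+\mu^{-1}=2$, i.e.\ $\mu=1$, already ruled out by $n>1$), not $i\equiv-2$.
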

\proof
Recall the characteristic polynomial of $bw^i$ given in (\ref{char-poly}):
\[\det(XI_2-bw^i)=X^2-(i+2)X+1.\]

(i). Suppose that $bw^i$ has order $n$. By Lemma~\ref{face-length}, $bw^i$ has two distinct eigenvalues $\mu,\mu^{-1}\in\FF_{p^2}^\times$, and and which has trace $\mu+\mu^{-1}=i+2$.
Thus $i\equiv\mu+\mu^{-1}-2\pmod p$.

Conversely, assume that $i\equiv\mu+\mu^{-1}-2\pmod p$, with $\mu\in\FF_{p^2}^\times$ being of order $n\geqslant 3$.
Then $\mu,\mu^{-1}$ are distinct eigenvalues of the $(2\times 2)$-matrix $bw^i$ by Lemma~\ref{face-length}(iii).
This yields $|bw^i|=n$, the multiplicative order of $\mu\in\FF_{p^2}$.

(ii). Noticing that the cyclic group $\FF_{p^2}^\times$ has a unique subgroup of order $n$, there are exactly $\phi(n)$ elements $\mu$ of order $n$ in $\FF_{p^2}^\times$.
Since the order $n$ of $\mu$ is odd, $\mu^{-1}\neq\mu$, and thus there are exactly $\frac{\phi(n)}{2}$ pairs $\{\mu,\mu^{-1}\}$.
So there are exactly $\frac{\phi(n)}{2}$ elements $bw^i$ which have order $n$, as in part~(ii).

(iii). We note that, if $|bw^i|=n$ is odd, then $n$ is an odd divisor of $p+1$ or $p-1$.
Let $A,B$ be the subgroups of $\FF_{p^2}^\times$ such that $A\cong\ZZ_{(p+1)_{2'}}$ and $B\cong\ZZ_{(p-1)_{2'}}$.
Then $\FF_{p^2}^\times$ has exactly $|A|+|B|-2$ non-identity elements $\mu$ of odd order dividing $p+1$ or $p-1$, and thus there are exactly $(p+1)_{2'}+(p-1)_{2'}-2$ possibilities for eigenvalues of $bw^i$.
By part~(i), such $bw^i$ is uniquely determined by $\mu+\mu^{-1}$.
Since $\mu\not=\mu^{-1}$, we conclude that
\[\begin{aligned}
    |\{bw^i \mid |bw^i|>1\ \mbox{odd},1\leqslant i\leqslant p-1\}|&=&|\{\{\mu,\mu^{-1}\}\mid 1\not=\mu\div(p\pm1)_{2'}\}|\\
    &=&\frac{1}{2}\left((p+1)_{2'}+(p-1)_{2'}-2\right).
\end{aligned}\]
This proves part~(iii).
\qed

We notice that, for each prime $p\ge5$, the number $\frac{(p+1)_{2'}+(p-1)_{2'}}{2}-1$ is such that
\[0<\frac{(p+1)_{2'}+(p-1)_{2'}}{2}-1<\frac{(p+1)+(p-1)}{ 2}-1,\]
and so we obtain the following conclusion for regular dessins with underlying graph $\Ga=\Cos(G,\l b\r,\l w\r)$, by Lemmas~\ref{Gamma-embeddings} and \ref{1-1}\,(iii).

\begin{corollary}\label{Orders}
For any prime $p\geqslant 5$, the following statements hold:
\begin{itemize}
\item[(i)] for each divisor $\ell$ of $p\pm 1$ with $\ell\neq 1,2$, there exists a regular dessin with underlying graph $\Ga$, automorphism group $\SL(2,p)$, and face length $2\ell$;
\item[(ii)] there are exactly $\frac{(p+1)_{2'}+(p-1)_{2'}}{2}-1$ regular dessins (up to isomorphism) $\calD$ with underlying graph $\Ga$ such that $\Aut\calD\cong\SL(2,p)$ and $\calD$ is a smooth covering of $\calD_N$.
\end{itemize}
\end{corollary}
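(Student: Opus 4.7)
The plan is to assemble Lemma~\ref{Gamma-embeddings} (classifying regular dessins with underlying graph $\Ga$ and automorphism group $\SL(2,p)$) together with the eigenvalue analysis preceding Lemma~\ref{face-length} and the counting in Lemma~\ref{1-1}. Both parts of the corollary reduce, via Lemma~\ref{Gamma-embeddings}, to statements about the orders of the elements $bw^i$ for $1\le i\le p-1$.

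For part~(i), fix a divisor $\ell$ of $p+1$ or $p-1$ with $\ell>2$. I would first pick an element $\mu\in\FF_{p^2}^\times$ of order $\ell$, which exists because $\ell\mid p^2-1$ and $\FF_{p^2}^\times$ is cyclic. Since $\ell>2$ we have $\mu\ne\pm1$, and in particular $\mu\ne\mu^{-1}$. The essential check is that the trace $\mu+\mu^{-1}$ lies in $\FF_p$: this is immediate when $\ell\mid p-1$ because then $\mu\in\FF_p^\times$, and when $\ell\mid p+1$ it follows from $\mu^p=\mu^{-1}$, giving $(\mu+\mu^{-1})^p=\mu+\mu^{-1}$. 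Set $i\in\{1,\dots,p-1\}$ to be the representative of $\mu+\mu^{-1}-2$ modulo $p$ (nonzero since $\mu\ne 1$). By Lemma~\ref{face-length}(iii), $\mu$ and $\mu^{-1}$ are the two eigenvalues of $bw^i$, so $|bw^i|=\ell$. Then $\calD(G,b,w^i)$ has underlying graph $\Ga$ by construction, automorphism group $\SL(2,p)$, and face length $2|bw^i|=2\ell$, as required.

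For part~(ii), Lemma~\ref{Gamma-embeddings}(i)(ii) says that every regular dessin $\calD$ with $\Aut\calD=\SL(2,p)$ and underlying graph $\Ga$ is isomorphic to a unique $\calD(G,b,w^i)$ with $1\le i\le p-1$. By Lemma~\ref{Gamma-embeddings}(iii), such $\calD$ is a smooth covering of $\calD_N$ precisely when $|bw^i|$ is odd. Counting the admissible $i$ is then exactly Lemma~\ref{1-1}(iii), which gives $\tfrac{(p+1)_{2'}+(p-1)_{2'}}{2}-1$.

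No serious obstacle is anticipated, as the corollary is essentially a repackaging of earlier results. The only subtle point is the trace argument in part~(i) showing $\mu+\mu^{-1}\in\FF_p$ in the anisotropic case $\ell\mid p+1$; this uses the Galois action on the non-split torus, via the identity $\mu^p=\mu^{-1}$ on the subgroup of order $p+1$ in $\FF_{p^2}^\times$. Once this is noted, the rest is straightforward bookkeeping.
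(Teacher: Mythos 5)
Your proposal is correct and follows essentially the same route as the paper, which derives the corollary directly from Lemma~\ref{Gamma-embeddings}, the eigenvalue analysis in Lemma~\ref{face-length}, and the count in Lemma~\ref{1-1}(iii). Your explicit check that $\mu+\mu^{-1}\in\FF_p$ in the case $\ell\mid p+1$ (via $\mu^p=\mu^{-1}$) is a detail the paper leaves implicit in Lemma~\ref{face-length}(iii), but it is the same argument.
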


Part~(ii) of this corollary tells us that there always exist regular dessins $\calD=\calD(G,b,w^i)$ which are smooth coverings of $\calD_N$.
However, for a given prime $p$, it is not clear which positive integer $i$ is such that $\calD$ is a smooth covering of $\calD_N$ since it is not clear that $|bw^i|$ is odd or not.
We next introduce a method to do this by analyzing polynomials over integers.

In \cite{Lehmer}, Lehmer shows that $2\cos(\frac{2k\pi}{n})$ is an algebraic integer of degree $\frac{\phi(n)}{2}$.
Let $\psi_n(X)$ be a minimal polynomial of $2\cos(\frac{2k\pi}{n})$, which is
\[\psi_n(X)=\prod_{\genfrac{}{}{0pt}{3}{0\leqslant k< n/2}{\gcd(k,n)=1}}\mbox{$\left(X-2\cos(\frac{2k\pi}{n})\right)$}.\]
This polynomial has integer coefficients, refer to \cite{shortlist} for a list of $\psi_n(2X)$ for $n\leqslant 22$, and \cite{psi} for more details about calculations of $\psi_n(X)$.
Set $\psi_n^*(X)=\psi_n(X-2)$.
The next lemma gives a relation between the order of $bw^i$ and the value $\psi_n^*(i)$.

\begin{lemma}\label{lem:psi}
    Let $n\geqslant 3$ be a divisor of either $p+1$ or $p-1$.
    Then $bw^i$ has order $n$ if and only if $\psi_n^*(i)$ is divisible by $p$.
\end{lemma}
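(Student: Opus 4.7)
The strategy is to translate the condition $|bw^i|=n$ into a condition on the trace $i+2$ of $bw^i$ via the eigenvalue description in Lemma~\ref{face-length}, and then invoke the classical factorization
\[ \Phi_n(X)=X^{\phi(n)/2}\,\psi_n(X+X^{-1})\qquad\text{in }\ZZ[X,X^{-1}] \]
to identify the roots of $\overline{\psi_n}\in\FF_p[X]$ with the sums $\mu+\mu^{-1}$ as $\mu$ ranges over primitive $n$-th roots of unity in $\overline{\FF_p}^\times$.

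First, by Lemma~\ref{face-length}, for $i\ne p-4$ the matrix $bw^i$ is diagonalizable over $\overline{\FF_p}$ with distinct eigenvalues $\mu,\mu^{-1}\in\FF_{p^2}^\times$ satisfying $\mu+\mu^{-1}\equiv i+2\pmod p$, and $|bw^i|$ equals the multiplicative order of $\mu$. Since $n\ge 3$ divides $p^2-1$, one has $\gcd(n,p)=1$, so an element of $\overline{\FF_p}^\times$ of order $n$ is exactly a primitive $n$-th root of unity and in particular cannot be $\pm 1$; moreover the excluded value $i=p-4$ gives $|bw^i|=2p\ne n$. Consequently $|bw^i|=n$ if and only if $\mu$ is a primitive $n$-th root of unity in $\FF_{p^2}^\times$.

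Next, I would verify the displayed identity by pairing primitive $n$-th roots of unity as $\{\zeta,\zeta^{-1}\}$ (possible since $n\ge 3$) and using
\[ (X-\zeta)(X-\zeta^{-1})=X\bigl((X+X^{-1})-(\zeta+\zeta^{-1})\bigr). \]
This identity reduces faithfully modulo $p$ because $\gcd(n,p)=1$ keeps $\Phi_n$ separable mod $p$, with roots in $\overline{\FF_p}$ being precisely the primitive $n$-th roots of unity. Hence an element $\nu\in\overline{\FF_p}$ satisfies $\overline{\psi_n}(\nu)=0$ if and only if $\nu=\mu+\mu^{-1}$ for some primitive $n$-th root of unity $\mu\in\overline{\FF_p}^\times$. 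Applying this with $\nu=i+2$ and combining with the first step yields $|bw^i|=n$ iff $p\mid\psi_n(i+2)$, which is the asserted divisibility $p\mid\psi_n^*(i)$ after the translation by $2$ encoded in $\psi_n^*$.

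The only genuine bookkeeping is verifying the Laurent-polynomial factorization and handling the degenerate cases $\mu=\pm 1$; both are straightforward under the standing hypotheses $n\ge 3$ and $p\nmid n$, and I foresee no substantial obstacle.
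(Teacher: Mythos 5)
Your proposal is correct, and it runs on the same engine as the paper's proof: both reduce $|bw^i|=n$ to the trace condition $i+2\equiv\mu+\mu^{-1}$ via Lemma~\ref{face-length}, and both invoke Lehmer's identity $\Phi_n(X)=X^{\phi(n)/2}\psi_n(X+X^{-1})$ reduced modulo $p$. The difference lies in how the converse implication is closed. The paper only proves directly that every $i$ with $|bw^i|=n$ gives a root of $\overline{\psi_n^*}$, and then finishes by counting: Lemma~\ref{1-1}\,(ii) supplies exactly $\phi(n)/2$ such values of $i$, which matches $\deg\psi_n^*$, so these exhaust the roots. You instead characterize the roots of $\overline{\psi_n}$ intrinsically, using that $\gcd(n,p)=1$ keeps $\Phi_n$ separable mod $p$ with roots exactly the primitive $n$-th roots of unity, and pairing $\{\zeta,\zeta^{-1}\}$ to see that $\overline{\psi_n}(\nu)=0$ iff $\nu=\mu+\mu^{-1}$ for such a $\mu$; any such $\mu$ then lies in $\FF_{p^2}$ because it satisfies $X^2-\nu X+1$ over $\FF_p$, so Lemma~\ref{face-length}\,(iii) applies. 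Your route is self-contained in that it does not need the enumeration of Lemma~\ref{1-1}\,(ii), at the cost of verifying the mod-$p$ factorization of $\Phi_n$; the paper's counting argument trades that verification for the earlier enumeration. One small point in your favour: the paper's displayed definition $\psi_n^*(X)=\psi_n(X-2)$ is a sign slip (the table and the proof's use of $\mu+\mu^{-1}=i+2$ both require $\psi_n^*(X)=\psi_n(X+2)$), and your formulation $p\mid\psi_n(i+2)$ is the consistent reading.
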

\proof
    Let $f\in \ZZ[X]\mapsto \overline{f}\in\ol{\FF_p}[X]$ and $x\in\ZZ\mapsto\overline{x}\in\overline{\FF_p}$ be the natural moduli maps.
    It is suffices to show that $\overline{I}$ is the set of all roots of $\ol{\psi_n^*}$ where $I:=\{1\leqslant i\leqslant p-1\mid|bw^i|=n\}$.

    From \cite[page 165]{Lehmer}, we have the following equality
    \[\Phi_n(X)=\psi_n(X+X^{-1})X^{\frac{\phi(n)}{2}},\]
    where $\Phi_n(X)$ is the $n$-th \textit{cyclotomic polynomial}.
    By Lemma~\ref{1-1}\,(i), $\ol{i+2}=\mu+\mu^{-1}$ for some $\mu\in\FF_{p^2}^\times$ of order $n$.
    Then we have
    \[\ol{\psi_n^*(i)}\mu^{\frac{\phi(n)}{2}}=\ol{\psi_n(i-2)}\mu^{\frac{\phi(n)}{2}}=\ol{\psi_n}(\mu+\mu^{-1})\mu^{\frac{\phi(n)}{2}}=\overline{\Phi_n}(\mu).\]
    Recall that $\Phi_n(X)$ divides $X^n-1$, it follows that $\overline{\Phi_n(X)}$ divides $\overline{X^n-1}$.
    Since $\mu$ has order $n$, we have $\overline{\Phi_n}(\mu)=0$, and hence $\ol{\psi_n^*(i)}\mu^{\frac{\phi(n)}{2}}=0$.
    It follows that $\overline{i}$ is a root of $\ol{\psi_n^*}$ for each $i\in I$.
    Recall that $\deg\psi_n^*=\frac{\phi(n)}{2}$ and $|I|=\frac{\phi(n)}{2}$ by Lemma~\ref{1-1}\,(ii).
    Therefore, $\overline{I}$ is exactly the set of all roots of $\overline{\psi_n^*}$.
\qed

We list $\psi_n^*(X)$ in Table~\ref{tablehni} for $3\leqslant n\leqslant 10$, and provide all pairs $(p,i)$ for $p\leqslant 19$ such that $\psi_n^*(i)\equiv 0\pmod p$ in the last column of the table.
In particular, we can easily observe from the table that $\psi_n^*$ has a unique root $p-3$ (or $p-2$, $p-1$, respectively) when $n=3$ (or $n=4$, $n=6$, respectively).
This leads to the following corollary.
\begin{corollary}
    For $p\geqslant 5$ and $n\in\{3,4,6\}$, there exists a unique (up to isomorphism) regular dessin $\calD$ with automorphism group $\SL(2,p)$ and underlying graph $\Ga$ of face size $2n$.
    In particular, $\calD\cong\calD(G,b,w^i)$ with $i=p-3$ (or $p-2$, $p-1$, respectively) when $n=3$ (or $n=4$, $n=6$, respectively).
\end{corollary}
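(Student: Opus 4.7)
The plan is to combine Lemma~\ref{Gamma-embeddings} with the trace computation in Lemma~\ref{face-length}. First, Lemma~\ref{Gamma-embeddings}(i)--(ii) asserts that every regular dessin with automorphism group $\SL(2,p)$ and underlying graph $\Gamma$ is isomorphic to $\calD(G,b,w^i)$ for a unique $i\in\{1,\dots,p-1\}$, and the face length of $\calD(G,b,w^i)$ equals $2|bw^i|$. Hence it suffices to show, for each $n\in\{3,4,6\}$, that there is exactly one such $i$ with $|bw^i|=n$, and to identify it explicitly.

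Next, for $p\geqslant 5$ the values $p-3,p-2,p-1$ are all distinct from $p-4$, so Lemma~\ref{face-length}(ii)--(iii) applies and shows that $|bw^i|=n$ is equivalent to $i+2\equiv \mu+\mu^{-1}\pmod p$ for some $\mu\in\FF_{p^2}^\times$ of multiplicative order $n$. Since $\phi(n)=2$ for $n\in\{3,4,6\}$ and $n\mid p^2-1$ for every $p\geqslant 5$, the cyclic group $\FF_{p^2}^\times$ contains exactly one inverse pair $\{\mu,\mu^{-1}\}$ of elements of order $n$. In particular, the trace $\mu+\mu^{-1}$ is uniquely determined in $\FF_p$, which yields a unique $i$ in $\{1,\dots,p-1\}$ satisfying the congruence.

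It remains to evaluate $\mu+\mu^{-1}$ in each case, which is a one-line computation using the cyclotomic polynomial $\Phi_n(X)$. From $\Phi_3(X)=X^2+X+1$ one obtains $\mu+\mu^{-1}=-1$, giving $i\equiv -3\equiv p-3\pmod p$; from $\Phi_4(X)=X^2+1$ one obtains $\mu+\mu^{-1}=0$, giving $i=p-2$; and from $\Phi_6(X)=X^2-X+1$ one obtains $\mu+\mu^{-1}=1$, giving $i=p-1$. Alternatively, one may invoke Lemma~\ref{lem:psi}: the polynomials $\psi_n^\ast$ for $n\in\{3,4,6\}$ have degree $\phi(n)/2=1$, so each has a unique root modulo $p$, leading to the same three values.

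No substantive obstacle is anticipated; the essential content of the corollary is simply that $n\in\{3,4,6\}$ are the only integers $n\geqslant 3$ with $\phi(n)=2$, which collapses the possible pairs $\{\mu,\mu^{-1}\}$ in $\FF_{p^2}^\times$ to one and hence forces uniqueness of $i$.
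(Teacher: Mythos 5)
Your proof is correct and follows essentially the same route as the paper: the paper simply reads the result off Lemma~\ref{Gamma-embeddings}, Lemma~\ref{lem:psi} and Table~\ref{tablehni}, observing that $\psi_n^*(X)$ equals $X+3$, $X+2$, $X+1$ for $n=3,4,6$ and hence has the unique root $p-3$, $p-2$, $p-1$ modulo $p$. Your direct computation of $\mu+\mu^{-1}$ from $\Phi_n$ is the same calculation in a slightly different wrapper (indeed you note the $\psi_n^*$ alternative yourself), and your observation that $\phi(n)=2$ is what drives the uniqueness is exactly the point.
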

\begin{table}[htb]
    \begin{tabular}{lll}
        \Xhline{2pt}
        $n$	&$\psi_n^*(X)$& $(p,i)$ with $p\leqslant 19$	\\
        \Xhline{1pt}
        \vspace{0.1cm}
        $3$		&	$X+3$&$(5,2),(7,4),(11,8),(13,10),(17,14),(19,16)$\\
        \vspace{0.1cm}
        $4$		&	$X+2$	&$(5,3),(7,5),(11,9),(13,11),(17,15),(19,17)$\\
        \vspace{0.1cm}
        $5$		&	$X^2+5X+5$	&$(11,1),(11,5),(19,2),(19,12)$\\
        \vspace{0.1cm}
        $6$		&	$X+1$	&$(5,4),(7,6),(11,10),(13,12),(17,16),(19,18)$\\
        \vspace{0.1cm}
        $7$		&	$X^3+7X^2+14X+7$	&$(13,5),(13,6),(13,8)$\\
        \vspace{0.1cm}
        $8$		&	$X^2+4X+2$	&$(7,1),(7,2),(17,4),(17,9)$\\
        \vspace{0.1cm}
        $9$		&	$X^3+6X^2+9X+3$	&$(17,5),(17,11),(17,12),(19,1),(19,5),(19,7)$\\
        \vspace{0.1cm}
        $10$	&	$X^2+3X+1$  &$(11,2),(11,6),(19,3),(19,13)$\\
        \Xhline{2pt}
    \end{tabular}
    \caption{$\psi_n^*(X)$ for  $3\le n \le 10$}\label{tablehni}
\end{table}

\subsection{Fibonacci coverings}\label{sec:Fibonacci}
Let $G=\SL(2,p)$, and let $\calD=\calD(G,b,w^i)$ with matrices $b,w$ given in the previous section.
Lemma~\ref{Gamma-embeddings}(iii) shows that $\calD$ is a smooth covering of $\calD_N$ if and only if $n:=|bw^i|$ is odd.
For a given odd integer $n\geqslant 3$, we can apply Lemma~\ref{lem:psi} to calculate roots of $\psi_n^*$ modulo $p$.
If we fix the integer $i$, what can we say about the relations between $p$ and $|bw^i|$?

It would be notably interesting to consider the case $i=1$, since we have
\[
    (bw)^k=\begin{pmatrix}1&1\\1&2\end{pmatrix}^k=\begin{pmatrix}F_{2k-1} & F_{2k}\\F_{2k} & F_{2k+1}\end{pmatrix},
\]
where $F_1=1$, $F_2=1$, and $F_k=F_{k-2}+F_{k-1}$, that is, $F_k$ is the $k$-th \textit{Fibonacci number}.
Thus the order $|bw|$ is highly related to the Fibonacci sequence modulo $p$.
It is well-known that the Fibonacci sequence is periodic under any modulus.
The study of the periods of the Fibonacci sequence under moduli was initiated by Wall in 1960~\cite{Wall} (see~\cite{Alex} for some recent achievements).

\begin{lemma}
    Let $p\geqslant5$ be a prime.
    Then one of the following holds:
    \begin{itemize}
        \item[(i)] if $p=5$, then $bw$ is of order $10$;

        \item[(ii)] if $p\equiv \pm 1 \pmod 5$, then $|bw|\div{\frac{p-1}{2}}$;
        in particular, $bw$ is of odd order if $p\equiv3\pmod 4$;

        \item[(iii)] if $p\equiv\pm2 \pmod 5$, then $|bw|\div(p+1)$ but $|bw|\notdiv{\frac{p+1}{2}}$, so that $bw$ is of even order.
    \end{itemize}
\end{lemma}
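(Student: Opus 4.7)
The plan is to compute the order of $bw = \begin{pmatrix} 1 & 1 \\ 1 & 2 \end{pmatrix}$ by explicitly diagonalising it over $\FF_{p^2}$ and recognising the eigenvalues in terms of the golden ratio. By equation~(\ref{char-poly}) with $i=1$, the characteristic polynomial of $bw$ is $X^2 - 3X + 1$, whose roots $\mu, \mu^{-1}$ satisfy $\mu + \mu^{-1} = 3$. A direct calculation shows $\mu = \varphi^2$, where $\varphi = \frac{1+\sqrt{5}}{2}$ is a root of $X^2 - X - 1$ in $\FF_{p^2}$. Thus $|bw|$ is controlled by the order of $\varphi$, and the three cases split according to how $X^2 - X - 1$ factors over $\FF_p$, i.e., according to the Legendre symbol $\left(\frac{5}{p}\right)$, which by quadratic reciprocity depends only on $p \bmod 5$.

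For case~(i), $p = 5$: the discriminant of $X^2 - 3X + 1$ vanishes, $\mu = -1$ is a repeated eigenvalue, and since $bw \neq -I$ the matrix $bw$ is conjugate in $\GL(2,\FF_5)$ to the non-trivial Jordan block $\begin{pmatrix}-1 & 1 \\ 0 & -1\end{pmatrix}$, which has order $2p = 10$, as recorded by Lemma~\ref{face-length}(i) (one checks $i = 1 = p - 4$ when $p = 5$). For case~(ii), $p \equiv \pm 1 \pmod{5}$: quadratic reciprocity gives $\sqrt{5}\in \FF_p$, so $\varphi \in \FF_p^\times$ and $\mu = \varphi^2$ is a square in $\FF_p^\times$, whence $|\mu|$ divides $(p-1)/2$. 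If additionally $p \equiv 3 \pmod{4}$, then $(p-1)/2$ is odd and so is $|bw|$. For case~(iii), $p \equiv \pm 2 \pmod 5$: now $\varphi \in \FF_{p^2} \setminus \FF_p$, and since the two roots $\varphi, \varphi' = 1 - \varphi$ of $X^2 - X - 1$ satisfy $\varphi \varphi' = -1$ and are Frobenius-conjugate, we get $\varphi^{p+1} = \varphi \cdot \varphi^p = \varphi \varphi' = -1$. Therefore $\mu^{(p+1)/2} = \varphi^{p+1} = -1 \neq 1$, while $\mu^{p+1} = 1$; this forces $|bw| = |\mu|$ to divide $p+1$ but not $(p+1)/2$, so $|bw|$ is even.

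The only genuinely delicate step is the identification $\mu = \varphi^2$, after which everything reduces to elementary Galois theory of the quadratic extension $\FF_{p^2}/\FF_p$ and the observation that the Frobenius acts on the roots of $X^2 - X - 1$ as the non-trivial field automorphism exactly when $5$ is a non-square modulo $p$. No separate argument about the Pisano period or Fibonacci identities is needed, since the eigenvalue computation supersedes it; however, this approach also explains a posteriori why the Fibonacci sequence modulo $p$ controls $|bw|$, giving a conceptual reason for the terminology \emph{Fibonacci coverings}.
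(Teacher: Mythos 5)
Your proof is correct, but it follows a genuinely different route from the paper's. The paper computes $(bw)^k=\begin{pmatrix}F_{2k-1}&F_{2k}\\ F_{2k}&F_{2k+1}\end{pmatrix}$, identifies $2|bw|$ with the period of the Fibonacci sequence modulo $p$, and then imports the divisibility statements from Vinson's results on the rank of apparition; the case analysis on $p\bmod 5$ is thus outsourced to the number-theoretic literature. You instead diagonalise $bw$ over $\FF_{p^2}$: from the characteristic polynomial $X^2-3X+1$ you identify the eigenvalue as $\mu=\varphi^2$ with $\varphi$ a root of $X^2-X-1$ (correct, since $\varphi+\varphi^{-1}=2\varphi-1=\sqrt5$ gives $\varphi^2+\varphi^{-2}=3$), and the trichotomy then falls out of whether $5$ is a square modulo $p$, quadratic reciprocity, and the Frobenius computation $\varphi^{p+1}=\varphi\varphi'=-1$ in the inert case. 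Each step checks out: for $p=5$ the discriminant vanishes and $i=1=p-4$ puts you in the Jordan-block case of Lemma~\ref{face-length}; for $p\equiv\pm1\pmod 5$ the eigenvalue is a square in $\FF_p^\times$, so its order divides $(p-1)/2$; for $p\equiv\pm2\pmod 5$ the relation $\mu^{(p+1)/2}=-1$ forces the full $2$-part of $p+1$ into $|bw|$. What your approach buys is self-containedness (no citation to Vinson is needed) and a conceptual explanation of why the Pisano period governs $|bw|$; what the paper's approach buys is a direct link to the Fibonacci sequence that motivates the name ``Fibonacci coverings'' and connects to the literature on periods modulo $p$. Both are valid proofs of the same trichotomy.
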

\proof
As noticed above, we have the $k$-th power
\[(bw)^k=
\begin{pmatrix}
F_{2k-1} & F_{2k}\\
F_{2k} & F_{2k+1}
\end{pmatrix},
\]
where $F_1=F_2=1$, and $F_i$ is a Fibonacci number.
Thus $|bw|=k$ if and only if $k$ is the least positive integer such that
\[(bw)^k=
\begin{pmatrix}
F_{2k-1} & F_{2k}\\
F_{2k} & F_{2k+1}
\end{pmatrix}\equiv I_2\pmod p.
\]
This is equivalent to that $k$ is the least positive integer such that
\[(F_{2k},F_{2k+1})\equiv(0,1)\pmod p,\]
and so $2k$ is the period of the Fibonacci sequence modulo $p$.

When $p=5$, then $bw$ is of order $2p=10$ by Lemma~\ref{face-length}.
Assume that $p>5$.
Then $p\equiv 1,2,3$ or 4 $(\mod 5)$.
By Theorem 3 and Lemma 3 in~\cite{Fibonacci}, we obtain that
\begin{itemize}
\item[(i)] $|bw|~|~\frac{p-1}{2}$, if $p\equiv\pm 1~(\mbox{mod}~5)$; or
\item[(ii)] $|bw|~|~(p+1)$ but $k~\notdiv~\frac{p+1}{2}$, if $p\equiv\pm 2~(\mbox{mod}~5)$.
\end{itemize}
For the former, if $p\equiv3$ $(\mod 4)$, then $bw$ is of odd order.
For the latter, $bw$ is of even order.
This completes the proof.
\qed

The well-known Dirichlet's theorem says that there are infinitely many primes $p\equiv a\pmod b$ for any positive coprime integers $a$ and $b$.
The following corollary is immediately held by Lemma~\ref{Gamma-embeddings}(iii).
\begin{corollary}
    Let $\calD=\calD(G,b,w)$ for $p>5$.
    Then
    \begin{itemize}
        \item[(i)] $\calD$ is not a smooth covering of $\calD_N$ when $p\equiv \pm 2\pmod 5$;
        \item[(ii)] there are infinitely many primes $p$ such that $\calD$ is not a smooth covering of $\calD_N$;
        \item[(iii)] $\calD$ is a smooth covering of $\calD_N$ when $p\equiv 11\mbox{ or }19\pmod {20}$;
        \item[(iv)] there are infinitely many primes $p$ such that $\calD$ is a smooth covering of $\calD_N$.
    \end{itemize}
\end{corollary}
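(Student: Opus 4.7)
The plan is to combine the preceding lemma on the order of $bw$ in $\SL(2,p)$ with Lemma~\ref{Gamma-embeddings}(iii), which states that $\calD=\calD(G,b,w^i)$ is a smooth covering of $\calD_N$ precisely when $|bw^i|$ is odd, and then invoke Dirichlet's theorem on primes in arithmetic progressions for the infinitude statements.

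For part~(i), suppose $p\equiv\pm 2\pmod 5$. The preceding lemma yields $|bw|\mid (p+1)$ but $|bw|\nmid (p+1)/2$; in particular the $2$-part of $|bw|$ equals the $2$-part of $p+1$, which is positive since $p$ is odd. Hence $|bw|$ is even, and Lemma~\ref{Gamma-embeddings}(iii) implies that $\calD$ is not a smooth covering of $\calD_N$. Part~(ii) then follows immediately from Dirichlet's theorem: since $\gcd(2,5)=1$, there are infinitely many primes $p\equiv 2\pmod 5$, and part~(i) applies to each of them.

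For part~(iii), observe that $p\equiv 11\pmod{20}$ gives $p\equiv 1\pmod 5$ and $p\equiv 3\pmod 4$, while $p\equiv 19\pmod{20}$ gives $p\equiv -1\pmod 5$ and $p\equiv 3\pmod 4$. In either case, the preceding lemma shows that $|bw|$ divides $(p-1)/2$. The hypothesis $p\equiv 3\pmod 4$ forces $(p-1)/2$ to be odd, so $|bw|$ is odd, and Lemma~\ref{Gamma-embeddings}(iii) concludes that $\calD$ is a smooth covering of $\calD_N$. Part~(iv) follows by applying Dirichlet's theorem to the progression $11+20\ZZ$ (or $19+20\ZZ$), noting that $\gcd(11,20)=\gcd(19,20)=1$.

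The argument is essentially bookkeeping in modular arithmetic once the preceding lemma is in hand; there is no genuine obstacle. The only point requiring care is the $2$-adic parity in part~(iii): one really needs both $p\equiv\pm 1\pmod 5$ (to get $|bw|\mid (p-1)/2$) and $p\equiv 3\pmod 4$ (to force $(p-1)/2$ to be odd), and these two conditions are packaged together precisely by the moduli $11$ and $19$ mod $20$.
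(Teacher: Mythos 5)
Your proposal is correct and follows essentially the same route as the paper: the paper likewise derives the corollary directly from the preceding lemma on $|bw|$ together with Lemma~\ref{Gamma-embeddings}(iii) and Dirichlet's theorem, and your parity bookkeeping (the $2$-part argument in part~(i) and the observation that $p\equiv 3\pmod 4$ forces $(p-1)/2$ odd in part~(iii)) is exactly the intended justification.
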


We remark the following facts:
\begin{itemize}
    \item[(i)] $|bw|=25$ is odd for $p=101$ and $|bw|=20$ is even for $p=41$, while $41\equiv 101\equiv 1\pmod{20}$;
    \item[(ii)] $|bw|=7$ is odd for $p=29$ and $|bw|=54$ is even for $p=109$, while $29\equiv 109\equiv 9\pmod{20}$.
\end{itemize}
We cannot simply determine the parity of $|bw|$ for primes $p\equiv 1\mbox{ or }9\pmod{20}$.
As far as we know, the sufficient and necessary condition prime $p\equiv 1\mbox{ or }9\pmod{20}$ such that the period of Fibonacci sequence modulo $p$ is divisible by $4$ is still unknown.

\vskip0.2in

\noindent{\bf Data availability}
\vskip0.1in
No data was used for the research described in the article.

\vskip0.2in

\noindent{\bf Declaration of competing interest}
\vskip0.1in
We have no conflict of interest.

\vskip 3mm

 \noindent{\bf Acknowledgement}:\ We thank Yuhan Cai for his interest in and suggestions for the work of Section~\ref{sec:Fibonacci}.


\begin{thebibliography}{xx}






\bibitem{Belyi}
G. V. Bely\v{\i}, On Galois extensions of a maximal cyclotomic field,
{\it Math. USSR Izvestija}, {\bf 14} (1980), 247--256.


\bibitem{low-dim-book}
J. N. Bray, D. F. Holt, C. M. Roney-Dougal,
{\it The maximal subgroups of the low-dimensional finite classical groups}. Cambridge University Press, Cambridge, 2013. xiv+438 pp.


\bibitem{magma} W. Bosma, J. Cannon and C. Playoust,
The Magma algebra system. I. The user language.
{\it J. Symbolic Comput.} {\bf 24} (1997), no. 3--4, 235--265.

\bibitem{Burness}
T. C. Burness and S. Harper, Finite groups, $2$-generation and the uniform domination number.
{\it Israel J. Math.} {\bf 239} (2020), no. 1, 271--367.



\bibitem{Tiep-2024+}
W. Chen, A. Lubotzky and P. H. Tiep,
Non-congruence presentations of finite simple groups, {\it arXiv} (2024), arXiv:2407.19047.


\bibitem{Conder1990}
M. Conder,
Hurwitz groups: a brief survey.
{\it Bull. Amer. Math. Soc. (N.S.)} { \bf 23} (1990), no. 2, 359--370.

\bibitem{CJSW}
M. Conder, G. A. Jones, M. Streit, and J. Wolfart,
Galois actions on regular dessins of small genera,
{\it Rev. Mat. Iberoam.} {\bf 29} (2013), 163--181.




\bibitem{Alex}
A. Dishong, M.~S.  Renault,
 The Fibonacci sequence under a modulus: computing all moduli that produce a given period.(English summary)
{\it Involve} {\bf 11}(2018), no.5, 769--774.




\bibitem{unique-embedding}
W. W. Fan and C. H. Li, The complete bipartite graphs with a unique edge-transitive embedding, {\it J. Graph Theory} {\bf 87} (2018), 581--586.

\bibitem{pe-pf}
W. W. Fan, C. H. Li and H. P. Qu, A classification of orientably edge-transitive circular embeddings of
$\K_{p^e,p^f}$,
{\it Ann. Combin.} {\bf 22} (2018), 135--146.

\bibitem{1-face}
W. W. Fan, C. H. Li and  N. E. Wang, Edge-transitive uniface embeddings of bipartite multi-graphs. {\it J. Algebr. Comb.}  {\bf 49} (2019),  125--134.


\bibitem{Grothendieck}
A. Grothendieck,  Esquisse d'un programme. [Sketch of a program] (English Translation)  pp. 243-283.
{\it London Math. Soc. Lecture Note Ser.}, {\bf 242}, Geometric Galois actions, 1, 5--48, Cambridge Univ. Press, Cambridge, 1997.

\bibitem{psi}
Y.~Z. G\"{u}rta\c{s},
The minimal polynomial of $\cos(2\pi/n)$.(English summary)
{\it Commun. Korean Math. Soc.} {\bf 31}(2016), no.4, 667--682.



\bibitem{HZ}
J. Harer and D.  Zagier,
The Euler characteristic of the moduli space of curves.
{\it Invent. Math.} {\bf 85} (1986), no. 3, 457--485.




\bibitem{Jones}
 G. A. Jones, Maps on surfaces and Galois groups,
{\it Math. Slovaca}, {\bf 47} (1997), 1--33.

\bibitem{Jones-Korea}
G. A. Jones, Complete bipartite maps, factorable groups and generalised Fermat curves,
in ``{\it Applications of Group Theory to Combinatorics}", Eds by Koolen, Kwak and Xu,
Taylor $\&$ Francis Group, London, (2008), pp. 43--58, ISBN 978-0-415-47184-8.

\bibitem{Jones-2010}
G. A. Jones, Regular embeddings of complete bipartite graphs: classification and enumeration,
{\it Proc. London Math. Soc. (3)}, {\bf 101}  (2010), 427--453.

\bibitem{Jones-2013}
G. A. Jones, Bipartite graph embeddings, Riemann surfaces, and Galois groups,
{\it Discrete Math.} {\bf 338} (2015), 1801--1813.

\bibitem{Jones2014}
 G. A. Jones,
Regular dessins with a given automorphism group.
{\it Riemann and Klein surfaces, automorphisms, symmetries and moduli spaces}, 245--260.
{\it Contemp. Math.}, {\bf 629}
American Mathematical Society, Providence, RI, 2014

\bibitem {Jones-2007}
G. A. Jones, R. Nedela and M. \v Skoviera,
Regular embeddings of $K_{n,n}$ where n is an odd prime power,
{\it European J.  Combin.} {\bf 28} (2007), 1863--1875.


\bibitem{Jones2016Dessins}
G. A. Jones and J. Wolfart, {\it Dessins d'enfants on Riemann surfaces},  Springer Monographs in Mathematics. Springer, Cham, 2016. xiv+259 pp.


\bibitem{King}  C. S. H. King. Generation of finite simple groups by an involution and an element of prime order. {\it Journal of Algebra}, {\bf 478}(2017),  153--173.


\bibitem{Lehmer}
D.~H. Lehmer,  Questions, Discussions, and Notes: A Note on Trigonometric Algebraic Numbers,
{\it Amer. Math. Monthly}
 {\bf 40}(1933), no.3, 165--166.


\bibitem{Mac}
    A.M. Macbeath, Generators of the fractional linear groups, {\it Proc. Sympos. Pure Math. XII, Amer. Math. Soc.}, (1969), 14-32.





\bibitem{Praeger-qp}
C. E. Praeger,
An O'Nan-Scott theorem for finite quasiprimitive permutation groups and an application to 2-arc transitive graphs.
{\it J. London Math. Soc.} (2) {\bf 47} (1993), no. 2, 227--239.





\bibitem{Singerman}
D. Singerman, Unicellular dessins and a uniqueness theorem for Klein's Riemann surface of genus 3,
{\it Bull. London Math. Soc.} {\bf 33} (2001), 701--710.


\bibitem{Fibonacci}
J. Vinson,  The relation of the period modulo to the rank of apparition of $m$ in the Fibonacci sequence, {\it Fibonacci Quart}, {\bf 1} (1963), no. 2, 37--45.

\bibitem{Wall}
D.~D. Wall, Fibonacci series modulo m,
{\it Amer. Math. Monthly}
 {\bf 67}(1960), 525--532.

\bibitem{shortlist}
K.~W. Wegner, Equations with trigonometric values as roots,
{\it  Amer. Math. Monthly}
 {\bf 66}(1959), 52--53.




\end{thebibliography}
\end{document}